\documentclass[11pt]{amsart}
\usepackage{amsmath,amssymb,amsthm,mathtools}
\usepackage[shortlabels]{enumitem}
\usepackage[margin=1.18in]{geometry}
\usepackage[protrusion=true,expansion=false]{microtype}
\usepackage{booktabs}
\usepackage{xcolor}
\usepackage{hyperref}
\hypersetup{colorlinks=true,citecolor=black,linkcolor=black,urlcolor=black,pdfauthor={Hamed Ebadi},pdftitle={Cohomology of Linearly Constrained Kloosterman Families}}
\allowdisplaybreaks[2]
\theoremstyle{plain}
\newtheorem{mainthm}{Theorem}

\newtheorem{theorem}{Theorem}[section]
\newtheorem{proposition}[theorem]{Proposition}
\newtheorem{lemma}[theorem]{Lemma}
\newtheorem{corollary}[theorem]{Corollary}
\theoremstyle{definition}
\newtheorem{definition}[theorem]{Definition}
\theoremstyle{remark}
\newtheorem{remark}[theorem]{Remark}
\numberwithin{equation}{section}

\newcommand{\Qlb}{\overline{\mathbb{Q}}_\ell}
\newcommand{\Fq}{\mathbb{F}_q}
\newcommand{\Fp}{\mathbb{F}_p}
\newcommand{\Fpbar}{\overline{\mathbb{F}}_p}
\newcommand{\Fqbar}{\overline{\mathbb{F}}_q}
\newcommand{\Gm}{\mathbb{G}_m}
\newcommand{\Gmn}[1]{\mathbb{G}_{m,#1}}
\newcommand{\Lpsi}[1]{\mathcal{L}_\psi(#1)}
\newcommand{\Kl}{\mathrm{Kl}}
\newcommand{\Ggeom}{G_{\mathrm{geom}}}
\newcommand{\Iw}{I^{\mathrm{w}}}
\newcommand{\Up}[1]{\mathrm{U}(#1)}
\DeclareMathOperator{\Tr}{Tr}
\DeclareMathOperator{\Frob}{Frob}
\DeclareMathOperator{\Swan}{Swan}
\DeclareMathOperator{\SL}{SL}
\DeclareMathOperator{\GL}{GL}
\DeclareMathOperator{\Sym}{Sym}
\DeclareMathOperator{\conv}{conv}
\DeclareMathOperator{\Vol}{Vol}
\DeclareMathOperator{\rk}{rank}
\DeclareMathOperator{\End}{End}
\DeclareMathOperator{\Hom}{Hom}
\DeclareMathOperator{\gr}{gr}
\DeclareMathOperator{\FT}{FT}
\DeclareMathOperator{\pr}{pr}

\title[Cohomology of constrained Kloosterman families]{Cohomology of Linearly Constrained Kloosterman Families: Newton Polytopes, Weights, and Boundary Monodromy}
\author{Hamed Ebadi}
\address{K. N. Toosi University of Technology, Tehran, Iran}
\email{h.ebadi1@email.kntu.ac.ir}
\date{October 2026}
\subjclass[2020]{Primary 11L05, 14F20; Secondary 11T23, 14G15}
\keywords{Kloosterman sums, exponential sums, Newton polyhedra, weights, Swan conductors, middle convolution, geometric monodromy, $\ell$-adic cohomology}

\begin{document}

\begin{abstract}
Let $p$ be an odd prime, $\psi$ a nontrivial additive character of $\mathbb F_p$, $\psi_q=\psi\circ\mathrm{Tr}_{\mathbb F_q/\mathbb F_p}$ and $\lambda\in\mathbb F_p^\times$. For $h\in(\mathbb F_q^\times)^n$ let $\Sigma_n(h)$ be the sum of $\psi_q(u_1^{-1}+\cdots+u_n^{-1})$ over $u\in(\mathbb F_q^\times)^n$ with $h_1u_1+\cdots+h_nu_n=-\lambda$, an additively twisted average of products of Kloosterman sums. For every $n\ge2$, the cohomology computing $\Sigma_n(h)$ is concentrated in one degree and has dimension $2^n-1$. It has $r(n)=2^n-\binom n{\lfloor n/2\rfloor}$ eigenvalues of weight $n-1$, and its other eigenvalues are the integers $q^{k-1}$, $1\le k\le\lfloor n/2\rfloor$, with ballot numbers as multiplicities. The part of top weight forms a lisse sheaf $\mathcal W_n$ of rank $r(n)$ on the torus $\mathbb G_m^n$. We determine its local monodromy at the coordinate boundary and its determinant: $\det\mathcal W_n$ is the Artin--Schreier sheaf of the sum $-2^{n-1}\lambda^{-1}(h_1+\cdots+h_n)$ of the critical values of the phase, twisted by $p^{(n+1)r(n)/2}$. The geometric monodromy group of $\mathcal W_n$, and the arithmetic one after normalization to weight $0$, are equal to $\{g\in\mathrm{GL}_{r(n)}:\det(g)^p=1\}$. Consequently the normalized sums become equidistributed like traces of Haar-random elements of $\{g\in\mathrm U(r(n)):\det(g)^p=1\}$. The proofs use Laumon's Fourier transform, middle convolution, the Cayley cubic surface and Larsen's alternative, but no classification of finite groups.
\end{abstract}

\maketitle

\section{Introduction}\label{sec:intro}

Let $p$ be an odd prime, let $\psi$ be a nontrivial additive character of $\Fp$, and for a power $q$ of $p$ put $\psi_q=\psi\circ\Tr_{\Fq/\Fp}$. The Kloosterman sums
\[
K(a)=\sum_{x\in\Fq^\times}\psi_q(x^{-1}+ax)\qquad(a\in\Fq)
\]
are, up to sign, the trace function of Deligne's Kloosterman sheaf $\Kl$. This is a lisse sheaf of rank two on $\Gm$, pure of weight one, tamely and unipotently ramified at $0$ and totally wild at $\infty$, whose geometric monodromy group is $\SL_2$ \cite{SGA45,Katz-Gauss}. This paper studies the family obtained by restricting the phase $u_1^{-1}+\cdots+u_n^{-1}$ to hyperplane sections of the torus. For $n\ge2$, a fixed $\lambda\in\Fp^\times$ and $h\in\Fq^n$ put
\begin{equation}\label{eq:intro-Sigma}
\Sigma_n(h)=\sum_{\substack{u\in(\Fq^\times)^n\\ h_1u_1+\cdots+h_nu_n=-\lambda}}\psi_q\big(u_1^{-1}+\cdots+u_n^{-1}\big).
\end{equation}
Orthogonality of additive characters gives
\begin{equation}\label{eq:intro-factor}
q\,\Sigma_n(h)=\sum_{t\in\Fq}\psi_q(\lambda t)\prod_{i=1}^nK(th_i),
\end{equation}
so $\Sigma_n(h)$ is an additively twisted average of products of Kloosterman sums at proportional arguments. The size of $\Sigma_n(h)$, and the statistics of the family as $h$ varies, are governed by two things: the cohomology of the hyperplane section $Y_h=\{u\in\Gm^n:\langle h,u\rangle=-\lambda\}$ with coefficients in $\Lpsi{u_1^{-1}+\cdots+u_n^{-1}}$, and the monodromy of the resulting sheaves on the parameter torus. For the general framework of trace functions see \cite{FKM}. Our aim is to determine both: the cohomology of each fibre, with its weights and the exact values of its eigenvalues of lower weight, and the geometric and arithmetic monodromy groups of the resulting sheaves.

Put
\[
r(n)=2^n-\binom{n}{\lfloor n/2\rfloor},\qquad m_{n,k}=\binom nk-\binom n{k-1}\quad(0\le k\le\lfloor n/2\rfloor),
\]
with $\binom n{-1}=0$. The $m_{n,k}$ are ballot numbers, and $\sum_km_{n,k}=\binom n{\lfloor n/2\rfloor}$. Weights are taken in the sense of Deligne \cite{Deligne}.

\begin{mainthm}\label{thmA}
Let $n\ge2$ and $h\in(\Fq^\times)^n$. Then $H^i_c\big(Y_h\otimes\Fqbar,\Lpsi{\sum_iu_i^{-1}}\big)=0$ for $i\ne n-1$, and $H^{n-1}_c$ has dimension $2^n-1$. Its Frobenius eigenvalues are $r(n)$ eigenvalues of weight $n-1$ together with, for each $1\le k\le\lfloor n/2\rfloor$, the eigenvalue $q^{k-1}$ with multiplicity $m_{n,k}$. Consequently
\[
\Sigma_n(h)=(-1)^{n-1}\Big(\tau_q(h)+\sum_{k=1}^{\lfloor n/2\rfloor}m_{n,k}\,q^{k-1}\Big),\qquad|\tau_q(h)|\le r(n)\,q^{(n-1)/2},
\]
where $\tau_q(h)$ is the sum of the eigenvalues of weight $n-1$.
\end{mainthm}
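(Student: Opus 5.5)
Write $f(u)=\sum_i u_i^{-1}$ and $\ell(u)=\langle h,u\rangle$. The plan is to replace the fibre cohomology by cohomology on the full torus $\Gm^n$, compute that with Newton polytopes, and read off the lower-weight eigenvalues from an explicit computation in the one-variable picture \eqref{eq:intro-factor}.

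\textbf{Step 1: pass to the full torus.} The key identity is
\[
R\Gamma_c\big(Y_h,\Lpsi{f}\big)\ \simeq\ \text{fibre at }{-\lambda}\text{ of } R\ell_!\Lpsi{f}.
\]
Fourier transform in the target variable converts $R\ell_!\Lpsi f$ into the family $t\mapsto R\Gamma_c\big(\Gm^n,\Lpsi{f+t\ell}\big)$. This family is the external product of the Kloosterman sheaves $\Kl(th_i)$ pulled back along $t\mapsto(th_1,\dots,th_n)$. Concretely, I would identify
\[
H^{n-1}_c(Y_h,\Lpsi f)\ \cong\ H^1_c\Big(\Gm,\ \mathcal L_\psi(\lambda t)\otimes\bigotimes_i[th_i]^*\Kl\Big),
\]
up to a Tate twist and a correction coming from the point $t=0$. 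That correction accounts for the factor $q$ in \eqref{eq:intro-factor}.

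\textbf{Step 2: vanishing and dimension.} Put $\mathcal F=\bigotimes_i[th_i]^*\Kl$, a lisse sheaf of rank $2^n$ on $\Gm$. I would compute its local behaviour at the two ends.

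At $\infty$, each $[th_i]^*\Kl$ has Swan conductor $1/2$ with slopes $1/2$. The tensor product therefore has all slopes at most $1/2$. Twisting by $\mathcal L_\psi(\lambda t)$, which has slope $1$, makes every slope exactly $1$, so $\Swan_\infty=2^n$.

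At $0$, $\mathcal F$ is tame and unipotent, with monodromy $\bigotimes$ of $n$ Jordan blocks $J_2$. Its invariants have dimension equal to the number of Jordan blocks of $J_2^{\otimes n}\cong\Sym$-decomposition, namely $\binom n{\lfloor n/2\rfloor}$.

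The global sections $H^0_c$ and $H^2_c$ vanish, because everything is totally wild at $\infty$. The Euler--Poincaré formula then gives $\dim H^1_c=\Swan_\infty=2^n$. The passage to $Y_h$ in Step 1 removes one dimension (the $t=0$ term / the constant $q^{-1}$), giving $2^n-1$.

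Alternatively one can argue via the Newton polytope of $f+t\ell$, whose volume is $2^n$, and Adolphson--Sperber or Denef--Loeser nondegeneracy. In either route, concentration in degree $n-1$ follows from Artin vanishing, since $Y_h$ is affine of dimension $n-1$, combined with duality and this explicit one-variable concentration.

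\textbf{Step 3: weights.} I would use $H^1_c\to H^1$ and the fact that $\mathcal F$ is pure of weight $n$. The pure part of $H^1_c$ is the image in $H^1$, the middle extension cohomology, which has weight $n+1$. The mixed part is $\mathcal F^{I_0}$, the inertia invariants at $0$, sitting in lower weights.

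By Clebsch--Gordan, $J_2^{\otimes n}$ (the unipotent $\Sym^1$ of $\SL_2$ tensored $n$ times) decomposes as $\bigoplus_k m_{n,k}\Sym^{n-2k}$. Each $\Sym^{n-2k}$ contributes one invariant vector. Weight–monodromy for the Kloosterman sheaf at $0$, which has Frobenius eigenvalues $1$ and $q$ on $\gr$, shows that this invariant has eigenvalue exactly $q^{k}$ for the block $\Sym^{n-2k}$. After the normalization by $q^{-1}$ from Step 1, this becomes $q^{k-1}$.

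The $k=0$ block ($\Sym^n$, multiplicity $m_{n,0}=1$) is exactly what is removed in Step 1, leaving multiplicities $m_{n,k}$ for $1\le k\le\lfloor n/2\rfloor$. The remaining $2^n-\binom n{\lfloor n/2\rfloor}=r(n)$ classes are pure of weight $n-1$ after the twist.

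\textbf{Step 4: trace formula.} The formula for $\Sigma_n(h)$ follows from the Grothendieck--Lefschetz trace formula with sign $(-1)^{n-1}$. The bound $|\tau_q(h)|\le r(n)q^{(n-1)/2}$ is then the Deligne bound applied to the pure part.

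\textbf{Main obstacle.} The delicate point is Step 3: pinning down the exact Frobenius eigenvalue $q^{k}$ on each invariant line, and verifying that the $t=0$ correction removes exactly the $k=0$ line. I would handle this by writing the invariants at $0$ explicitly via the local monodromy filtration of $\Kl$, where $\gr^M$ has eigenvalues $1,q$. I would then compute $\gr^M$ of the tensor product, noting that the invariants lie in the lowest graded piece of each $\Sym^{n-2k}$.
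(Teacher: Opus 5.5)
Your route is the paper's. Your Fourier inversion in Step~1 is Lemma~\ref{lem:integrate-t} applied to $X_h=\Gm^n\times\mathbb A^1_t$, followed by excision of $t=0$. The remaining steps also match: the reduction to $H^1_c(\Gm,\mathcal N_h)$ with $\mathcal N_h=\bigotimes_i[\times h_i]^*\Kl\otimes\Lpsi{\lambda t}$, Euler--Poincar\'e, the middle-extension sequence, and Clebsch--Gordan at $t=0$. The $t=0$ line injects into $H^1_c(\Gm,\mathcal N_h)$ precisely because Artin vanishing kills $H^{n-2}_c(Y_h,\Lpsi f)$, which is also the role Artin vanishing plays in the paper. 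Two small slips: $\Swan_\infty(\Kl)=1$, with both slopes $1/2$. And the factor $q$ in \eqref{eq:intro-factor} is the Tate twist from integrating over $\mathbb A^1_t$; the point $t=0$ supplies the removed line.

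The step that is not justified as written is the one you single out yourself. Weight--monodromy for $\Kl$ at $0$, even in the sharp form $NF=qFN$, only shows that $\Frob_q$ acts on $\gr^M_{-1}$ and $\gr^M_1$ of $\Kl(0)$ by $\alpha$ and $q\alpha$ for some $\alpha$ of weight $0$. It does not give $\alpha=1$. With that input alone, the invariants $\mathcal N_h^{I_0}$ have eigenvalues $\alpha^nq^k$. All factors give the same $\alpha$, because $h_i\in\Fq^\times$ implies $(j_*[\times h_i]^*\Kl)_0\cong(j_*\Kl)_0$ as $\Frob_q$-modules.

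The paper supplies the missing input as (K5). By Euler--Poincar\'e and total wildness at $\infty$, $R\Gamma_c(\mathbb A^1\otimes\Fqbar,j_*\Kl)=0$, so the trace formula gives $\Tr(\Frob_q\mid(j_*\Kl)_0)=\sum_{a\in\Fq^\times}K(a)=1$. Lemma~\ref{lem:frob-block} then produces a basis with eigenvalues $1,q$, and hence $\phi e_J=q^{|J|}e_J$ on the tensor product.

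You can also close the gap inside your own argument. The $t=0$ line carries the trivial Frobenius (Lemma~\ref{lem:t0}) and injects into $H^1_c(\Gm,\mathcal N_h)$. The only eigenvalue of weight $0$ there is $\alpha^n$, on the highest-weight line of $\Sym^n$. Hence $\alpha^n=1$, and the eigenvalues on the invariants are exactly $q^k$. Your $\gr^M$ bookkeeping then yields the multiplicities $m_{n,k}$, using that the monodromy filtration of the tensor product is the convolution of those of the factors. That is all Theorem~\ref{thmA} requires; semisimplicity, which the paper gets from the explicit grading, is not needed there.
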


For $n=2,3,4,5$ this reads $|\Sigma_2(h)+1|\le2q^{1/2}$, $|\Sigma_3(h)-2|\le5q$, $|\Sigma_4(h)+2q+3|\le10q^{3/2}$ and $|\Sigma_5(h)-5q-4|\le22q^2$. If $h\in\Fq^n$ has exactly $a$ nonzero coordinates, the cohomology is still concentrated in degree $n-1$, and its dimension is $2^a-1$ (Corollary~\ref{cor:all-h}).

\begin{mainthm}\label{thmB}
Let $S=\Gm^n$ over $\Fp$, with coordinates $h=(h_1,\dots,h_n)$. There is a lisse sheaf $\mathcal G_n$ of rank $2^n-1$ on $S$ whose stalk at a geometric point $h$ is $H^{n-1}_c\big(Y_h,\Lpsi{\sum_iu_i^{-1}}\big)(-1)$. It carries a filtration by lisse subsheaves whose graded pieces are the geometrically constant sheaves $\Qlb(-k)^{\oplus m_{n,k}}$ for $1\le k\le\lfloor n/2\rfloor$, and a lisse sheaf $\mathcal W_n$, pure of weight $n+1$ and of rank $r(n)$, as top quotient. For $h\in S(\Fq)$,
\[
\Tr(\Frob_h\mid\mathcal W_n)=(-1)^{n-1}q\,\Sigma_n(h)-\sum_{k=1}^{\lfloor n/2\rfloor}m_{n,k}\,q^k .
\]
\end{mainthm}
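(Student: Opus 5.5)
We construct $\mathcal G_n$ as a relative compactly supported direct image and reduce everything else to Theorem~\ref{thmA}. Let $\mathcal Y\subset \Gm^n_u\times S$ be the family $\{(u,h):\langle h,u\rangle=-\lambda\}$, with projection $\pi:\mathcal Y\to S$. Put
\[
\mathcal G_n:=R^{n-1}\pi_!\,\Lpsi{\textstyle\sum_iu_i^{-1}}(-1).
\]
By proper base change its stalks are the groups in the statement. For lisseness I would avoid a direct compactification argument. Instead I would use \eqref{eq:intro-factor} at the level of sheaves: $R\pi_!$ is, up to shift and Tate twist, the Fourier transform in $t$ (evaluated at $\lambda$) of $\bigotimes_i[h_i]^*\Kl$ along the line through $h$. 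Equivalently, it is a middle-convolution-type object built from Laumon's Fourier transform. Theorem~\ref{thmA} says the cohomology is concentrated in degree $n-1$ with constant rank $2^n-1$ at every $h\in S$. By Deligne's semicontinuity for Swan conductors (or Katz's criterion that a constructible sheaf on a normal base with constant Euler characteristic fibres and tame-enough structure is lisse), $\mathcal G_n$ is lisse. Concretely, the sheaf $\bigotimes_i \Kl(th_i)$ has Swan conductor at $\infty$ independent of $h\in S$, since all $h_i\neq0$. Deligne--Laumon's theorem then gives local acyclicity of the family, hence lisseness.

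Next I would construct the weight filtration. Deligne's theory shows that $\mathcal G_n$ is mixed of weights $\le n+1$ (here $n-1$ plus $2$ from the twist). By Theorem~\ref{thmA}, at each closed point the eigenvalues of weight $<n+1$ are exactly $q^{k}$ (after the twist) with multiplicities $m_{n,k}$. A lisse mixed sheaf on a normal connected base has a canonical weight filtration by lisse subsheaves, and its graded pieces are pure lisse. Let $W_{\le 2k}$ denote these pieces. The graded piece of weight $2k$ has rank $m_{n,k}$, and every Frobenius acts on it with all eigenvalues equal to $q^k$.

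To show that this piece is geometrically constant $\Qlb(-k)^{\oplus m_{n,k}}$, I would use Chebotarev density. After twisting by $(k)$, the graded piece is a pure weight-$0$ lisse sheaf all of whose Frobenii are unipotent, with characteristic polynomial $(T-1)^{m}$. Since a pure sheaf is geometrically semisimple, its arithmetic semisimplification has all Frobenius traces equal to $m$. By Chebotarev it is then isomorphic to the constant sheaf, and geometric semisimplicity gives $\Qlb(-k)^{\oplus m_{n,k}}$ geometrically. The top quotient $\mathcal W_n=\gr^W_{n+1}$ is pure of weight $n+1$ and has rank $2^n-1-\sum_{k\ge1}m_{n,k}=2^n-\binom{n}{\lfloor n/2\rfloor}=r(n)$, using $m_{n,0}=1$.

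The trace formula comes from Grothendieck--Lefschetz. Since only $H^{n-1}_c$ is nonzero,
\[
\Sigma_n(h)=(-1)^{n-1}\Tr(\Frob_h\mid H^{n-1}_c).
\]
The twist $(-1)$ multiplies traces by $q$. Additivity of traces along the filtration then yields
\[
q\Sigma_n(h)(-1)^{n-1}=\Tr(\Frob_h\mid\mathcal W_n)+\sum_k m_{n,k}q^k,
\]
which is the claimed identity.

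The main obstacle is lisseness of $R^{n-1}\pi_!$. Constant fibre dimension alone does not imply lisseness in the presence of wild ramification along the boundary of the fibres. I would first try to prove it through the Fourier/convolution description, showing that it is a Fourier transform of a sheaf whose ramification does not jump as $h$ varies in $S$. Failing that, I would fall back on Katz's result that $R\pi_!$ is lisse when there is a good compactification of $\pi$ with the divisor at infinity relatively nondegenerate (Newton-polytope nondegeneracy), uniformly in $h\in S$.
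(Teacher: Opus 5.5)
\textbf{Where the proposal fails: lisseness.} Your overall structure matches the paper: the relative $R\pi_!$, Deligne's weight filtration, ranks read off from Theorem~\ref{thmA}, and the trace formula. Your Chebotarev-plus-geometric-semisimplicity argument for the lower graded pieces is a valid alternative to the paper's Lemma~\ref{lem:averages}. The gap is in the lisseness step, which is the only substantial point.

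The hypothesis you feed into Deligne--Laumon is false. $\Swan_\infty\big(\bigotimes_i[\times h_i]^*\Kl\big)$ is not independent of $h\in S$. It equals $2^{n-1}$ on the open set $\Omega_n$ of Proposition~\ref{prop:crit}, and it drops on the divisors $L_\varepsilon=0$, where a critical point escapes to infinity (Remark~\ref{rem:collision}). To see this, pull back along $x\mapsto x^2=t$ and use Katz's description of $\Kl(\infty)$ as induced from a tame twist of $\Lpsi{2x}$. The tensor product then becomes a sum over $\varepsilon\in\{\pm1\}^n$ of tame twists of $\Lpsi{2L_\varepsilon x}$, and the summands with $L_\varepsilon=0$ are tame. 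Already for $n=2$ and $h_1=h_2$ one has $\Kl\otimes\Kl\cong\Sym^2\Kl\oplus\det\Kl$, with $\det\Kl$ geometrically trivial and $\Swan_\infty(\Sym^2\Kl)=1$. For $h_1\ne h_2$ the Swan conductor is $2$.

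What is constant is the Swan conductor of the twisted sheaf $\mathcal N_h=\bigotimes_i[\times h_i]^*\Kl\otimes\Lpsi{\lambda t}$ on $\Gmn{t}$. Both breaks of $\Kl$ at $\infty$ are $<1$ by (K3), so all breaks of the tensor product are $<1$ by Lemma~\ref{lem:breaks}(i). Tensoring with the break-$1$ character $\Lpsi{\lambda t}$ then makes $\mathcal N_h$ isoclinic of break $1$, by Lemma~\ref{lem:breaks}(ii). Hence $\Swan_0(\mathcal N_h)=0$ and $\Swan_\infty(\mathcal N_h)=2^n$ for every $h\in S$. This slope comparison, which masks the jumps of the untwisted product, is the missing idea. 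Your fallback through uniform Newton-polytope nondegeneracy fails for the same reason: the face polynomial on $\sigma^*$ is degenerate exactly along $L_\varepsilon=0$ (Remark~\ref{rem:AS}(b)).

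\textbf{A second gap: the fibre $t=0$.} Even with the correct hypothesis, Theorem~\ref{thm:DL} concerns $j_!$ of a lisse sheaf on the complement of $\{0,\infty\}\times S$. Applied to $\mathcal N$, it gives lisseness of the cohomology of the part $t\ne0$, namely $\mathcal H_n$, of rank $2^n$. It does not give lisseness of your $R^{n-1}\pi_!$, which has rank $2^n-1$. Your identification of $R\pi_!$ with a Fourier transform of $\bigotimes_i[\times h_i]^*\Kl$ ignores the fibre $t=0$, that is, the term $t=0$ of \eqref{eq:intro-factor}. To conclude, first pass from $\mathcal Y$ to $\Gm^n\times\mathbb A^1_t\times S$ by Lemma~\ref{lem:integrate-t}. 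You then need three things:
\begin{enumerate}[(a)]
\item the excision sequence $0\to\Qlb\xrightarrow{\partial}\mathcal H_n\to\mathcal G_n\to0$;
\item injectivity of $\partial$ at closed points, from Theorem~\ref{thm:fibre}(ii);
\item the fact that kernels and cokernels of morphisms of lisse sheaves on the connected normal scheme $S$ are lisse.
\end{enumerate}

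\textbf{A minor point.} Using only the characteristic polynomial $(T-q^k)^{m_{n,k}}$, your Chebotarev argument shows that $\gr^{\mathrm W}_{2k}\mathcal G_n$ is geometrically constant with unipotent Frobenius. To identify it with $\Qlb(-k)^{\oplus m_{n,k}}$ on $S$ over $\Fp$, also use that Frobenius acts semisimply on the part of weight $\le n$ (Theorem~\ref{thm:fibre}(iii), which comes from Proposition~\ref{prop:frob-invariants}).
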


In particular $\mathcal G_n$ is lisse across the divisors of $S$ along which a critical point of the phase escapes to infinity (Remark~\ref{rem:collision}). The boundary of $S$ in $(\mathbb P^1)^n$ consists of the divisors $h_i=0$ and $h_i=\infty$, and by symmetry it suffices to consider $h_1$. Fix $c=(c_2,\dots,c_n)\in(\Fpbar^\times)^{n-1}$ and the line $C_c=\{(h_1,c_2,\dots,c_n):h_1\in\Gm\}\subset S$. For $j\ge1$ let $\kappa_j$ be the number of Jordan blocks of size $j$ in the tensor product of $n-1$ unipotent Jordan blocks of size two; thus $\kappa_{n-2k}=m_{n-1,k}$, and $\kappa_j=0$ for all other $j$.

\begin{mainthm}\label{thmC}
\begin{enumerate}[(i)]
\item $\mathcal W_n|_{C_c}$ is tamely ramified at $h_1=0$, and its local monodromy there is unipotent, with $r(n-1)$ Jordan blocks of size $2$, $\kappa_1$ blocks of size $1$, and no block of size $\ge3$.
\item The local monodromy of $\mathcal W_n|_{C_c}$ at $h_1=\infty$ is the direct sum of a tame part and a part of dimension $2^{n-1}$ that is isoclinic of slope $1$. The tame part is unipotent, with $\kappa_j$ Jordan blocks of size $j-2$ for every $j\ge3$. The part of slope $1$ is of the form $\Lpsi{-h_1/\lambda}\otimes X$, where all slopes of $X$ are $<1$. In particular $\Swan_\infty(\mathcal W_n|_{C_c})=2^{n-1}$.
\item $\det(\mathcal W_n|_{C_c})$ is geometrically isomorphic to $\Lpsi{-2^{n-1}\lambda^{-1}h_1}$.
\item The perverse sheaf $\mathcal W_n|_{C_c}[1]$ is isomorphic to the multiplicative middle convolution of $\Kl[1]$ with $\iota^*\mathcal M_c[1]$, where $\iota(t)=t^{-1}$ and $\mathcal M_c=\bigotimes_{i=2}^n[\times c_i]^*\Kl\otimes\Lpsi{\lambda t}$.
\end{enumerate}
\end{mainthm}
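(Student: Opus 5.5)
I would begin with (iv), because the other three parts are then read off from the local theory of multiplicative convolution. Restrict to the line $C_c$ and put $t$ for the Fourier variable in \eqref{eq:intro-factor}. Then the trace function of $\mathcal G_n|_{C_c}$ at $h_1$ is, up to the factor $q$ and the sign,
\[
\sum_{t}K(th_1)\,\Big(\psi_q(\lambda t)\prod_{i\ge2}K(tc_i)\Big).
\]
Substituting $s=t^{-1}$ turns this into $\sum_s K(h_1/s)\,M_c(s^{-1})$, where $M_c$ is the trace function of $\mathcal M_c$. This is exactly a multiplicative $!$-convolution of $\Kl$ with $\iota^*\mathcal M_c$.

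The steps for (iv) are as follows.
\begin{enumerate}[(a)]
\item Realize $\mathcal G_n|_{C_c}[1]$ as $R\pi_!$ of $\Kl\boxtimes\iota^*\mathcal M_c$ along multiplication, using proper base change from the construction of $\mathcal G_n$ in Theorem~\ref{thmB}. The $t=0$ term contributes only geometrically constant pieces.
\item Observe that both $\Kl[1]$ and $\iota^*\mathcal M_c[1]$ are irreducible of Katz's class $\mathcal P$ and are not Kummer sheaves. Irreducibility of $\mathcal M_c$ comes from its wild part $\Lpsi{\lambda t}$ at $\infty$ with Swan conductor small. Hence the $!$- and $*$-convolutions differ by geometrically constant sheaves, and the middle convolution is the image.
\item Identify that image with the top-weight quotient $\mathcal W_n$. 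By weights, the constant pieces $\Qlb(-k)$ of Theorem~\ref{thmB} are exactly the kernel and cokernel of the forget-supports map, and the middle convolution is pure of weight $n+1$.
\end{enumerate}

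For (i)–(iii) I would apply Katz's local convolution formulas (Rigid Local Systems, Ch.~2; ESDE Ch.~8), equivalently a local Fourier transform computation in the additive model \eqref{eq:intro-factor}.

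At $h_1=0$, the local monodromy of a middle convolution is controlled by the tame parts at $0$ of the two factors. Here $\Kl$ is a single $\mathrm{Unip}(2)$. The sheaf $\iota^*\mathcal M_c$ at $0$ corresponds to $\mathcal M_c$ at $\infty$, which is wild with no tame part. So the relevant input is the tame part of $\iota^*\mathcal M_c$ at $\infty$, i.e.\ $\mathcal M_c$ at $0$, namely $\mathrm{Unip}(2)^{\otimes(n-1)}=\bigoplus_j\kappa_j\,\mathrm{Unip}(j)$. Katz's rule for unipotent blocks under middle convolution with $\mathrm{Unip}(2)$ shifts block sizes by one, which gives size $2$ for the $r(n-1)$ part and size $1$ from $\kappa_1$. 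I would check the total rank against $r(n)=r(n-1)+2^{n-1}-\dots$ as a consistency test.

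At $\infty$, the slope-$1$ part comes from the stationary phase point of $\Kl(h_1/s)\otimes\Lpsi{\lambda/s}$. Local Fourier transform of $\Lpsi{\lambda t}$ paired with the Kloosterman phase produces $\Lpsi{-h_1/\lambda}$ tensored with the remaining tensor factors, which have smaller slope. The tame part at $\infty$ arises from the tame part of $\mathcal M_c$ at $0$ convolved with $\Kl$'s tame part at $0$, shifted down by $2$. The Swan conductor $2^{n-1}$ then follows.

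For (iii), the determinant is lisse on $\Gm$, tame at $0$ by (i), and at $\infty$ equals $\det$ of the slope-$1$ part times a tame character. That is $\Lpsi{-h_1/\lambda}^{\otimes 2^{n-1}}\otimes\det X$, where $\det X$ has slope $<1$, so it is tame, being a rank-one character with slope in $\{0\}$. A rank-one sheaf on $\Gm$ that is tame at $0$ with unipotent (hence trivial) monodromy there, and is $\Lpsi{-2^{n-1}h_1/\lambda}$ times a tame character at $\infty$, is geometrically $\Lpsi{-2^{n-1}\lambda^{-1}h_1}$, since the tame character must be trivial by the global structure of $\pi_1^{\mathrm{tame}}(\Gm)$.

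The main obstacle I expect is (ii): isolating the slope-$1$ piece and proving that $X$ has all slopes strictly less than $1$. This needs a careful Laumon local Fourier transform $\FT^{(\infty,\infty)}$ computation with $\Lpsi{\lambda t}$ twisted by slope-$1/2$ Kloosterman factors. I would first try the product formula for the Swan conductor (Euler–Poincaré on $C_c$) to pin down $\Swan_\infty=2^{n-1}$ independently, then match ranks to force the slope decomposition.
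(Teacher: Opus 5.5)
There is a genuine gap where you pass from the convolution to the local monodromy at $h_1=0$, and it also undermines (iv) and your route to $\Swan_\infty$. The product formula for unipotent parts (Rojas-Le\'on's, Theorem~\ref{thm:RL}) sees only the tame part of the generic local monodromy $F_{\mathcal K}(0)$ of $\mathcal K=\Kl[1]*_{\mathrm{mid}}\iota^*\mathcal M_c[1]$. It is blind to wild parts and to the generic rank of $\mathcal K$. (Also, multiplication by $\mathsf P_{\Kl[1],\mathbf 1}=T^{-2}$ shifts exponents by two, not block sizes by one.) So ``$r(n-1)$ blocks of size $2$ and $\kappa_1$ of size $1$'' describes a subspace of dimension $2r(n-1)+\kappa_1$. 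To conclude that this is the whole local monodromy you need either tameness of $\mathcal W_n|_{C_c}$ at $0$ or $\rk\mathcal K=r(n)$, and you prove neither.

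Your step (c) ``by weights'' gives only one inclusion. The part $\mathcal H_n^{\le n}|_{C_c}[1]$, of weights $\le n$, must die in $\mathcal K$, which is pure of weight $n+1$, so $\mathcal K$ is a quotient of $\mathcal W_n|_{C_c}[1]$. Nothing in your argument prevents part of $\mathcal W_n$ from dying too. The identity $2r(n-1)+\kappa_1=r(n)$, which you treat as a consistency test, is the load-bearing step.

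The paper first proves tameness at $h_1=0$ independently (Proposition~\ref{prop:tame}), using the additive model \eqref{eq:FT-model}. There $h_1$ is Fourier-dual to $v$ and $Rb_!\Lpsi{f'}$ is unramified at $v=\infty$, so Laumon's $\mathcal F^{(\infty,0')}$ produces tame vanishing cycles. Then $F_{\mathcal K}(0)$ is a tame quotient of $(\mathcal W_n|_{C_c})(0)$, of dimension $r(n)$ by the product formula, which gives (iv) and (i) at once. Alternatively, you could compute $\rk\mathcal K$ directly: generically, the kernel of $*_!\to*_*$ is the sheaf of $I_0$-invariants at $t=0$ of $\mathcal N$, of rank $\binom n{\lfloor n/2\rfloor}$, and the wild end $t=\infty$ contributes nothing. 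The product formula would then force tameness, but no such argument is in your proposal. Without tameness at $0$, the Euler--Poincar\'e formula only gives $\Swan_0+\Swan_\infty=2^{n-1}$.

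At $\infty$, rank and Swan conductor cannot by themselves force the wild part to be isoclinic of slope $1$. You need the bound that all breaks are $\le1$, and then the shape $\Lpsi{-h_1/\lambda}\otimes X$. Both come from stationary phase in the same Fourier model (Proposition~\ref{prop:slopes-infinity}):
\begin{itemize}
\item the $\mathcal F^{(\infty,\infty')}$ term vanishes, because the source is unramified at $v=\infty$;
\item the singular point $v=0$ contributes only slopes $<1$;
\item the slope-$1$ part is $\Lpsi{sh_1}\otimes\mathcal F^{(0,\infty')}(\cdot)$, coming from the finite singularity $v=s=-1/\lambda$.
\end{itemize}
An $\mathcal F^{(\infty,\infty')}$ computation on $\Lpsi{\lambda t}$ in the $t$-model, where $h_1$ is not a Fourier variable, is not where this comes from.

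A smaller error: $\mathcal M_c$ is not irreducible in general. If $c_2=c_3$, the summand $\det\Kl$ of $\Kl^{\otimes2}$ is geometrically trivial, so $\mathcal M_c$ splits. Its Swan conductor at $\infty$ also equals its rank, so it is not ``small''. You only need $\iota^*\mathcal M_c[1]\in\mathcal P$, which follows from total wildness at $0$.

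Given (i) and (ii), your argument for (iii) is correct and equivalent to the paper's: Hasse--Arf makes $\det X$ tame, and a rank-one sheaf on $\mathbb A^1$ tame at $\infty$ is geometrically trivial.
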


\begin{mainthm}\label{thmD}
\begin{enumerate}[(i)]
\item For every $n\ge2$ and every $q$,
\[
\sum_{h\in\Fq^n}\Sigma_n(h)=(-1)^nq^{n-1},\qquad\sum_{h\in\Fq^n}|\Sigma_n(h)|^2=q^{n-2}\Big((q-1)^{n+1}+1-(-1)^n(q-2)\Big).
\]
\item For every $n\ge3$,
\[
\sum_{h\in\Fq^n}|\Sigma_n(h)|^4=2q^{3n-2}+O_n\big(q^{3n-2-\delta(n)}\big),\qquad\delta(n)=\min\Big(1,\frac{n-2}2\Big).
\]
\item For $n=2$ and fixed $p$, $\sum_{h\in\Fq^2}|\Sigma_2(h)|^4=2q^4+O_p\big(q^{7/2}\big)$ as $q\to\infty$ through powers of $p$.
\end{enumerate}
\end{mainthm}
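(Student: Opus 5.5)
The plan is to compute everything by expanding $\Sigma_n(h)$ in the $u$-variables and summing over $h$ first. The $h$-sum then only counts solutions of a linear system, so all three parts become character sums over configurations of points of the torus $\Gm^n(\Fq)$. Throughout I will use $\sum_{x\in\Fq^\times}\psi_q(x^{-1})=-1$ and the assumption $\lambda\ne0$.

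\emph{Part (i).} For the first moment I interchange sums. For each $u\in(\Fq^\times)^n$ the equation $\langle h,u\rangle=-\lambda$ has exactly $q^{n-1}$ solutions $h\in\Fq^n$. This gives $q^{n-1}\big(\sum_x\psi_q(x^{-1})\big)^n=(-1)^nq^{n-1}$.

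For the second moment I sum $\psi_q\big(\sum_i(u_i^{-1}-v_i^{-1})\big)$ over pairs $(u,v)$, weighted by the number of $h$ with $\langle h,u\rangle=\langle h,v\rangle=-\lambda$. There are three cases.
\begin{itemize}
\item If $u,v$ are linearly independent, the count is $q^{n-2}$.
\item If $u=v$, the count is $q^{n-1}$.
\item If $u=cv$ with $c\ne1$, the count is $0$, because $\lambda\ne0$ makes the system inconsistent.
\end{itemize}
I write the independent pairs as all pairs (whose total is $\big((-1)^n\big)^2=1$) minus the proportional ones. The pairs $u=cv$ with $c\ne0,1$ contribute $\sum_{c\ne0,1}\prod_i\sum_{v_i}\psi_q\big(v_i^{-1}(c^{-1}-1)\big)=(-1)^n(q-2)$. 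The diagonal contributes $(q^{n-1}-q^{n-2})(q-1)^n$ in addition to its $q^{n-2}$-share. Collecting terms gives $q^{n-2}\big((q-1)^{n+1}+1-(-1)^n(q-2)\big)$.

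\emph{Part (ii).} I expand $|\Sigma_n(h)|^4$ the same way, as a sum over quadruples $(u^{(1)},\dots,u^{(4)})$ with phase $\sum_i\big(1/u^{(1)}_i+1/u^{(2)}_i-1/u^{(3)}_i-1/u^{(4)}_i\big)$. The weight is the number $q^{n-\rho}$ or $0$ of $h$ solving four affine equations, where $\rho$ is the rank of the $u^{(j)}$. I stratify the quadruples by their linear-dependence pattern and by consistency of the system.
\begin{itemize}
\item The main term comes from the two pairing strata $u^{(1)}=u^{(3)},\,u^{(2)}=u^{(4)}$ and $u^{(1)}=u^{(4)},\,u^{(2)}=u^{(3)}$. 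There the phase vanishes and $\rho=2$, so each stratum contributes $q^{n-2}(q-1)^{2n}\sim q^{3n-2}$. This gives the constant $2$.
\item On the generic stratum $\rho=4$, the sum factors as in (i) into a product of one-variable sums. After removing lower strata by inclusion--exclusion it is $O(q^{n-4}\cdot q^{?})$ with a very small exponent.
\item The other strata are those with $\rho=3$, i.e.\ consistent relations $\sum_ja_ju^{(j)}=0$ with $\sum_ja_j=0$ forced by the $\lambda$'s, together with $\rho=2$ strata other than the two pairings. On these the phase is a nondegenerate sum of $\psi_q(1/x)$'s on a linear subvariety of the torus. I will bound them by Deligne's theorem \cite{Deligne}, or directly by evaluating the coordinatewise factorizations, as in the proportional case above.
\end{itemize}
I expect the main obstacle to be this bookkeeping of degenerate strata. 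The saving $\delta(n)=\min\big(1,\tfrac{n-2}2\big)$ should come out as follows: a $\rho=3$ stratum has dimension $3n-1$ and weight $q^{n-3}$, and square-root cancellation in the $u$-variables that are not forced equal gives roughly $q^{3n-2-(n-2)/2}$. The saving of $1$ comes from strata where the phase cancels identically but the stratum is smaller by one dimension. I would first work out the case $n=3$ by hand, to pin down which stratum realizes $\delta$.

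\emph{Part (iii).} For $n=2$ the elementary stratification gives no saving, since $\delta(2)=0$, so I will switch to the sheaf-theoretic description.
\begin{itemize}
\item By Theorem~\ref{thmB}, $q\Sigma_2(h)=-\Tr(\Frob_h\mid\mathcal W_2)-q$ for $h\in S(\Fq)$.
\item The $h$ with a zero coordinate contribute $O(q)$ in total. Indeed, if $a$ coordinates are nonzero, Corollary~\ref{cor:all-h} gives $|\Sigma|=O(q^{(a-1)/2})$ up to constants.
\item I expand $|\Tr\Frob_h\mid\mathcal W_2+q|^4$. The main term is $\sum_h|\Tr\Frob_h\mid\mathcal W_2|^4$.
\item I apply the Grothendieck--Lefschetz formula to $\mathcal W_2^{\otimes2}\otimes\mathcal W_2^{\vee\otimes2}$ on $\Gm^2$. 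The top cohomology $H^4_c$ has dimension equal to the number of geometric invariants. This number is $2$, because $\Ggeom=\{g\in\GL_2:\det(g)^p=1\}$, which contains $\SL_2$, and $p$ odd leaves the determinant character on $V^{\otimes2}\otimes V^{\vee\otimes2}$ trivial.
\item The remaining cohomology has weight at most one less by Weil~II. This gives an error of $O(q^{-4}\cdot q^{(4\cdot3+3)/2})=O(q^{7/2})$ after normalization.
\item The cross terms with $q$ are handled the same way, using that $\mathcal W_2^{\otimes2}\otimes\mathcal W_2^\vee$ has no geometric invariants.
\item The implied constant is a sum of Betti numbers, bounded in terms of Swan conductors via Theorem~\ref{thmC}. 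This is why the bound is $O_p$ and only valid with $p$ fixed.
\end{itemize}
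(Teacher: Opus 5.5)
\textbf{The gap is in part (ii).} Your stratification there is a heuristic, and two of its claims fail.

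First, for $n\ge4$ the rank-$4$ stratum is not ``very small''. Writing it as all quadruples minus those of rank $\le3$ brings back every rank-$3$ quadruple with weight $-q^{n-4}$, including those whose relation $\sum_ja_ju^{(j)}=0$ has $\sum_ja_j\ne0$. For a fixed relation $a$ the phase sum factors over the coordinates as $W(a)^n$, where
\[
W(a)=\sum_{x\in(\Fq^\times)^4,\ \sum_ja_jx_j=0}\psi_q\big(x_1^{-1}+x_2^{-1}-x_3^{-1}-x_4^{-1}\big).
\]
There are about $q^3$ relations, and $W(a)$ is typically of size $q^{3/2}$. So the best available bound for this piece is $q^{n-4}\cdot q^3\cdot q^{3n/2}=q^{(5n-2)/2}$, which is the whole error term of (ii). The consistent rank-$3$ relations (weight $q^{n-3}$, about $q^2$ of them) are of the same size.

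A cleaner bookkeeping is available. The number of $h$ is $q^{n-4}\sum_{a:\,\sum_ja_ju^{(j)}=0}\psi_q(\lambda\sum_ja_j)$, so your sum equals $q^{n-4}\sum_{a\in\Fq^4}\psi_q(\lambda\sum_ja_j)W(a)^n$. After $t=(a_1,a_2,-a_3,-a_4)$ and a relabelling this is Proposition~\ref{prop:master}. Your consistent/inconsistent split is then the identity in the proof of Lemma~\ref{lem:ray}, and you avoid the M\"obius inversion over kernels.

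Second, the bound $W(a)=O(q^{3/2})$ for generic $a$, which is the heart of (ii), does not follow from ``nondegeneracy and Deligne''. Weil~II bounds $H^i_c$ of this threefold only by weight $i$, and the difficulty is exactly the cohomology in degrees $4$, $5$ and $6$. The paper gets the bound in three steps:
\begin{enumerate}[(a)]
\item Homogeneity gives $W(a)=-q\,a_t-2q-3$, where $a_t$ is the point-count defect of the plane cubic $H_t\cap\{e_3=0\}$ (Lemma~\ref{lem:plane}).
\item The classification of the nine lines on the Cayley cubic makes this section geometrically irreducible for $t\in\mathcal T_G$ (Proposition~\ref{prop:lines}).
\item Hasse's bound then applies (Corollary~\ref{cor:hasse}).
\end{enumerate}
Your proposal has nothing in place of this.

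Your nondegeneracy claim is also false on the planes that contain a line $u_i+u_j=u_k+u_l=0$ of the cubic. In your labelling these are the relations $a(u^{(1)}-u^{(3)})+b(u^{(2)}-u^{(4)})=0$ and $a(u^{(1)}-u^{(4)})+b(u^{(2)}-u^{(3)})=0$, consistent for every $[a:b]$, and $a(u^{(1)}+u^{(2)})=b(u^{(3)}+u^{(4)})$. On them $W(a)=q^2+O(q)$, and $W(a)=2q^2+O(q)$ for the common relation $u^{(1)}+u^{(2)}=u^{(3)}+u^{(4)}$. So the two consistent one-parameter families contribute about $q^{n-3}\cdot q\cdot q^{2n}=q^{3n-2}$, as much as the main term. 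They shrink to $O(q^{3n-3})$ only after you subtract their rank-$2$ sub-strata, whose dominant part is your pairing strata. That requires the second-order information $W(a)=(q-1)^2+O(q)$, with leading coefficient exactly $1$. This is the exact evaluation through Jacobsthal's sum in Lemma~\ref{lem:diag}, and no upper bound from Weil~II can supply it.

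\textbf{Parts (i) and (iii) are fine.} In (i), summing over $h$ first is a correct alternative to the paper's route through \eqref{eq:intro-factor} and Lemma~\ref{lem:V2}. The two computations are Fourier dual, and your three cases give both formulas.

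In (iii) you follow Corollary~\ref{cor:fourth-n2}, but you should say why the main term is exactly $2q^2$. Lemma~\ref{lem:averages} gives $q^2(\beta_1^k+\beta_2^k)$. One has $\beta_1=\beta_2=1$ because the invariants are spanned by the identity and the symmetry of $\mathcal W_2\otimes\mathcal W_2$, which are defined over $\Fp$.
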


\begin{mainthm}\label{thmE}
Let $n\ge2$ and let $p$ be an odd prime.
\begin{enumerate}[(i)]
\item $\det\mathcal W_n\cong\Lpsi{-2^{n-1}\lambda^{-1}(h_1+\cdots+h_n)}\otimes\alpha^{\deg}$ with $\alpha=p^{(n+1)r(n)/2}$, where $\alpha^{\deg}$ is the geometrically constant sheaf on which $\Frob_p$ acts by $\alpha$. Equivalently, by Theorem~\ref{thmB} and the Chebotarev density theorem, for every power $q$ of $p$ and every $h\in(\Fq^\times)^n$, the product of the $r(n)$ eigenvalues of weight $n-1$ in Theorem~\ref{thmA} is $q^{(n-1)r(n)/2}\psi_q\big(-2^{n-1}\lambda^{-1}(h_1+\cdots+h_n)\big)$, and
\[
\det\Big(\Frob_q\mid H^{n-1}_c\big(Y_h\otimes\Fqbar,\Lpsi{\textstyle\sum_iu_i^{-1}}\big)\Big)=q^{(n-2)2^{n-1}+1}\,\psi_q\big(-2^{n-1}\lambda^{-1}(h_1+\cdots+h_n)\big).
\]
\item The geometric monodromy group of $\mathcal W_n$ is
\[
\Ggeom(\mathcal W_n)=\big\{g\in\GL_{r(n)}(\Qlb):\det(g)^p=1\big\}.
\]
In particular $\Ggeom(\mathcal W_n)^0=\SL_{r(n)}$, and the component group is cyclic of order $p$.
\item Let $\mathcal W_n^0=\mathcal W_n\otimes\gamma^{\deg}$, where $\gamma\in\Qlb$ corresponds to the positive real number $p^{-(n+1)/2}$. Then $\mathcal W_n^0$ is pure of weight $0$, and its arithmetic monodromy group is equal to $\Ggeom(\mathcal W_n)$.
\end{enumerate}
\end{mainthm}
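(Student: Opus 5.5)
Theorem~\ref{thmE} is proved in three stages: the determinant, then $\Ggeom$, then the arithmetic group.

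\emph{Part (i).} Start from Theorem~\ref{thmC}(iii): for every $c$, the restriction of $\det\mathcal W_n$ to the line $C_c$ is geometrically $\Lpsi{-2^{n-1}\lambda^{-1}h_1}$. The sheaf $\det\mathcal W_n$ is of rank one and lisse on $S=\Gm^n$. By the $S_n$-symmetry of the family, the same statement holds in every coordinate direction.
\begin{itemize}
\item Twist by $\Lpsi{2^{n-1}\lambda^{-1}(h_1+\cdots+h_n)}$ to get a rank-one lisse sheaf $\mathcal D$. Its restriction to every coordinate line is geometrically trivial.
\item By Theorem~\ref{thmC}(i), the restriction to each line is tame at $0$, so $\mathcal D$ is tame at the boundary $h_i=0$. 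The statement at $\infty$ gives the same conclusion there.
\item A rank-one sheaf on $\Gm^n$ that is geometrically trivial on all coordinate lines is geometrically trivial. For the tame part, use $\pi_1^{\mathrm{tame}}(\Gm^n)=\prod\pi_1^{\mathrm{tame}}(\Gm)$. The wild part is excluded because the restrictions have Swan conductor $0$ along every line through every point; Deligne's semicontinuity/Künneth for characters suffices.
\end{itemize}
Hence $\mathcal D$ is geometrically constant, i.e.\ equal to $\beta^{\deg}$. To find $\beta$, evaluate at one point, say at the $\Fp$-point $h=(1,\dots,1)$, and use Theorem~\ref{thmB}. A cleaner route: $\mathcal W_n$ is pure of weight $n+1$ and self-dual up to the twist by $\Lpsi{\cdot}$ coming from complex conjugation, $\overline{\Sigma_n(h)}=\Sigma_n(-h)$ with $\psi\mapsto\bar\psi$. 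This gives $|\beta|=p^{(n+1)r(n)/2}$ and $\beta\cdot\overline\beta'=p^{(n+1)r(n)}$. To pin down $\beta$ exactly (including its sign and root of unity), I would compute the determinant via Theorem~\ref{thmC}(iv): the determinant of a middle convolution of Kloosterman sheaves can be computed by the Katz formulas for determinants of Fourier transforms (Laumon's product formula), which give a Gauss-sum constant. In odd characteristic the quadratic Gauss-sum factors pair up to $\pm p^{\cdot}$, and their number is even. The determinant of all of $H^{n-1}_c$ then follows by multiplying in the $q^{k-1}$ eigenvalues. The exponent check is $\sum_k(k-1)m_{n,k}+(n-1)r(n)/2=(n-2)2^{n-1}+1$, a ballot-number identity.

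\emph{Part (ii).} Let $G=\Ggeom(\mathcal W_n)$.
\begin{itemize}
\item By Theorem~\ref{thmC}(iii)/(i) of this theorem, $\det$ has geometric image $\mu_p$, so $G\subset\{\det^p=1\}$. Also $G/G^0$ surjects onto $\mu_p$ via $\det$.
\item It suffices to show $G^0=\SL_{r(n)}$. Apply Larsen's alternative: compute the fourth moment $M_4$ of $G$ via Theorem~\ref{thmD}(ii),(iii). The sum over $h\in\Fq^n$ includes boundary $h$ and the lower-weight constants; after subtracting these, $\sum_{h\in S(\Fq)}|\Tr\Frob_h|\mathcal W_n^0|^4/q^n\to2$ by Deligne's equidistribution.
\item Therefore $M_4=2$, and Larsen gives that $G^0$ is $\SL$, $\mathrm{Sp}$ or $\mathrm{SO}$, or $G$ is finite. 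For $n\ge3$ the error term is $o(1)$; for $n=2$, $r=2$ and Theorem~\ref{thmD}(iii) works.
\item Exclude finiteness: the unipotent Jordan blocks of size $2$ at $h_1=0$ in Theorem~\ref{thmC}(i) have infinite order.
\item Exclude $\mathrm{Sp}$ and $\mathrm{SO}$: these would force $\mathcal W_n$ geometrically self-dual, so $\det$ would be $\pm1$-valued. But $\det$ is the nontrivial $\Lpsi{\cdot}$ of order $p$, odd. More precisely, if $G^0=\mathrm{Sp}$ or $\mathrm{SO}$, then $G$ lies in its normalizer, which is $G^0$ times scalars, so $\det$ equals the determinant of a scalar. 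A finite-order scalar $\zeta$ in $G$ normalizing gives $\det=\zeta^{r}$, and the argument reduces to $G$ being contained in $\mathrm{GO}$ or $\mathrm{GSp}$ with multiplier character $\chi$, where $\chi^{r/2}\cdot(\pm1)=\det$. Then $\chi$ has order $p$, so $\mathcal W_n\cong\mathcal W_n^\vee\otimes\Lpsi{a\cdot h}$. I would rule this out using Theorem~\ref{thmC}(ii): at $\infty$ the slope-one part is $\Lpsi{-h_1/\lambda}\otimes X$ of dimension $2^{n-1}$, while the tame part has dimension $r(n)-2^{n-1}$. Twisting the dual by a $\Lpsi{}$ would either move the slope-one part to a non-matching character or create additional slope-one pieces from the tame part, contradicting dimensions as long as $r(n)-2^{n-1}>0$, which holds for $n\ge3$. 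For $n=2$, $r=2$ and $\mathrm{Sp}_2=\SL_2$ anyway.
\end{itemize}

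\emph{Part (iii).} After the twist by $\gamma^{\deg}$, part (i) shows that $\det\mathcal W_n^0$ is $\Lpsi{\cdot}$ times $(\alpha\gamma^{r})^{\deg}=1^{\deg}$. Hence $G_{\mathrm{arith}}\subset\{\det^p=1\}$. Since $\Ggeom\triangleleft G_{\mathrm{arith}}$ and $\Ggeom$ already equals this group, the two are equal.

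The main obstacle is pinning down the exact constant $\alpha$ in part (i), in particular its sign, and the self-duality exclusion in part (ii).
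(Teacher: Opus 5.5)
Your argument for part~(i) determines $\det\mathcal W_n$ only up to the constant $\alpha$. The fibrewise formulas in~(i) and all of~(iii) depend on its exact value.

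Evaluating at $h=(1,\dots,1)$ with Theorem~\ref{thmB} cannot work. That theorem gives the trace of $\Frob_h$ on $\mathcal W_n$, not its determinant. A determinant would require the traces over $\mathbb F_{p^m}$ for all $m\le r(n)$, which are not known in closed form.

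Your conjugation argument can be made precise: $\overline{A_q(h)}=A_q(-h)$ and purity show that $\mathcal W_n^\vee(-n-1)$ and the pullback of $\mathcal W_n$ by $h\mapsto-h$ have the same traces. Hence they have isomorphic semisimplifications and the same determinant, and this gives $\alpha^2=p^{(n+1)r(n)}$. So only a sign remains, and that sign is not cosmetic. If $\alpha=-p^{(n+1)r(n)/2}$, then:
\begin{itemize}
\item the fibre determinant would acquire a factor $(-1)^k$ for $q=p^k$;
\item $\det\mathcal W_n^0$ would be an Artin--Schreier sheaf tensored with $(-1)^{\deg}$;
\item $G_{\mathrm{arith}}(\mathcal W_n^0)$ would contain elements of determinant $-\zeta$ with $\zeta\in\mu_p$, so~(iii) would fail.
\end{itemize}
Your Gauss-sum computation via Laumon's product formula is only announced. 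Your own remark that quadratic Gauss sums pair up to $\pm p$ leaves exactly this sign undetermined.

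The missing idea is to compute a Frobenius determinant at the \emph{finite} boundary point $h_1=0$ of a coordinate line. Use the Fourier model $\FT_\psi(j_{v!}\mathcal R)|_{\Gmn{h_1}}\cong\mathcal G_n|_{C_c}(1)[2-n]$. Its vanishing cycles at $h_1=0$ localize at $v=\infty$, where $\mathcal R$ is lisse and unramified. This gives the exact sequence~\eqref{eq:vanishing-zero}, with $\mathcal G_0\cong H^{n-1}_c(U\otimes\Fqbar,\Lpsi{f'})$ and $\bar F_\infty\cong H^{n-2}_c(Y_c\otimes\Fqbar,\Lpsi{f'})$. Hence $\det(\phi\mid\mathcal G_{\bar\eta_0})=q^{2^{n-1}-1}D_{n-1}(c)^2$. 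The geometric determinant and the weight filtration of $\mathcal G_n$ force this same determinant to equal $q^{S_1(n)-(2^n-1)}\alpha^k\psi_q(\cdots)$. Comparing the two and inducting on $n$ from the case of a point gives $\alpha=p^{(n+1)r(n)/2}$, with no Gauss sums and no sign ambiguity (Lemma~\ref{lem:vanishing-zero}, Theorem~\ref{thm:alpha}).

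There are further problems. For $n=2$ you invoke Theorem~\ref{thmD}(iii), but that estimate is deduced \emph{from} Theorem~\ref{thmE}: for $n=2$ the error terms of the Cayley-cubic computation are as large as the main term. So this step is circular. Argue directly instead:
\begin{itemize}
\item $G^0\subseteq\SL_2$, since $\det$ has finite image;
\item $G^0$ is semisimple, since $\mathcal W_2$ is pure;
\item $\dim G^0\ge1$, because of the unipotent element of infinite order.
\end{itemize}
Hence $G^0=\SL_2$.

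Next, Larsen's alternative with $M_4=2$ says that $G^0\supseteq\SL(V)$ or that $G$ is finite modulo scalars. Symplectic and orthogonal groups in dimension $r(n)\ge5$ have $M_4\ge3$ and cannot occur. So your self-duality and twisting argument is superfluous; moreover, the normalizer of an even orthogonal group is not $\mathrm{SO}$ times scalars.

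Finally, in part~(i), the claim that a rank-one lisse sheaf on $\Gm^n$ which is geometrically trivial on all coordinate lines is geometrically trivial is true. But ``semicontinuity/K\"unneth'' does not prove it, since $\pi_1(\Gm\times\Gm)$ in characteristic $p$ is not the product of the factors (consider $\Lpsi{h_1h_2}$). The paper first shows that the geometric image of $\det\mathcal W_n$ is finite of $p$-power order: inertia at $h_i=0$ acts unipotently, and Kummer theory kills the prime-to-$p$ part. It then handles $p$-power characters by reducing Laurent polynomials modulo $\wp(A)$ and restricting to lines (Lemma~\ref{lem:AS-lines}).
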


The function $-2^{n-1}\lambda^{-1}(h_1+\cdots+h_n)$ is the sum of the values of the phase at its $2^{n-1}$ critical points (Remark~\ref{rem:critical-values}). For the equidistribution statement, put $c_n(q)=\sum_{k=1}^{\lfloor n/2\rfloor}m_{n,k}q^{k-1}$; by Theorem~\ref{thmA}, $q^{-(n-1)/2}|\Sigma_n(h)-(-1)^{n-1}c_n(q)|\le r(n)$ for $h\in(\Fq^\times)^n$.

\begin{mainthm}\label{thmF}
Let $n\ge2$, let $p$ be an odd prime, and let $K_n=\{g\in\mathrm U(r(n)):\det(g)^p=1\}$. As $q\to\infty$ through powers of $p$, the $(q-1)^n$ complex numbers
\[
q^{-(n-1)/2}\big((-1)^{n-1}\Sigma_n(h)-c_n(q)\big),\qquad h\in(\Fq^\times)^n,
\]
become equidistributed with respect to the image under $g\mapsto\Tr g$ of the Haar probability measure on $K_n$. In particular, the numbers $q^{-(n-1)/2}\big|\Sigma_n(h)-(-1)^{n-1}c_n(q)\big|$ become equidistributed with respect to the law of $|\Tr g|$ for $g$ a Haar-random element of $\mathrm{SU}(r(n))$.
\end{mainthm}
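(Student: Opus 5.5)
Theorem~\ref{thmF} follows from Deligne's equidistribution theorem in Katz's form, applied to the lisse sheaf $\mathcal W_n^0$ on $S=\Gm^n$ over $\Fp$. We use Theorem~\ref{thmB} and Theorem~\ref{thmE}(ii),(iii) as input.

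\emph{Normalized trace.} By Theorem~\ref{thmB}, for $h\in S(\Fq)$ with $q=p^d$, the Frobenius trace is
\[
\Tr(\Frob_h\mid\mathcal W_n^0)=\gamma^{d}\Big((-1)^{n-1}q\,\Sigma_n(h)-\sum_{k}m_{n,k}q^k\Big)=q^{-(n-1)/2}\big((-1)^{n-1}\Sigma_n(h)-c_n(q)\big),
\]
since $\gamma^d=q^{-(n+1)/2}$ and $\sum_k m_{n,k}q^k=q\,c_n(q)$. The numbers in the statement are therefore exactly the traces of $\Frob_h$ on $\mathcal W_n^0$, which is pure of weight $0$.

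\emph{Deligne's theorem.} By Theorem~\ref{thmE}(iii), $G_{\mathrm{arith}}(\mathcal W_n^0)=\Ggeom(\mathcal W_n)=G$, with $G=\{g:\det(g)^p=1\}$. Fix a maximal compact subgroup $K$ of $G(\mathbb C)$; after conjugation we may take $K=K_n$. Deligne's theorem (Katz, \emph{Gauss sums, Kloosterman sums and monodromy}, \S3, or Katz--Sarnak 9.2.6) applies when $G_{\mathrm{geom}}=G_{\mathrm{arith}}$, the sheaf is pure of weight $0$, and $q$ runs through powers of $p$. It says the Frobenius conjugacy classes $\theta_h\in K^\natural$ become equidistributed for Haar measure, provided that for every nontrivial irreducible representation $\Lambda$ of $G$,
\[
\frac{1}{(q-1)^n}\sum_{h\in S(\Fq)}\Tr\Lambda(\theta_h)\to0 .
\]
We verify this as follows. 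By Grothendieck--Lefschetz the sum equals $\sum_i(-1)^i\Tr(\Frob_q\mid H^i_c(S\otimes\Fpbar,\Lambda(\mathcal W_n^0)))$. The top group $H^{2n}_c$ is the coinvariants of $\Ggeom$ on $\Lambda$, and this vanishes because $\Lambda$ is nontrivial and $G_{\mathrm{geom}}=G$. By Deligne's Weil II the remaining groups have weight $\le 2n-1$. Their total dimension is bounded independently of $q$ for each fixed $\Lambda$ (Katz's Betti bounds on the torus), so the sum is $O_\Lambda(q^{n-1/2})$. Dividing by $(q-1)^n$ gives $O(q^{-1/2})\to0$. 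Pushing Haar measure forward by $g\mapsto\Tr g$ gives the first claim.

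\emph{Absolute values.} Every $g\in K_n$ has $\det g=\zeta$ with $\zeta^p=1$. Write $g=\zeta'g_1$, where $\zeta'$ is an $r(n)$-th root of $\zeta$ and $g_1\in\mathrm{SU}(r(n))$. Then $|\Tr g|=|\Tr g_1|$. Haar measure on $K_n$ is the average over the $p$ cosets $\eta\,\mathrm{SU}(r(n))$ of Haar measure on each coset. On each coset, multiplication by a fixed scalar carries Haar measure to Haar measure on $\mathrm{SU}(r(n))$, and this preserves $|\Tr|$. So the law of $|\Tr g|$ on $K_n$ equals that on $\mathrm{SU}(r(n))$. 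Since $|\cdot|$ is continuous, the second claim follows from the first.

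\emph{Main obstacle.} The only delicate point is that the statement must hold for all powers $q$ of $p$, not only along a subsequence. This is exactly why we need $G_{\mathrm{arith}}=G_{\mathrm{geom}}$ from Theorem~\ref{thmE}(iii). Otherwise the Frobenius classes would live in a translate $\det=\text{const}$ depending on $d$, and only the $\mathrm{SU}$ statement would survive. The substance therefore lies entirely in Theorem~\ref{thmE}; the deduction here is formal.
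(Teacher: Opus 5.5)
Your proposal is correct and follows the paper's proof. Like the paper, you identify the normalized sums with the traces of $\gamma^k\Frob_h$ on $\mathcal W_n^0$, apply Deligne's equidistribution theorem (in the Katz--Sarnak form) using $G_{\mathrm{arith}}(\mathcal W_n^0)=\Ggeom(\mathcal W_n)$ from Theorem~\ref{thmE}(iii), push forward by the trace, and reduce $|\Tr|$ on $K_n$ to $\mathrm{SU}(r(n))$ via the scalar cosets; your Weyl-criterion sketch only unpacks the cited theorem, and the appeal to Betti bounds is unnecessary, since the groups $H^i_c(S\otimes\Fpbar,\Lambda(\mathcal W_n^0))$ do not depend on $q$.
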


Theorems~\ref{thmA}--\ref{thmF} are proved as Theorems~\ref{thm:fibre}, \ref{thm:lisse}, \ref{thm:boundary} and~\ref{thm:infinity}, Proposition~\ref{prop:low-moments} with Theorem~\ref{thm:fourth} and Corollary~\ref{cor:fourth-n2}, Theorems~\ref{thm:det}, \ref{thm:alpha}, \ref{thm:Ggeom} and~\ref{thm:Garith}, and Corollary~\ref{cor:equidistribution}, respectively.

\subsection*{On the proofs}
The Newton polytope of the phase $F_n(u,t;h)=\sum_iu_i^{-1}+t(\lambda+\langle h,u\rangle)$ on the torus $\Gm^n\times\Gm$ is a pyramid over an $n$-dimensional cross-polytope. Its Newton polyhedron at infinity is the bipyramid obtained by adding the origin as a second apex (Section~\ref{sec:polytope}). This explains two numerical facts. For generic $h$ the phase has $2^{n-1}$ critical points, whereas the cohomology of the torus has dimension $2^n$, the normalized volume of the bipyramid. And since the origin is a vertex rather than an interior point, nothing forces the cohomology to be pure; in fact it is mixed.

To prove Theorem~\ref{thmA} we integrate over $u$ first. The cohomology of the torus becomes the cohomology on the $t$-line of the tensor product $\bigotimes_i[\times h_i]^*\Kl\otimes\Lpsi{\lambda t}$. The unipotent monodromy of $\Kl$ at $0$, decomposed by the Clebsch--Gordan rule, produces the lower weights, and the fibre over $t=0$ removes exactly one line of weight zero. The lower eigenvalues themselves come from the Frobenius on the nearby fibre of $\Kl$ at $0$, which is $\operatorname{diag}(1,q)$ in a basis adapted to the monodromy; on the tensor power, the invariants of the monodromy are the highest-weight vectors of the Clebsch--Gordan decomposition, and their degree determines the eigenvalue. Theorem~\ref{thmB} follows from the semicontinuity theorem of Deligne and Laumon, because all Swan conductors involved are independent of $h$.

Along a coordinate line the family is a multiplicative convolution of $\Kl$ with a sheaf that is totally wild at $0$ and unipotent at $\infty$. We first show, using Laumon's local Fourier transform, that the top-weight sheaf is tame at $h_1=0$. The Jordan structure is then read off from a product formula, due to Rojas-Le\'on, for the unipotent parts of a middle convolution. We deduce it from Deligne's fibre functor on Katz's Tannakian category of perverse sheaves on $\Gm$, through a dictionary between Jordan blocks and weights. The identification with the middle convolution follows from a comparison of generic ranks, together with the absence of punctual subobjects. At $h_1=\infty$, Laumon's principle of stationary phase, applied to the same Fourier model, shows that all slopes are at most $1$ and that the part of slope $1$ comes from a single singular point of the Fourier model, at which the sheaf is twisted by $\Lpsi{-h_1/\lambda}$. A count of dimensions and Swan conductors then gives Theorem~\ref{thmC}(ii) and~(iii).

The first two moments in Theorem~\ref{thmD} are evaluated exactly by orthogonality. The fourth moment rests on an exact four-parameter identity. On the generic stratum its terms are expressed through plane sections of the Cayley cubic surface $e_3(u_1,\dots,u_4)=0$, which are irreducible plane cubics away from the strata carrying the main term. For $n=2$ the error terms of this computation are as large as the main term, and Theorem~\ref{thmD}(iii) is deduced instead from Theorem~\ref{thmE}.

For Theorem~\ref{thmE}, the boundary supplies a nontrivial unipotent element, and the determinant has finite image. For $n\ge3$ the fourth moment gives $M_4=2$, and Larsen's alternative, whose short proof we include, shows that the identity component of the monodromy group is $\SL_{r(n)}$; for $n=2$ a direct argument in dimension two replaces the fourth moment. The determinant is then identified on every coordinate line by Theorem~\ref{thmC}(iii). An Artin--Schreier argument shows that a character of $p$-power order of $\pi_1(\Gm^n\otimes\Fpbar)$ that is trivial on all coordinate lines is trivial, which gives Theorem~\ref{thmE}(i) up to the value of $\alpha$, and hence the component group. The constant $\alpha$ is then computed by induction on $n$. At the boundary point $h_1=0$ of a coordinate line, the Fourier model above is the Fourier transform of a perverse sheaf, and its nearby cycles there sit in an exact sequence of Frobenius modules. The subobject is the cohomology of the complement of a hyperplane section in an $(n-1)$-dimensional torus. The quotient is formed by the vanishing cycles, which come from the point at infinity of the source of the Fourier transform, where the sheaf is unramified; it is a Tate twist of the cohomology of a hyperplane section in $n-1$ variables. Comparing determinants of Frobenius with the identity of Theorem~\ref{thmE}(i) on the line gives $\alpha=p^{(n+1)r(n)/2}$, and with it the arithmetic monodromy group in Theorem~\ref{thmE}(iii). Theorem~\ref{thmF} then follows from Deligne's equidistribution theorem. Neither the classification of finite simple groups nor that of finite primitive linear groups is used.

\subsection*{Logical structure}
Theorem~\ref{thmA} rests on the facts (K1)--(K5) about $\Kl$, Lemmas~\ref{lem:integrate-t}, \ref{lem:breaks} and~\ref{lem:CG}, Propositions~\ref{prop:fibration}, \ref{prop:weights-torus} and~\ref{prop:frob-invariants}, and Lemma~\ref{lem:t0}. Theorem~\ref{thmB} rests on Theorem~\ref{thmA}, on the theorem of Deligne and Laumon (Theorem~\ref{thm:DL}) and on Lemma~\ref{lem:averages}. Theorem~\ref{thmC}(i) and~(iv) rest on Theorem~\ref{thmB}; on Proposition~\ref{prop:tame}, which uses Laumon's local Fourier transform; and on the product formula of Theorem~\ref{thm:RL}, which we deduce from Deligne's fibre functor \cite[Th.~3.1 and Cor.~3.2]{Katz-Conv}, two facts on Katz's arithmetic Tannakian category \cite[Ch.~4 and~5]{Katz-Conv}, and the weight dictionary of Lemma~\ref{lem:weight-dictionary}. Theorem~\ref{thmC}(ii) and~(iii) rest in addition on Laumon's principle of stationary phase, through Proposition~\ref{prop:slopes-infinity}. Theorem~\ref{thmD}(i) and~(ii) are proved in Section~\ref{sec:fourth} from the identity~\eqref{eq:intro-factor} by elementary means: orthogonality, Weil's bound, Jacobsthal's evaluation of a quadratic character sum, the nine lines of the Cayley cubic surface, B\'ezout's theorem and Hasse's theorem. Theorem~\ref{thmD}(iii) is deduced from Theorem~\ref{thmE}. The statement $\Ggeom(\mathcal W_n)^0=\SL_{r(n)}$ in Theorem~\ref{thmE} combines four inputs:
\begin{itemize}
\item Theorem~\ref{thmB}, through Lemma~\ref{lem:lower};
\item Theorem~\ref{thmD}(ii) for $n\ge3$, through Lemma~\ref{lem:averages} and Corollary~\ref{cor:M4};
\item Theorem~\ref{thmC}(i), through Propositions~\ref{prop:unipotent} and~\ref{prop:det};
\item Larsen's alternative (Theorem~\ref{thm:larsen}).
\end{itemize}
Theorem~\ref{thmE}(ii), and Theorem~\ref{thmE}(i) up to the value of $\alpha$, combine this with Theorem~\ref{thmC}(iii), Proposition~\ref{prop:det} and the Artin--Schreier Lemma~\ref{lem:AS-lines}. The value of $\alpha$, and Theorem~\ref{thmE}(iii), rest in addition on Theorems~\ref{thmA} and~\ref{thmB} for $n-1$ variables (by induction on $n$), on Lemma~\ref{lem:R-global}, on the Fourier model~\eqref{eq:FT-model}, on Lemmas~\ref{lem:integrate-t} and~\ref{lem:t0}, on Theorem~\ref{thm:lisse}(vi), on Theorem~\ref{thm:det}, on Laumon's results on the Fourier transform \cite[Cor.~(1.2.2.3) and Th.~(1.3.2.3)]{Laumon}, on the formalism of vanishing cycles \cite[Exp.~XIII]{SGA7} together with Deligne's local acyclicity theorem \cite[Th.~finitude, Cor.~2.16]{SGA45}, and on the elementary identities of Lemma~\ref{lem:numerology}. Theorem~\ref{thmF} combines Theorem~\ref{thmE}(ii),(iii) with Deligne's equidistribution theorem \cite[Th.~3.5.3]{Deligne}. Theorem~\ref{thmD}(i) is not used in the proofs of the other theorems; through Corollary~\ref{cor:M2} it gives a second proof of the irreducibility of the geometric monodromy, valid for every $n\ge2$. The analysis of the Newton polytope in Section~\ref{sec:polytope} explains the numerology of Theorem~\ref{thmA}; it is used only in Remarks~\ref{rem:critical-values}, \ref{rem:AS} and~\ref{rem:collision}.

\subsection*{Relation to other work}
This article and the companion preprint \cite{EbadiMoments} grew out of the author's longer manuscript \cite{EbadiFull}, which studies the same family and develops analytic applications, including short-sum estimates and correlations with multiplicative functions. In the range $p>\max(2,n)$, the longer manuscript also obtains the identity component $\SL_{r(n)}$.

The present article treats every $n\ge2$ and every odd prime $p$. Its emphasis is on the compactly supported cohomology of the fibres $Y_h$, the exact multiplicities and values of the lower-weight eigenvalues, the construction of the associated lisse sheaves, and the detailed local monodromy along coordinate lines. These results lead to the determination of the geometric determinant, its arithmetic normalization, the full geometric and normalized arithmetic monodromy groups, and the corresponding equidistribution theorem.

The current companion preprint \cite{EbadiMoments} treats every $n\ge2$ in every characteristic. In odd characteristic, the two articles overlap in the cohomological description, the fourth-moment calculation through plane sections of the Cayley cubic surface, the determination of the identity component by Larsen's alternative, the full monodromy groups, and an equidistribution theorem. In characteristic two, the companion preprint establishes self-duality, identifies the orthogonal or symplectic monodromy group, and develops the associated moment and equidistribution results. It also gives explicit refinements of the moment calculations, including an exact fourth moment for $n=2$ and formulas for moment constants beyond the Gaussian range.

The proofs in the present article do not use either \cite{EbadiMoments} or \cite{EbadiFull} as an input. Conversely, for odd $p$ and $n\ge3$, the companion preprint uses the unipotence at the origin of coordinate lines proved in Theorem~\ref{thm:boundary}(ii) of the present article to complete its determination of the full monodromy groups; it proves the corresponding assertion for $n=2$ directly. Thus this use of the present article introduces no circular dependence between the two papers.

For a fixed $h\in(\Fq^\times)^n$, an upper bound of the right order already follows from~\eqref{eq:intro-factor}, Proposition~\ref{prop:fibration} and Deligne's theorem \cite[Th.~3.3.1]{Deligne}: the terms with $t\ne0$ in~\eqref{eq:intro-factor} add up to $\pm\Tr\big(\Frob_q\mid H^1_c(\Gm\otimes\Fqbar,\mathcal N_h)\big)$, where $\mathcal N_h$ is the sheaf of Proposition~\ref{prop:fibration}, and this group has dimension $2^n$ and weights at most $n+1$; hence $|\Sigma_n(h)|\le2^nq^{(n-1)/2}+q^{-1}$. For $q=p$ such bounds are also instances of the general estimates of Fouvry, Kowalski and Michel for sums of products of trace functions \cite{FKM2}. Theorem~\ref{thmA} determines the cohomology exactly: its dimension, the multiplicities of all of its weights, and the eigenvalues of the lower weights. The strategy behind the equality $\Ggeom(\mathcal W_n)^0=\SL_{r(n)}$, which combines a unipotent element coming from the boundary, the finiteness of the determinant, a fourth moment and Larsen's alternative, is the one developed by Katz \cite{Katz-Larsen,Katz-Conv}. The function whose Artin--Schreier sheaf is $\det\mathcal W_n$ is the sum of the critical values of the phase (Remark~\ref{rem:critical-values}). The identity holds on all of $S$, although the phase degenerates along the divisors where critical points escape to infinity (Remark~\ref{rem:collision}); it is proved through the local monodromy at infinity on coordinate lines, and not through the critical points. The constant $p^{(n+1)r(n)/2}$, which involves no Gauss sums, is computed instead at the finite boundary point, from the unramified case of Laumon's local Fourier transform. Theorem~\ref{thmF} is an instance of Deligne's equidistribution theorem \cite[Th.~3.5.3]{Deligne}, in the form given by Katz and Sarnak \cite[Th.~9.2.6]{KatzSarnak}.

\subsection*{Organization}
Section~\ref{sec:prelim} fixes notation and collects facts on Kloosterman sheaves, breaks, unipotent Jordan blocks, semisimplicity and averages of traces. Section~\ref{sec:polytope} analyses the Newton polytope. Theorem~\ref{thmA} is proved in Section~\ref{sec:fibre} and Theorem~\ref{thmB} in Section~\ref{sec:family-S}. Section~\ref{sec:boundary} treats the coordinate boundary, at $h_1=0$ and at $h_1=\infty$, and proves Theorem~\ref{thmC}. The moments are computed in Section~\ref{sec:fourth}. Section~\ref{sec:monodromy} proves Larsen's alternative in the form needed, determines the geometric monodromy group, the determinant and its constant, and the arithmetic monodromy group, and proves Theorems~\ref{thmD}(iii), \ref{thmE} and~\ref{thmF}; it also records conductor bounds for tensor constructions. Section~\ref{sec:numerics} reports numerical checks, which are not used in the proofs.

\section{Notation and preliminaries}\label{sec:prelim}

\subsection{Conventions}\label{sec:conventions}
Throughout, $p$ is an odd prime, $q$ is a power of $p$, and $\Fpbar$ is a fixed algebraic closure of $\Fp$. We fix a nontrivial additive character $\psi$ of $\Fp$ and an element $\lambda\in\Fp^\times$, and write $\langle h,u\rangle=\sum_ih_iu_i$. The letter $\ell$ denotes a prime different from $p$. Sheaves are constructible $\Qlb$-sheaves, and we fix a field isomorphism $\iota_{\mathbb C}\colon\Qlb\to\mathbb C$, with respect to which absolute values and weights are taken; through it we regard traces of Frobenius as complex numbers. An eigenvalue $\alpha$ of the geometric Frobenius of $\Fq$ has weight $w$ if $|\alpha|=q^{w/2}$. A sheaf is pure of weight $w$ if, at every closed point, all Frobenius eigenvalues on the stalk have weight $w$, and mixed if it is a successive extension of pure sheaves \cite[1.2]{Deligne}. The Tate twist $\Qlb(1)$ is pure of weight $-2$. For a separated morphism $f$ of finite type, \emph{proper base change} refers to the base change isomorphism for $Rf_!$ \cite[Exp.~XVII]{SGA4}; it holds without any properness assumption on $f$, because $Rf_!$ is defined through a compactification of $f$.

For a morphism $g\colon X\to\mathbb A^1$, $\Lpsi g$ denotes the inverse image of the Artin--Schreier sheaf. It is lisse of rank one and pure of weight $0$, with trace $\psi_q(g(x))$ at $x\in X(\Fq)$. For a character $\chi$ of finite order prime to $p$ of the tame fundamental group of $\Gm\otimes\Fpbar$, $\mathcal L_\chi$ is the corresponding Kummer sheaf.

Let $F$ be a lisse sheaf on a dense open subset of a smooth curve $C$ over $\Fpbar$, and let $x$ be a closed point of $C$. We write $F(x)$ for the representation of the inertia group $I_x$ on the geometric generic stalk, $\Iw_x\subset I_x$ for the wild inertia subgroup, and $\Swan_x(F)$ for the Swan conductor. Since $\Iw_x$ is a pro-$p$ group acting through a finite quotient, and $\ell\ne p$, its action is semisimple. We say that $F$ is \emph{tame} at $x$ if $\Iw_x$ acts trivially, and \emph{totally wild} at $x$ if $F(x)^{\Iw_x}=0$. The tame part $F(x)^{\Iw_x}$ is a direct summand of $F(x)$. All sheaves in this paper are defined over finite fields, so by Grothendieck's local monodromy theorem a topological generator $\gamma_x$ of tame inertia acts on the tame part quasi-unipotently, and the tame part decomposes canonically as $\bigoplus_\rho\mathcal L_\rho\otimes U_\rho$ over characters $\rho$ of finite order prime to $p$ of the tame inertia, with $U_\rho$ unipotent. We call $U_{\mathbf 1}$ the \emph{unipotent part} of $F(x)$, and write $\Up{k}$ for a unipotent Jordan block of size $k$.

When $F$ is lisse on all of $\Gm\otimes\Fpbar$, the Grothendieck--Ogg--Shafarevich formula \cite[Exp.~X]{SGA5} reads
\begin{equation}\label{eq:GOS}
\chi_c(\Gm\otimes\Fpbar,F)=-\Swan_0(F)-\Swan_\infty(F).
\end{equation}
Moreover $H^0_c(\Gm\otimes\Fpbar,F)=0$, and $H^2_c(\Gm\otimes\Fpbar,F)$ is the space of geometric coinvariants of $F$, twisted by $(-1)$. It vanishes as soon as $F$ is totally wild at $0$ or at $\infty$, since a geometrically trivial quotient is tame everywhere.

For $a\in\Fpbar^\times$ let $[\times a]$ be the automorphism $t\mapsto at$ of $\Gm$, and let $\iota$ be the inversion $t\mapsto t^{-1}$, which exchanges $0$ and $\infty$. Since $[\times a]$ fixes $0$ and $\infty$, the local monodromies of $F$ and $[\times a]^*F$ at each of these points have the same breaks and Swan conductors and the same unipotent Jordan structure. Indeed, the induced automorphism of the punctured disc respects the ramification filtration, and it induces the identity on the tame quotient of inertia, canonically isomorphic to $\prod_{\ell'\ne p}\mathbb Z_{\ell'}(1)$, because $(at)^{1/m}=a^{1/m}t^{1/m}$ with $a^{1/m}\in\Fpbar$.

\subsection{The sums and the phase}\label{sec:family}
For $h\in\Fq^n$ the sum $\Sigma_n(h)$ is defined by \eqref{eq:intro-Sigma}. On $\Gm^n\times\mathbb A^1_t$ we use the phase
\[
F_n(u,t;h)=f(u)+t\big(\lambda+\langle h,u\rangle\big),\qquad f(u)=\sum_{i=1}^nu_i^{-1},
\]
and for $h\in(\Fq^\times)^n$ we write
\[
Y_h=\{u\in\Gm^n:\langle h,u\rangle=-\lambda\},\quad X_h=\Gm^n\times\mathbb A^1_t,\quad X_h^\circ=\Gm^n\times\Gmn{t},\quad Z_h=\Gm^n\times\{0\}.
\]
Since every $h_i$ is nonzero, $Y_h$ is the complement of $n$ hyperplanes in an affine hyperplane of $\mathbb A^n$; it is a smooth, affine, geometrically irreducible variety of dimension $n-1$.

\begin{lemma}\label{lem:factor}
For every $h\in\Fq^n$ the identity \eqref{eq:intro-factor} holds. If exactly $a$ coordinates of $h$ are nonzero and $h'\in(\Fq^\times)^a$ is the vector they form, then $\Sigma_n(h)=0$ if $a=0$, $|\Sigma_n(h)|=1$ if $a=1$, and $\Sigma_n(h)=(-1)^{n-a}\Sigma_a(h')$ if $a\ge2$.
\end{lemma}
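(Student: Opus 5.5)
The identity \eqref{eq:intro-factor} will follow from orthogonality of additive characters. For $u\in(\Fq^\times)^n$ we have
\[
\frac1q\sum_{t\in\Fq}\psi_q\big(t(\lambda+\langle h,u\rangle)\big)=\begin{cases}1,&\langle h,u\rangle=-\lambda,\\ 0,&\text{otherwise}.\end{cases}
\]
Multiply by $\psi_q(\sum_iu_i^{-1})$, sum over $u$, and interchange the finite sums. The $u$-sum then factors as
\[
\psi_q(\lambda t)\prod_i\sum_{u_i\in\Fq^\times}\psi_q(u_i^{-1}+th_iu_i)=\psi_q(\lambda t)\prod_iK(th_i).
\]
This gives $q\,\Sigma_n(h)=\sum_t\psi_q(\lambda t)\prod_iK(th_i)$ for every $h\in\Fq^n$, with no hypothesis on the coordinates.

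For the degenerate cases I will use $K(0)=\sum_{x\in\Fq^\times}\psi_q(x^{-1})=-1$. If coordinate $i$ of $h$ vanishes, then $K(th_i)=-1$ for every $t$, so each vanishing coordinate contributes a factor $-1$. Hence
\[
q\,\Sigma_n(h)=(-1)^{n-a}\sum_t\psi_q(\lambda t)\prod_{j=1}^aK(th'_j).
\]
For $a\ge2$ the right side is $(-1)^{n-a}q\,\Sigma_a(h')$ by \eqref{eq:intro-factor} in $a$ variables, which gives $\Sigma_n(h)=(-1)^{n-a}\Sigma_a(h')$.

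For $a=0$ the right side is $(-1)^n\sum_t\psi_q(\lambda t)$, which vanishes because $\lambda\ne0$. One can also see this directly: the constraint reads $0=-\lambda$, which has no solutions.

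The case $a=1$ is easiest to treat directly rather than through Kloosterman sums. The constraint $h'u_1=-\lambda$ fixes $u_1=-\lambda/h'$, which is nonzero and hence admissible. The other $n-1$ variables range freely over $\Fq^\times$, so
\[
\Sigma_n(h)=\psi_q(-h'/\lambda)\,(-1)^{n-1},
\]
whose absolute value is $1$.

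There is no real obstacle here. The only points needing care are the value $K(0)=-1$ and keeping track of the signs $(-1)^{n-a}$.
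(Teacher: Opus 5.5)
Your proposal is correct and follows the paper's proof: orthogonality of additive characters gives \eqref{eq:intro-factor}, each vanishing coordinate contributes a factor $\sum_{x\in\Fq^\times}\psi_q(x^{-1})=-1$, the constraint is unsolvable when $a=0$, and it pins down the single remaining variable when $a=1$. The only cosmetic difference is that you get the factor $-1$ through $K(0)=-1$ inside \eqref{eq:intro-factor} in $a$ variables, whereas the paper sums out the free variables directly in the definition of $\Sigma_n(h)$; the two are equivalent.
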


\begin{proof}
Insert $\mathbf 1_{\langle h,u\rangle=-\lambda}=q^{-1}\sum_{t\in\Fq}\psi_q\big(t(\lambda+\langle h,u\rangle)\big)$ into \eqref{eq:intro-Sigma}. The sum over $u$ factors, and the factor indexed by $i$ is $\sum_{u_i\in\Fq^\times}\psi_q(u_i^{-1}+th_iu_i)=K(th_i)$. If $h_i=0$, the variable $u_i$ does not occur in the constraint, and summing over it contributes the factor $\sum_{x\in\Fq^\times}\psi_q(x^{-1})=-1$. If $a=0$ the constraint reads $0=-\lambda$, which is impossible. If $a=1$ it determines the remaining variable, and the sum has a single term.
\end{proof}

\begin{lemma}[Integration over $t$]\label{lem:integrate-t}
Let $V$ be a scheme of finite type over a field of characteristic $p$, $g\colon V\to\mathbb A^1$ a morphism, $i\colon Y=g^{-1}(0)\hookrightarrow V$ the inclusion, and $q_V\colon V\times\mathbb A^1_t\to V$ the projection. Then $Rq_{V!}\Lpsi{tg}\cong i_*\Qlb(-1)[-2]$. Hence, for every constructible complex $K$ on $V$,
\[
Rq_{V!}\big(q_V^*K\otimes\Lpsi{tg}\big)\cong i_*(K|_Y)(-1)[-2],
\]
and the formation of these isomorphisms commutes with base change $V'\to V$.
\end{lemma}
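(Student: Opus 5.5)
The plan is to reduce everything to the one-dimensional statement on $\mathbb A^1$, via the standard base-change and projection formalism. First I treat the universal case $V=\mathbb A^1_s$, $g=s$. On $\mathbb A^1_s\times\mathbb A^1_t$ the sheaf $\Lpsi{st}$ is the kernel of the Fourier transform, and the claim becomes $R\pr_{s!}\Lpsi{st}\cong\delta_0(-1)[-2]$.

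This I check fibrewise by proper base change. At a geometric point $s\ne0$ the fibre is $R\Gamma_c(\mathbb A^1_t,\Lpsi{st})$, which vanishes, because $\Lpsi{st}$ is a nontrivial Artin--Schreier sheaf on $\mathbb A^1$: it is lisse on $\mathbb A^1$ and totally wild at $\infty$, of Swan conductor $1$, so $H^0_c=H^2_c=0$ and $\chi_c=1-1=0$. At $s=0$ the fibre is $R\Gamma_c(\mathbb A^1,\Qlb)=\Qlb(-1)[-2]$.

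Hence $R\pr_{s!}\Lpsi{st}$ is supported at the origin, where it is $\Qlb(-1)[-2]$. To get an honest isomorphism with $i_{0*}\Qlb(-1)[-2]$ rather than just an identification of stalks, I use adjunction. Let $j$ be the inclusion of $\Gm$. Since $j^*$ of the complex is $0$, the complex equals $i_{0*}i_0^*$ of itself, and $i_0^*$ of it is computed by base change as $R\Gamma_c(\mathbb A^1_t,\Qlb)$. The map is canonical, coming from the trace/restriction to the fibre $s=0$, where $\Lpsi{0}=\Qlb$ canonically. This is the step needing the most care, since compatibility with Frobenius and with base change must be explicit, but it is formal.

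Next I treat general $V$ by base change along $g\colon V\to\mathbb A^1_s$. The square formed by $V\times\mathbb A^1_t\to\mathbb A^1_s\times\mathbb A^1_t$ (given by $g\times\mathrm{id}$) over $V\to\mathbb A^1_s$ is cartesian, and $\Lpsi{tg}$ is the pullback of $\Lpsi{st}$. Proper base change for $Rq_{V!}$, valid without properness as in the conventions, gives
\[
Rq_{V!}\Lpsi{tg}\cong g^*R\pr_{s!}\Lpsi{st}\cong g^*i_{0*}\Qlb(-1)[-2]\cong i_*\Qlb(-1)[-2],
\]
using base change once more for the closed immersion $0\hookrightarrow\mathbb A^1$, whose pullback along $g$ is $Y\hookrightarrow V$.

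For a general constructible complex $K$, I apply the projection formula $Rq_{V!}(q_V^*K\otimes L)\cong K\otimes Rq_{V!}L$ and then $K\otimes i_*M\cong i_*(K|_Y\otimes M)$. Together these give $i_*(K|_Y)(-1)[-2]$.

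Compatibility with any base change $V'\to V$ follows because every isomorphism used is a base-change or projection-formula isomorphism, and these are functorial. Alternatively, the construction for $V'$ is again pulled back from the universal case $\mathbb A^1_s$ via $g\circ(V'\to V)$, so both sides are pulled back from the same isomorphism over $\mathbb A^1_s$. I expect no genuine obstacle here; the only point to watch is the vanishing $R\Gamma_c(\mathbb A^1,\Lpsi{at})=0$ for $a\ne0$, which is standard (SGA $4\tfrac12$, Sommes trig.).
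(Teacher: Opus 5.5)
Your proposal is correct and follows the paper's proof essentially step by step: the universal case $R\pr_{s!}\Lpsi{st}$ on $\mathbb A^1_s\times\mathbb A^1_t$, checked on geometric fibres by proper base change (vanishing for $s\neq0$ via $H^0_c=H^2_c=0$ and the Grothendieck--Ogg--Shafarevich formula, and $\Qlb(-1)[-2]$ at $s=0$), then base change along $g$ and the projection formula. Your adjunction argument ($j^*M=0$ implies $M\cong i_{0*}i_0^*M$) only makes explicit the passage from stalks to an actual isomorphism, which the paper leaves implicit.
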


\begin{proof}
Let $p_s\colon\mathbb A^1_s\times\mathbb A^1_t\to\mathbb A^1_s$ be the projection. For a geometric point $s\ne0$, the sheaf $\Lpsi{st}$ on $\mathbb A^1_t$ is lisse of rank one and geometrically nontrivial, with Swan conductor $1$ at $\infty$. The Grothendieck--Ogg--Shafarevich formula gives Euler characteristic $1-1=0$; as $H^0_c$ and $H^2_c$ vanish, all its compactly supported cohomology vanishes. For $s=0$ one obtains $H^*_c(\mathbb A^1,\Qlb)=\Qlb(-1)[-2]$. By proper base change for $Rp_{s!}$, the stalk of $Rp_{s!}\Lpsi{st}$ at a geometric point $s$ is $R\Gamma_c(\mathbb A^1_t\otimes k(s),\Lpsi{st})$. So the cohomology sheaves of $Rp_{s!}\Lpsi{st}$ are supported at $s=0$, with stalk $\Qlb(-1)$ in degree $2$, and $Rp_{s!}\Lpsi{st}\cong i_{0*}\Qlb(-1)[-2]$. Since $\Lpsi{tg}=(g\times\mathrm{id})^*\Lpsi{st}$, proper base change along $g$ gives the first isomorphism, and the projection formula gives the second.
\end{proof}

\subsection{Kloosterman sheaves}\label{sec:Kl}
Let $\sigma\colon\Gm^2\to\Gm$ be the map $(x,y)\mapsto xy$, and let $\Kl=R^1\sigma_!\Lpsi{x+y}$ over $\Fp$. We use the following facts.
\begin{itemize}
\item[(K1)] $R^i\sigma_!\Lpsi{x+y}=0$ for $i\ne1$. The sheaf $\Kl$ is lisse of rank $2$ on $\Gm$ and pure of weight $1$, and $\Tr(\Frob_a\mid\Kl)=-K(a)$ for $a\in\Fq^\times$ \cite[Sommes trig., Th.~7.4 and~7.8]{SGA45}, \cite[4.1.1]{Katz-Gauss}.
\item[(K2)] $\Kl$ is tame at $0$, and $\Kl(0)\cong\Up{2}$.
\item[(K3)] $\Kl$ is totally wild at $\infty$, with $\Swan_\infty(\Kl)=1$ \cite[Sommes trig., Th.~7.4 and~7.8]{SGA45}, \cite[4.1.1]{Katz-Gauss}. Both breaks of $\Kl$ at $\infty$ lie in the open interval $(0,1)$.
\item[(K4)] If $\beta\colon\Gmn{u}\times\Gmn{b}\to\Gmn{b}$ is the projection, then $R\beta_!\Lpsi{u^{-1}+bu}\cong\Kl[-1]$.
\item[(K5)] Let $j\colon\Gm\hookrightarrow\mathbb A^1$ be the inclusion. For every power $q$ of $p$, the geometric Frobenius of $\Fq$ acts trivially on the one-dimensional stalk $(j_*\Kl)_0=\Kl(0)^{I_0}$.
\end{itemize}
For (K2), recall that $\Kl$ is tame at $0$, with unipotent local monodromy consisting of a single Jordan block \cite[Sommes trig., Th.~7.4 and~7.8]{SGA45}. For completeness we note why this unipotent automorphism of a two-dimensional space is not the identity, hence is a single Jordan block of size~$2$: if $I_0$ acted trivially, $\Kl$ would extend to a lisse sheaf on $\mathbb A^1$, and excision together with~\eqref{eq:GOS} would give $\chi_c(\mathbb A^1\otimes\Fpbar,\Kl)=\chi_c(\Gm\otimes\Fpbar,\Kl)+2=-\Swan_\infty(\Kl)+2=1$. But $H^0_c(\mathbb A^1\otimes\Fpbar,\Kl)=0$, and $H^2_c(\mathbb A^1\otimes\Fpbar,\Kl)$, the space of geometric coinvariants twisted by $(-1)$, vanishes because $\Kl$ is totally wild at $\infty$; so $\chi_c(\mathbb A^1\otimes\Fpbar,\Kl)\le0$, a contradiction. For (K3), the two breaks are positive because $\Kl$ is totally wild at $\infty$, and their sum is $\Swan_\infty(\Kl)=1$. For (K4), note that $(u,b)\mapsto(u^{-1},bu)$ is an automorphism of $\Gm^2$, with inverse $(x,y)\mapsto(x^{-1},xy)$. It transforms $\beta$ into $\sigma$ and the phase $u^{-1}+bu$ into $x+y$. The trace formula in (K1) follows from the Grothendieck--Lefschetz trace formula \cite[Rapport]{SGA45}, which we use throughout, and $\sum_{xy=a}\psi_q(x+y)=\sum_{z\in\Fq^\times}\psi_q(z^{-1}+az)=K(a)$.

For (K5), consider $j_*\Kl$ on $\mathbb A^1$ over $\Fq$; by (K2) its stalk at $0$ is a line. Its group $H^0_c$ vanishes, since $j_*\Kl$ has no nonzero section supported on a finite set. Excision for $\{0\}\subset\mathbb A^1$ shows that $H^2_c(\mathbb A^1\otimes\Fqbar,j_*\Kl)=H^2_c(\Gm\otimes\Fqbar,\Kl)$, which vanishes because $\Kl$ is totally wild at $\infty$. The Euler characteristic is $\chi_c(\Gm\otimes\Fqbar,\Kl)+1=-\Swan_\infty(\Kl)+1=0$, by~\eqref{eq:GOS}. So all compactly supported cohomology of $j_*\Kl$ vanishes, and the Grothendieck--Lefschetz formula gives
\[
0=\Tr\big(\Frob_q\mid(j_*\Kl)_0\big)-\sum_{a\in\Fq^\times}K(a).
\]
Finally $\sum_{a\in\Fq}K(a)=\sum_{x\in\Fq^\times}\psi_q(x^{-1})\sum_{a\in\Fq}\psi_q(ax)=0$ and $K(0)=\sum_{x\in\Fq^\times}\psi_q(x^{-1})=-1$, so $\sum_{a\ne0}K(a)=1$, and the eigenvalue of $\Frob_q$ on $(j_*\Kl)_0$ is~$1$.

\subsection{Breaks of tensor products}

\begin{lemma}\label{lem:breaks}
Let $x$ be a closed point of a smooth curve over $\Fpbar$, and let $V,V'$ be finite-dimensional continuous representations of $I_x$.
\begin{enumerate}[(i)]
\item Every break of $V\otimes V'$ is at most the largest of the breaks of $V$ and $V'$.
\item If all breaks of $V$ are $<s'$ and $V'$ is isoclinic of break $s'>0$, then $V\otimes V'$ is isoclinic of break $s'$. In particular $\Swan_x(V\otimes V')=s'\dim V\dim V'$.
\end{enumerate}
\end{lemma}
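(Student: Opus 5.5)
The plan is to use the upper numbering filtration $I_x^{(r)}$, $r\ge0$, on inertia, and the characterization of breaks that goes with it. A representation $V$ of $I_x$ decomposes canonically as $V=\bigoplus_s V(s)$ (the break decomposition), where $V(s)$ has all breaks equal to $s$. For $r>0$, $I_x^{(r)}$ acts trivially on $V(s)$ exactly when $s<r$. Moreover, $V(s)$ for $s>0$ has no nonzero $I_x^{(s)}$-invariants. This is because $I_x^{(s)}$ is pro-$p$ and acts through a finite quotient, hence semisimply, and $V(s)$ is a sum of nontrivial irreducible $I_x^{(s)}$-representations. These are the standard properties from Katz's \emph{Gauss sums, Kloosterman sums and monodromy groups}, Ch.~1, and I would quote them rather than reprove them.

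For (i), let $b$ be the largest break of $V$ and $V'$. For every $r>b$ the group $I_x^{(r)}$ acts trivially on $V$ and on $V'$, hence trivially on $V\otimes V'$. So every break of $V\otimes V'$ is $<r$. Letting $r\downarrow b$ shows that every break is $\le b$.

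For (ii), first apply (i): every break of $V\otimes V'$ is $\le s'$. To show none is $<s'$, use the fact that all breaks of $V$ are $<s'$. By the upper-semicontinuity of the filtration (i.e.\ $I_x^{(s')}=\overline{\bigcup_{r>s'}I_x^{(r)}}$, and a finite-dimensional representation with breaks $<s'$ is trivial on some $I_x^{(r)}$ with $r<s'$), $I_x^{(s')}$ acts trivially on $V$. Hence, as a representation of $I_x^{(s')}$, $V\otimes V'$ is isomorphic to $(V')^{\oplus\dim V}$. Since $V'$ is isoclinic of break $s'>0$, it has no nonzero invariants under $I_x^{(s')}$; more precisely, $I_x^{(s'+)}=\overline{\bigcup_{r>s'}I_x^{(r)}}$ acts trivially on $V'$ while $I_x^{(s')}$ acts without invariants. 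The same then holds for $V\otimes V'$. Therefore the component of break $<s'$ of $V\otimes V'$, on which $I_x^{(s')}$ would act trivially, is zero, and $V\otimes V'$ is isoclinic of break $s'$. The Swan conductor of an isoclinic representation of break $s$ is $s$ times its dimension, so $\Swan_x(V\otimes V')=s'\dim V\dim V'$.

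The only delicate point is the precise convention at the jump $r=s$: breaks $<r$ correspond to triviality on $I_x^{(r)}$, and break exactly $s$ means no invariants under $I_x^{(s)}$ but triviality on $I_x^{(r)}$ for all $r>s$. I would state this characterization once, with a citation to Katz, Ch.~1, and then use it in both parts.
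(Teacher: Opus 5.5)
Your argument is correct and follows essentially the paper's route: you use the upper-numbering characterization of breaks, show that a suitable ramification group acts trivially on $V$, and get vanishing of invariants on $V\otimes V'$ by reducing to $V'$ (you test with the single group $I_x^{(s')}$, the paper with the groups $I^{(r+)}$ for $0\le r<s'$). One slip to fix: the parenthetical identity $I_x^{(s')}=\overline{\bigcup_{r>s'}I_x^{(r)}}$ is false---that closure is $I_x^{(s'+)}$, and the identity would contradict your next claim that $V'$ has no $I_x^{(s')}$-invariants---but the triviality of $I_x^{(s')}$ on $V$ follows directly from your opening characterization with $r=s'$, or from $I_x^{(s')}\subseteq I_x^{(r)}$ for any $r$ strictly between the largest break of $V$ and $s'$.
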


\begin{proof}
Let $(I^{(r)})_{r\ge0}$ be the upper numbering filtration of $I_x$, and let $I^{(r+)}$ be the closure of $\bigcup_{r'>r}I^{(r')}$. A representation $W$ has all breaks $\le r$ if and only if $I^{(r+)}$ acts trivially on $W$, and it is isoclinic of break $s>0$ if and only if $I^{(s+)}$ acts trivially on $W$ and $W^{I^{(r+)}}=0$ for $0\le r<s$ \cite[1.1]{Katz-Gauss}. Statement~(i) is clear, because $I^{(r+)}$ acts trivially on $V\otimes V'$ as soon as it acts trivially on both factors. For (ii), let $s<s'$ be the largest break of $V$. By (i), $I^{(s'+)}$ acts trivially on $V\otimes V'$. If $s\le r<s'$, then $I^{(r+)}$ acts trivially on $V$, so $(V\otimes V')^{I^{(r+)}}=V\otimes V'^{I^{(r+)}}=0$. If $0\le r<s$, then $I^{(r+)}\supseteq I^{(s+)}$, and therefore $(V\otimes V')^{I^{(r+)}}\subseteq(V\otimes V')^{I^{(s+)}}=0$ by the previous case. Hence $V\otimes V'$ is isoclinic of break $s'$, and its Swan conductor is $s'\dim(V\otimes V')$.
\end{proof}

\subsection{Unipotent Jordan blocks}

\begin{lemma}\label{lem:CG}
Let $g_1,\dots,g_n$ be unipotent automorphisms of two-dimensional vector spaces over a field of characteristic $0$, each consisting of a single Jordan block. Then $g_1\otimes\cdots\otimes g_n$ is unipotent, and
\[
\Up{2}^{\otimes n}\cong\bigoplus_{k=0}^{\lfloor n/2\rfloor}m_{n,k}\,\Up{n-2k+1}.
\]
In particular $g_1\otimes\cdots\otimes g_n$ has $\binom n{\lfloor n/2\rfloor}$ Jordan blocks, which is the dimension of its fixed space. The number of blocks of size $1$ is the Catalan number $m_{n,n/2}$ if $n$ is even, and $0$ if $n$ is odd.
\end{lemma}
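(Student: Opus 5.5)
We will use the standard dictionary between unipotent Jordan structure and representations of $\mathfrak{sl}_2$ (Jacobson--Morozov), which over a field of characteristic $0$ turns tensor products of unipotent operators into tensor products of $\mathfrak{sl}_2$-modules. Unipotence of $g=g_1\otimes\cdots\otimes g_n$ is immediate: writing $g_i=1+N_i$ with $N_i$ nilpotent, $g$ is a product of the commuting unipotent operators $1\otimes\cdots\otimes g_i\otimes\cdots\otimes1$. Take $N=\log g=\sum_i 1\otimes\cdots\otimes\log g_i\otimes\cdots\otimes1$; this equality holds because the factors commute. A unipotent $u$ and its logarithm $\log u$ have the same Jordan type, since $\log u=(u-1)\cdot(\text{invertible polynomial in }u-1)$. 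So it suffices to find the Jordan type of the nilpotent operator $N$.

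Choose bases so that each $\log g_i$ is the standard nilpotent $e$ in the two-dimensional irreducible representation $V_1$ of $\mathfrak{sl}_2$. Then $N$ is the action of $e$ on $V_1^{\otimes n}$ with the diagonal $\mathfrak{sl}_2$-action. By $\mathfrak{sl}_2$-theory, the action of $e$ on the irreducible module $V_d$ of dimension $d+1$ is a single nilpotent Jordan block of size $d+1$. Hence the Jordan type of $N$ is read off from the decomposition of $V_1^{\otimes n}$ into irreducibles.

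That decomposition follows by induction from Clebsch--Gordan, $V_d\otimes V_1\cong V_{d+1}\oplus V_{d-1}$ (with $V_{-1}=0$). More cleanly, compare characters. The weight-$(n-2k)$ space of $V_1^{\otimes n}$ has dimension $\binom nk$, so the multiplicity of $V_{n-2k}$ is $\binom nk-\binom n{k-1}=m_{n,k}$ for $0\le k\le\lfloor n/2\rfloor$. Since $V_{n-2k}$ has dimension $n-2k+1$, this gives exactly the stated formula $\bigoplus_k m_{n,k}\Up{n-2k+1}$.

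The remaining assertions are bookkeeping. The number of blocks is $\sum_k m_{n,k}=\binom n{\lfloor n/2\rfloor}$, since the sum telescopes. The number of blocks of a unipotent operator equals the dimension of its fixed space, since each block contributes a one-dimensional kernel of $g-1$. A block of size $1$ occurs only when $n-2k=0$, which requires $n$ even and $k=n/2$. Its multiplicity is then $\binom n{n/2}-\binom n{n/2-1}$, the Catalan number.

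The only point needing care is the claim that $\log u$ and $u$ have the same Jordan type, and the identification of $e$ on $V_d$ as a single block. Both are standard in characteristic $0$. If one prefers to avoid $\mathfrak{sl}_2$ altogether, the alternative is a direct induction: show that $\Up{m}\otimes\Up{2}\cong\Up{m+1}\oplus\Up{m-1}$ by computing the ranks of powers of $g-1$. That computation is the main technical step in this version.
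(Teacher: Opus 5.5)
Your proof is correct and follows the paper's route. You pass to $\log$, identify $\sum_i N_i$ with the action of $e$ on $\mathrm{St}^{\otimes n}$, and read the Jordan type off the $\mathfrak{sl}_2$-decomposition, using that $e$ acts on each irreducible module as a single Jordan block. The only difference is how the multiplicities are obtained: you compute them directly from the weight-space dimensions as $\binom nk-\binom n{k-1}$, whereas the paper derives them from the Clebsch--Gordan recursion and checks that against Pascal's rule. Your version is a slightly quicker way to the same formula.
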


\begin{proof}
For a unipotent automorphism $g$, the logarithm $N=\log g$ is a nilpotent endomorphism with the same Jordan type: $g-1=N\,e(N)$, where $e(N)=1+N/2+\cdots$ is invertible and commutes with $N$. The factors $1\otimes\cdots\otimes g_i\otimes\cdots\otimes1$ commute, so $\log(g_1\otimes\cdots\otimes g_n)=\sum_iN_i$ with $N_i=1\otimes\cdots\otimes\log g_i\otimes\cdots\otimes1$. In bases in which every $\log g_i$ is $\left(\begin{smallmatrix}0&1\\0&0\end{smallmatrix}\right)$, the endomorphism $\sum_iN_i$ is the action of $e=\left(\begin{smallmatrix}0&1\\0&0\end{smallmatrix}\right)\in\mathfrak{sl}_2$ on the $n$-th tensor power of the standard representation $\mathrm{St}$. On an irreducible $\mathfrak{sl}_2$-module of dimension $d$ the element $e$ is a single Jordan block of size $d$. So the Jordan type is the multiset of dimensions of the irreducible constituents of $\mathrm{St}^{\otimes n}$. By the Clebsch--Gordan rule, $\mathrm{St}\otimes\Sym^{d}\cong\Sym^{d+1}\oplus\Sym^{d-1}$ for $d\ge1$ and $\mathrm{St}\otimes\Sym^0=\mathrm{St}$. Hence the multiplicity $\mu_{n,k}$ of $\Sym^{n-2k}$ in $\mathrm{St}^{\otimes n}$ satisfies $\mu_{1,0}=1$ and $\mu_{n,k}=\mu_{n-1,k}+\mu_{n-1,k-1}$ for $0\le k\le\lfloor n/2\rfloor$, where $\mu_{n-1,-1}=0$ and, for even $n$, $\mu_{n-1,n/2}=0$. The numbers $m_{n,k}$ satisfy the same recursion and initial value. Indeed, by Pascal's rule both $m_{n-1,k}+m_{n-1,k-1}$ and $m_{n,k}$ equal $\binom{n-1}k-\binom{n-1}{k-2}$, and $m_{n-1,n/2}=0$ for even $n$. Hence $\mu_{n,k}=m_{n,k}$. The number of blocks telescopes to $\binom n{\lfloor n/2\rfloor}$, and the blocks of size $1$ correspond to $k=n/2$.
\end{proof}

We shall also use the kernel of $\sum_iN_i$ with its grading. In the notation of the proof, let $e_1,e_2$ be the basis of $\mathrm{St}$ with $e\cdot e_1=0$ and $e\cdot e_2=e_1$. For $J=(j_1,\dots,j_n)\in\{1,2\}^n$ put $e_J=e_{j_1}\otimes\cdots\otimes e_{j_n}$, and let $|J|$ be the number of indices $i$ with $j_i=2$. The operator $\sum_iN_i$ maps $e_J$ to a combination of the $e_{J'}$ with $|J'|=|J|-1$, so its kernel $E$ is graded: $E=\bigoplus_kE_k$, where $E_k$ is spanned by the elements of $E$ that are combinations of the $e_J$ with $|J|=k$. The vector $e_J$ has weight $n-2|J|$ for the diagonal element $\left(\begin{smallmatrix}1&0\\0&-1\end{smallmatrix}\right)$ of $\mathfrak{sl}_2$, and $E$ is the space of highest-weight vectors of $\mathrm{St}^{\otimes n}$. Each irreducible constituent $\Sym^{n-2k}$ contains a single line of highest-weight vectors, of weight $n-2k$. Hence
\begin{equation}\label{eq:Ek}
\dim E_k=m_{n,k}\quad(0\le k\le\lfloor n/2\rfloor),\qquad E_k=0\quad(k>n/2).
\end{equation}

\begin{lemma}\label{lem:frob-block}
Let $x$ be an $\Fq$-rational point of a smooth curve over $\Fq$, let $D_x$ be a decomposition group at $x$, with inertia group $I_x$ and wild inertia group $\Iw_x$, and let $\gamma\in I_x$ be an element whose image topologically generates $I_x/\Iw_x$. Let $\phi\in D_x$ induce the geometric Frobenius $z\mapsto z^{1/q}$ on $\Fqbar$. Let $V$ be a two-dimensional continuous representation of $D_x$ on which $\Iw_x$ acts trivially and $\gamma$ acts by a unipotent automorphism $T\ne1$, and suppose that $\phi$ acts trivially on the line $V^{I_x}$. Put $N=\log T$. Then there is a basis $e_1,e_2$ of $V$ with $Ne_1=0$, $Ne_2=e_1$, $\phi e_1=e_1$ and $\phi e_2=qe_2$.
\end{lemma}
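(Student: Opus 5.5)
The plan is to use the standard relation between Frobenius and tame inertia: $\phi\gamma\phi^{-1}$ and $\gamma^{q}$ have the same image in $I_x/\Iw_x$, up to the convention for geometric versus arithmetic Frobenius. The conjugation $\phi\gamma\phi^{-1}$ lies in $I_x$, since $I_x$ is normal in $D_x$. Because $\Iw_x$ acts trivially, the action of $I_x$ on $V$ factors through the tame quotient $I_x/\Iw_x\cong\prod_{\ell'\ne p}\mathbb Z_{\ell'}(1)$. The geometric Frobenius acts on this quotient by multiplication by $q^{-1}$; with the arithmetic Frobenius it would be $q$. Hence $\phi\gamma\phi^{-1}$ acts on $V$ as $T^{q^{-1}}$, where $q^{-1}$ is read as an $\ell$-adic exponent. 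This makes sense because $T$ is unipotent, so $T^{a}=\exp(aN)$ for $a\in\mathbb Z_\ell$, and the action of $I_x$ on $V$ factors through its pro-$\ell$ part. Taking logarithms gives
\[
\phi N\phi^{-1}=q^{-1}N,\qquad\text{equivalently}\qquad N\phi=q\,\phi N .
\]
I will double-check the sign of the exponent against the target eigenvalues: with this relation, an eigenvector of $\phi$ with eigenvalue $\mu$ is sent by $N$ to an eigenvector with eigenvalue $q^{-1}\mu$. This is consistent with $\phi e_1=e_1$, $\phi e_2=qe_2$ and $Ne_2=e_1$.

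Next I build the basis. Since $T\ne1$ is unipotent on a two-dimensional space, $N\ne0$ and $N^2=0$. Both $\ker N=V^{I_x}$ and $\operatorname{im}N$ are lines, and they coincide. Here $V^{I_x}=\ker(T-1)=\ker N$, because $\gamma$ generates the tame quotient topologically and the action is continuous. The line $\ker N$ is $\phi$-stable by the commutation relation, and by hypothesis $\phi$ acts on it by $1$. Choose any $v\notin\ker N$. Then $Nv$ spans $\ker N$, and $N\phi v=q\phi Nv=qNv$, so $\phi v\equiv qv\pmod{\ker N}$. Thus $\phi$ has eigenvalues $1$ and $q$.

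These eigenvalues are distinct because $q>1$, so $\phi$ is diagonalizable. Let $e_2$ be an eigenvector for the eigenvalue $q$; it is not in $\ker N$. Put $e_1=Ne_2$. Then $e_1\ne0$, $Ne_1=0$, and $\phi e_1=q^{-1}N\phi e_2=q^{-1}\cdot qNe_2=e_1$. This gives the required basis.

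I expect the main obstacle to be the first step. One has to justify carefully that conjugation by a geometric Frobenius raises tame inertia to the power $q^{-1}$, rather than $q$, on $I_x/\Iw_x$. One also has to make sense of non-integer $\ell$-adic powers of $T$, which requires restricting to the $\ell$-part of the tame quotient. I would settle the direction using the Kummer description $\sigma\mapsto\sigma(z^{1/m})/z^{1/m}\in\mu_m$, on which the geometric Frobenius acts by $\zeta\mapsto\zeta^{1/q}$. The rest is linear algebra.
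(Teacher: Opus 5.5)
Your proposal is correct and follows essentially the same route as the paper: a Kummer computation shows that conjugation by $\phi$ acts on $I_x/\Iw_x$ as multiplication by $q^{-1}$, which gives $\phi N\phi^{-1}=q^{-1}N$, and the same two-dimensional linear algebra then produces the basis. The only cosmetic difference is that the paper fixes $e_1$ spanning $\ker N$, takes $v$ with $Nv=e_1$ and corrects it to $e_2=v+(q-1)^{-1}ce_1$, whereas you diagonalize $\phi$ and set $e_1=Ne_2$.
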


\begin{proof}
Let $\varpi$ be a uniformizer at $x$. The tame quotient $I_x/\Iw_x$ is identified with $\varprojlim_{p\nmid m}\mu_m(\Fqbar)$ by $\sigma\mapsto\big(\sigma(\varpi^{1/m})/\varpi^{1/m}\big)_m$; this does not depend on the choice of the roots $\varpi^{1/m}$. Let $\sigma\in I_x$, let $m$ be prime to $p$, and put $\zeta=\sigma(\varpi^{1/m})/\varpi^{1/m}$. Write $\phi^{-1}(\varpi^{1/m})=\xi\varpi^{1/m}$ with $\xi\in\mu_m(\Fqbar)$, so that $\phi(\varpi^{1/m})=\phi(\xi)^{-1}\varpi^{1/m}$. Since $\sigma$ acts trivially on $\Fqbar$,
\[
\phi\sigma\phi^{-1}(\varpi^{1/m})=\phi(\xi\zeta\varpi^{1/m})=\phi(\zeta)\,\varpi^{1/m}=\zeta^{1/q}\varpi^{1/m}.
\]
Thus conjugation by $\phi$ induces on $I_x/\Iw_x$ the inverse of the $q$-th power map, which is multiplication by $q^{-1}$ in the procyclic group $I_x/\Iw_x\cong\prod_{\ell'\ne p}\mathbb Z_{\ell'}$. The action of $I_x$ on $V$ factors through $I_x/\Iw_x$, and the closure of the group generated by $T$ is $\exp(\mathbb Z_\ell N)$; so $\gamma^a$ acts by $\exp(a_\ell N)$, where $a_\ell$ is the component of $a\in\prod_{\ell'\ne p}\mathbb Z_{\ell'}$ in $\mathbb Z_\ell$. Hence $F=\phi|_V$ satisfies $FTF^{-1}=\exp(q^{-1}N)$, and taking logarithms, $FNF^{-1}=q^{-1}N$, that is, $NF=qFN$.

As $T\ne1$ is unipotent on a space of dimension two, $N=T-1$ is nonzero with $N^2=0$, and $\ker N=V^{I_x}$ is a line, spanned by a vector $e_1$ with $Fe_1=e_1$. Choose $v$ with $Nv=e_1$. Then $N(Fv)=qFNv=qe_1=N(qv)$, so $Fv=qv+ce_1$ for some scalar $c$. Since $q\ne1$, the vector $e_2=v+(q-1)^{-1}ce_1$ satisfies $Fe_2=qe_2$ and $Ne_2=e_1$.
\end{proof}

\subsection{Semisimplicity}

\begin{lemma}\label{lem:semisimple}
Let $k$ be an algebraically closed field of characteristic $0$.
\begin{enumerate}[(i)]
\item If $H$ is a linear algebraic group over $k$ with $H^0$ reductive, every finite-dimensional representation of $H$ is semisimple.
\item If $H\subseteq\GL(V)$ is a Zariski-closed subgroup and $V$ is a semisimple representation of $H$, then $H^0$ is reductive.
\end{enumerate}
\end{lemma}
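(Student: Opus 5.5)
For part~(i) I will reduce to the connected case. Let $H$ be linear algebraic over $k$ with $H^0$ reductive, and let $V$ be a finite-dimensional representation. Since $\operatorname{char}k=0$, the connected reductive group $H^0$ is linearly reductive. This is the standard fact, via Weyl's theorem for the semisimple Lie algebra of the derived group together with the diagonalizability of representations of the central torus. Hence $V|_{H^0}$ is semisimple.

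Next I pass to $H$ by averaging over the finite group $H/H^0$. Let $W\subseteq V$ be an $H$-stable subspace, and choose an $H^0$-equivariant projection $\pi\colon V\to W$. Put
\[
\tilde\pi=\frac1{[H:H^0]}\sum_{g\in H/H^0}g\,\pi\,g^{-1}.
\]
Each summand is well defined on cosets because $\pi$ commutes with $H^0$, which is normal in $H$. Each summand is also an $H^0$-equivariant projection onto $W$. Therefore $\tilde\pi$ is an $H$-equivariant projection onto $W$; this is where characteristic $0$ is used, to divide by the index. Its kernel is then an $H$-stable complement to $W$.

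For part~(ii), let $U=R_u(H^0)$ be the unipotent radical of $H^0$. It is characteristic in $H^0$, hence normal in $H$, and $V$ is semisimple for $H$. I first show $V|_U$ is semisimple by Clifford theory. Take $V$ irreducible for $H$ without loss of generality. Then $V^U\neq0$ by the Lie--Kolchin theorem, since $U$ is unipotent. Because $U$ is normal in $H$, the subspace $V^U$ is $H$-stable, so irreducibility forces $V^U=V$. Hence $U$ acts trivially on every simple constituent, and therefore trivially on the semisimple module $V$. Since $H\subseteq\GL(V)$ acts faithfully, $U=1$, and so $H^0$ is reductive.

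Both steps are standard. The only genuine input is the linear reductivity of connected reductive groups in characteristic $0$, which I would cite rather than reprove; I regard it as the main point. Everything else is formal.
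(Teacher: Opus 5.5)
Your proof is correct and follows the paper's argument essentially verbatim. For (i) you average an $H^0$-equivariant projection over the finite group $H/H^0$; for (ii) you use that the normal unipotent subgroup $R_u(H^0)$ has a nonzero, $H$-stable fixed space in each simple summand, and then conclude by faithfulness. Your label ``Clifford theory'' is superfluous, since the argument only uses that $V^U$ is nonzero and $H$-stable.
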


\begin{proof}
(i) In characteristic $0$, every finite-dimensional representation of a connected reductive group is semisimple. Let $U$ be a finite-dimensional representation of $H$, let $W\subseteq U$ be an $H$-stable subspace, and let $\varpi\colon U\to W$ be an $H^0$-equivariant projection. For $g\in H$ the map $g\varpi g^{-1}$ is again a projection onto $W$, and it depends only on the coset $gH^0$. The average of the $g\varpi g^{-1}$ over the finite group $H/H^0$ is therefore an $H$-equivariant projection onto $W$, and its kernel is an $H$-stable complement of $W$.

(ii) The unipotent radical $R_u(H^0)$ is a normal unipotent subgroup of $H$. It has a nonzero fixed vector in each simple summand of $V$, and these fixed vectors form an $H$-stable subspace of the summand, hence the whole summand. So $R_u(H^0)$ acts trivially on $V$, and it is trivial because $V$ is a faithful representation of $H$.
\end{proof}

\subsection{Averages of traces}

\begin{lemma}\label{lem:averages}
Let $T$ be a smooth geometrically connected variety of dimension $d$ over a finite field $\mathbb F_{q_0}$, and let $\mathcal E$ be a lisse sheaf on $T$, pointwise pure of weight $0$, with geometric generic stalk $V_{\mathcal E}$. Let $\pi^{\mathrm g}=\pi_1(T\otimes\overline{\mathbb F}_{q_0})$, and let $M$ be the dimension of the space of invariants $V_{\mathcal E}^{\pi^{\mathrm g}}$. Then $\Frob_{q_0}$ acts on $V_{\mathcal E}^{\pi^{\mathrm g}}$ with eigenvalues $\beta_1,\dots,\beta_M$ of absolute value $1$, and for $q=q_0^k$
\[
q^{-d}\sum_{x\in T(\Fq)}\Tr(\Frob_x\mid\mathcal E)=\sum_{m=1}^M\beta_m^k+O\big(q^{-1/2}\big),
\]
where the implied constant depends only on $\mathcal E$. Consequently, if the left-hand side converges to a limit $L$ as $k\to\infty$, then $L=M$.
\end{lemma}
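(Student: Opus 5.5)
The plan is to apply the Grothendieck--Lefschetz trace formula to $\mathcal E$ on $T$ and then use Deligne's weight bounds to separate the top-degree contribution from the rest. For $q=q_0^k$ the trace formula gives
\[
\sum_{x\in T(\Fq)}\Tr(\Frob_x\mid\mathcal E)=\sum_{i=0}^{2d}(-1)^i\Tr\big(\Frob_q\mid H^i_c(T\otimes\overline{\mathbb F}_{q_0},\mathcal E)\big).
\]
Since $T$ is smooth of dimension $d$ and $\mathcal E$ is lisse and pointwise pure of weight $0$, Deligne's theorem \cite[Th.~3.3.1]{Deligne} says that $H^i_c$ is mixed of weights $\le i$. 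Hence every group with $i\le2d-1$ contributes $O(q^{d-1/2})$. The implied constant is $\sum_{i<2d}\dim H^i_c$, which depends only on $\mathcal E$ because the cohomology groups are finite-dimensional and do not depend on $k$. Dividing by $q^d$, the only term that survives is $q^{-d}\Tr(\Frob_q\mid H^{2d}_c)$.

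Next I would identify $H^{2d}_c$. Because $T$ is smooth and geometrically connected of dimension $d$, Poincaré duality, or equivalently the standard description of top-degree compactly supported cohomology of a lisse sheaf, gives $H^{2d}_c(T\otimes\overline{\mathbb F}_{q_0},\mathcal E)\cong(V_{\mathcal E})_{\pi^{\mathrm g}}(-d)$, the geometric coinvariants twisted by $(-d)$. This identification is compatible with Frobenius.

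To pass from coinvariants to invariants, I need $V_{\mathcal E}$ to be semisimple as a $\pi^{\mathrm g}$-representation; then coinvariants and invariants are isomorphic as $\Frob$-modules. Here I would use Deligne's theorem that a lisse sheaf which is pure of weight $0$ is geometrically semisimple \cite[Th.~3.4.1(iii)]{Deligne}. Under this isomorphism, the twist $(-d)$ multiplies the eigenvalues of $\Frob_{q_0}$ by $q_0^d$. So the eigenvalues of $\Frob_q$ on $H^{2d}_c$ are $q^d\beta_m^k$, where $\beta_1,\dots,\beta_M$ are the eigenvalues of $\Frob_{q_0}$ on $V_{\mathcal E}^{\pi^{\mathrm g}}$.

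It remains to show that $|\beta_m|=1$. The invariants are the fibre of the largest geometrically constant subsheaf of $\mathcal E$. At any closed point that fibre is a subspace of the stalk, on which Frobenius has weight $0$, so $|\beta_m|=1$.

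The main obstacle is this semisimplicity input: without it, coinvariants and invariants could have different dimensions. I would settle it by citing Deligne's Weil II result above. As an alternative, I could argue via Lemma~\ref{lem:semisimple} after knowing that $\Ggeom^0$ is semisimple, but the citation is cleaner.

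For the final assertion, suppose $\sum_m\beta_m^k\to L$ as $k\to\infty$. The sequence $\sum_m\beta_m^k$ has $1/K\sum_{k\le K}|\sum_m\beta_m^k-M|^2\to0$ only if all $\beta_m=1$, and more directly the sequence is almost periodic. Concretely, Dirichlet's simultaneous approximation theorem gives infinitely many $k$ with all $\beta_m^k$ arbitrarily close to $1$, so the limit must be $L=M$.
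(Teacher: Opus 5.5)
Your proposal is correct and follows the paper's proof essentially step for step. Both use the trace formula, Deligne's weight bound for $i<2d$, the identification $H^{2d}_c\cong(V_{\mathcal E})_{\pi^{\mathrm g}}(-d)$, geometric semisimplicity of pure sheaves to pass from coinvariants to invariants, and Dirichlet's simultaneous approximation for the limit. The only inessential difference is how you get $|\beta_m|=1$: you embed the invariants $H^0(T\otimes\overline{\mathbb F}_{q_0},\mathcal E)$ directly into the stalk at a closed point, whereas the paper does this for $\mathcal E^\vee$ after Poincar\'e duality. Your stray sentence about Ces\`aro averages before the Dirichlet argument is not needed.
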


\begin{proof}
By the Grothendieck--Lefschetz trace formula \cite[Rapport]{SGA45}, the left-hand side is $q^{-d}\sum_i(-1)^i\Tr\big(\Frob_q\mid H^i_c(T\otimes\Fqbar,\mathcal E)\big)$. Here $H^i_c(T\otimes\Fqbar,\mathcal E)=0$ for $i>2d$. For $i<2d$ the group $H^i_c$ is mixed of weights at most $i\le2d-1$ \cite[Th.~3.3.1]{Deligne}, so every eigenvalue of $\Frob_q$ on it has absolute value at most $q^{d-1/2}$; its dimension does not depend on $k$. Hence these terms contribute at most $q^{-1/2}\sum_{i<2d}\dim H^i_c(T\otimes\Fqbar,\mathcal E)$. Moreover $H^{2d}_c(T\otimes\Fqbar,\mathcal E)\cong(V_{\mathcal E})_{\pi^{\mathrm g}}(-d)$. The fundamental group $\pi_1(T)$ acts on $V_{\mathcal E}$, and its normal subgroup $\pi^{\mathrm g}$ acts semisimply, because $\mathcal E$ is pure and hence geometrically semisimple \cite[3.4.1(iii)]{Deligne}. So the natural map $V_{\mathcal E}^{\pi^{\mathrm g}}\to(V_{\mathcal E})_{\pi^{\mathrm g}}$ is an isomorphism, compatible with the action of $\pi_1(T)/\pi^{\mathrm g}=\mathrm{Gal}(\Fqbar/\mathbb F_{q_0})$. By Poincar\'e duality, $H^{2d}_c(T\otimes\Fqbar,\mathcal E)$ is dual to $H^0(T\otimes\Fqbar,\mathcal E^\vee)(d)$, and $H^0(T\otimes\Fqbar,\mathcal E^\vee)$ embeds into the stalk of $\mathcal E^\vee$ at a geometric point above a closed point, compatibly with the Frobenius of that point. Hence $H^{2d}_c$ is pure of weight $2d$, and the $\beta_m$ have absolute value $1$. This proves the formula. If its left-hand side converges to $L$, then $\sum_m\beta_m^k\to L$. By Dirichlet's simultaneous approximation theorem there are infinitely many $k$ for which every $\beta_m^k$ is arbitrarily close to $1$; along them $\sum_m\beta_m^k\to M$, and hence $L=M$.
\end{proof}

\section{The Newton polytope}\label{sec:polytope}

In this section $n\ge1$. We work in $\mathbb R^{n+1}=\mathbb R^n\times\mathbb R$, with coordinates $(x,y)$ and standard basis $e_1,\dots,e_{n+1}$, and put
\[
\mathbf a_i=-e_i,\qquad\mathbf b_i=e_i+e_{n+1}\quad(1\le i\le n),\qquad\mathbf c=e_{n+1}.
\]
These are the exponents of the monomials $u_i^{-1}$, $h_iu_it$ and $\lambda t$ of $F_n$ in the torus coordinates $(u_1,\dots,u_n,t)$. Let
\[
\begin{aligned}
P_n&=\conv\{\mathbf a_1,\dots,\mathbf a_n,\mathbf b_1,\dots,\mathbf b_n,\mathbf c\},\qquad\Delta_n=\conv(P_n\cup\{0\}),\\
\sigma^*&=\conv\{\mathbf a_1,\dots,\mathbf a_n,\mathbf b_1,\dots,\mathbf b_n\}.
\end{aligned}
\]
Thus $P_n$ is the Newton polytope of $F_n$, and $\Delta_n$ is its Newton polyhedron at infinity in the sense of Kouchnirenko and of Adolphson--Sperber \cite{Kouchnirenko,AS}. For a nonzero linear form $\varphi(x,y)=\sum_i\alpha_ix_i+\beta y$ let $M_\varphi=\max_{P_n}\varphi$, and let $P_n^\varphi=\{v\in P_n:\varphi(v)=M_\varphi\}$ be the face it cuts out. As $P_n$ is the convex hull of the $2n+1$ points above, $M_\varphi$ is attained among them, and $P_n^\varphi$ is the convex hull of those points at which $\varphi=M_\varphi$. Put $\xi_i=\varphi(\mathbf a_i)=-\alpha_i$; then $\varphi(\mathbf b_i)=\beta-\xi_i$ and $\varphi(\mathbf c)=\beta$. Finally let $\varphi^*(x,y)=\sum_ix_i-2y$.

\begin{theorem}[Faces of $P_n$]\label{thm:faces}
Let $\varphi\ne0$ and $M=M_\varphi$.
\begin{enumerate}[(a)]
\item $\beta-M\le\xi_i\le M$ for every $i$, $\beta\le M$, and $\beta\le2M$.
\item If $\beta=2M$, then $M<0$, $\varphi=-M\varphi^*$, and $P_n^\varphi=\sigma^*$.
\item If $\beta\ne2M$, each index $i$ is of exactly one of the types
\[
\mathrm A:\ \xi_i=M,\qquad\mathrm B:\ \xi_i=\beta-M,\qquad\mathrm N:\ \beta-M<\xi_i<M.
\]
The face $P_n^\varphi$ is the convex hull of the $\mathbf a_i$ with $i$ of type $\mathrm A$, the $\mathbf b_i$ with $i$ of type $\mathrm B$, and, if and only if $\beta=M$, the point $\mathbf c$. If $\beta<M$, some index is not of type $\mathrm N$.
\item Conversely, let $J_{\mathrm A},J_{\mathrm B}\subseteq\{1,\dots,n\}$ be disjoint and $\eta\in\{0,1\}$, with $J_{\mathrm A}\cup J_{\mathrm B}\ne\varnothing$ if $\eta=0$. Let $\tau(J_{\mathrm A},J_{\mathrm B},\eta)$ be the convex hull of $\{\mathbf a_i\}_{i\in J_{\mathrm A}}\cup\{\mathbf b_j\}_{j\in J_{\mathrm B}}$, together with $\mathbf c$ if $\eta=1$. Then there is a $\varphi$ with $M_\varphi=1$ and $P_n^\varphi=\tau(J_{\mathrm A},J_{\mathrm B},\eta)$.
\item The vertex set of $\tau(J_{\mathrm A},J_{\mathrm B},\eta)$ is linearly independent. In particular $\tau(J_{\mathrm A},J_{\mathrm B},\eta)$ is a simplex of dimension $|J_{\mathrm A}|+|J_{\mathrm B}|+\eta-1$.
\end{enumerate}
\end{theorem}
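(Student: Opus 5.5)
The whole proof runs on the values of $\varphi$ at the $2n+1$ generators: $\varphi(\mathbf a_i)=\xi_i$, $\varphi(\mathbf b_i)=\beta-\xi_i$, $\varphi(\mathbf c)=\beta$. Every step is bookkeeping about which of these values equal $M$.

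For (a), $M$ is the maximum of these numbers. This gives $\xi_i\le M$, $\beta-\xi_i\le M$ (so $\xi_i\ge\beta-M$) and $\beta\le M$. Adding $\xi_i\le M$ and $\beta-\xi_i\le M$ gives $\beta\le 2M$.

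For (b), if $\beta=2M$ then both inequalities in (a) are equalities, so $\xi_i=M$ for all $i$. Combined with $\beta\le M$ this gives $M\le0$. If $M=0$ then $\beta=0$ and all $\alpha_i=0$, contradicting $\varphi\ne0$; hence $M<0$. Then $\alpha_i=-M$ and $\beta=2M$, i.e. $\varphi=-M\varphi^*$. Every $\mathbf a_i,\mathbf b_i$ attains $M$, while $\varphi(\mathbf c)=2M<M$, so $P_n^\varphi=\sigma^*$.

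For (c), if $\beta\ne 2M$ then $\beta-M<M$. So the two endpoints of the interval $[\beta-M,M]$ are distinct, and each index falls into exactly one of the types A, B, N. The description of the face is then immediate from "$P_n^\varphi$ is the convex hull of the generators where $\varphi=M$". If $\beta<M$ and every index were of type N, no generator would attain $M$, which is absurd.

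For (d), I choose $\varphi$ with $\beta=\eta$. For $i\in J_{\mathrm A}$ take $\xi_i=1$; for $j\in J_{\mathrm B}$ take $\xi_j=\beta-1$; for other indices take $\xi_i$ strictly between $\beta-1$ and $1$, for instance $\xi_i=\beta/2$. Then the maximum over the generators is $1$: if $\eta=1$ it is attained at $\mathbf c$, and if $\eta=0$ it is attained at some $\mathbf a_i$ or $\mathbf b_j$, since $J_{\mathrm A}\cup J_{\mathrm B}\ne\varnothing$. The face is exactly $\tau$, and $\varphi\ne0$ because $\beta\ne0$ or some $\xi_i\ne\beta/2$. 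The main subtlety is making the maximum equal exactly $1$ in the case $\eta=0$; the nonemptiness hypothesis handles this.

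For (e), take the vectors $-e_i$ ($i\in J_{\mathrm A}$), $e_j+e_{n+1}$ ($j\in J_{\mathrm B}$), and possibly $e_{n+1}$, with $J_{\mathrm A}$ and $J_{\mathrm B}$ disjoint. The first $n$ coordinates of these vectors have disjoint supports, and only $e_{n+1}$ has zero $x$-part. So a vanishing linear combination has all coefficients of the $\mathbf a_i,\mathbf b_j$ equal to zero, and then the coefficient of $\mathbf c$ is zero too. Linearly independent points are affinely independent, so $\tau$ is a simplex of dimension equal to the number of vertices minus one.
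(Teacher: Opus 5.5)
Your proof is correct and follows the paper's argument essentially step by step. In particular, your choice in (d), namely $\beta=\eta$ with $\xi_i=1$, $\beta-1$ or $\beta/2$, is exactly the paper's choice of the $\alpha_i$, rewritten through $\xi_i=-\alpha_i$.
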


\begin{proof}
(a) The values $\xi_i$, $\beta-\xi_i$ and $\beta$ are at most $M$; adding the first two for $i=1$ gives $\beta\le2M$.

(b) If $\beta=2M$, then $\beta-M=M$, so (a) forces $\xi_i=M$ for all $i$. Thus $\alpha=-M(1,\dots,1)$ and $\beta=2M$, that is, $\varphi=-M\varphi^*$. Since $\varphi\ne0$ we have $M\ne0$, and $2M=\beta\le M$ gives $M<0$. Every $\mathbf a_i$ and every $\mathbf b_i$ takes the value $M$, whereas $\varphi(\mathbf c)=2M<M$; hence $P_n^\varphi=\sigma^*$.

(c) If $\beta\ne2M$, then $\beta<2M$ by (a), so $\beta-M<M$. By (a), $\xi_i$ lies in $[\beta-M,M]$ and equals at most one of its endpoints. Since $\mathbf a_i\in P_n^\varphi$ if and only if $\xi_i=M$, $\mathbf b_i\in P_n^\varphi$ if and only if $\xi_i=\beta-M$, and $\mathbf c\in P_n^\varphi$ if and only if $\beta=M$, the description of the face follows. If $\beta<M$ and every index were of type $\mathrm N$, no generator would attain the value $M$.

(d) For $\eta=0$ take $\beta=0$, and $\alpha_i=-1$, $1$ or $0$ according as $i\in J_{\mathrm A}$, $i\in J_{\mathrm B}$ or $i\notin J_{\mathrm A}\cup J_{\mathrm B}$. Then $\varphi(\mathbf a_i)$ is $1$, $-1$ or $0$, $\varphi(\mathbf b_i)$ is $-1$, $1$ or $0$, and $\varphi(\mathbf c)=0$. Since $J_{\mathrm A}\cup J_{\mathrm B}\ne\varnothing$, the maximum is $1$, attained exactly at the vertices of $\tau(J_{\mathrm A},J_{\mathrm B},0)$. For $\eta=1$ take $\beta=1$, and $\alpha_i=-1$, $0$ or $-\frac12$ according as $i\in J_{\mathrm A}$, $i\in J_{\mathrm B}$ or $i\notin J_{\mathrm A}\cup J_{\mathrm B}$. Then $\varphi(\mathbf a_i)$ is $1$, $0$ or $\frac12$, $\varphi(\mathbf b_i)$ is $0$, $1$ or $\frac12$, and $\varphi(\mathbf c)=1$, so the maximum $1$ is attained exactly at the vertices of $\tau(J_{\mathrm A},J_{\mathrm B},1)$.

(e) In a linear relation $\sum_{i\in J_{\mathrm A}}c_i\mathbf a_i+\sum_{j\in J_{\mathrm B}}d_j\mathbf b_j+c_0\mathbf c=0$, the coordinate $x_i$ with $i\in J_{\mathrm A}$ receives only the contribution $-c_i$, and the coordinate $x_j$ with $j\in J_{\mathrm B}$ only the contribution $d_j$. Hence all $c_i$ and $d_j$ vanish, and then $c_0=0$. Linearly independent points are affinely independent.
\end{proof}

\begin{proposition}\label{prop:structure}
\begin{enumerate}[(i)]
\item $\dim P_n=n+1$, and $0\notin P_n$.
\item $\sigma^*$ is a facet of $P_n$; it is cut out by $\varphi^*$, which takes the value $-1$ on it. With $\mathbf m=\frac12\mathbf c$ and $w_i=e_i+\frac12e_{n+1}$ one has $\sigma^*=\mathbf m+\conv\{\pm w_1,\dots,\pm w_n\}$, and $w_1,\dots,w_n$ are linearly independent. Thus $\sigma^*$ is an $n$-dimensional cross-polytope; for $n\ge2$ it has $2n>n+1$ vertices and is not a simplex.
\item $P_n=\conv(\sigma^*\cup\{\mathbf c\})$ and $\Delta_n=P_n\cup\conv(\sigma^*\cup\{0\})$. These two pyramids over $\sigma^*$ have disjoint interiors and equal volumes, so $\Vol(\Delta_n)=2\Vol(P_n)$.
\item The proper faces of $P_n$ are $\sigma^*$ and the pairwise distinct simplices $\tau(J_{\mathrm A},J_{\mathrm B},\eta)$ of Theorem~\ref{thm:faces}(d). Consequently, for $0\le j\le n$,
\[
f_j(P_n)=\binom n{j+1}2^{j+1}+\binom nj2^j+[\,j=n\,],
\]
where $[\,j=n\,]$ is $1$ if $j=n$ and $0$ otherwise. For $n=4$ the $f$-vector $(f_0,\dots,f_4)$ of the $5$-dimensional polytope $P_4$ is $(9,32,56,48,17)$.
\item The faces of $\Delta_n$ not containing $0$ are exactly the proper faces of $P_n$ other than $\sigma^*$.
\end{enumerate}
\end{proposition}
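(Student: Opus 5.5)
The plan is to derive everything from Theorem~\ref{thm:faces} together with explicit coordinates.

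For (i), the points $\mathbf a_1,\dots,\mathbf a_n,\mathbf c$ form the basis $-e_1,\dots,-e_n,e_{n+1}$, and $\mathbf b_1=e_1+e_{n+1}$ is another vertex. So $P_n$ contains the $n+2$ affinely independent points $\mathbf a_1,\dots,\mathbf a_n,\mathbf c,\mathbf b_1$ (affine independence is checked by a one-line linear relation computation), and $\dim P_n=n+1$. For $0\notin P_n$ I use $\varphi^*$: it equals $-1$ at every $\mathbf a_i$ and $\mathbf b_i$ and $-2$ at $\mathbf c$. Hence $\varphi^*\le-1<0=\varphi^*(0)$ on $P_n$.

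For (ii), Theorem~\ref{thm:faces}(b) applied to $\varphi=\varphi^*$ (where $M=-1$ and $\beta=-2=2M$) gives $P_n^{\varphi^*}=\sigma^*$. The identities $\mathbf b_i=\mathbf m+w_i$ and $\mathbf a_i=\mathbf m-w_i$ are immediate. The $w_i$ are independent because their first $n$ coordinates are the $e_i$. So $\sigma^*$ is a cross-polytope of dimension $n=\dim P_n-1$, hence a facet.

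For (iii), every vertex of $P_n$ lies in $\sigma^*\cup\{\mathbf c\}$, which gives $P_n=\conv(\sigma^*\cup\{\mathbf c\})$, and then $\Delta_n=\conv(\sigma^*\cup\{\mathbf c,0\})$. The key point is that $\mathbf m=\frac12(0+\mathbf c)$ lies in the relative interior of $\sigma^*$. Take any point of $\Delta_n$, written as a convex combination $s\mathbf c+r0+(1-s-r)x$ with $x\in\sigma^*$. If $s\ge r$, rewrite $r0+r\mathbf c=2r\mathbf m$, which puts the point in $\conv(\sigma^*\cup\{\mathbf c\})$. If $s<r$, the symmetric rewriting puts it in $\conv(\sigma^*\cup\{0\})$. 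The two pyramids lie on opposite sides of the hyperplane $\varphi^*=-1$, since $\varphi^*(\mathbf c)=-2$ and $\varphi^*(0)=0$, so their interiors are disjoint. Their apexes are at equal distance from that hyperplane, so the two pyramids have equal heights over the common base and hence equal volumes.

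For (iv), Theorem~\ref{thm:faces}(b),(c) shows that every proper face is either $\sigma^*$ or of the form $\tau(J_{\mathrm A},J_{\mathrm B},\eta)$.
\begin{itemize}
\item In case (c) with $\beta=M$, the constraint $\beta-M<M$ forces $M>0$, and the face is $\tau(\cdot,\cdot,1)$.
\item In case (c) with $\beta<M$, the face is $\tau(\cdot,\cdot,0)$ with $J_{\mathrm A}\cup J_{\mathrm B}\neq\varnothing$.
\end{itemize}
The sets $J_{\mathrm A}$ and $J_{\mathrm B}$ are disjoint because each index has exactly one type. Conversely, Theorem~\ref{thm:faces}(d) realizes every such $\tau$ as a face. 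Distinct triples give distinct simplices because Theorem~\ref{thm:faces}(e) makes the vertex sets independent, so each triple is recovered from its vertex set.

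The counting then goes as follows. A $j$-face with $\eta=0$ picks $j+1$ indices, each assigned A or B, giving $\binom n{j+1}2^{j+1}$. A $j$-face with $\eta=1$ picks $j$ indices, giving $\binom nj2^j$. The facet $\sigma^*$ contributes $[j=n]$. Substituting $n=4$ gives the stated $f$-vector.

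For (v), a face of $\Delta_n$ not containing $0$ is cut out by some $\varphi$ with $\max_{\Delta_n}\varphi>\varphi(0)=0$. Its maximum is then attained on $P_n$, so the face equals $P_n^\varphi$ with $M_\varphi>0$. Since $\sigma^*$ requires $M<0$, this excludes $\sigma^*$. Conversely, each $\tau$ is cut out in Theorem~\ref{thm:faces}(d) by a $\varphi$ with $M_\varphi=1>0$, so it is also a face of $\Delta_n$ not containing $0$.

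The main obstacle I expect is the decomposition argument in (iii), that is, verifying carefully that the two pyramids cover $\Delta_n$. I would handle it by the explicit rewriting through $\mathbf m$ described there.
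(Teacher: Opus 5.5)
Your proposal is correct and follows the paper's proof essentially step by step; the only variation is in (i), where you exhibit $n+2$ affinely independent vertices and separate $0$ from $P_n$ by $\varphi^*\le-1$, instead of using spanning difference vectors and the $y$-coordinate. For the count in (iv) you should also record, as the paper does, that $\sigma^*$ is none of the simplices $\tau(J_{\mathrm A},J_{\mathrm B},\eta)$ (it contains both $\mathbf a_1$ and $\mathbf b_1$, which no $\tau$ does), so that the term $[\,j=n\,]$ does not count a face twice.
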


\begin{proof}
(i) The differences $\mathbf b_i-\mathbf c=e_i$ ($1\le i\le n$) and $(\mathbf c-\mathbf a_1)-(\mathbf b_1-\mathbf c)=e_{n+1}$ span $\mathbb R^{n+1}$, so the affine hull of $P_n$ is $\mathbb R^{n+1}$. The coordinate $y$ vanishes at the $\mathbf a_i$ and equals $1$ at the $\mathbf b_i$ and at $\mathbf c$. If $0$ were a convex combination of the generators, the coefficients of the $\mathbf b_i$ and of $\mathbf c$ would vanish, and $0$ would be a convex combination of $-e_1,\dots,-e_n$, which is absurd.

(ii) For $\varphi^*$ we have $\xi_i=-1$ and $\beta=-2$; so $\max_{P_n}\varphi^*=-1$ and, by Theorem~\ref{thm:faces}(b), $\varphi^*$ cuts out $\sigma^*$. All segments $[\mathbf a_i,\mathbf b_i]$ have midpoint $\mathbf m$, and $\mathbf b_i-\mathbf m=w_i=-(\mathbf a_i-\mathbf m)$. The $w_i$ are linearly independent, since their first $n$ coordinates form the identity matrix. Hence the affine hull of $\sigma^*$ is $\mathbf m+\sum_i\mathbb Rw_i$, of dimension $n=\dim P_n-1$, and $\sigma^*$ is the image of the standard cross-polytope $\conv\{\pm e_1,\dots,\pm e_n\}\subset\mathbb R^n$ under the affine embedding $z\mapsto\mathbf m+\sum_iz_iw_i$.

(iii) Every generator of $P_n$ other than $\mathbf c$ lies in $\sigma^*$, so $P_n=\conv(\sigma^*\cup\{\mathbf c\})$. Since $\mathbf c=2\mathbf m$, a point of $\conv(\sigma^*\cup\{0,\mathbf c\})$ can be written $z=(1-a-b)s+b\,\mathbf c$ with $s\in\sigma^*$, $a,b\ge0$ and $a+b\le1$. If $a\ge b$, then $z=(1-a-b)s+2b\,\mathbf m+(a-b)\cdot0$ is a convex combination of points of $\sigma^*$ and of $0$. If $b\ge a$, then $z=(1-a-b)s+2a\,\mathbf m+(b-a)\mathbf c$ is a convex combination of points of $\sigma^*$ and of $\mathbf c$. Hence $\Delta_n$ is the union of the two pyramids. They lie on the two sides $\{\varphi^*\le-1\}$ and $\{\varphi^*\ge-1\}$ of the hyperplane spanned by $\sigma^*$, so their interiors are disjoint. Since $\varphi^*(\mathbf c)=-2$ and $\varphi^*(0)=0$, the two apices are at the same distance from that hyperplane, and the pyramids have the same volume.

(iv) By Theorem~\ref{thm:faces}(b),(c), every proper face is $\sigma^*$ or of the form $\tau(J_{\mathrm A},J_{\mathrm B},\eta)$, and by Theorem~\ref{thm:faces}(d) each such $\tau$ is a face. Distinct data $(J_{\mathrm A},J_{\mathrm B},\eta)$ give distinct vertex sets, and $\sigma^*$ contains both $\mathbf a_1$ and $\mathbf b_1$, which no $\tau$ does. By Theorem~\ref{thm:faces}(e) the simplex $\tau(J_{\mathrm A},J_{\mathrm B},\eta)$ has dimension $j=|J_{\mathrm A}|+|J_{\mathrm B}|+\eta-1$. For $\eta=0$ there are $\binom n{j+1}2^{j+1}$ choices with $|J_{\mathrm A}|+|J_{\mathrm B}|=j+1$ (choose the indices, then a type for each), and for $\eta=1$ there are $\binom nj2^j$; the facet $\sigma^*$ contributes $[\,j=n\,]$.

(v) Let $\tau$ be a face of $\Delta_n$ with $0\notin\tau$, cut out by a linear form $\varphi$ with maximum $M'$ on $\Delta_n$. Then $M'>\varphi(0)=0$, and since $\Delta_n=\conv(P_n\cup\{0\})$, we get $M'=M_\varphi$ and $\tau=P_n^\varphi$. This is a proper face of $P_n$ (as $\varphi$ is not constant on $P_n$) cut out by a form with positive maximum. By Theorem~\ref{thm:faces}(b) every form cutting out $\sigma^*$ has negative maximum, so $\tau\ne\sigma^*$. Conversely, every proper face of $P_n$ other than $\sigma^*$ is cut out by a form $\varphi$ with $M_\varphi=1$ (Theorem~\ref{thm:faces}(d)), and this form cuts out the same face of $\Delta_n$, because $\varphi(0)=0<1$.
\end{proof}

\begin{proposition}[A unimodular triangulation]\label{prop:volume}
For $J\subseteq\{1,\dots,n\}$ let
\[
\tau_J=\conv\big(\{0,\mathbf c\}\cup\{\mathbf a_i\}_{i\in J}\cup\{\mathbf b_j\}_{j\notin J}\big).
\]
Each $\tau_J$ is a unimodular $(n+1)$-simplex, $\Delta_n=\bigcup_J\tau_J$, and the interiors of the $\tau_J$ are pairwise disjoint. Consequently
\[
(n+1)!\,\Vol(\Delta_n)=2^n,\qquad(n+1)!\,\Vol(P_n)=2^{n-1}.
\]
\end{proposition}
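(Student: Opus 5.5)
The plan is to prove the three claims in order: each $\tau_J$ is unimodular, the $\tau_J$ cover $\Delta_n$, and their interiors are disjoint. The volume formulas then follow from Proposition~\ref{prop:structure}(iii).

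\emph{Unimodularity.} The vertices of $\tau_J$ other than $0$ are $\mathbf c$, the $\mathbf a_i$ for $i\in J$ and the $\mathbf b_j$ for $j\notin J$. I will compute the determinant of the corresponding $(n+1)\times(n+1)$ integer matrix. Subtracting $\mathbf c$ from each $\mathbf b_j$ replaces it by $e_j$, so the columns become $-e_i$ ($i\in J$), $e_j$ ($j\notin J$) and $e_{n+1}$. The determinant is therefore $\pm1$, and $(n+1)!\Vol(\tau_J)=1$.

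\emph{Covering and disjointness.} I will use the function $g(x,y)=y$ and the decomposition $\Delta_n=P_n\cup\conv(\sigma^*\cup\{0\})$. A cleaner route is to describe $\Delta_n$ and each $\tau_J$ by coordinates. A point of $\tau_J$ has the form
\[
z=\sum_{i\in J}s_i\mathbf a_i+\sum_{j\notin J}t_j\mathbf b_j+c\,\mathbf c,
\]
with all coefficients $\ge0$ and total sum $\le1$. Its coordinates are $x_i=-s_i\le0$ for $i\in J$, $x_j=t_j\ge0$ for $j\notin J$, and $y=\sum_{j\notin J}t_j+c$. So on $\tau_J$ we have $\operatorname{sign}(x_k)\in\{\le0\text{ on }J,\ \ge0\text{ off }J\}$, and $\tau_J$ equals the set
\[
\Big\{(x,y):\ x_i\le0\ (i\in J),\ x_j\ge0\ (j\notin J),\ y\ge\sum_j x_j^+,\ y+\sum_i x_i^-\le1\Big\},
\]
where $x^+=\max(x,0)$ and $x^-=\max(-x,0)$. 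Indeed, the coefficients are recovered uniquely as $s_i=-x_i$, $t_j=x_j$ and $c=y-\sum x_j^+$, and the constraint on their total sum is $\sum x_i^-+y\le1$.

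It then remains to show that
\[
\Delta_n=\Big\{(x,y):\ \sum_k x_k^+\le y\le1-\sum_k x_k^-\Big\}.
\]
Both conditions are preserved by convex combinations, since the left side is convex and the right side concave, and all generators $0,\mathbf c,\mathbf a_i,\mathbf b_i$ satisfy them; this gives $\subseteq$. The reverse inclusion follows from the explicit decomposition above: for a point in the set, take $J=\{k:x_k<0\}$ and read off nonnegative coefficients. Thus the orthant sign pattern of $x$ selects $\tau_J$, and this proves the covering. Interiors are disjoint because the interior of $\tau_J$ lies in the open orthant with sign pattern $J$.

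\emph{Volumes and the expected obstacle.} Summing over the $2^n$ choices of $J$ gives $(n+1)!\Vol(\Delta_n)=2^n$. Halving via Proposition~\ref{prop:structure}(iii) gives $(n+1)!\Vol(P_n)=2^{n-1}$. The only delicate point is the reverse inclusion in the description of $\Delta_n$, which is handled by the coefficient recovery above, so I expect no real obstacle.
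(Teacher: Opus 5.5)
Your proof is correct and follows essentially the same route as the paper. Both use the same column reduction to get unimodularity, the same inequality description of $\tau_J$ and of $\Delta_n$ via $x^{\pm}$, the choice $J=\{k:x_k<0\}$ for the reverse inclusion, and the same orthant argument for disjoint interiors. The abandoned opening remark about $g(x,y)=y$ and the decomposition of $\Delta_n$ plays no role and can be deleted.
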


\begin{proof}
The edge vectors of $\tau_J$ at the vertex $0$ are $-e_i$ ($i\in J$), $e_j+e_{n+1}$ ($j\notin J$) and $e_{n+1}$. Subtracting the last vector from those of the second group turns them into a signed permutation of the standard basis, so $\tau_J$ is a unimodular simplex. Write a point of $\tau_J$ as $\sum_{i\in J}\beta_i\mathbf a_i+\sum_{j\notin J}\delta_j\mathbf b_j+\gamma\mathbf c+\alpha\cdot0$, with nonnegative coefficients of sum $1$. Then $x_i=-\beta_i$ ($i\in J$), $x_j=\delta_j$ ($j\notin J$), $y=\gamma+\sum_{j\notin J}\delta_j$ and $\alpha=1+\sum_{i\in J}x_i-y$. Hence
\begin{equation}\label{eq:tauJ}
\tau_J=\Big\{(x,y):\ x_i\le0\ (i\in J),\ \ x_j\ge0\ (j\notin J),\ \ \sum_{j\notin J}x_j\le y\le1+\sum_{i\in J}x_i\Big\}.
\end{equation}
Let $K=\{(x,y):\sum_ix_i^+\le y\le1-\sum_ix_i^-\}$, where $x^{\pm}=\max(\pm x,0)$. The set $K$ is convex, because $x\mapsto\sum_ix_i^{\pm}$ are convex functions. It contains $0$, $\mathbf c$, every $\mathbf a_i$ (for which $0\le0\le1-1$) and every $\mathbf b_i$ (for which $1\le1\le1$); hence $\Delta_n\subseteq K$. Conversely, given $(x,y)\in K$ put $J=\{i:x_i<0\}$; then \eqref{eq:tauJ} shows $(x,y)\in\tau_J\subseteq\Delta_n$. Thus $\Delta_n=K=\bigcup_J\tau_J$. If $J\ne J'$ and $i$ lies in exactly one of them, then the interior of one of $\tau_J,\tau_{J'}$ lies in $\{x_i<0\}$ and that of the other in $\{x_i>0\}$. A unimodular $(n+1)$-simplex has volume $1/(n+1)!$, which gives the first formula; the second follows from Proposition~\ref{prop:structure}(iii).
\end{proof}

\begin{remark}\label{rem:euclidean}
The volumes can also be computed from the cross-polytope. The Gram matrix of the vectors $w_1,\dots,w_n$ of Proposition~\ref{prop:structure}(ii) is $\mathrm I_n+\frac14\mathrm J_n$, where $\mathrm J_n$ is the all-ones matrix. Its eigenvalues are $1+\frac n4$, once, and $1$, with multiplicity $n-1$, so its determinant is $\frac{n+4}4$. The standard cross-polytope in $\mathbb R^n$ has volume $2^n/n!$. Hence the $n$-dimensional Euclidean volume of the facet $\sigma^*$ is
\[
\Vol_n(\sigma^*)=\frac{2^n}{n!}\sqrt{\frac{n+4}4}.
\]
The facet lies in the hyperplane $\varphi^*=-1$, whose normal vector $(1,\dots,1,-2)$ has length $\sqrt{n+4}$, and $\varphi^*(\mathbf c)=-2$. So the apex $\mathbf c$ is at distance $1/\sqrt{n+4}$ from this hyperplane, and
\[
\Vol(P_n)=\frac1{n+1}\cdot\frac{2^n}{n!}\sqrt{\frac{n+4}4}\cdot\frac1{\sqrt{n+4}}=\frac{2^{n-1}}{(n+1)!},
\]
in agreement with Proposition~\ref{prop:volume}. For $n=4$, $\Vol_4(\sigma^*)=\frac23\sqrt2$.
\end{remark}

\begin{proposition}[Non-degeneracy]\label{prop:nondeg}
Let $h\in(\Fpbar^\times)^n$. For every face $\tau$ of $\Delta_n$ not containing $0$, the partial derivatives of the face polynomial $F_{n,\tau}$, the sum of the monomials of $F_n$ whose exponents lie in $\tau$, have no common zero on the torus $\Gm^{n+1}$ over $\Fpbar$. Thus $F_n$ is non-degenerate with respect to $\Delta_n$ in the sense of \cite{AS}, and $\dim\Delta_n=n+1$.
\end{proposition}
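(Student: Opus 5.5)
By Proposition~\ref{prop:structure}(v), the faces of $\Delta_n$ not containing $0$ are exactly the simplices $\tau=\tau(J_{\mathrm A},J_{\mathrm B},\eta)$ of Theorem~\ref{thm:faces}(d). So the plan is to check non-degeneracy face by face, using only their explicit vertex sets. Write $A=J_{\mathrm A}$ and $B=J_{\mathrm B}$. The face polynomial is
\[
F_{n,\tau}=\sum_{i\in A}u_i^{-1}+t\sum_{j\in B}h_ju_j+\eta\lambda t .
\]
The variables $u_i$ with $i\notin A\cup B$ do not occur in it.

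The derivatives give three kinds of equations. For $i\in A$ we have $\partial F_{n,\tau}/\partial u_i=-u_i^{-2}$. This never vanishes on the torus, so as soon as $A\ne\varnothing$ there is no common zero. For $j\in B$ we have $\partial F_{n,\tau}/\partial u_j=th_j$, which is nonzero on the torus because $h_j\ne0$ and $t\ne0$. So if $B\ne\varnothing$ there is again no common zero. The remaining case is $A=B=\varnothing$. Then the constraint in Theorem~\ref{thm:faces}(d) forces $\eta=1$, and $\tau$ is the vertex $\mathbf c$. Here $F_{n,\tau}=\lambda t$, and $\partial F_{n,\tau}/\partial t=\lambda\ne0$, since $\lambda\in\Fp^\times$.

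This covers every case. The argument uses no hypothesis on $p$ and works over $\Fpbar$. It relies only on the nonvanishing of the coefficients $1$, $h_j$ and $\lambda$, because the vertices of each $\tau$ are linearly independent (Theorem~\ref{thm:faces}(e)), so each monomial has its own derivative direction. The equality $\dim\Delta_n=n+1$ follows from Proposition~\ref{prop:structure}(i), since $P_n\subseteq\Delta_n$. Together these give non-degeneracy with respect to $\Delta_n$ in the sense of \cite{AS}.

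There is no real obstacle. The one point to get right is that the list of faces is complete: faces containing $0$, including those that contain the facet-pyramid over $\sigma^*$, are excluded by the definition. This matters because $\sigma^*$ itself would be degenerate, as the face polynomial $\sum_iu_i^{-1}+t\langle h,u\rangle$ is quasi-homogeneous with critical points. Proposition~\ref{prop:structure}(v) handles this by excluding $\sigma^*$.
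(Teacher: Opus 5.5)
Your proof is correct and is essentially the paper's: the same case split ($J_{\mathrm A}\ne\varnothing$, then $J_{\mathrm B}\ne\varnothing$, then $\tau=\{\mathbf c\}$ with $\eta=1$) applied to the faces listed in Proposition~\ref{prop:structure}(v), with $\dim\Delta_n=n+1$ taken from Proposition~\ref{prop:structure}(i). One side remark is inaccurate, though the argument never uses it: the partial derivatives of the face polynomial of $\sigma^*$ have a common zero on the torus only when $L_\varepsilon=0$ for some $\varepsilon$ (Remark~\ref{rem:AS}(b)), not for every $h$.
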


\begin{proof}
By Proposition~\ref{prop:structure}(iv),(v), $\tau=\tau(J_{\mathrm A},J_{\mathrm B},\eta)$ with $J_{\mathrm A}\cap J_{\mathrm B}=\varnothing$. If $i\in J_{\mathrm A}$, then $i\notin J_{\mathrm B}$, so the only monomial of $F_{n,\tau}$ involving $u_i$ is $u_i^{-1}$, and $\partial F_{n,\tau}/\partial u_i=-u_i^{-2}$ does not vanish on the torus. If $J_{\mathrm A}=\varnothing$ and $j\in J_{\mathrm B}$, then $\partial F_{n,\tau}/\partial u_j=h_jt\ne0$. If $J_{\mathrm A}=J_{\mathrm B}=\varnothing$, then $F_{n,\tau}=\lambda t$, and $\partial F_{n,\tau}/\partial t=\lambda\ne0$. Finally $\dim\Delta_n=n+1$ by Proposition~\ref{prop:structure}(i).
\end{proof}

\begin{proposition}[Critical points]\label{prop:crit}
Let $h\in(\Fpbar^\times)^n$, fix $s_i\in\Fpbar$ with $s_i^2=h_i$, and for $\varepsilon\in\{\pm1\}^n$ put $L_\varepsilon=\sum_i\varepsilon_is_i$. The critical points of $F_n(\cdot,\cdot;h)$ on $\Gm^n\times\Gm$ are in bijection with the pairs $\{\varepsilon,-\varepsilon\}$ such that $L_\varepsilon\ne0$; the pair $\{\pm\varepsilon\}$ corresponds to the point
\[
u_i=-\frac{\lambda\varepsilon_i}{s_iL_\varepsilon},\qquad t=\frac{L_\varepsilon^2}{\lambda^2}.
\]
At each critical point, $\det\operatorname{Hess}F_n=2^{n-1}\lambda/(t\,u_1^3\cdots u_n^3)\ne0$. In particular, on the dense open subset $\Omega_n\subset\Gm^n$ where $\prod_\varepsilon L_\varepsilon\ne0$, the phase has exactly $2^{n-1}$ critical points, all of them non-degenerate.
\end{proposition}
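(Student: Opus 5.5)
The plan is to solve the critical-point equations directly, reparametrising by a square root of $t$, and then to compute the Hessian as a bordered determinant.

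\emph{Critical equations.} On $\Gm^n\times\Gm$ they read
\[
\partial F_n/\partial u_i=-u_i^{-2}+th_i=0,\qquad \partial F_n/\partial t=\lambda+\langle h,u\rangle=0.
\]
The first family says $(u_is_i)^2t=1$ for every $i$. Fix a square root $\tau$ of $t$. Then $u_is_i\tau=\varepsilon_i\in\{\pm1\}$, so $u_i=\varepsilon_i/(s_i\tau)$. Substituting into the last equation gives
\[
\lambda+\sum_is_i\varepsilon_i/\tau=\lambda+L_\varepsilon/\tau=0 .
\]
This forces $L_\varepsilon\ne0$ and $\tau=-L_\varepsilon/\lambda$, hence $t=L_\varepsilon^2/\lambda^2$ and $u_i=-\lambda\varepsilon_i/(s_iL_\varepsilon)$. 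Conversely, for every $\varepsilon$ with $L_\varepsilon\ne0$ these formulas define a point of the torus satisfying all the equations.

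\emph{The bijection.} Replacing $\varepsilon$ by $-\varepsilon$ changes $L_\varepsilon$ to $-L_\varepsilon$ and leaves the point unchanged. This reflects the two choices of $\tau$. Conversely, a critical point determines $t$, hence $\tau$ up to sign, and then each $\varepsilon_i=u_is_i\tau$. So the point determines exactly the pair $\{\varepsilon,-\varepsilon\}$. Since $p$ is odd, $1\ne-1$, so distinct pairs give distinct points. This proves the bijection.

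\emph{Hessian.} In the variables $(u,t)$ the Hessian is the bordered matrix
\[
\begin{pmatrix}D&h\\h^{T}&0\end{pmatrix},\qquad D=\operatorname{diag}(2u_i^{-3}),
\]
since $\partial^2F_n/\partial u_i^2=2u_i^{-3}$, $\partial^2F_n/\partial u_i\partial t=h_i$, $\partial^2F_n/\partial t^2=0$, and the mixed $u_iu_j$ derivatives vanish. By the Schur complement its determinant is
\[
-\det D\cdot h^TD^{-1}h=-2^n\prod_iu_i^{-3}\cdot\tfrac12\sum_ih_i^2u_i^3 .
\]
At a critical point $h_iu_i^2=t^{-1}$, so $h_i^2u_i^3=h_iu_i/t$. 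Hence $\sum_ih_i^2u_i^3=\langle h,u\rangle/t=-\lambda/t$, and the determinant equals $2^{n-1}\lambda/(t\,u_1^3\cdots u_n^3)$. This is nonzero because $\lambda\ne0$, all coordinates lie in the torus, and $2$ is invertible since $p$ is odd.

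\emph{Conclusion on $\Omega_n$.} On $\Omega_n$ every $L_\varepsilon$ is nonzero, so all $2^n$ sign vectors contribute. They give $2^{n-1}$ pairs, hence exactly $2^{n-1}$ critical points, all non-degenerate by the Hessian computation. $\Omega_n$ is open because it is defined by the nonvanishing of $\prod_\varepsilon L_\varepsilon$. That product is symmetric under each $s_i\mapsto-s_i$, so it is a polynomial in the $h_i$; it is nonzero, hence $\Omega_n$ is dense.

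The only point needing care is the bookkeeping of sign choices, namely that the choice of $\tau$ accounts exactly for the identification $\varepsilon\sim-\varepsilon$. No step is a real obstacle; the Schur-complement evaluation of the Hessian is the main computation.
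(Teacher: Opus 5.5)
Your proposal is correct and follows the paper's proof step for step. Both arguments set $\varepsilon_i=s_i\tau u_i$ with $\tau^2=t$, account for the identification $\varepsilon\sim-\varepsilon$ by the sign of $\tau$, and evaluate the bordered Hessian by the Schur complement using $\sum_ih_i^2u_i^3=\langle h,u\rangle/t=-\lambda/t$. The one claim you leave unjustified is that $\prod_\varepsilon L_\varepsilon$ is a nonzero polynomial. It is immediate: the paper observes that its coefficient of $h_1^{2^{n-1}}$ is $\pm1$, and alternatively it is a product of nonzero linear forms in the domain $\Fpbar[s_1,\dots,s_n]$.
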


\begin{proof}
The critical equations are $u_i^{-2}=th_i$ ($1\le i\le n$) and $\lambda+\langle h,u\rangle=0$. Let $(u,t)$ be a critical point and $\tau$ a square root of $t$. Then $(s_i\tau u_i)^2=1$, so $\varepsilon_i:=s_i\tau u_i\in\{\pm1\}$ and $u_i=\varepsilon_i/(s_i\tau)$. The last equation becomes $\lambda+L_\varepsilon/\tau=0$, so $L_\varepsilon\ne0$ and $\tau=-L_\varepsilon/\lambda$. Replacing $\tau$ by $-\tau$ replaces $\varepsilon$ by $-\varepsilon$, so the critical point determines the pair $\{\pm\varepsilon\}$. Conversely, if $L_\varepsilon\ne0$, the displayed formulas define a critical point, and $\varepsilon$ and $-\varepsilon$ give the same point. If $\varepsilon$ and $\varepsilon'$ give the same point, the corresponding square roots satisfy $\tau'=\pm\tau$, and hence $\varepsilon'=\pm\varepsilon$.

The Hessian in $(u_1,\dots,u_n,t)$ is $\left(\begin{smallmatrix}D&h\\ h^{T}&0\end{smallmatrix}\right)$ with $D=\operatorname{diag}(2u_i^{-3})$. By the Schur complement formula its determinant is $-\det(D)\,h^{T}D^{-1}h=-\frac{2^n}{\prod_iu_i^3}\cdot\frac12\sum_ih_i^2u_i^3$. At a critical point $h_i=u_i^{-2}t^{-1}$, so $h_i^2u_i^3=t^{-2}u_i^{-1}$ and $u_i^{-1}=th_iu_i$. Therefore $\sum_ih_i^2u_i^3=t^{-1}\langle h,u\rangle=-\lambda/t$, and the determinant equals $2^{n-1}\lambda/(t\prod_iu_i^3)$, which is nonzero because $p$ is odd. The open set $\Omega_n$ is dense: the product $\prod_\varepsilon L_\varepsilon$ is invariant under all sign changes of the $s_i$, hence a polynomial in $h$, and it is nonzero because its coefficient of $h_1^{2^{n-1}}$ is $\pm1$.
\end{proof}

\begin{remark}\label{rem:critical-values}
At the critical point attached to $\{\pm\varepsilon\}$ the constraint $\lambda+\langle h,u\rangle=0$ holds, so the critical value is $f(u)=\sum_iu_i^{-1}=-\lambda^{-1}L_\varepsilon\sum_i\varepsilon_is_i=-L_\varepsilon^2/\lambda$. Since $\sum_{\varepsilon\in\{\pm1\}^n}L_\varepsilon^2=\sum_{i,j}s_is_j\sum_\varepsilon\varepsilon_i\varepsilon_j=2^n\sum_ih_i$, the sum of the critical values over the $2^{n-1}$ critical points, for $h\in\Omega_n$, is
\[
-\frac1\lambda\sum_{\{\pm\varepsilon\}}L_\varepsilon^2=-\frac{2^{n-1}}\lambda\,(h_1+\cdots+h_n).
\]
This is the function in Theorem~\ref{thmE}(i). The identity of Theorem~\ref{thmE}(i) holds on all of $S$, although critical points escape to infinity along the divisors $L_\varepsilon=0$ (Remark~\ref{rem:collision}); along them the critical value of the escaping point, $-L_\varepsilon^2/\lambda$, tends to $0$. The proof of Theorem~\ref{thmE}(i) does not use critical points.
\end{remark}

\begin{remark}\label{rem:AS}
(a) By Propositions~\ref{prop:nondeg} and~\ref{prop:volume}, the theorems of Adolphson--Sperber \cite{AS} and Denef--Loeser \cite{DL} predict that $H^i_c(\Gm^{n+1}\otimes\Fpbar,\Lpsi{F_n})$ vanishes for $i\ne n+1$ and has dimension $(n+1)!\Vol(\Delta_n)=2^n$; we prove this directly in Proposition~\ref{prop:fibration}. The origin is a vertex of $\Delta_n$, not an interior point, so the purity criterion of \cite{AS,DL} does not apply, and indeed the cohomology is mixed (Proposition~\ref{prop:weights-torus}).

(b) The face $\sigma^*$ of $P_n$ is not a face of $\Delta_n$, and by Proposition~\ref{prop:nondeg} it is the only proper face of $P_n$ at which non-degeneracy can fail. Its face polynomial is $\sum_i(u_i^{-1}+h_itu_i)$. The computation in the proof of Proposition~\ref{prop:crit} shows that the partial derivatives of this polynomial have a common zero on the torus if and only if $L_\varepsilon=0$ for some $\varepsilon$. On $\Omega_n$ the phase is therefore non-degenerate with respect to every proper face of $P_n$. The count $2^{n-1}$ of Proposition~\ref{prop:crit} is the one predicted by Bernstein's theorem \cite{Bernstein}. Indeed, the critical points are the common zeros on the torus of the $n+1$ Laurent polynomials
\[
u_i\frac{\partial F_n}{\partial u_i}=-u_i^{-1}+h_itu_i\quad(1\le i\le n),\qquad t\frac{\partial F_n}{\partial t}=t\big(\lambda+\langle h,u\rangle\big),
\]
whose Newton polytopes are the segments $[\mathbf a_i,\mathbf b_i]$ and the simplex $Q=\conv\{\mathbf c,\mathbf b_1,\dots,\mathbf b_n\}$. Normalize mixed volumes in $\mathbb R^{n+1}$ so that $n+1$ copies of a polytope $P$ have mixed volume $(n+1)!\Vol(P)$. As the first $n$ polytopes are segments with directions $v_i=\mathbf b_i-\mathbf a_i=2e_i+e_{n+1}$, their mixed volume with $Q$ is $\max_{y,y'\in Q}|\det(v_1,\dots,v_n,y-y')|$. The linear form $w\mapsto\det(v_1,\dots,v_n,w)$ and the form $\varphi^*$ both vanish on the hyperplane spanned by the linearly independent vectors $v_i$, so they are proportional. Expanding $\det(v_1,\dots,v_n,e_1)=-2^{n-1}$ along its last column, and using $\varphi^*(e_1)=1$, gives $\det(v_1,\dots,v_n,w)=-2^{n-1}\varphi^*(w)$. Since $\varphi^*$ takes the value $-2$ at $\mathbf c$ and $-1$ at every $\mathbf b_i$, the mixed volume is $2^{n-1}$, which also equals $(n+1)!\Vol(P_n)$ (Proposition~\ref{prop:volume}). The cohomology of the torus is twice as large because $\Delta_n$ contains the second pyramid $\conv(\sigma^*\cup\{0\})$. Nothing in this remark is used later.
\end{remark}

\begin{remark}\label{rem:purity-example}
The failure of purity in Remark~\ref{rem:AS}(a) can be seen already in dimension one. Let $g(u)=u+u^2$ on $\Gm$. Its Newton polytope is $[1,2]$ and its Newton polyhedron at infinity is $[0,2]$, of normalized volume $2$, with $0$ as a vertex. The only face not containing $0$ is $\{2\}$, with face polynomial $u^2$, whose derivative $2u$ does not vanish on $\Gm$ because $p$ is odd; so $g$ is non-degenerate. The sheaf $\Lpsi g$ on $\mathbb A^1$ is lisse, with $\Swan_\infty=2$, so its compactly supported cohomology is concentrated in degree $1$ and is one-dimensional; it is pure of weight $1$ by \cite[3.2.3]{Deligne}, since $\Lpsi g$ is totally wild at $\infty$. As $g(0)=0$, excision for $\{0\}\subset\mathbb A^1$ gives an exact sequence
\[
0\to\Qlb\to H^1_c(\Gm\otimes\Fqbar,\Lpsi g)\to H^1_c(\mathbb A^1\otimes\Fqbar,\Lpsi g)\to0,
\]
with $\Frob_q$ acting trivially on $\Qlb$. Thus $H^1_c(\Gm\otimes\Fqbar,\Lpsi g)$ has dimension $2$, as predicted by \cite{AS}, and it is mixed, of weights $0$ and $1$. For the Kloosterman phase $u^{-1}+au$, by contrast, the polyhedron at infinity is $[-1,1]$, the origin is an interior point, and the cohomology is pure.
\end{remark}

\section{The cohomology of a single fibre}\label{sec:fibre}

In this section $n\ge2$ and $h\in(\Fq^\times)^n$. Cohomology groups are taken over $\Fqbar$ and carry the action of the geometric Frobenius $\Frob_q$. We write $\mathcal L=\Lpsi{F_n}$ on $X_h$.

\begin{proposition}\label{prop:fibration}
Let $\pr\colon X_h^\circ\to\Gmn{t}$ be the projection and
\[
\mathcal N_h=\Big(\bigotimes_{i=1}^n[\times h_i]^*\Kl\Big)\otimes\Lpsi{\lambda t}.
\]
\begin{enumerate}[(i)]
\item $R\pr_!\mathcal L\cong\mathcal N_h[-n]$, and $\mathcal N_h$ is lisse of rank $2^n$ on $\Gm$ and pure of weight $n$.
\item $\mathcal N_h$ is tame at $0$, with $\mathcal N_h(0)\cong\Up{2}^{\otimes n}$, and totally wild at $\infty$, with all breaks equal to $1$. Hence $\Swan_0(\mathcal N_h)=0$ and $\Swan_\infty(\mathcal N_h)=2^n$.
\item $H^i_c(\Gm,\mathcal N_h)=0$ for $i\ne1$, and $\dim H^1_c(\Gm,\mathcal N_h)=2^n$. Consequently $H^i_c(X_h^\circ,\mathcal L)\cong H^{i-n}_c(\Gm,\mathcal N_h)$ vanishes for $i\ne n+1$ and has dimension $2^n$ for $i=n+1$.
\end{enumerate}
\end{proposition}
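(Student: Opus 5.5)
For (i), I would apply (K4) one coordinate at a time. The sheaf $\mathcal L$ on $X_h^\circ$ factors as $\bigotimes_i\Lpsi{u_i^{-1}+th_iu_i}\otimes\Lpsi{\lambda t}$. The projection $\pr$ is the fibre product over $\Gmn t$ of the $n$ projections $(u_i,t)\mapsto t$. By the Künneth formula for $R\pr_!$ and the projection formula for $\Lpsi{\lambda t}$, it therefore suffices to compute each factor $R\beta_{i!}\Lpsi{u_i^{-1}+th_iu_i}$. This factor is the pullback of $R\beta_!\Lpsi{u^{-1}+bu}$ along $t\mapsto b=h_it$, by proper base change. By (K4) it equals $[\times h_i]^*\Kl[-1]$. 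Taking the tensor product gives $R\pr_!\mathcal L\cong\mathcal N_h[-n]$. Lisseness and rank $2^n$ are immediate. Purity of weight $n$ follows because each $[\times h_i]^*\Kl$ is pure of weight $1$ by (K1) and $\Lpsi{\lambda t}$ is pure of weight $0$.

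For (ii), at $0$ each factor $[\times h_i]^*\Kl$ is tame with local monodromy $\Up{2}$, by (K2) together with the remark in \S\ref{sec:conventions} on $[\times a]$. The sheaf $\Lpsi{\lambda t}$ is lisse at $0$. Hence $\mathcal N_h(0)\cong\Up{2}^{\otimes n}$, which is tame, so $\Swan_0=0$.

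At $\infty$ I would argue as follows.
\begin{itemize}
\item By (K3) and the same remark, each $[\times h_i]^*\Kl(\infty)$ has breaks in $(0,1)$.
\item Lemma~\ref{lem:breaks}(i), applied inductively, shows that every break of $V=\bigotimes_i[\times h_i]^*\Kl(\infty)$ is $<1$.
\item $\Lpsi{\lambda t}$ is isoclinic of break $1$ at $\infty$, since $\lambda\ne0$.
\item Lemma~\ref{lem:breaks}(ii) then shows that $\mathcal N_h(\infty)$ is isoclinic of break $1$. In particular it is totally wild, and $\Swan_\infty=2^n$.
\end{itemize}
The only point needing care is the strictness: every break of $V$ must be strictly less than $1$, and not merely at most $1$. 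This holds because the maximum of finitely many numbers in $(0,1)$ is still $<1$.

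For (iii), $H^0_c(\Gm,\mathcal N_h)=0$ always. $H^2_c$ vanishes because $\mathcal N_h$ is totally wild at $\infty$, as noted after \eqref{eq:GOS}. The Euler characteristic formula \eqref{eq:GOS} then gives $\chi_c=-2^n$, so $\dim H^1_c=2^n$. Finally, the Leray spectral sequence for $R\Gamma_c(X_h^\circ,\mathcal L)=R\Gamma_c(\Gm,R\pr_!\mathcal L)=R\Gamma_c(\Gm,\mathcal N_h)[-n]$ gives $H^i_c(X_h^\circ,\mathcal L)\cong H^{i-n}_c(\Gm,\mathcal N_h)$. This group is nonzero only for $i=n+1$, where it has dimension $2^n$.
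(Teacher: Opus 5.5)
Your proposal is correct and follows the paper's own proof essentially step by step: factor $\mathcal L$ over $\Gmn{t}$, apply (K4) and proper base change along $t\mapsto h_it$, then Künneth and the projection formula for (i); use (K2)/(K3) with Lemma~\ref{lem:breaks}(i),(ii) for (ii); and use \eqref{eq:GOS} with the vanishing of $H^0_c$ and $H^2_c$ plus the degenerate Leray spectral sequence for (iii). Your remark on the strictness of the bound on the breaks of the tensor product of the Kloosterman factors is exactly the point the paper uses implicitly when it invokes Lemma~\ref{lem:breaks}(i).
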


\begin{proof}
(i) Since $F_n=\lambda t+\sum_i(u_i^{-1}+h_itu_i)$, we have $\mathcal L\cong\pr^*\Lpsi{\lambda t}\otimes\bigotimes_i\Lpsi{u_i^{-1}+h_itu_i}$, the $i$-th factor being pulled back from $\Gmn{u_i}\times\Gmn{t}$. By (K4) and proper base change along $t\mapsto h_it$, the direct image of $\Lpsi{u_i^{-1}+h_itu_i}$ to $\Gmn{t}$ is $[\times h_i]^*\Kl[-1]$. The K\"unneth formula over $\Gmn{t}$ and the projection formula give $R\pr_!\mathcal L\cong\mathcal N_h[-n]$. The remaining assertions of (i) follow from (K1).

(ii) By (K2) and \S\ref{sec:conventions}, each $[\times h_i]^*\Kl$ is tame at $0$ with local monodromy $\Up{2}$, and $\Lpsi{\lambda t}$ is lisse at $0$; the tame generator acts on the tensor product by the tensor product of its actions. At $\infty$ each $[\times h_i]^*\Kl$ has both breaks in $(0,1)$ by (K3), so their tensor product has all breaks $<1$ by Lemma~\ref{lem:breaks}(i). Since $\lambda\ne0$, the sheaf $\Lpsi{\lambda t}$ is isoclinic of break $1$ at $\infty$, and Lemma~\ref{lem:breaks}(ii) shows that $\mathcal N_h$ is isoclinic of break $1$ there.

(iii) By \eqref{eq:GOS}, $\chi_c(\Gm,\mathcal N_h)=-2^n$. Moreover $H^0_c=0$, and $H^2_c=0$ because $\mathcal N_h$ is totally wild at $\infty$. The last assertion follows from (i) and the Leray spectral sequence of $\pr$, which degenerates.
\end{proof}

\begin{proposition}\label{prop:weights-torus}
Let $j\colon\Gm\hookrightarrow\mathbb P^1$ be the inclusion. There is an exact sequence of $\Frob_q$-modules
\[
0\longrightarrow\mathcal N_h^{I_0}\longrightarrow H^1_c(\Gm,\mathcal N_h)\longrightarrow H^1(\mathbb P^1,j_*\mathcal N_h)\longrightarrow0,
\]
where $\mathcal N_h^{I_0}=(j_*\mathcal N_h)_0$ carries the action of the Frobenius at the rational point $0$. The right-hand term is pure of weight $n+1$ and has dimension $r(n)$. For $0\le k\le\lfloor n/2\rfloor$, exactly $m_{n,k}$ of the Frobenius eigenvalues on $\mathcal N_h^{I_0}$, counted with multiplicity, have weight $2k$, and there are no others. In particular $H^1_c(\Gm,\mathcal N_h)$ contains a unique Frobenius-stable line of weight $0$ and has no other eigenvalue of weight $0$.
\end{proposition}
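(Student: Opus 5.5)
The plan is to start from the standard excision sequence for $j_!\mathcal N_h\to j_*\mathcal N_h$ on $\mathbb P^1$. Since $\mathcal N_h$ is totally wild at $\infty$ (Proposition~\ref{prop:fibration}(ii)), $(j_*\mathcal N_h)_\infty=0$. So the cokernel of $j_!\mathcal N_h\to j_*\mathcal N_h$ is the skyscraper $(j_*\mathcal N_h)_0=\mathcal N_h^{I_0}$ at the rational point $0$, and this identification is Frobenius-equivariant. The long exact sequence reads
\[
0\to H^0(\mathbb P^1,j_*\mathcal N_h)\to\mathcal N_h^{I_0}\to H^1_c(\Gm,\mathcal N_h)\to H^1(\mathbb P^1,j_*\mathcal N_h)\to H^2_c(\Gm,\mathcal N_h)=0 .
\]
The group $H^0(\mathbb P^1,j_*\mathcal N_h)$ consists of global geometric invariants, which vanish because $\mathcal N_h$ is totally wild at $\infty$. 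This gives the claimed short exact sequence.

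For the dimension count, Lemma~\ref{lem:CG} gives $\dim\mathcal N_h^{I_0}=\binom n{\lfloor n/2\rfloor}$. Proposition~\ref{prop:fibration}(iii) gives $\dim H^1_c=2^n$. Hence $\dim H^1(\mathbb P^1,j_*\mathcal N_h)=2^n-\binom n{\lfloor n/2\rfloor}=r(n)$. Purity of weight $n+1$ then follows from Deligne's theorem \cite[3.2.3]{Deligne}: $\mathcal N_h$ is lisse and pure of weight $n$ on $\Gm$, so $H^1(\mathbb P^1,j_*\mathcal N_h)$ is pure of weight $n+1$.

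The main step is to compute the Frobenius eigenvalues on $\mathcal N_h^{I_0}$. Here $I_0$ acts on $\bigotimes_i([\times h_i]^*\Kl)(0)$ through tensor products of unipotent blocks, while $\Lpsi{\lambda t}$ is unramified at $0$ with trivial Frobenius there. For each factor I will apply Lemma~\ref{lem:frob-block} to $[\times h_i]^*\Kl$ at $0$. The hypotheses hold: the monodromy is a nontrivial unipotent block by (K2) and \S\ref{sec:conventions}. Frobenius is trivial on the invariant line by (K5), since the stalk of $j_*[\times h_i]^*\Kl$ at $0$ equals the stalk of $j_*\Kl$ at $0$, because $[\times h_i]$ fixes $0$ and is defined over $\Fq$. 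Lemma~\ref{lem:frob-block} then gives bases with $Ne_1=0$, $Ne_2=e_1$, $\phi e_1=e_1$ and $\phi e_2=qe_2$. One point needs care: we must use the same $\gamma$ and $\phi$ for all factors, and compatibility holds because all the sheaves live on the same curve. On the tensor product, $\phi e_J=q^{|J|}e_J$. The logarithm of the monodromy is $\sum_iN_i$, so $\mathcal N_h^{I_0}=\ker\sum_iN_i=E=\bigoplus_kE_k$. Each $E_k$ is spanned by combinations of $e_J$ with $|J|=k$, so it is $\phi$-stable with $\phi$ acting by $q^k$, and by \eqref{eq:Ek} it has dimension $m_{n,k}$. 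This yields $m_{n,k}$ eigenvalues $q^k$ of weight $2k$, and no others.

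Finally, since $m_{n,0}=1$ and all other weights are positive, and $H^1(\mathbb P^1,j_*\mathcal N_h)$ has weight $n+1>0$, the unique weight-$0$ eigenvalue of $H^1_c$ is the image of $E_0$. The exactness gives the unique stable line.
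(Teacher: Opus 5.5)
Your proof is correct. The exact sequence, the dimension count and the purity of $H^1(\mathbb P^1,j_*\mathcal N_h)$ are obtained as in the paper. There is one cosmetic slip: in the long exact sequence of $0\to j_!\mathcal N_h\to j_*\mathcal N_h\to i_{0*}\mathcal N_h^{I_0}\to0$, the term after $H^1(\mathbb P^1,j_*\mathcal N_h)$ is $H^1$ of the skyscraper sheaf, which vanishes, not $H^2_c(\Gm,\mathcal N_h)$; surjectivity is unaffected.

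Where you genuinely differ is the computation of the weights on $\mathcal N_h^{I_0}$.

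\textbf{The paper's route.} It uses Deligne's local monodromy theorem with weights \cite[1.8.4]{Deligne}. Since $\mathcal N_h$ is pure of weight $n$ with unipotent monodromy $\Up{2}^{\otimes n}$ at $0$, the invariants of a Jordan block of size $n-2k+1$ have weight $n-(n-2k)=2k$.

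\textbf{Your route.} You compute $\Frob_q$ on the invariants explicitly. You apply Lemma~\ref{lem:frob-block} and (K5) factor by factor with a common $\gamma$ and $\phi$, and then use the graded kernel $E=\bigoplus_kE_k$. This is precisely the argument the paper gives immediately afterwards as Proposition~\ref{prop:frob-invariants}. The paper remarks there that it recovers the weights without \cite[1.8.4]{Deligne}.

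\textbf{Comparison.} The paper's route needs only purity and the Jordan type at $0$. It is therefore independent of the arithmetic input (K5) and of adapted bases, but it yields only the weights. Your route avoids the weight--monodromy theorem: only \cite[3.2.3]{Deligne} is used, for the right-hand term. It also gives strictly more: the eigenvalues are exactly $q^k$, and $\Frob_q$ acts semisimply on $\mathcal N_h^{I_0}$. This is what Theorem~\ref{thm:fibre}(iii) eventually needs in any case. In effect you have merged Propositions~\ref{prop:weights-torus} and~\ref{prop:frob-invariants} into a single argument.
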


\begin{proof}
The adjunction map $j_!\mathcal N_h\to j_*\mathcal N_h$ is injective, with cokernel the skyscraper sheaves $\mathcal N_h^{I_0}$ at $0$ and $\mathcal N_h^{I_\infty}$ at $\infty$; the latter vanishes because $\mathcal N_h$ is totally wild at $\infty$. Since $H^0(\mathbb P^1,j_*\mathcal N_h)=H^0(\Gm,\mathcal N_h)$ is contained in $\mathcal N_h^{I_\infty}=0$, and skyscraper sheaves have no $H^1$, the long exact cohomology sequence gives the displayed sequence, with $H^1(\mathbb P^1,j_!\mathcal N_h)=H^1_c(\Gm,\mathcal N_h)$. By Deligne's theorem \cite[3.2.3]{Deligne}, $H^1(\mathbb P^1,j_*\mathcal N_h)$ is pure of weight $n+1$. By Proposition~\ref{prop:fibration}(ii) and Lemma~\ref{lem:CG}, the local monodromy of $\mathcal N_h$ at $0$ is unipotent, with $m_{n,k}$ Jordan blocks of size $n-2k+1$ for each $k$. So $\dim\mathcal N_h^{I_0}=\binom n{\lfloor n/2\rfloor}$, and the right-hand term has dimension $2^n-\binom n{\lfloor n/2\rfloor}=r(n)$. Finally, $\mathcal N_h$ is pure of weight $n$ with unipotent local monodromy at $0$. By Deligne's local monodromy theorem with weights \cite[1.8.4]{Deligne}, the monodromy filtration on the generic stalk, centred at $n$, has graded pieces $\gr^M_i$ pure of weight $n+i$; the invariants of a Jordan block of size $m$ lie in $M_{-(m-1)}$ and have weight $n-(m-1)$. A block of size $n-2k+1$ therefore contributes one eigenvalue of weight $2k$.
\end{proof}

The eigenvalues on $\mathcal N_h^{I_0}$ can be determined exactly.

\begin{proposition}\label{prop:frob-invariants}
Let $h\in(\Fq^\times)^n$. Then $\Frob_q$ acts semisimply on $\mathcal N_h^{I_0}$, and its eigenvalues are the numbers $q^k$, $0\le k\le\lfloor n/2\rfloor$, with multiplicity $m_{n,k}$.
\end{proposition}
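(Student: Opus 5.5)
The plan is to reduce everything to the tensor-product structure at $0$ and apply Lemma~\ref{lem:frob-block} to each factor. Choose a decomposition group $D_0$ at the rational point $0$ of $\Gmn{t}$ over $\Fq$, a tame generator $\gamma$, and a Frobenius lift $\phi$ as in Lemma~\ref{lem:frob-block}. The generic stalk of $\mathcal N_h$ is $\bigotimes_iV_i\otimes L$, where $V_i$ is the stalk of $[\times h_i]^*\Kl$ and $L$ that of $\Lpsi{\lambda t}$. These identifications are $D_0$-equivariant, with $D_0$ acting diagonally. Since $\Lpsi{\lambda t}$ is lisse at $0$ and $\lambda\cdot0=0$, the group $I_0$ acts trivially on $L$, and $\phi$ acts on $L$ by $\psi_q(0)=1$. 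So $L$ can be dropped.

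First, each $V_i$ satisfies the hypotheses of Lemma~\ref{lem:frob-block}.
\begin{itemize}
\item Wild inertia acts trivially and $\gamma$ acts by a single unipotent Jordan block $T_i\ne1$; this follows from (K2) and the remark in \S\ref{sec:conventions} that $[\times h_i]$ preserves the tame structure.
\item $\phi$ acts trivially on the line $V_i^{I_0}$. This is (K5) transported by $[\times h_i]$: the map $t\mapsto h_it$ is defined over $\Fq$ and fixes $0$, so $(j_*[\times h_i]^*\Kl)_0=(j_*\Kl)_0$ as Frobenius modules.
\end{itemize}
Lemma~\ref{lem:frob-block} then gives bases $e_1^{(i)},e_2^{(i)}$ with $N_ie_1=0$, $N_ie_2=e_1$, $\phi e_1=e_1$ and $\phi e_2=qe_2$.

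Next, compute on the tensor product. As in the proof of Lemma~\ref{lem:CG}, $\gamma$ acts on $\bigotimes_iV_i$ by $\exp(\sum_iN_i)$, so $\mathcal N_h^{I_0}=\ker\sum_iN_i=E$ is the highest-weight space, graded as $E=\bigoplus_kE_k$. In the product basis $e_J$, the Frobenius $\phi$ acts diagonally with $\phi e_J=q^{|J|}e_J$. Therefore $\phi$ is semisimple on the whole stalk. Each $E_k$ lies in the $q^k$-eigenspace, and being spanned by combinations of the $e_J$ with $|J|=k$, it is $\phi$-stable. So $\phi$ acts on $E_k$ as the scalar $q^k$, and by \eqref{eq:Ek} $\dim E_k=m_{n,k}$ for $0\le k\le\lfloor n/2\rfloor$. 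Since $E=\bigoplus_kE_k$, we get semisimplicity and the stated multiplicities. As a consistency check, these eigenvalues agree with the weights $2k$ in Proposition~\ref{prop:weights-torus}.

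The main obstacle is justifying that the Frobenius action on $\mathcal N_h^{I_0}=(j_*\mathcal N_h)_0$ is the restriction of $\phi$ acting on the generic stalk via the tensor decomposition. This is standard: the stalk of $j_*$ at $0$ is the $I_0$-invariants of the generic stalk with the induced $D_0/I_0$-action. Beyond that, the only care needed is that the Frobenius lifts used for the different factors are a single common $\phi\in D_0$, which holds because Lemma~\ref{lem:frob-block} applies to any such $\phi$.
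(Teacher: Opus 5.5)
Your proposal is correct and follows essentially the same route as the paper. Both apply Lemma~\ref{lem:frob-block} to each factor, using (K2) and (K5) transported along $[\times h_i]$ with a single Frobenius lift $\phi$; both then use the diagonal action $\phi e_J=q^{|J|}e_J$ on the product basis and the graded kernel $E=\bigoplus_kE_k$ of $\sum_iN_i$ with $\dim E_k=m_{n,k}$ from \eqref{eq:Ek}. One minor slip: $0$ is a rational point of $\mathbb A^1_t$, not of $\Gmn{t}$.
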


\begin{proof}
Let $D_0$ be a decomposition group at the rational point $t=0$ of $\mathbb A^1$ over $\Fq$, and let $I_0$, $\Iw_0$, $\gamma$ and $\phi$ be as in Lemma~\ref{lem:frob-block}, with $x=0$. For each $i$ let $V_i$ be the generic stalk of $[\times h_i]^*\Kl$, as a representation of $D_0$. By (K2) and \S\ref{sec:conventions}, $\Iw_0$ acts trivially on $V_i$ and $\gamma$ acts by a unipotent automorphism $T_i\ne1$. The map $t\mapsto h_it$ extends to an automorphism of $\mathbb A^1$ over $\Fq$ fixing $0$, so $j_*[\times h_i]^*\Kl=[\times h_i]^*j_*\Kl$, and $V_i^{I_0}=(j_*\Kl)_0$ as $\Frob_q$-modules. By (K5), $\phi$ acts trivially on $V_i^{I_0}$. Lemma~\ref{lem:frob-block} provides a basis $e^{(i)}_1,e^{(i)}_2$ of $V_i$ with $N_ie^{(i)}_1=0$, $N_ie^{(i)}_2=e^{(i)}_1$, $\phi e^{(i)}_1=e^{(i)}_1$ and $\phi e^{(i)}_2=qe^{(i)}_2$, where $N_i=\log T_i$.

The sheaf $\Lpsi{\lambda t}$ is lisse at $0$, and $\phi$ acts on its stalk at $0$ by $\psi_q(0)=1$. So the generic stalk of $\mathcal N_h$ is $V=V_1\otimes\cdots\otimes V_n$ as a representation of $D_0$. On it $\gamma$ acts by $T_1\otimes\cdots\otimes T_n=\exp\big(\sum_iN_i\big)$, where $N_i$ acts on the $i$-th factor, and $\phi$ acts on the basis vectors $e_J=e^{(1)}_{j_1}\otimes\cdots\otimes e^{(n)}_{j_n}$ by $\phi e_J=q^{|J|}e_J$. As in the proof of Lemma~\ref{lem:CG}, the invariants of $\exp(\sum_iN_i)$ form the kernel $E$ of $\sum_iN_i$, and in the basis $(e_J)$ the operator $\sum_iN_i$ is the one considered before~\eqref{eq:Ek}. Hence $\mathcal N_h^{I_0}=E=\bigoplus_kE_k$ with $\dim E_k=m_{n,k}$. Since $I_0$ is normal in $D_0$, the element $\phi$ preserves $E$; it also preserves the span of the $e_J$ with $|J|=k$, on which it acts as $q^k$. So $\phi$ acts on $E_k$ as multiplication by $q^k$. As $\phi$ induces $\Frob_q$ on $\mathcal N_h^{I_0}=(j_*\mathcal N_h)_0$, the proposition follows.
\end{proof}

This recovers the last assertion of Proposition~\ref{prop:weights-torus} without \cite[1.8.4]{Deligne}: the eigenvalue $q^k$ has weight $2k$.

\begin{lemma}\label{lem:t0}
$\mathcal L|_{Z_h}\cong\boxtimes_{i=1}^n\Lpsi{u_i^{-1}}$. Its compactly supported cohomology vanishes outside degree $n$, and $H^n_c(Z_h,\mathcal L|_{Z_h})$ is one-dimensional, with $\Frob_q$ acting trivially.
\end{lemma}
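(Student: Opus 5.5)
On $Z_h=\Gm^n\times\{0\}$ the phase restricts to $F_n(u,0;h)=\sum_iu_i^{-1}$, since the term $t(\lambda+\langle h,u\rangle)$ vanishes at $t=0$. Hence $\mathcal L|_{Z_h}=\Lpsi{\sum_iu_i^{-1}}$. The Artin--Schreier sheaf converts sums into tensor products, so this sheaf is $\bigotimes_i\pr_i^*\Lpsi{u_i^{-1}}=\boxtimes_{i=1}^n\Lpsi{u_i^{-1}}$. I would record this first as a direct consequence of the additivity of $\Lpsi{\cdot}$ under pullback.

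Next I apply the K\"unneth formula for compactly supported cohomology on $Z_h\cong\Gm^n$ over $\Fqbar$. This gives
\[
R\Gamma_c\big(Z_h,\mathcal L|_{Z_h}\big)\cong\bigotimes_{i=1}^nR\Gamma_c\big(\Gm,\Lpsi{u^{-1}}\big),
\]
compatibly with $\Frob_q$. So everything reduces to the one-variable computation $H^*_c(\Gm\otimes\Fqbar,\Lpsi{u^{-1}})$.

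For the one-variable computation, apply the inversion $\iota$, which is an automorphism of $\Gm$ defined over $\Fp$. It identifies this cohomology with $H^*_c(\Gm,\Lpsi{u})$. The sheaf $\Lpsi{u}$ is lisse on $\mathbb A^1$, tame (indeed trivial) at $0$, and has Swan conductor $1$ at $\infty$. By \eqref{eq:GOS}, $\chi_c(\Gm,\Lpsi u)=-1$. Moreover $H^0_c=0$, and $H^2_c=0$ because the sheaf is totally wild at $\infty$. So the cohomology is concentrated in degree $1$ and is one-dimensional.

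To find the Frobenius eigenvalue, I would use excision for $\{0\}\subset\mathbb A^1$. Since $H^*_c(\mathbb A^1,\Lpsi u)=0$, exactly as in the proof of Lemma~\ref{lem:integrate-t}, the excision sequence gives $H^1_c(\Gm,\Lpsi u)\cong(\Lpsi u)_0$. On this stalk $\Frob_q$ acts by $\psi_q(0)=1$. Alternatively, the Lefschetz trace formula gives $-\sum_{x\in\Fq^\times}\psi_q(x)=1$, which is also the trace of $\Frob_q$ on the line.

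Tensoring, the cohomology of $Z_h$ is concentrated in degree $n$ and is one-dimensional, with trivial Frobenius action. The Frobenius eigenvalue is the product of the $n$ one-variable eigenvalues, each equal to $1$. The sign conventions in Künneth only reorder tensor factors and do not affect the eigenvalue. As a check, the trace formula gives $(-1)^n\big(\sum_{x\ne0}\psi_q(x^{-1})\big)^n=(-1)^n(-1)^n=1$. No step is a real obstacle; the only point needing care is the Frobenius compatibility of excision and Künneth, which is standard.
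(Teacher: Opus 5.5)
Your proof is correct and follows the paper's argument: restrict the phase to $t=0$, reduce by K\"unneth to one variable, obtain a single line in degree $1$ from \eqref{eq:GOS} together with the vanishing of $H^0_c$ and $H^2_c$, and then show that Frobenius acts on it by $1$. The differences are cosmetic. The paper works directly with $\Lpsi{u^{-1}}$ (using $\Swan_0=1$) and uses only the trace formula $-\Tr(\Frob_q\mid H^1_c)=\sum_{u\in\Fq^\times}\psi_q(u^{-1})=-1$, whereas you first transport to $\Lpsi{u}$ via $\iota$ and identify $H^1_c$ with the stalk at $0$ by excision, which is an equally valid way to see the trivial Frobenius action.
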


\begin{proof}
Setting $t=0$ leaves the phase $\sum_iu_i^{-1}$. On $\Gm$, the sheaf $\Lpsi{u^{-1}}$ is lisse of rank one, with $\Swan_0=1$ and $\Swan_\infty=0$. By \eqref{eq:GOS} its Euler characteristic is $-1$; its $H^0_c$ vanishes, and so does its $H^2_c$, because it is geometrically nontrivial. Hence its $H^1_c$ is one-dimensional, and the Grothendieck--Lefschetz formula $-\Tr(\Frob_q\mid H^1_c)=\sum_{u\in\Fq^\times}\psi_q(u^{-1})=-1$ shows that $\Frob_q$ acts trivially on it. The K\"unneth formula concludes.
\end{proof}

\begin{theorem}\label{thm:fibre}
Let $n\ge2$ and $h\in(\Fq^\times)^n$.
\begin{enumerate}[(i)]
\item $H^i_c(X_h,\mathcal L)=0$ for $i\ne n+1$.
\item The connecting homomorphism $\partial\colon H^n_c(Z_h,\mathcal L|_{Z_h})\to H^{n+1}_c(X_h^\circ,\mathcal L)$ of the excision sequence is injective, and its image is the unique line of weight $0$.
\item $\dim H^{n+1}_c(X_h,\mathcal L)=2^n-1$. The Frobenius eigenvalues are $r(n)$ eigenvalues of weight $n+1$ and, for $1\le k\le\lfloor n/2\rfloor$, the eigenvalue $q^k$ with multiplicity $m_{n,k}$. The subspace of weights $\le n$ is a direct sum of eigenspaces of $\Frob_q$.
\item $H^i_c(Y_h,\Lpsi f)=0$ for $i\ne n-1$, $H^{n-1}_c(Y_h,\Lpsi f)\cong H^{n+1}_c(X_h,\mathcal L)(1)$, and
\[
\Sigma_n(h)=(-1)^{n-1}\Tr\big(\Frob_q\mid H^{n-1}_c(Y_h,\Lpsi f)\big).
\]
The eigenvalues on $H^{n-1}_c(Y_h,\Lpsi f)$ are $r(n)$ of weight $n-1$ and, for $1\le k\le\lfloor n/2\rfloor$, the eigenvalue $q^{k-1}$ with multiplicity $m_{n,k}$. Consequently
\begin{equation}\label{eq:exact-formula}
\Sigma_n(h)=(-1)^{n-1}\Big(\tau_q(h)+\sum_{k=1}^{\lfloor n/2\rfloor}m_{n,k}\,q^{k-1}\Big),
\end{equation}
where $\tau_q(h)$ is the trace of $\Frob_q$ on the part of weight $n-1$, so that $|\tau_q(h)|\le r(n)q^{(n-1)/2}$.
\end{enumerate}
In particular Theorem~\ref{thmA} holds.
\end{theorem}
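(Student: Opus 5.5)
The proof runs through the excision sequence for the closed subset $Z_h=\{t=0\}\subset X_h$ with open complement $X_h^\circ$:
\[
\cdots\to H^{i}_c(X_h^\circ,\mathcal L)\to H^i_c(X_h,\mathcal L)\to H^i_c(Z_h,\mathcal L|_{Z_h})\xrightarrow{\ \partial\ } H^{i+1}_c(X_h^\circ,\mathcal L)\to\cdots.
\]
By Proposition~\ref{prop:fibration}(iii), the groups $H^i_c(X_h^\circ,\mathcal L)$ vanish for $i\ne n+1$ and $H^{n+1}_c$ has dimension $2^n$. By Lemma~\ref{lem:t0}, $H^i_c(Z_h,\mathcal L|_{Z_h})$ vanishes for $i\ne n$ and $H^n_c$ is a Frobenius-trivial line. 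The sequence therefore reduces to
\[
0\to H^n_c(X_h,\mathcal L)\to H^n_c(Z_h)\xrightarrow{\partial}H^{n+1}_c(X_h^\circ)\to H^{n+1}_c(X_h)\to0,
\]
and all other degrees vanish. Parts (i) and (ii) therefore both amount to showing that $\partial$ is injective.

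To prove injectivity I compute $H^*_c(X_h,\mathcal L)$ a second way. Integrating over $t$ first, Lemma~\ref{lem:integrate-t} (with $V=\Gm^n$, $g=\lambda+\langle h,u\rangle$, $K=\Lpsi f$) gives $R\Gamma_c(X_h,\mathcal L)\cong R\Gamma_c(Y_h,\Lpsi f)(-1)[-2]$. Hence $H^n_c(X_h,\mathcal L)\cong H^{n-2}_c(Y_h,\Lpsi f)(-1)$. Since $\partial$ is Frobenius-equivariant and $H^n_c(Z_h)$ has weight $0$, $\partial$ is either injective or zero, so it suffices to rule out $\partial=0$. I do this with a trace comparison. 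Take alternating traces of Frobenius over $\mathbb F_{q^m}$ for all $m$ in the excision sequence, use the Lefschetz formula for $Y_h$ through~\eqref{eq:intro-factor}, and separate the eigenvalue $1$.

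A cleaner route is a weight argument. The group $H^{n-2}_c(Y_h,\Lpsi f)$ lives on an affine smooth variety of dimension $n-1$. By Artin vanishing and Poincaré duality, $H^i_c(Y_h,\Lpsi f)=0$ for $i<n-1$, since the dual $H^{2(n-1)-i}(Y_h,\Lpsi{-f})$ vanishes above degree $n-1$. So $H^n_c(X_h)=0$, $\partial$ is injective, and (i) holds in all degrees. Its image is a Frobenius-stable line with trivial Frobenius, so by Proposition~\ref{prop:weights-torus} it is the unique line of weight $0$. This proves (ii).

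For (iii), the dimension is $2^n-1$. The eigenvalues of $H^{n+1}_c(X_h^\circ)$ come from Proposition~\ref{prop:weights-torus} together with Proposition~\ref{prop:frob-invariants}: there are $r(n)$ eigenvalues of weight $n+1$, and the values $q^k$ occur with multiplicity $m_{n,k}$ for $0\le k\le\lfloor n/2\rfloor$. Removing the line of weight $0$ (note $m_{n,0}=1$) leaves exactly the stated list. The weight-$\le n$ subspace is the image of $\mathcal N_h^{I_0}$, on which Frobenius is semisimple (Proposition~\ref{prop:frob-invariants}), so it is a direct sum of eigenspaces. The weight-$(n+1)$ part does not meet this subspace, which justifies splitting off the quotient.

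Part (iv) then follows from Lemma~\ref{lem:integrate-t}. Twisting by $(1)$ divides every eigenvalue by $q$, so the weights drop by $2$ and $q^k$ becomes $q^{k-1}$. The sign $(-1)^{n-1}$ comes from the Lefschetz trace formula, since the cohomology is concentrated in degree $n-1$. The bound on $\tau_q$ is immediate.

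The step I expect to be the main obstacle is the injectivity of $\partial$. The Artin-vanishing argument handles it, but it needs care with compact supports. The alternative would be Euler-characteristic bookkeeping, which does not by itself exclude the cancellation of $H^n_c(X_h)$ against an extra class in $H^{n+1}_c$.
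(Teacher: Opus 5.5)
Your proposal is correct and follows essentially the same route as the paper. The paper uses the excision sequence for $Z_h\subset X_h$, shows $H^n_c(X_h,\mathcal L)\cong H^{n-2}_c(Y_h,\Lpsi f)(-1)=0$ via Lemma~\ref{lem:integrate-t}, Artin vanishing and Poincar\'e duality to get injectivity of $\partial$, and then uses Propositions~\ref{prop:weights-torus} and~\ref{prop:frob-invariants} to identify the image of $\partial$ with the weight-$0$ line and read off the remaining eigenvalues; the trace-comparison detour you sketch is not needed.
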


\begin{proof}
By Lemma~\ref{lem:integrate-t}, applied with $V=\Gm^n$, $g(u)=\lambda+\langle h,u\rangle$ and $K=\Lpsi f$, we have $H^i_c(X_h,\mathcal L)\cong H^{i-2}_c(Y_h,\Lpsi f)(-1)$ for all $i$. The variety $Y_h$ is smooth and affine of pure dimension $n-1$. By Artin's vanishing theorem \cite[Exp.~XIV]{SGA4} and Poincar\'e duality \cite[Exp.~XVIII]{SGA4}, $H^j_c(Y_h,\Lpsi f)=0$ for $j<n-1$. Hence $H^i_c(X_h,\mathcal L)=0$ for $i\le n$.

Consider the excision sequence
\[
\cdots\to H^i_c(X_h^\circ,\mathcal L)\to H^i_c(X_h,\mathcal L)\to H^i_c(Z_h,\mathcal L|_{Z_h})\xrightarrow{\ \partial\ }H^{i+1}_c(X_h^\circ,\mathcal L)\to\cdots.
\]
By Proposition~\ref{prop:fibration}(iii) and Lemma~\ref{lem:t0}, the groups $H^i_c(X_h^\circ,\mathcal L)$ and $H^i_c(Z_h,\mathcal L|_{Z_h})$ vanish except for $H^{n+1}_c(X_h^\circ,\mathcal L)$ and $H^n_c(Z_h,\mathcal L|_{Z_h})$. For $i\ge n+2$ this gives $H^i_c(X_h,\mathcal L)=0$, which proves (i), and the remaining part of the sequence reads
\[
0=H^n_c(X_h,\mathcal L)\to H^n_c(Z_h,\mathcal L|_{Z_h})\xrightarrow{\ \partial\ }H^{n+1}_c(X_h^\circ,\mathcal L)\to H^{n+1}_c(X_h,\mathcal L)\to0.
\]
Thus $\partial$ is injective. It is Frobenius-equivariant, and its source is a line on which Frobenius acts trivially (Lemma~\ref{lem:t0}). Hence its image is a Frobenius-stable line of weight $0$, which by Proposition~\ref{prop:weights-torus} is the unique such line in $H^{n+1}_c(X_h^\circ,\mathcal L)\cong H^1_c(\Gm,\mathcal N_h)$. This proves (ii). For (iii), $H^{n+1}_c(X_h,\mathcal L)$ is the cokernel of $\partial$. By Proposition~\ref{prop:weights-torus}, the part of weight $\le n$ of $H^1_c(\Gm,\mathcal N_h)$ is the image of $\mathcal N_h^{I_0}$, and its quotient is pure of weight $n+1$ and of dimension $r(n)$. By Proposition~\ref{prop:frob-invariants}, $\Frob_q$ acts on $\mathcal N_h^{I_0}$ as $q^k$ on a subspace $E_k$ of dimension $m_{n,k}$, with $\mathcal N_h^{I_0}=\bigoplus_kE_k$, and the image of $\partial$ is the line $E_0$. Hence the part of weight $\le n$ of the cokernel is $\bigoplus_{k\ge1}E_k$, which proves (iii).

For (iv), the vanishing and the isomorphism follow from (i) and the first paragraph. The Grothendieck--Lefschetz trace formula on $Y_h$ gives $\Sigma_n(h)=\sum_i(-1)^i\Tr\big(\Frob_q\mid H^i_c(Y_h,\Lpsi f)\big)$, which reduces to the displayed formula. The Tate twist $(1)$ divides eigenvalues by $q$ and lowers weights by $2$, which gives the eigenvalues and~\eqref{eq:exact-formula}. Since an eigenvalue of weight $w$ has absolute value $q^{w/2}$, the bound on $\tau_q(h)$ follows.
\end{proof}

\begin{corollary}\label{cor:all-h}
Let $h\in\Fq^n$, and let $a$ be the number of nonzero coordinates of $h$. Then $H^i_c(Y_h\otimes\Fqbar,\Lpsi f)=0$ for $i\ne n-1$, $\dim H^{n-1}_c(Y_h\otimes\Fqbar,\Lpsi f)=2^a-1$, and $\Sigma_n(h)=(-1)^{n-1}\Tr\big(\Frob_q\mid H^{n-1}_c(Y_h,\Lpsi f)\big)$. For $a\ge2$ the eigenvalues are those of Theorem~\ref{thm:fibre}(iv) with $n$ replaced by $a$; for $a=1$ and $h_i\ne0$ the single eigenvalue is $\psi_q(-h_i/\lambda)$.
\end{corollary}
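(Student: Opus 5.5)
The plan is to reduce to the case where all coordinates of $h$ are nonzero by removing the variables with $h_i=0$. After permuting coordinates, write $h=(h',0)$ with $h'\in(\Fq^\times)^a$. The variety $Y_h$ is then a product $Y'_{h'}\times\Gm^{n-a}$, where
\[
Y'_{h'}=\{u'\in\Gm^a:\langle h',u'\rangle=-\lambda\}\qquad(a\ge1),
\]
and $Y_h$ is empty when $a=0$. The sheaf $\Lpsi f$ is the exterior tensor product of $\Lpsi{\sum_{i\le a}u_i^{-1}}$ on $Y'_{h'}$ with the sheaves $\Lpsi{u_i^{-1}}$ on the remaining factors $\Gm$.

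The Künneth formula then gives
\[
R\Gamma_c(Y_h,\Lpsi f)\cong R\Gamma_c\big(Y'_{h'},\Lpsi{\textstyle\sum_{i\le a}u_i^{-1}}\big)\otimes\bigotimes_{i>a}R\Gamma_c\big(\Gm,\Lpsi{u_i^{-1}}\big).
\]
By the computation in the proof of Lemma~\ref{lem:t0}, each factor $R\Gamma_c(\Gm,\Lpsi{u^{-1}})$ is a line in degree $1$ on which Frobenius acts trivially. So tensoring with these $n-a$ factors shifts degrees by $n-a$, leaves the dimension unchanged, and leaves the eigenvalues unchanged.

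It remains to handle the first factor according to $a$.
\begin{itemize}
\item For $a\ge2$, Theorem~\ref{thm:fibre}(iv) in $a$ variables gives concentration in degree $a-1$, dimension $2^a-1$, and the stated eigenvalues. After the shift, the cohomology of $Y_h$ sits in degree $(a-1)+(n-a)=n-1$.
\item For $a=1$, $Y'_{h'}$ is the single point $u_i=-\lambda/h_i$. Its cohomology is one line in degree $0$, on which Frobenius acts by $\psi_q(u_i^{-1})=\psi_q(-h_i/\lambda)$. The total degree is $n-1$ and the dimension is $1=2^1-1$.
\item For $a=0$, the constraint reads $0=-\lambda$, so $Y_h$ is empty. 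All cohomology vanishes, consistent with $2^0-1=0$.
\end{itemize}

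The trace identity follows from the Grothendieck--Lefschetz formula: since the only nonzero group is $H^{n-1}_c$, the sign is $(-1)^{n-1}$. This agrees with Lemma~\ref{lem:factor}, which serves as a check on the signs.

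I do not expect a real obstacle here. The only care needed is to make the Künneth isomorphism Frobenius-equivariant, which holds because everything is defined over $\Fq$, and to track the degree shift correctly.
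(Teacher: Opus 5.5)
Your proposal is correct and follows the paper's proof essentially step for step. Both use the product decomposition $Y_h\cong Y_{h_I}\times\Gm^{n-a}$, then the K\"unneth formula together with Lemma~\ref{lem:t0} to get a Frobenius-compatible shift by $n-a$, then Theorem~\ref{thm:fibre} for $a\ge2$, the single point for $a=1$, emptiness for $a=0$, and finally the Grothendieck--Lefschetz trace formula.
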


\begin{proof}
If $a=0$, then $Y_h$ is empty, since $\lambda\ne0$. Otherwise let $I$ be the set of indices of the nonzero coordinates, $h_I$ the vector they form, and $f_I(u_I)=\sum_{i\in I}u_i^{-1}$. Then $Y_h\cong Y_{h_I}\times\Gm^{n-a}$, where $Y_{h_I}\subset\Gm^I$ is defined by $\langle h_I,u_I\rangle=-\lambda$, and $\Lpsi f\cong\Lpsi{f_I}\boxtimes\boxtimes_{i\notin I}\Lpsi{u_i^{-1}}$. By Lemma~\ref{lem:t0} and the K\"unneth formula,
\[
R\Gamma_c\big(Y_h\otimes\Fqbar,\Lpsi f\big)\cong R\Gamma_c\big(Y_{h_I}\otimes\Fqbar,\Lpsi{f_I}\big)[-(n-a)],
\]
compatibly with $\Frob_q$. For $a\ge2$, Theorem~\ref{thm:fibre} applied to $h_I$ gives the assertions. For $a=1$, $Y_{h_I}$ is the point $u_i=-\lambda/h_i$, at which $\Frob_q$ acts on the stalk of $\Lpsi{u_i^{-1}}$ by $\psi_q(-h_i/\lambda)$. The trace formula concludes.
\end{proof}

By Corollary~\ref{cor:all-h}, the dimension of the cohomology drops from $2^n-1$ to $2^{n-1}-1$ on the coordinate hyperplanes. This is consistent with the nontrivial unipotent monodromy of $\mathcal W_n$ around them (Theorem~\ref{thmC}(i)).

Table~\ref{tab:small} lists the resulting data for small $n$. The proof passes through the following chain of dimensions, which we spell out for $n=4$. The phase has $8=2^{n-1}$ critical points for generic $h$ (Proposition~\ref{prop:crit}), while the cohomology of the torus has dimension $16=2^n$ (Proposition~\ref{prop:fibration}). The local monodromy of $\mathcal N_h$ at $t=0$ is $\Up{2}^{\otimes4}\cong\Up{5}\oplus3\,\Up{3}\oplus2\,\Up{1}$, so the invariants $\mathcal N_h^{I_0}$ have dimension $6$, with eigenvalues $1$, $q,q,q$ and $q^2,q^2$ (Proposition~\ref{prop:frob-invariants}). The connecting map $\partial$ removes the line with eigenvalue $1$, which leaves dimension $15$. The pure quotient $H^1(\mathbb P^1,j_*\mathcal N_h)$ has dimension $10$ and weight $5$. After the Tate twist, $H^3_c(Y_h,\Lpsi f)$ has ten eigenvalues of weight $3$, the eigenvalue $1$ three times and the eigenvalue $q$ twice.

\begin{table}[ht]
\centering
\caption{Theorem~\ref{thm:fibre} for small $n$: the number of critical points for generic $h$, the dimensions of the cohomology of the torus and of the invariants at $t=0$, the dimensions of $H^{n-1}_c(Y_h,\Lpsi f)$ and of its part of weight $n-1$, and the remaining eigenvalues, with multiplicities as exponents. The last column is the exact formula~\eqref{eq:exact-formula}, where $\tau=\tau_q(h)$ is a sum of $r(n)$ Weil numbers of weight $n-1$.}\label{tab:small}
\begin{tabular}{cccccccl}
\toprule
$n$ & $2^{n-1}$ & $2^n$ & $\binom n{\lfloor n/2\rfloor}$ & $2^n-1$ & $r(n)$ & lower eigenvalues & $\Sigma_n(h)$\\
\midrule
$2$ & $2$ & $4$ & $2$ & $3$ & $2$ & $1^{(1)}$ & $-1-\tau$\\
$3$ & $4$ & $8$ & $3$ & $7$ & $5$ & $1^{(2)}$ & $2+\tau$\\
$4$ & $8$ & $16$ & $6$ & $15$ & $10$ & $1^{(3)},\ q^{(2)}$ & $-3-2q-\tau$\\
$5$ & $16$ & $32$ & $10$ & $31$ & $22$ & $1^{(4)},\ q^{(5)}$ & $4+5q+\tau$\\
\bottomrule
\end{tabular}
\end{table}

\section{The family over the parameter torus}\label{sec:family-S}

Let $S=\Gm^n$ over $\Fp$, with coordinates $h$, and let
\[
X=\Gm^n\times\mathbb A^1_t\times S\supset X^\circ=\{t\ne0\},\qquad Z=\{t=0\},
\]
with projections $\pi$, $\pi^\circ$ and $\pi_Z$ to $S$. On $X$ we again write $\mathcal L=\Lpsi{F_n}$, now with $h$ variable, and put
\[
\mathcal G_n=R^{n+1}\pi_!\mathcal L,\qquad\mathcal H_n=R^{n+1}\pi^\circ_!\mathcal L.
\]
By proper base change, the geometric stalks of these sheaves at $h$ are $H^{n+1}_c(X_h,\mathcal L)$ and $H^{n+1}_c(X_h^\circ,\mathcal L)$.

\begin{theorem}[Deligne--Laumon]\label{thm:DL}
Let $T$ be an excellent noetherian scheme, $f\colon\mathbb P^1_T\to T$ the projection, $D\subset\mathbb P^1_T$ a closed subscheme finite and \'etale over $T$, $j\colon V=\mathbb P^1_T\smallsetminus D\hookrightarrow\mathbb P^1_T$ the inclusion, and $F$ a lisse sheaf of constant rank on $V$. Let $\sigma_F$ be the function $\bar t\mapsto\sum_{x\in D_{\bar t}}\Swan_x(F|_{V_{\bar t}})$ on geometric points of $T$.
\begin{enumerate}[(i)]
\item The function $\sigma_F$ is constructible.
\item If $\sigma_F$ is locally constant, then $R^if_*(j_!F)$ is lisse on $T$ for every $i$.
\end{enumerate}
\end{theorem}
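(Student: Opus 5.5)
This is the semicontinuity theorem of Deligne and Laumon, and I would not reprove it from scratch; the plan is to reduce it to Laumon's theorem on the Swan conductor and to Deligne's criterion for local acyclicity. Both statements are local on $T$ for the étale topology. We may therefore assume that $T$ is the spectrum of a strictly henselian excellent local ring, or work over an étale neighbourhood of a geometric point. After a further finite étale base change, $D$ becomes a disjoint union of sections $s_1,\dots,s_r\colon T\to\mathbb P^1_T$. Then $\sigma_F(\bar t)=\sum_k\Swan_{s_k(\bar t)}(F|_{V_{\bar t}})$, and it suffices to treat each section separately.

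For (i) I would argue by noetherian induction on $T$. For this it is enough to show that, over a dense open subset of each irreducible closed subset $T'\subseteq T$, every summand $\Swan_{s_k(\bar t)}$ is constant. Over the generic point $\eta$ of $T'$, the restriction of $F$ is trivialised by a finite Galois cover of $V_{\eta}$. Spreading that cover out over a dense open $U\subseteq T'$, I would arrange that its ramification along $s_k$ has constant invariants over $U$. The tool here is the upper-numbering filtration and Abhyankar's lemma after a radicial or finite étale base change. Then $\Swan$, which is computed from the ramification groups, is constant on $U$.

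For (ii), the key step is Laumon's theorem. Under the hypothesis that $\sigma_F$ is locally constant, the sheaf $j_!F$ is universally locally acyclic relative to $f$ at the points of $D$. At points of $V$ it is automatically so, since $F$ is lisse and $V\to T$ is smooth. At the points of $D$ I would use Deligne's criterion. For a sheaf on a relative curve that is lisse off a divisor finite over the base, local acyclicity at the points of the divisor is equivalent to the local constancy of the total dimension $\rk+\Swan$ of the vanishing cycles. Since the rank is constant, this is exactly the hypothesis on $\sigma_F$.

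Given universal local acyclicity, the proper morphism $f$ and proper base change give that $R^if_*(j_!F)$ commutes with base change. The cospecialisation maps between geometric fibres are then isomorphisms, by the vanishing cycles formalism of SGA~7, Exp.~XIII, since $R\Phi=0$. A constructible sheaf whose specialisation maps are all isomorphisms is lisse, and this gives (ii).

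The main obstacle is the equivalence ``constant Swan conductor $\Rightarrow$ local acyclicity'' at the points of $D$, which is the content of Laumon's paper. I would try to obtain it by comparing Euler characteristics. By the Grothendieck--Ogg--Shafarevich formula, $\chi(V_{\bar t},F)$ equals $\chi(V_{\bar t})\rk F-\sigma_F(\bar t)$. Local constancy of $\sigma_F$ then gives local constancy of the Euler characteristic of $j_!F$ on the fibres. Deligne's theorem on families of curves (SGA~$4\tfrac12$, Th.~finitude, and Laumon, Semi-continuité du conducteur de Swan) converts this into the vanishing of the vanishing cycles.
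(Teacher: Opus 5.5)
Your plan for (ii) is the paper's. You obtain universal local acyclicity of $j_!F$ from the local constancy of $\sigma_F$ by Laumon's theorem, then use properness of $f$, proper base change and $R\Phi=0$ to see that the $R^if_*(j_!F)$ are locally constant. The paper does the same by citing Laumon's Th.~2.1.1 and Cor.~2.1.2.

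\textbf{The gap is in the coefficients.} Those results are stated for locally constant constructible sheaves of $\mathbb F_\ell$-vector spaces. Here $F$ is a lisse $\Qlb$-sheaf, and your proposal never passes from one to the other.
\begin{itemize}
\item In (i) this is exactly where your argument fails. The claim that $F|_{V_\eta}$ is trivialised by a finite Galois cover of $V_\eta$ is false for lisse $\Qlb$-sheaves, whose monodromy is in general infinite. For the sheaf $\mathcal N$ to which the theorem is applied in Theorem~\ref{thm:lisse}(ii), the local monodromy at $t=0$ on each fibre is $\Up{2}^{\otimes n}$, a nontrivial unipotent element of infinite order. Your noetherian induction includes closed points $T'=\{t\}$, so the claim already fails there.
\item In (ii), the local acyclicity supplied by Laumon is a statement about torsion sheaves. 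It has to be transferred to $F$ before your cospecialisation argument shows that the $\Qlb$-sheaves $R^if_*(j_!F)$ are lisse.
\end{itemize}

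The missing step is the d\'evissage the paper carries out.
\begin{itemize}
\item Choose a finite extension $E$ of $\mathbb Q_\ell$, with uniformizer $\varpi$ and residue field $k_E$, and a lisse $\mathcal O_E$-lattice $F_0$ in $F$. Put $\bar F_0=F_0/\varpi F_0$.
\item Wild inertia acts on $F_0$ through a finite $p$-group, and $\ell\ne p$. So taking invariants under the upper ramification groups commutes with reduction modulo $\varpi$, and $\sigma_{\bar F_0}=[k_E:\mathbb F_\ell]\,\sigma_F$. This gives (i) from the torsion case, and it transfers the hypothesis of (ii) to $\bar F_0$.
\item The sheaves $R^if_*j_!(F_0/\varpi^mF_0)$ are then locally constant constructible by induction on $m$, using the extensions $0\to\bar F_0\to F_0/\varpi^{m+1}F_0\to F_0/\varpi^mF_0\to0$.
\item Finally, $R^if_*(j_!F_0)$ is lisse, being defined by an AR-$\varpi$-adic system whose stable images are locally constant constructible.
\end{itemize}

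For (i) there is also a shortcut, valid directly for $\Qlb$-sheaves, using the Euler characteristic from your last paragraph. Since $\mathbb P^1_T$ is proper, proper base change and the Grothendieck--Ogg--Shafarevich formula give $\sigma_F(\bar t)=(2-\#D_{\bar t})\rk F-\sum_i(-1)^i\dim(R^if_*j_!F)_{\bar t}$. Here $\#D_{\bar t}$ is locally constant and the $R^if_*(j_!F)$ are constructible, so $\sigma_F$ is constructible. With this route no spreading out of ramification is needed.
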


This is \cite[Th.~2.1.1 and Cor.~2.1.2]{Laumon-semicont}. Laumon states it for locally constant constructible sheaves of $\mathbb F_\ell$-vector spaces on a smooth relative curve, with a divisor finite and flat over an excellent noetherian base, which covers the situation here: by Th.~2.1.1(i) the Swan function is constructible (we shall apply this with $T$ smooth over $\Fp$); by Th.~2.1.1(ii), if it is locally constant, then $j_!F$ is universally locally acyclic relative to $f$; and then, $f$ being proper, the sheaves $R^if_*(j_!F)$ are locally constant (Cor.~2.1.2). The $\ell$-adic statement follows by d\'evissage. Let $E$ be a finite extension of $\mathbb Q_\ell$, with uniformizer $\varpi$ and residue field $k_E$, such that $F$ has a lisse $\mathcal O_E$-lattice $F_0$, and put $\bar F_0=F_0/\varpi F_0$, a lisse sheaf of $k_E$-vector spaces, hence of $\mathbb F_\ell$-vector spaces. Wild inertia acts through finite $p$-groups and $\ell\ne p$, so taking invariants under the higher ramification groups commutes with reduction modulo~$\varpi$; hence $\sigma_{\bar F_0}=[k_E:\mathbb F_\ell]\,\sigma_F$, and (i) follows. Under the hypothesis of~(ii), the sheaves $R^if_*(j_!\bar F_0)$ are locally constant and constructible. Multiplication by $\varpi^m$ gives exact sequences $0\to\bar F_0\to F_0/\varpi^{m+1}F_0\to F_0/\varpi^mF_0\to0$. Kernels, cokernels and extensions of locally constant constructible sheaves are again locally constant and constructible. So the long exact sequences of the $R^if_*j_!$ and induction on $m$ show that every $R^if_*\big(j_!(F_0/\varpi^mF_0)\big)$ is locally constant and constructible. As $f$ is proper, $R^if_*(j_!F_0)$ is the $\varpi$-adic sheaf defined by the AR-$\varpi$-adic projective system $\big(R^if_*(j_!(F_0/\varpi^mF_0))\big)_{m\ge1}$ \cite[Exp.~V and~VI]{SGA5}. This system is AR-isomorphic to the system of stable images of its transition maps, and these images are again locally constant and constructible. Hence $R^if_*(j_!F_0)$ is a lisse $\varpi$-adic sheaf, and $R^if_*(j_!F)$ is lisse. By proper base change, the stalks of $R^if_*(j_!F)=R^i(fj)_!F$ are the groups $H^i_c(V_{\bar t},F)$.

\begin{remark}\label{rem:constant-rank}
Theorem~\ref{thm:fibre} shows that the stalks of $R^{n+1}\pi_!\mathcal L$ have the same dimension $2^n-1$ at all geometric points of $S$. This alone does not imply that the sheaf is lisse, even over a smooth connected base. For instance, on $\mathbb A^1$ the constructible sheaf $i_*\Qlb\oplus j_!\Qlb$, where $i\colon\{0\}\hookrightarrow\mathbb A^1$ and $j\colon\Gm\hookrightarrow\mathbb A^1$, has one-dimensional stalks everywhere but is not locally constant. This is why we use Theorem~\ref{thm:DL}.
\end{remark}

\begin{theorem}\label{thm:lisse}
\begin{enumerate}[(i)]
\item Let $q_1\colon\Gm^n\times\Gmn{t}\times S\to\Gmn{t}\times S$ be the projection. Then $Rq_{1!}\mathcal L\cong\mathcal N[-n]$, where
\[
\mathcal N=\Big(\bigotimes_{i=1}^n[\times h_i]^*\Kl\Big)\otimes\Lpsi{\lambda t},\qquad[\times h_i](t,h)=h_it,
\]
is lisse of rank $2^n$ on $\Gmn{t}\times S$.
\item $R^i\pi^\circ_!\mathcal L=0$ for $i\ne n+1$, and $\mathcal H_n$ is lisse of rank $2^n$.
\item $R^i\pi_!\mathcal L=0$ for $i\ne n+1$, and there is an exact sequence of lisse sheaves
\[
0\longrightarrow\Qlb\xrightarrow{\ \partial\ }\mathcal H_n\longrightarrow\mathcal G_n\longrightarrow0,
\]
in which $\Qlb$ is the constant sheaf, with trivial Frobenius action. In particular $\mathcal G_n$ is lisse of rank $2^n-1$, and by Lemma~\ref{lem:integrate-t} its stalk at $h$ is $H^{n-1}_c(Y_h,\Lpsi f)(-1)$.
\item $\mathcal G_n$ has a unique increasing filtration $\mathrm W_\bullet$ by lisse subsheaves such that $\gr^{\mathrm W}_w\mathcal G_n$ is pointwise pure of weight $w$. The nonzero graded pieces are $\gr^{\mathrm W}_{2k}\mathcal G_n$, of rank $m_{n,k}$ for $1\le k\le\lfloor n/2\rfloor$, and $\mathcal W_n:=\gr^{\mathrm W}_{n+1}\mathcal G_n$, of rank $r(n)$. The inverse image $\mathcal H_n^{\le n}\subset\mathcal H_n$ of $\mathrm W_{\le n}\mathcal G_n$ is a lisse subsheaf, pointwise mixed of weights $\le n$, and $\mathcal H_n/\mathcal H_n^{\le n}\cong\mathcal W_n$.
\item For $h\in S(\Fq)$ put $A_q(h)=\Tr(\Frob_h\mid\mathcal W_n)$. Then
\[
(-1)^{n-1}q\,\Sigma_n(h)=A_q(h)+B_q(h),\qquad|A_q(h)|\le r(n)\,q^{(n+1)/2},\qquad B_q(h)=\sum_{k=1}^{\lfloor n/2\rfloor}m_{n,k}\,q^k.
\]
\item For $1\le k\le\lfloor n/2\rfloor$ there is an isomorphism $\gr^{\mathrm W}_{2k}\mathcal G_n\cong\Qlb(-k)^{\oplus m_{n,k}}$ of lisse sheaves on $S$.
\end{enumerate}
In particular Theorem~\ref{thmB} holds.
\end{theorem}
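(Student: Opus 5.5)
The proof runs in the order (i)--(vi), each step upgrading the fibrewise results of Section~\ref{sec:fibre} to families over $S$.

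\emph{Step (i).} Repeat the proof of Proposition~\ref{prop:fibration}(i) with $h$ variable. The map $(u_i,t,h)\mapsto(u_i^{-1},h_itu_i)$ turns the $i$-th factor into the Kloosterman fibration. So (K4) and proper base change along $(t,h)\mapsto h_it$ identify the direct image of $\Lpsi{u_i^{-1}+h_itu_i}$ with $[\times h_i]^*\Kl[-1]$. Künneth over $\Gmn{t}\times S$ then gives $\mathcal N[-n]$, which is lisse because $\Kl$ is.

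\emph{Step (ii).} Apply Theorem~\ref{thm:DL} with $T=S$, $D=(\{0\}\cup\{\infty\})\times S$ and $F=\mathcal N$. By Proposition~\ref{prop:fibration}(ii), $\Swan_0=0$ and $\Swan_\infty=2^n$ at every geometric point, so $\sigma_F$ is constant. Hence $R^if_*(j_!\mathcal N)$ is lisse. Its stalks are $H^i_c(\Gm,\mathcal N_h)$, which vanish for $i\ne1$ and have dimension $2^n$ for $i=1$. Since $R\pi^\circ_!\mathcal L\cong R(fj)_!\mathcal N[-n]$, this gives (ii).

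\emph{Step (iii).} Use the relative excision triangle for $X^\circ\subset X\supset Z$. By Lemma~\ref{lem:t0} and Künneth, $R\pi_{Z!}\mathcal L|_Z\cong\big(\boxtimes_i R\Gamma_c(\Gm,\Lpsi{u^{-1}})\big)\otimes\Qlb$. This is the constant sheaf $\Qlb[-n]$ with trivial Frobenius: the cohomology of $\Lpsi{u^{-1}}$ on $\Gmn{u}$ does not involve $h$, so this is pullback from a point. The connecting map $\partial$ is then a map of lisse sheaves $\Qlb\to\mathcal H_n$ that is injective on every stalk by Theorem~\ref{thm:fibre}(ii). Therefore its cokernel $\mathcal G_n$ is lisse of rank $2^n-1$, and $R^i\pi_!\mathcal L=0$ for $i\ne n+1$. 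The stalk description follows from Lemma~\ref{lem:integrate-t}, which is compatible with base change.

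\emph{Step (iv), the expected obstacle.} The filtration must be built by lisse subsheaves, not merely stalkwise. I would use the relative weight filtration. The sheaf $\mathcal G_n$ is mixed, because it is a quotient of $R^1f_!\mathcal N$ [Deligne~3.3.1]. Over $\Fp$, with $S$ normal and geometrically connected, a lisse mixed sheaf has a unique filtration by lisse subsheaves with pointwise pure graded pieces (Deligne, Weil~II, 3.4.1(ii)). The pieces can also be built directly by hand. Take the characteristic polynomial of $\Frob$ acting on the part of $\mathcal H_n$ of weight $\le n$. Then $\mathcal H_n^{\le n}$ is the image of $j_*$-invariants, namely the lisse sheaf $R^0f_*$ of the skyscraper $(j_*\mathcal N)_{0\times S}=(\mathcal N|_{\text{near }0})^{I_0}$. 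This sheaf is lisse because the unipotent local monodromy at $t=0$ is constant in $h$: pulling back along $t\mapsto h_it$ does not change it. Ranks come from Proposition~\ref{prop:weights-torus} and Theorem~\ref{thm:fibre}(iii).

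\emph{Step (v).} Take traces in the filtration, using Theorem~\ref{thm:fibre}(iv) and the twist $(-1)$. The bound $|A_q(h)|\le r(n)q^{(n+1)/2}$ follows from purity of $\mathcal W_n$ of weight $n+1$.

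\emph{Step (vi).} The graded piece $\gr_{2k}$ is a lisse sheaf of rank $m_{n,k}$ on which every $\Frob_h$ acts by $q^k$ semisimply (Proposition~\ref{prop:frob-invariants}). Its geometric monodromy is therefore unipotent by Chebotarev, since the characteristic polynomials are those of the identity. It is also semisimple by Deligne's semisimplicity of pure lisse sheaves [3.4.1(iii)], and hence geometrically trivial. The arithmetic action then factors through $\mathrm{Gal}(\Fpbar/\Fp)$ with $\Frob_p$ acting semisimply by $p^k$. So $\gr_{2k}\cong\Qlb(-k)^{\oplus m_{n,k}}$.
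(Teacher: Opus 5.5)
Your steps (i)--(v) follow the paper's proof: the relative K\"unneth/(K4) argument, Theorem~\ref{thm:DL} with $D=\{0,\infty\}\times S$, the relative excision sequence with constant term $\Qlb[-n]$, and Deligne's weight filtration with ranks read off from Theorem~\ref{thm:fibre}(iii).

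The one real divergence is (vi). The paper twists to $\mathcal E=\gr^{\mathrm W}_{2k}\mathcal G_n\otimes\Qlb(k)$ and observes that $q^{-n}\sum_{h\in S(\Fq)}\Tr(\Frob_h\mid\mathcal E)=q^{-n}(q-1)^nm_{n,k}\to m_{n,k}$. It then applies Lemma~\ref{lem:averages} (trace formula, $H^{2n}_c$ as geometric coinvariants, semisimplicity) to conclude that $\pi_1(S\otimes\Fpbar)$ acts trivially. Finally it uses the rational point $h_0=(1,\dots,1)$ to pin down the arithmetic action.

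You replace this with Chebotarev density plus geometric semisimplicity. Your route is shorter, and it needs less than you use. Since $\Frob_h$ acts on the stalk of $\mathcal E$ as the identity, not merely unipotently (Theorem~\ref{thm:fibre}(iii) gives a direct sum of eigenspaces), the continuous representation of $\pi_1(S)$ on $\mathcal E$ is trivial on the dense set of Frobenius elements. It is therefore trivial, so $\gr^{\mathrm W}_{2k}\mathcal G_n\cong\Qlb(-k)^{\oplus m_{n,k}}$ follows without \cite[3.4.1(iii)]{Deligne}. The paper's route instead stays inside the trace-formula machinery of Lemma~\ref{lem:averages}, which it reuses for the moment arguments (Proposition~\ref{prop:moment}, Corollary~\ref{cor:M2}).

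Two minor points to tighten. First, Proposition~\ref{prop:fibration}(ii) and Theorem~\ref{thm:fibre}(ii) are proved only at geometric points over closed points, so your phrases ``at every geometric point'' and ``on every stalk'' need a bridge. For $\sigma_{\mathcal N}$ the paper uses Theorem~\ref{thm:DL}(i): fibres are constructible, and every nonempty constructible subset of $S$ contains a closed point. For $\partial$ it suffices that $\ker\partial$ is lisse on the connected $S$ and vanishes at one stalk. Second, your ``by hand'' description of $\mathcal H_n^{\le n}$ through $(j_*\mathcal N)|_{\{0\}\times S}$ relies on $j_*$ commuting with base change along $\{0\}\times S$, which you do not justify. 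It is not needed, since Deligne's filtration already gives the lisse subsheaves.
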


\begin{proof}
(i) This is the relative form of Proposition~\ref{prop:fibration}(i). The same decomposition of $\mathcal L$, (K4), proper base change along $(t,h)\mapsto h_it$, the K\"unneth formula over $\Gmn{t}\times S$ and the projection formula apply. Each $[\times h_i]^*\Kl$ is lisse of rank $2$, being the inverse image of $\Kl$ by a morphism to $\Gm$.

(ii) Let $\pr_S\colon\Gmn{t}\times S\to S$ be the projection, and apply Theorem~\ref{thm:DL} with $T=S$, $D=\{0,\infty\}\times S$ and $F=\mathcal N$. At a geometric point $\bar h$ above a closed point of $S$, Proposition~\ref{prop:fibration}(ii) gives $\Swan_0(\mathcal N_{\bar h})=0$ and $\Swan_\infty(\mathcal N_{\bar h})=2^n$, so $\sigma_{\mathcal N}(\bar h)=2^n$. By Theorem~\ref{thm:DL}(i), every fibre of $\sigma_{\mathcal N}$ is a constructible subset of $S$. As $S$ is of finite type over $\Fp$, Hilbert's Nullstellensatz shows that every nonempty constructible subset of $S$ contains a closed point. Hence $\sigma_{\mathcal N}$ is constant, equal to $2^n$, and by Theorem~\ref{thm:DL}(ii) the sheaf $R^i\pr_{S!}\mathcal N$ is lisse for every $i$, and by Proposition~\ref{prop:fibration}(iii) it vanishes for $i\ne1$ and has rank $2^n$ for $i=1$. The Leray spectral sequence for $\pi^\circ=\pr_S\circ q_1$ and (i) give $R\pi^\circ_!\mathcal L\cong R\pr_{S!}\mathcal N[-n]$, whence (ii).

(iii) The excision triangle for the open subscheme $X^\circ$ and its closed complement $Z$ gives a long exact sequence
\[
\cdots\to R^i\pi^\circ_!\mathcal L\to R^i\pi_!\mathcal L\to R^i\pi_{Z!}(\mathcal L|_Z)\xrightarrow{\ \partial\ }R^{i+1}\pi^\circ_!\mathcal L\to\cdots.
\]
On $Z\cong\Gm^n\times S$ the phase is $\sum_iu_i^{-1}$, which does not involve $h$. So $R\pi_{Z!}(\mathcal L|_Z)$ is the constant complex with value $R\Gamma_c\big(\Gm^n\otimes\Fpbar,\boxtimes_i\Lpsi{u_i^{-1}}\big)$, which by Lemma~\ref{lem:t0} is $\Qlb[-n]$ with trivial Frobenius action. Together with (ii), the sequence shows that $R^i\pi_!\mathcal L=0$ for $i\notin\{n,n+1\}$, and it reduces to an exact sequence
\[
0\to R^n\pi_!\mathcal L\to\Qlb\xrightarrow{\ \partial\ }\mathcal H_n\to\mathcal G_n\to0.
\]
Lisse sheaves on the connected normal scheme $S$ form an abelian subcategory of the constructible sheaves, equivalent to the category of continuous representations of $\pi_1(S)$; so $\ker\partial=R^n\pi_!\mathcal L$ and $\operatorname{coker}\partial=\mathcal G_n$ are lisse. By proper base change and Theorem~\ref{thm:fibre}(ii), the map $\partial$ is injective on the stalk at a geometric point above a closed point of $S$. A lisse sheaf on the connected scheme $S$ with one zero stalk vanishes; hence $R^n\pi_!\mathcal L=0$, which proves (iii). The stalk of $\mathcal G_n$ at any geometric point $h$ is $H^{n+1}_c(X_h,\mathcal L)$ by proper base change, and this is $H^{n-1}_c(Y_h,\Lpsi f)(-1)$ by Lemma~\ref{lem:integrate-t}.

(iv) The sheaf $\mathcal G_n$ is mixed by \cite[Th.~3.3.1]{Deligne}. Since $S$ is normal and connected, \cite[3.4.1]{Deligne} provides the unique filtration $\mathrm W_\bullet$ by lisse subsheaves with pointwise pure graded pieces. Let $h$ be a closed point of $S$, with residue field $\Fq$. The stalks at $\bar h$ of the subsheaves $\mathrm W_w\mathcal G_n$ form a $\Frob_h$-stable filtration of $(\mathcal G_n)_{\bar h}$ whose $w$-th graded piece is pure of weight $w$. Hence the rank of the lisse sheaf $\gr^{\mathrm W}_w\mathcal G_n$ is the number of eigenvalues of weight $w$, counted with multiplicity, of $\Frob_h$ on $(\mathcal G_n)_{\bar h}=H^{n+1}_c(X_h\otimes\Fqbar,\mathcal L)$. These numbers are given by Theorem~\ref{thm:fibre}(iii), a statement about the single fibre over $h$, proved in Section~\ref{sec:fibre} from Propositions~\ref{prop:fibration}, \ref{prop:weights-torus} and~\ref{prop:frob-invariants} and Lemma~\ref{lem:t0} alone. The assertions on $\mathcal H_n^{\le n}$ follow, as $\Qlb$ has weight $0\le n$.

(v) By Theorem~\ref{thm:fibre}(iv), $\Tr(\Frob_h\mid(\mathcal G_n)_h)=q\Tr\big(\Frob_q\mid H^{n-1}_c(Y_h,\Lpsi f)\big)=(-1)^{n-1}q\Sigma_n(h)$. Decomposing the trace along $\mathrm W_\bullet$ gives $A_q(h)+B_q(h)$, where $B_q(h)$ is the sum of the traces on the $\gr^{\mathrm W}_{2k}\mathcal G_n$. The bound on $A_q(h)$ follows from the weight and the rank in (iv). By Theorem~\ref{thm:fibre}(iii), $\Frob_h$ acts on the stalk of $\gr^{\mathrm W}_{2k}\mathcal G_n$ at $\bar h$ as multiplication by $q^k$, which gives the formula for $B_q(h)$.

(vi) Put $\mathcal E=\gr^{\mathrm W}_{2k}\mathcal G_n\otimes\Qlb(k)$, a lisse sheaf of rank $m=m_{n,k}$, pointwise pure of weight $0$. By the proof of~(v), $\Frob_h$ acts trivially on the stalk of $\mathcal E$ at $\bar h$, for every closed point $h$ of $S$. Hence $q^{-n}\sum_{h\in S(\Fq)}\Tr(\Frob_h\mid\mathcal E)=q^{-n}(q-1)^nm$, which tends to $m$ as $q\to\infty$ through powers of $p$. By Lemma~\ref{lem:averages}, applied with $T=S$ and $d=n$, the geometric fundamental group $\pi_1(S\otimes\Fpbar)$ acts trivially on the geometric generic stalk of $\mathcal E$. So $\pi_1(S)$ acts on it through $\mathrm{Gal}(\Fpbar/\Fp)$, which is topologically generated by the image of $\Frob_{h_0}$ for the rational point $h_0=(1,\dots,1)$, and $\Frob_{h_0}$ acts trivially. Hence $\mathcal E$ is constant, which is~(vi).
\end{proof}

\begin{remark}\label{rem:collision}
By Proposition~\ref{prop:crit}, the phase has $2^{n-1}$ critical points for $h\in\Omega_n$, but only $2^{n-1}-1$ at a point $h_0$ where exactly one pair $\{\pm\varepsilon_0\}$ satisfies $L_{\varepsilon_0}=0$. No two critical points collide there. Indeed, the critical locus of $F_n$ in $\Gm^n\times\Gm\times S$ is \'etale over $S$, because its relative Jacobian is the Hessian, which is invertible at every critical point for every $h$ (Proposition~\ref{prop:crit}). The drop in the number of points reflects a failure of finiteness. Instead, the critical point attached to $\{\pm\varepsilon_0\}$ escapes to infinity. As functions of the square roots $s_i$, its coordinates $u_i=-\lambda\varepsilon_{0,i}/(s_iL_{\varepsilon_0})$ have a pole and $t=L_{\varepsilon_0}^2/\lambda^2$ has a zero along $L_{\varepsilon_0}=0$. This is the degeneration of the phase on the face $\sigma^*$ of $P_n$ described in Remark~\ref{rem:AS}(b). Theorem~\ref{thm:lisse} shows that $\mathcal G_n$ is nevertheless lisse across these divisors; that is, $\mathcal G_n$ has no local monodromy along them. The proof never passes through the critical points: it only uses that the Swan conductors of $\mathcal N_h$ at $0$ and $\infty$ do not depend on $h$. This is consistent with Proposition~\ref{prop:nondeg}, by which $F_n$ is non-degenerate with respect to $\Delta_n$ for every $h$.
\end{remark}

\section{The coordinate boundary}\label{sec:boundary}

Throughout this section $n\ge2$, and $c=(c_2,\dots,c_n)\in(\Fq^\times)^{n-1}$ for some power $q$ of $p$. Every $c\in(\Fpbar^\times)^{n-1}$ is defined over a finite field, so all geometric statements hold for every such $c$. Let $C_c\subset S\otimes\Fq$ be the line $\{(h_1,c_2,\dots,c_n)\}$, identified with $\Gmn{h_1}$, and put
\[
\mathcal M_c=\Big(\bigotimes_{i=2}^n[\times c_i]^*\Kl\Big)\otimes\Lpsi{\lambda t}\qquad\text{on }\Gmn{t}.
\]
For $j\ge1$ let $\kappa_j$ be the number of Jordan blocks of size $j$ of $\Up{2}^{\otimes(n-1)}$. By Lemma~\ref{lem:CG}, $\sum_j\kappa_j=\binom{n-1}{\lfloor(n-1)/2\rfloor}$, and $\sum_jj\kappa_j=2^{n-1}$.

\subsection{Convolution}\label{sec:conv}
Let $\mu\colon\Gm\times\Gm\to\Gm$ be the multiplication. Following Katz \cite[Ch.~2]{Katz-Conv}, for perverse sheaves \cite{BBD} $\mathcal A,\mathcal B$ on $\Gm\otimes\Fpbar$ put
\[
\mathcal A*_!\mathcal B=R\mu_!(\mathcal A\boxtimes\mathcal B),\qquad\mathcal A*_*\mathcal B=R\mu_*(\mathcal A\boxtimes\mathcal B).
\]
Katz writes $\mathcal P$ for the class of perverse sheaves on $\Gm\otimes\Fpbar$ that have no perverse subobject or quotient of the form $\mathcal L_\chi[1]$. If $\mathcal A\in\mathcal P$, then $\mathcal A*_!\mathcal B$ and $\mathcal A*_*\mathcal B$ are perverse for every perverse sheaf $\mathcal B$ \cite[Ch.~2]{Katz-Conv}. The \emph{middle convolution} $\mathcal A*_{\mathrm{mid}}\mathcal B$ is the image of the natural morphism $\mathcal A*_!\mathcal B\to\mathcal A*_*\mathcal B$ in the abelian category of perverse sheaves \cite[Ch.~2]{Katz-Conv}. The same formulas apply to perverse sheaves on $\Gm$ over a finite field $\Fq$. Their formation commutes with extension of scalars to $\Fpbar$: for $R\mu_!$ by proper base change, for $R\mu_*$ because $\Fpbar$ is a filtered union of finite, hence \'etale, extensions of $\Fq$; and extension of scalars is t-exact for the perverse t-structure.

We use the following facts about perverse sheaves \cite{BBD} on a smooth connected curve $C$ over $\Fq$ or $\Fpbar$.
\begin{itemize}
\item[(P1)] If $F$ is lisse on $C$, then $F[1]$ is perverse, and $\Hom(F[1],G[1])=\Hom(F,G)$ for lisse $F$ and $G$. For a morphism $f$ of lisse sheaves, the kernel and the cokernel of $f[1]$ in the category of perverse sheaves are $(\ker f)[1]$ and $(\operatorname{coker}f)[1]$.
\item[(P2)] A complex is \emph{punctual} if it is supported on finitely many closed points. A complex $\mathcal A$ on $C$ is perverse if and only if $\mathcal H^i(\mathcal A)=0$ for $i\notin\{-1,0\}$, $\mathcal H^0(\mathcal A)$ is punctual, and $\mathcal H^{-1}(\mathcal A)$ has no nonzero section supported on a finite set. The truncation triangle then gives an exact sequence $0\to\mathcal H^{-1}(\mathcal A)[1]\to\mathcal A\to\mathcal H^0(\mathcal A)\to0$ of perverse sheaves. Every perverse sheaf $\mathcal A$ on $C$ is of the form $F[1]$, with $F$ lisse, on some dense open subset of $C$; the rank of $F$ is the \emph{generic rank} of $\mathcal A$. The generic rank is additive in short exact sequences, and a perverse sheaf of generic rank $0$ is punctual. Restriction to an open subset is exact on perverse sheaves.
\item[(P3)] Over $\Fpbar$, a punctual perverse sheaf $\mathcal N$ is a finite direct sum of sheaves $i_{x*}V$, with $V$ finite-dimensional. So if $\mathcal N\ne0$, there are nonzero morphisms $\delta_x\to\mathcal N$ and $\mathcal N\to\delta_x$, with $\delta_x=i_{x*}\Qlb$, for some closed point $x$. For lisse $G$, $\Hom(\delta_x,G[1])=H^1(i_x^!G)=H^1\big(G_x(-1)[-2]\big)=0$, and $\Hom(G[1],\delta_x)=\Hom(G,\delta_x[-1])=0$. Hence $G[1]$ has no nonzero punctual subobject or quotient. This holds over $\Fpbar$, and therefore also over $\Fq$, since extension of scalars is exact and faithful. In particular, if $G$ is lisse of rank one, then $G[1]$ is simple. Indeed, by what precedes, a nonzero perverse subobject $\mathcal N$ of $G[1]$ is not punctual, so by (P2) it has generic rank $1$. Then $G[1]/\mathcal N$ has generic rank $0$, so by (P2) it is a punctual quotient of $G[1]$, and it vanishes.
\item[(P4)] Let $j\colon U\hookrightarrow C$ be the inclusion of a dense open subset, $F_U$ a lisse sheaf on $U$, and $\mathcal K$ a perverse sheaf on $C$ with $\mathcal K|_U\cong F_U[1]$.
\begin{enumerate}[(a)]
\item If $\mathcal K$ has no nonzero punctual subobject or quotient, then $\mathcal K\cong(j_*F_U)[1]$.
\item Suppose that the base field is $\Fpbar$, that $\mathcal K=(j_*F_U)[1]$, and that $L$ is a lisse sheaf of rank one on $C$. If $\mathcal K$ has no subobject and no quotient isomorphic to $L[1]$, then $F_U$ has no subsheaf and no quotient sheaf isomorphic to $L|_U$.
\end{enumerate}
For (a), (P2) gives an exact sequence $0\to\mathcal H^{-1}(\mathcal K)[1]\to\mathcal K\to\mathcal H^0(\mathcal K)\to0$ with $\mathcal H^0(\mathcal K)$ punctual, so $\mathcal H^0(\mathcal K)=0$. Thus $\mathcal K=F[1]$, where $F=\mathcal H^{-1}(\mathcal K)$ has no nonzero section supported on a finite set and satisfies $F|_U=F_U$. The adjunction map $F\to j_*F_U$ is therefore injective, with punctual cokernel $C'$. The triangle $C'\to F[1]\to(j_*F_U)[1]\to C'[1]$ is then an exact sequence $0\to C'\to F[1]\to(j_*F_U)[1]\to0$ of perverse sheaves, so $C'=0$ by hypothesis, and $\mathcal K=(j_*F_U)[1]$.

For (b), a nonzero morphism $L|_U\to F_U$ corresponds by adjunction to a nonzero morphism $L\to j_*F_U$, that is, to a nonzero morphism $L[1]\to\mathcal K$. Since $L[1]$ is simple (P3), this is a monomorphism, which is excluded. A nonzero morphism $F_U\to L|_U$ corresponds by adjunction to a nonzero morphism $\phi\colon j_!F_U[1]\to L[1]$. The adjunction map $j_!F_U\to j_*F_U$ is injective with punctual cokernel $C''$, so as in (a) there is an exact sequence $0\to C''\to j_!F_U[1]\to\mathcal K\to0$ of perverse sheaves. The image of $C''$ under $\phi$ is a punctual subobject of $L[1]$, hence zero by (P3). So $\phi$ factors through a nonzero morphism $\mathcal K\to L[1]$, an epimorphism because $L[1]$ is simple, which is excluded. Hence there is no nonzero morphism $L|_U\to F_U$ and no nonzero morphism $F_U\to L|_U$; a fortiori $F_U$ has no subsheaf and no quotient sheaf isomorphic to $L|_U$.
\end{itemize}

\begin{lemma}\label{lem:conv-model}
Let $\mathcal N_c$ be the restriction of the sheaf $\mathcal N$ of Theorem~\ref{thm:lisse}(i) to $\Gmn{t}\times C_c$; thus $\mathcal N_c=[\times h_1]^*\Kl\otimes\pr_t^*\mathcal M_c$ with $[\times h_1](t,h_1)=h_1t$. Then $R\pr_{h_1!}\mathcal N_c\cong\mathcal H_n|_{C_c}[-1]$ and
\[
\mathcal H_n|_{C_c}[1]\cong\Kl[1]*_!\iota^*\mathcal M_c[1].
\]
\end{lemma}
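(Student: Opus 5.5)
The first isomorphism should come directly from the relative computation in Theorem~\ref{thm:lisse}, restricted to the line $C_c$ by proper base change. The second should come from a change of variables that turns the projection to $h_1$ into the multiplication map $\mu$.

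\emph{Step 1: the direct image of $\mathcal N_c$.} In the proof of Theorem~\ref{thm:lisse}(ii), with $\pr_S\colon\Gmn{t}\times S\to S$, we found $R^i\pr_{S!}\mathcal N=0$ for $i\ne1$. The Leray identification $R\pi^\circ_!\mathcal L\cong R\pr_{S!}\mathcal N[-n]$ gives $R^1\pr_{S!}\mathcal N=R^{n+1}\pi^\circ_!\mathcal L=\mathcal H_n$. Hence $R\pr_{S!}\mathcal N\cong\mathcal H_n[-1]$, since a complex with a single nonzero cohomology sheaf is isomorphic to that sheaf, suitably shifted. The square formed by $\Gmn{t}\times C_c\hookrightarrow\Gmn{t}\times S$ over $C_c\hookrightarrow S$ is cartesian. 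Proper base change for $R\pr_{S!}$ along $C_c\hookrightarrow S$ therefore gives $R\pr_{h_1!}\mathcal N_c\cong(\mathcal H_n|_{C_c})[-1]$. The description $\mathcal N_c=[\times h_1]^*\Kl\otimes\pr_t^*\mathcal M_c$ is immediate from the definition of $\mathcal N$, because the factors with $i\ge2$ are pulled back along $t\mapsto c_it$.

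\emph{Step 2: the change of variables.} Consider the morphism
\[
\theta\colon\Gmn{t}\times\Gmn{h_1}\to\Gm\times\Gm,\qquad\theta(t,h_1)=(h_1t,\,t^{-1}).
\]
It is an automorphism, with inverse $(x,y)\mapsto(y^{-1},xy)$, and it satisfies $\mu\circ\theta=\pr_{h_1}$, since $h_1t\cdot t^{-1}=h_1$. It also satisfies
\[
\theta^*\big(\Kl\boxtimes\iota^*\mathcal M_c\big)=[\times h_1]^*\Kl\otimes\pr_t^*(\iota\circ\iota)^*\mathcal M_c=\mathcal N_c .
\]
Since $\theta$ is an isomorphism,
\[
R\mu_!\big(\Kl\boxtimes\iota^*\mathcal M_c\big)\cong R\mu_!R\theta_!\theta^*\big(\Kl\boxtimes\iota^*\mathcal M_c\big)=R(\mu\theta)_!\mathcal N_c=R\pr_{h_1!}\mathcal N_c .
\]

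\emph{Step 3: shifts.} We have $\Kl[1]\boxtimes\iota^*\mathcal M_c[1]=(\Kl\boxtimes\iota^*\mathcal M_c)[2]$. By Step 2, $\Kl[1]*_!\iota^*\mathcal M_c[1]\cong R\pr_{h_1!}\mathcal N_c[2]$, and by Step 1 this is $\mathcal H_n|_{C_c}[1]$.

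Every isomorphism used is defined over $\Fq$, so the identification holds over $\Fq$ and, by extension of scalars, over $\Fpbar$. I do not expect any step to be a real obstacle. The only point that needs care is the bookkeeping of the variable substitution: one must put $\iota$ on the $\mathcal M_c$ factor, not on $\Kl$, so that $\iota\circ\iota=\mathrm{id}$ restores $\mathcal M_c$ in the variable $t$. One should also confirm that base change to the non-closed line $C_c$ is legitimate. It is, because proper base change for $R\pr_{S!}$ holds for arbitrary morphisms $C_c\to S$.
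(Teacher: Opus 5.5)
Your proposal is correct and follows the paper's proof essentially step for step. You base-change the identity $R\pr_{S!}\mathcal N\cong\mathcal H_n[-1]$ to the line $C_c$, use the same automorphism $(t,h_1)\mapsto(h_1t,t^{-1})$ with $\mu\circ\theta=\pr_{h_1}$ and $\theta^*(\Kl\boxtimes\iota^*\mathcal M_c)=\mathcal N_c$, and do the same count of the shift by $2$.
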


\begin{proof}
By Theorem~\ref{thm:lisse}(i) and its proof, $R\pr_{S!}\mathcal N\cong R\pi^\circ_!\mathcal L[n]$, and by Theorem~\ref{thm:lisse}(ii) the complex $R\pi^\circ_!\mathcal L$ is $\mathcal H_n[-n-1]$; so $R\pr_{S!}\mathcal N\cong\mathcal H_n[-1]$. Restriction to $C_c$ by proper base change gives the first isomorphism. The morphism $\Phi\colon\Gmn{t}\times\Gmn{h_1}\to\Gm\times\Gm$, $\Phi(t,h_1)=(h_1t,t^{-1})$, is an isomorphism with inverse $(x,y)\mapsto(y^{-1},xy)$; it satisfies $\mu\circ\Phi=\pr_{h_1}$ and $\Phi^*(\Kl\boxtimes\iota^*\mathcal M_c)=\mathcal N_c$. Hence $R\pr_{h_1!}\mathcal N_c\cong R\mu_!(\Kl\boxtimes\iota^*\mathcal M_c)$. Since $\Kl[1]\boxtimes\iota^*\mathcal M_c[1]=(\Kl\boxtimes\iota^*\mathcal M_c)[2]$, we obtain $\Kl[1]*_!\iota^*\mathcal M_c[1]\cong R\pr_{h_1!}\mathcal N_c[2]\cong\mathcal H_n|_{C_c}[1]$.
\end{proof}

\begin{lemma}\label{lem:local-data}
\begin{enumerate}[(i)]
\item $\Kl$ is lisse on $\Gm$ and pure of weight $1$. It is tame at $0$, with $\Kl(0)\cong\Up{2}$, and totally wild at $\infty$, with $\Swan_\infty(\Kl)=1$.
\item $\iota^*\mathcal M_c$ is lisse on $\Gm$ and pure of weight $n-1$. It is totally wild at $0$, with all breaks equal to $1$ and $\Swan_0=2^{n-1}$, and tame at $\infty$, with $(\iota^*\mathcal M_c)(\infty)\cong\Up{2}^{\otimes(n-1)}$.
\item Neither $\Kl$ nor $\iota^*\mathcal M_c$ has, geometrically, a Kummer sheaf as a subsheaf, quotient sheaf or Jordan--H\"older constituent. Consequently $\Kl[1]$ and $\iota^*\mathcal M_c[1]$ have no Kummer subobject, quotient or constituent in the category of perverse sheaves.
\end{enumerate}
\end{lemma}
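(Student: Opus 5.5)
Parts (i) and (ii) are mostly bookkeeping from earlier results, and part (iii) reduces to a short argument on breaks, slopes and Jordan–Hölder constituents.

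For (i), I would quote (K1), (K2) and (K3); these are exactly the listed properties of $\Kl$.

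For (ii), I would first treat $\mathcal M_c$ on $\Gmn{t}$ and then transport by $\iota$, which exchanges $0$ and $\infty$.
\begin{itemize}
\item \emph{Lisse and pure.} $\mathcal M_c$ is lisse of rank $2^{n-1}$ and pure of weight $n-1$, since it is a tensor product of $n-1$ sheaves $[\times c_i]^*\Kl$ of weight $1$ and one sheaf of weight $0$.
\item \emph{Tame end.} At $t=0$, the proof of Proposition~\ref{prop:fibration}(ii) applies with $n-1$ factors. Each $[\times c_i]^*\Kl$ is tame with local monodromy $\Up{2}$ (by \S\ref{sec:conventions}), and $\Lpsi{\lambda t}$ is lisse at $0$. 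So $\mathcal M_c(0)\cong\Up{2}^{\otimes(n-1)}$.
\item \emph{Wild end.} At $\infty$, all breaks of $\bigotimes_i[\times c_i]^*\Kl$ are $<1$ by (K3) and Lemma~\ref{lem:breaks}(i). Lemma~\ref{lem:breaks}(ii) with the break-$1$ sheaf $\Lpsi{\lambda t}$ then gives isoclinic break $1$, and hence Swan conductor $2^{n-1}$.
\end{itemize}
Pulling back by $\iota$ gives exactly the statement of (ii).

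For (iii), I would compare local behaviour at $0$ and $\infty$. A Kummer sheaf $\mathcal L_\chi$ is tame at both $0$ and $\infty$.
\begin{itemize}
\item \emph{$\iota^*\mathcal M_c$.} At $0$ it is isoclinic of break $1>0$. Every subquotient of an isoclinic representation of break $1$ is again isoclinic of break $1$, so no Jordan–Hölder constituent, sub or quotient, can be tame there.
\item \emph{$\Kl$.} At $\infty$ both breaks are positive by (K3), and a rank-one subquotient would carry one of the breaks of $\Kl(\infty)$. It would therefore be wild at $\infty$, contradicting tameness of $\mathcal L_\chi$. (Alternatively, $\Kl$ is geometrically irreducible of rank $2$, which also excludes rank-one constituents.) So no constituent is Kummer.
\end{itemize}
The same argument covers subsheaves and quotient sheaves, since these are in particular unions of constituents.

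To pass to perverse sheaves, let $\mathcal L_\chi[1]$ be a subobject, quotient or constituent of $G[1]$, with $G$ lisse.
\begin{itemize}
\item By (P3), $G[1]$ has no punctual subobject or quotient, and $\mathcal L_\chi[1]$ is simple.
\item By (P2), restricting to a dense open set where everything is lisse is exact and preserves generic rank. A perverse constituent $\mathcal L_\chi[1]$ of $G[1]$ therefore restricts to a sheaf-theoretic constituent $\mathcal L_\chi|_U$ of $G|_U$.
\item Since $G$ is lisse on all of $\Gm$, constituents of $G|_U$ are constituents of $G$, because $\pi_1(U)\to\pi_1(\Gm)$ is surjective. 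This contradicts the sheaf statement.
\end{itemize}

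The main subtlety is this passage between perverse and sheaf constituents. It is handled by exactness of restriction to $U$ and the surjectivity of $\pi_1(U)\to\pi_1(\Gm)$, as above.
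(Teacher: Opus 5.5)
Your proof is correct. For (i), (ii) and the sheaf-level half of (iii) it follows the paper. The local data of $\mathcal M_c$ are read off as in the proof of Proposition~\ref{prop:fibration}(ii) and transported by $\iota$. A tame subquotient is then excluded by total wildness at $\infty$ for $\Kl$ and at $0$ for $\iota^*\mathcal M_c$; the paper phrases this through the semisimplicity of $\Iw_x$ and you through breaks, which amounts to the same thing.

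In (ii) you skip one small point the paper records. The inversion $\iota$ sends a topological generator of tame inertia to the inverse of one. This is harmless, because a unipotent automorphism and its inverse have the same Jordan type.

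The real difference is the passage to perverse sheaves, which the paper runs in the opposite direction. Let $F$ be $\Kl$ or $\iota^*\mathcal M_c$.
\begin{itemize}
\item \emph{The paper} shifts a Jordan--H\"older series of the lisse sheaf $F$, using (P1), to a filtration of $F[1]$ with graded pieces $G[1]$. Each $G[1]$ is simple, because the middle extension of an irreducible lisse sheaf is simple. Jordan--H\"older uniqueness then says these $G[1]$ are all the perverse constituents of $F[1]$, and (P1) shows that none of them is $\mathcal L_\chi[1]$.
\item \emph{You} start from a perverse composition series and restrict it to a dense open $U$ on which every term is lisse in degree $-1$. Exactness of restriction and (P1) then exhibit $\mathcal L_\chi|_U$ as a lisse subquotient of $F|_U$. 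You lift this back to $\Gm$ using the surjectivity of $\pi_1(U)\to\pi_1(\Gm)$.
\end{itemize}
Your route needs simplicity only for rank-one objects, which is (P3), and replaces the simplicity of middle extensions of higher-rank irreducible lisse sheaves by the surjectivity of fundamental groups. The paper's route stays on all of $\Gm$ and identifies every perverse constituent of $F[1]$, not only the Kummer ones.
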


\begin{proof}
(i) is (K1)--(K3). For (ii), $\mathcal M_c$ is pure of weight $n-1$ by (K1). At $0$ it is tame with local monodromy $\Up{2}^{\otimes(n-1)}$, and at $\infty$ all its breaks equal $1$ by Lemma~\ref{lem:breaks}, exactly as in the proof of Proposition~\ref{prop:fibration}(ii). The inversion $\iota$ exchanges $0$ and $\infty$, preserves purity, and carries the local monodromy of $\mathcal M_c$ at one point to that of $\iota^*\mathcal M_c$ at the other. It maps a topological generator of tame inertia to the inverse of one, and a unipotent automorphism and its inverse have the same Jordan blocks.

(iii) A Kummer sheaf is tame at $0$ and at $\infty$. Let $F$ be $\Kl$ or $\iota^*\mathcal M_c$, and let $x$ be a point at which $F$ is totally wild: $x=\infty$ for $\Kl$ and $x=0$ for $\iota^*\mathcal M_c$. Since $\Iw_x$ acts semisimply, no nonzero subquotient of $F(x)$ has nonzero $\Iw_x$-invariants. Hence no nonzero subquotient of $F$, in particular no Kummer sheaf, is tame at $x$. For the perverse statement, let $G$ run over the Jordan--H\"older constituents of $F$. By (P1) and induction on the length of $F$, the perverse sheaf $F[1]$ is a successive extension of the $G[1]$. Each $G[1]$ is simple, being the intermediate extension of an irreducible lisse sheaf placed in degree $-1$ \cite{BBD}. So the Jordan--H\"older constituents of $F[1]$ are the $G[1]$, and by (P1) none of them is of the form $\mathcal L_\chi[1]$.
\end{proof}

\subsection{Tameness at the finite boundary point}

\begin{proposition}\label{prop:tame}
The sheaves $\mathcal H_n|_{C_c}$, $\mathcal G_n|_{C_c}$ and $\mathcal W_n|_{C_c}$ are tamely ramified at $h_1=0$.
\end{proposition}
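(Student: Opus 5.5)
Since $\mathcal G_n|_{C_c}$ is a quotient of $\mathcal H_n|_{C_c}$ by the constant sheaf (Theorem~\ref{thm:lisse}(iii)), and $\mathcal W_n|_{C_c}$ is a quotient of $\mathcal G_n|_{C_c}$ (Theorem~\ref{thm:lisse}(iv)), it suffices to prove that $\mathcal H_n|_{C_c}$ is tame at $h_1=0$. Indeed, $\Iw_0$ acts semisimply, so a quotient of a representation on which $\Iw_0$ acts trivially is again acted on trivially. The plan is to rewrite $\mathcal H_n|_{C_c}$ as an additive Fourier transform in the variable $h_1$, and then to read off the local monodromy at $h_1=0$ from Laumon's local Fourier transform $\FT^{(\infty,0)}$.

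\emph{The Fourier model.} By Lemma~\ref{lem:conv-model}, $\mathcal H_n|_{C_c}[-1]\cong R\pr_{h_1!}\big([\times h_1]^*\Kl\otimes\pr_t^*\mathcal M_c\big)$. Inserting (K4) for $\Kl(h_1t)$ gives a triple integral over $(u,t)$ with phase $u^{-1}+h_1tu$, twisted by $\mathcal M_c(t)$. The substitution $s=tu$, an automorphism of $\Gm^2$, turns this into
\[
R\pr_{h_1!}\big(\Lpsi{h_1s}\otimes\pr_s^*\mathcal K\big),\qquad \mathcal K=R\sigma'_!\big(\Lpsi{u^{-1}}\boxtimes\mathcal M_c\big),
\]
where $\sigma'(u,t)=ut$. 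Thus $\mathcal K$ is the $!$-multiplicative convolution of $\Lpsi{u^{-1}}[1]$ with $\mathcal M_c[1]$, up to shift. Extending $\mathcal K$ by zero to $\mathbb A^1_s$, one finds that $\mathcal H_n|_{C_c}$ on $\Gmn{h_1}$ is, up to shift, the restriction of the Fourier transform $\FT_\psi(j_!\mathcal K[1])$. Here $\mathcal K[1]$ is perverse, because $\Lpsi{u^{-1}}[1]$ lies in Katz's class $\mathcal P$: it is totally wild at $0$.

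\emph{Local analysis.} By Laumon's theorem \cite[Th.~(2.4.3)]{Laumon}, the $I_0$-representation of $\FT(j_!\mathcal K[1])$ at $h_1=0$ is an extension. One piece comes from the vanishing cycles at the finite singular points of $\mathcal K$, and these give unramified contributions at $0$, being translates by $\Lpsi{sh_1}$ with $h_1\to0$. The other piece is $\FT^{(\infty,0)}$ applied to the part of $\mathcal K(\infty)$ with slopes $<1$. Since $\FT^{(\infty,0)}$ sends slope $s$ to slope $s/(1-s)$, tameness at $h_1=0$ reduces to showing that $\mathcal K$ is tame at $s=\infty$, with no slopes $\ge1$ there.

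\emph{Main obstacle.} I expect the hard step to be this tameness of $\mathcal K$ at $\infty$. My first attempt is to compute $\mathcal K(\infty)$ by stationary phase for the convolution, writing $\mathcal K_s=R\Gamma_c\big(\Gmn{u},\Lpsi{u^{-1}}\otimes\mathcal M_c(s/u)\big)$. For large $s$, the wild factor $\Lpsi{\lambda s/u}$ inside $\mathcal M_c$ combines with $\Lpsi{u^{-1}}$ into $\Lpsi{(1+\lambda s)/u}$, which is a rescaled version of the tame-at-$\infty$ Kloosterman-type integrand. The remaining factors $[\times c_i]^*\Kl(s/u)$ have slopes $<1$. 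The hope is that the $u$-integral then reduces, after the change of variables $u\mapsto su$, to a sheaf whose monodromy at $s=\infty$ is unipotent, coming from the unipotent monodromy of the $\Kl$ factors at $0$. Should that fail, the fallback is to identify the inertia representation of $\mathcal K$ at $\infty$ by Katz's local convolution results \cite[Ch.~5]{Katz-Conv}, and to verify tameness by a Swan count: $\Swan_\infty(\mathcal K)=0$ would follow from the Euler characteristic formula~\eqref{eq:GOS}, given the generic rank of $\mathcal K$ and its Swan conductor at $0$, which one computes from Lemma~\ref{lem:local-data}.
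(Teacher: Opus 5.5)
Your reduction to $\mathcal H_n|_{C_c}$, your Fourier model and your appeal to Laumon's $\mathcal F^{(\infty,0')}$ follow the same pattern as the paper's proof. The paper uses $\mathcal G_n|_{C_c}$ and the model $\FT_\psi(j_{v!}\mathcal R)$, with $\mathcal R=Rb_!\Lpsi{f'}$, where you use your convolution $\mathcal K$. Note that it is $\mathcal K[2]=\Lpsi{u^{-1}}[1]*_!\mathcal M_c[1]$, not $\mathcal K[1]$, that is perverse. The real problem is that the step you call the main obstacle is the entire content of the proposition, and neither of your two routes proves it.

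\emph{The first route misidentifies the mechanism.} Putting $x=s/u$ gives $\mathcal K_s\cong R\Gamma_c(\Gmn{x},\Lpsi{(\lambda+s^{-1})x}\otimes\mathcal T)$ with $\mathcal T=\bigotimes_{i\ge2}[\times c_i]^*\Kl$. So $s$ enters only through the parameter $a=\lambda+s^{-1}$, which tends to $\lambda\ne0$ as $s\to\infty$, and the unipotent monodromy of $\Kl$ at $0$ plays no role there. The missing idea is that the family $a\mapsto R\Gamma_c(\Gmn{x},\Lpsi{ax}\otimes\mathcal T)$ is lisse on $\Gmn{a}$. Since the breaks of $\mathcal T$ at $\infty$ are $<1$, for $a\ne0$ the sheaf $\mathcal T\otimes\Lpsi{ax}$ is tame at $0$ and isoclinic of slope $1$ at $\infty$ (Lemma~\ref{lem:breaks}). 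Its Swan conductors are therefore constant, and Theorem~\ref{thm:DL} applies as in Theorem~\ref{thm:lisse}(ii). As $s\mapsto\lambda+s^{-1}$ extends over $s=\infty$, $\mathcal K$ is then even unramified at $\infty$, and the rest of your argument goes through. This is exactly the input of the paper's proof, where $\mathcal R$ near $v=\infty$ is recognized as the pullback of the lisse sheaf $\mathcal G_{n-1}(1)$ along a map that extends over $w=0$.

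\emph{The fallback Swan count fails because $\mathcal K$ is not lisse on $\Gm$.} It is singular at $s=-1/\lambda$, where the wild factor $\Lpsi{(1+\lambda s)/u}$ degenerates. At $s=0$, which corresponds to $a=\infty$, stationary phase only produces $\mathcal F^{(0,\infty')}$ of the unipotent representation $\mathcal T(0)$, which is tame, so $\Swan_0(\mathcal K)=0$. For $n=2$, for instance, a direct computation shows that $\mathcal H^1(\mathcal K)$ has semisimplification $\Lpsi{-c_2s/(1+\lambda s)}(-1)\oplus\Qlb$ away from $s=-1/\lambda$. Thus the whole Euler characteristic $2^{n-1}$ sits at $s=-1/\lambda$. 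A count based on the generic rank and $\Swan_0$ alone would instead suggest $\Swan_\infty(\mathcal K)=2^{n-1}$. The count becomes conclusive only after that singular point has been analysed separately, which is no easier than the direct argument above.
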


\begin{proof}
\emph{Reduction.} By Theorem~\ref{thm:lisse}(iii),(iv), there is an exact sequence $0\to\Qlb\to\mathcal H_n\to\mathcal G_n\to0$ with constant first term, and $\mathcal W_n$ is a quotient of $\mathcal G_n$. Since $\Iw_0$ acts semisimply, a representation on which it acts trivially on a subrepresentation and on the quotient is trivial, and trivial actions pass to subquotients. It therefore suffices to show that $\mathcal G_n|_{C_c}$ is tame at $h_1=0$.

\emph{A Fourier model.} Write $u'=(u_2,\dots,u_n)$, $f'(u')=\sum_{i\ge2}u_i^{-1}$, $B(u')=-\lambda-\sum_{i\ge2}c_iu_i$, $U=\{u'\in\Gm^{n-1}:B(u')\ne0\}$ and $b=B^{-1}\colon U\to\Gmn{v}$. Let
\[
\mathcal Y=\Big\{(h_1,u)\in\Gm\times\Gm^n:\ h_1u_1+\sum_{i\ge2}c_iu_i=-\lambda\Big\},
\]
with projection $\pi_{\mathcal Y}$ to $\Gmn{h_1}$. By Lemma~\ref{lem:integrate-t} in its relative form over $C_c$, Theorem~\ref{thm:lisse}(iii) and proper base change,
\begin{equation}\label{eq:G-on-line}
R\pi_{\mathcal Y!}\Lpsi f\cong\mathcal G_n|_{C_c}(1)[-(n-1)].
\end{equation}
The map $(h_1,u)\mapsto(h_1,u')$ is an isomorphism $\mathcal Y\to\Gmn{h_1}\times U$, with inverse given by $u_1=B(u')/h_1$, and it carries $f$ to $h_1b(u')+f'(u')$. Put $\mathcal R=Rb_!\Lpsi{f'}$ on $\Gmn{v}$, let $j_v\colon\Gmn{v}\hookrightarrow\mathbb A^1_v$ be the inclusion, and let
\[
\FT_\psi(K)=R\pr_{2!}\big(\pr_1^*K\otimes\Lpsi{vh_1}\big)[1]
\]
be the Fourier transform on $\mathbb A^1_v\times\mathbb A^1_{h_1}$, in Laumon's normalization \cite[(1.2.1.1)]{Laumon}. The projection formula and proper base change for $b\times\mathrm{id}$ give $R\pi_{\mathcal Y!}\Lpsi f\cong\FT_\psi(j_{v!}\mathcal R)[-1]|_{\Gmn{h_1}}$. Therefore, by \eqref{eq:G-on-line},
\begin{equation}\label{eq:FT-model}
\FT_\psi(j_{v!}\mathcal R)|_{\Gmn{h_1}}\cong\mathcal G_n|_{C_c}(1)[2-n].
\end{equation}

\emph{The complex $\mathcal R$ near $v=\infty$.} Let $w=v^{-1}$, and let $D_w=\mathbb A^1_w\smallsetminus\{-\lambda\}$, a neighbourhood of $w=0$. On $D_w$ the map $a(w)=\big(\lambda c_i/(\lambda+w)\big)_{i\ge2}$ is a morphism $D_w\to\Gm^{n-1}$ with $a(0)=c$. In the coordinate $w$, the fibre of $b$ over $v=w^{-1}$ is $\{u'\in\Gm^{n-1}:\sum_{i\ge2}c_iu_i=-(\lambda+w)\}$. The scheme $\{(w,u'):w\in D_w,\ \sum_{i\ge2}c_iu_i=-(\lambda+w)\}$ is the inverse image under $a$ of the universal family
\[
\mathcal Y_{n-1}=\{(h',u')\in\Gm^{n-1}\times\Gm^{n-1}:\langle h',u'\rangle=-\lambda\},
\]
because $\sum_i\lambda c_i(\lambda+w)^{-1}u_i=-\lambda$ is the same equation; the phase is $f'$ on both. Suppose first $n\ge3$. By Lemma~\ref{lem:integrate-t} and Theorem~\ref{thm:lisse}(iii) for $n-1\ge2$ variables, the direct image of $\Lpsi{f'}$ along $\mathcal Y_{n-1}\to\Gm^{n-1}$ is $\mathcal G_{n-1}(1)[-(n-2)]$, a lisse sheaf placed in one degree. By proper base change along $a$, the restriction of $\mathcal R$ to the punctured neighbourhood $D_w\smallsetminus\{0\}$ of $v=\infty$ is the restriction of the lisse sheaf $a^*\mathcal G_{n-1}(1)$ on $D_w$, placed in degree $n-2$. If $n=2$, the fibre is the single point $u_2=-(\lambda+w)/c_2$, and the restriction of $\mathcal R$ to $D_w\smallsetminus\{0\}$ is that of $\Lpsi{-c_2/(\lambda+w)}$, which is lisse on $D_w$. In both cases $\mathcal R$ is, near $v=\infty$, a lisse sheaf in a single degree on which $I_\infty$ acts trivially.

\emph{Laumon's local Fourier transform.} We use three facts from \cite{Laumon}, valid for every perverse sheaf $P$ on $\mathbb A^1_v$. First, $\FT_\psi(P)$ is perverse \cite[(1.3.2.3)]{Laumon}. Second, the vanishing cycles of $\FT_\psi(P)$ at $h_1=0$ depend, as a representation of $I_0$, only on the representation $V$ of $I_\infty$ on $\mathcal H^{-1}(P)_{\bar\eta_\infty}$: there is an isomorphism of $I_0$-modules
\[
R^{-1}\Phi_0\big(\FT_\psi(P)\big)\cong\mathcal F^{(\infty,0')}(V),
\]
where $\mathcal F^{(\infty,0')}$ is Laumon's local Fourier transform \cite[Prop.~(2.3.2.1)(iii) and D\'ef.~(2.4.2.3)]{Laumon}. Third, if all slopes of $V$ are $<1$, then $\Swan\big(\mathcal F^{(\infty,0')}(V)\big)=\Swan(V)$ \cite[Th.~(2.4.3)(ii)(b)]{Laumon}. Shifts and Tate twists do not change the action of inertia, so the normalization of $\FT_\psi$ plays no further role. Now let $P_i={}^p\mathcal H^i(j_{v!}\mathcal R)$ be the perverse cohomology sheaves, so that $j_{v!}\mathcal R$ is a successive extension of the $P_i[-i]$. By the previous step, the $I_\infty$-module $V_i=\mathcal H^{-1}(P_i)_{\bar\eta_\infty}$ is trivial for every $i$. In particular it has slope $0$, so $\Swan\big(\mathcal F^{(\infty,0')}(V_i)\big)=\Swan(V_i)=0$, and $\Iw_0$ acts trivially on the vanishing cycles $R^{-1}\Phi_0\big(\FT_\psi(P_i)\big)$. On a small punctured neighbourhood of $0$, the perverse sheaf $\FT_\psi(P_i)$ is a lisse sheaf placed in degree $-1$; let $\mathcal V_i$ be its generic stalk, an $I_0$-module. The triangle formed by the stalk at $0$, the nearby cycles and the vanishing cycles gives an exact sequence of $I_0$-modules
\[
\mathcal H^{-1}\big(\FT_\psi(P_i)\big)_0\longrightarrow\mathcal V_i\longrightarrow R^{-1}\Phi_0\big(\FT_\psi(P_i)\big),
\]
in which $I_0$ acts trivially on the image of the first arrow. As $\Iw_0$ acts semisimply on $\mathcal V_i$, and trivially both on the image of the first arrow and on the third term, it acts trivially on $\mathcal V_i$. Thus every $\FT_\psi(P_i)$ is tame at $0$. Since $\FT_\psi$ is a triangulated functor, $\FT_\psi(j_{v!}\mathcal R)$ is a successive extension of the $\FT_\psi(P_i)[-i]$. On a small punctured neighbourhood of $0$ all cohomology sheaves involved are lisse, so the long exact cohomology sequences and the semisimplicity of $\Iw_0$-actions show that every cohomology sheaf of $\FT_\psi(j_{v!}\mathcal R)$ is tame at $0$. By \eqref{eq:FT-model}, $\mathcal G_n|_{C_c}$ is tame at $h_1=0$.
\end{proof}

\subsection{Unipotent parts of a middle convolution}

\begin{definition}\label{def:jordan-poly}
Let $\mathcal A$ be a perverse sheaf on $\Gm\otimes\Fpbar$, let $F$ be a lisse sheaf on a dense open subset $U\subseteq\Gm$ with $\mathcal A|_U\cong F[1]$, and let $\chi$ be a character of finite order prime to $p$. For $x\in\{0,\infty\}$ and $i\ge1$, let $b^x_i(\mathcal A,\chi)$ be the number of Jordan blocks of size $i$ in the unipotent part of $(\mathcal L_\chi\otimes F)(x)$. Define $\mathsf P_{\mathcal A,\chi}\in\mathbb Z[T,T^{-1}]$ by
\[
\mathsf P_{\mathcal A,\chi}(T)=a_0(\mathcal A,\chi)+\sum_{i\ge1}\big(b^0_i(\mathcal A,\chi)\,T^{-i}+b^\infty_i(\mathcal A,\chi)\,T^{i}\big),
\]
where $a_0(\mathcal A,\chi)$ is chosen so that $\mathsf P_{\mathcal A,\chi}(1)=\chi_c(\Gm\otimes\Fpbar,\mathcal A)$. This Euler characteristic does not change if $\mathcal A$ is replaced by $\mathcal L_\chi\otimes\mathcal A$, since a tame twist preserves all local invariants.
\end{definition}

If $\mathcal A=F[1]$ with $F$ lisse on all of $\Gm$, then \eqref{eq:GOS} gives $\mathsf P_{\mathcal A,\chi}(1)=\Swan_0(F)+\Swan_\infty(F)$.

\begin{lemma}\label{lem:weight-dictionary}
Let $U\subseteq\Gm$ be a dense open subset over a finite field, and let $F_U$ be a lisse sheaf on $U$, pure of weight $-1$, without geometrically trivial subsheaf or quotient sheaf. Let $F$ be the direct image of $F_U$ on $\Gm$, let $j_\infty\colon\Gm\hookrightarrow\mathbb P^1\smallsetminus\{0\}$ and $j_0\colon\mathbb P^1\smallsetminus\{0\}\hookrightarrow\mathbb P^1$ be the inclusions, and put
\[
\omega(F[1])=H^0\big(\mathbb P^1\otimes\Fpbar,\,j_{0!}Rj_{\infty*}F[1]\big).
\]
Then $\dim\omega(F[1])=\chi_c(\Gm\otimes\Fpbar,F[1])$. For $i\ge1$, the number of Frobenius eigenvalues of weight $-i$ on $\omega(F[1])$ is $b^0_i(F[1],\mathbf 1)$ and the number of weight $+i$ is $b^\infty_i(F[1],\mathbf 1)$; all other eigenvalues have weight $0$. Equivalently, $\sum_w\big(\text{number of eigenvalues of weight }w\big)\,T^w=\mathsf P_{F[1],\mathbf 1}(T)$.
\end{lemma}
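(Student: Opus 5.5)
The plan is to compute $\omega(F[1])=H^1\big(\mathbb P^1\otimes\Fpbar,j_{0!}Rj_{\infty*}F\big)$ by excision at $\infty$, show that it is the only nonvanishing cohomology group, and then read off its weights piece by piece from Deligne's theorems on weights.

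\emph{Step 1: excision.} Put $K=j_{0!}Rj_{\infty*}F$. Then $K|_{\Gm}=F$, $K_0=0$, and $K_\infty=R\Gamma(I_\infty,F_{\bar\eta_\infty})$, which is concentrated in degrees $0$ and $1$. In degree $0$ it is the invariants $F^{I_\infty}$. In degree $1$ it is the coinvariants $(F^{\Iw_\infty})_{I_\infty}(-1)$, since $\Iw_\infty$ is pro-$p$ and acts semisimply. The triangle $j_!F\to K\to i_{\infty*}K_\infty\to$ gives a long exact sequence in which the cohomology of $j_!F$ is $H^*_c(\Gm,F)$.

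\emph{Step 2: vanishing outside degree $1$.} Both $H^0(\mathbb P^1,K)\subseteq H^0(\Gm,F)$ and $H^0_c(\Gm,F)$ vanish. The absence of a geometrically trivial subsheaf of $F_U$ gives $H^0(\Gm,F)=0$, because $F=j_*F_U$. By Verdier duality, $H^2(\mathbb P^1,K)$ is dual to $H^0$ of $j_{0*}j_{\infty!}F^\vee$, which is contained in $H^0(\Gm,F^\vee)$. That group vanishes because $F_U$ has no geometrically trivial quotient. Likewise $H^2_c(\Gm,F)$, the space of coinvariants, vanishes. The long exact sequence therefore reduces to
\[
0\to F^{I_\infty}\to H^1_c(\Gm,F)\to\omega(F[1])\to(F^{\Iw_\infty})_{I_\infty}(-1)\to0 .
\]
The invariants and the coinvariants of a finite-dimensional module under the procyclic tame quotient have the same dimension. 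Hence $\dim\omega(F[1])=-\chi_c(\Gm,F)=\chi_c(\Gm,F[1])$.

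\emph{Step 3: weights.} As in the proof of Proposition~\ref{prop:weights-torus}, we have an exact sequence
\[
0\to F^{I_0}\oplus F^{I_\infty}\to H^1_c(\Gm,F)\to H^1(\mathbb P^1,j_*F_U)\to0 .
\]
Its right-hand term is pure of weight $0$ by \cite[3.2.3]{Deligne}, since $F_U$ is pure of weight $-1$. The points of $\Gm\smallsetminus U$ add nothing here, because $F=j_*F_U$ there. It follows that $\omega(F[1])$ is a successive extension of three pieces: $F^{I_0}$, the pure weight-$0$ part, and the coinvariants at $\infty$ twisted by $(-1)$.

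On the tame part $\bigoplus_\rho\mathcal L_\rho\otimes U_\rho$, only the unipotent summand $\rho=\mathbf 1$ contributes invariants or coinvariants. By Deligne's local monodromy theorem with weights \cite[1.8.4]{Deligne}, the invariants of a Jordan block of size $i$ have weight $-1-(i-1)=-i$. This gives $b^0_i$ eigenvalues of weight $-i$ at $0$. The coinvariants of a block of size $i$ have weight $-1+(i-1)$, which the twist $(-1)$ raises to $i$. This gives $b^\infty_i$ eigenvalues of weight $+i$ at $\infty$. All remaining eigenvalues have weight $0$. The polynomial identity is then a restatement, with the constant term fixed by $\mathsf P(1)=\dim\omega$.

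\emph{Main obstacle.} The delicate point is Step 3, specifically the precise form of the weight-monodromy statement on the coinvariants at $\infty$. One has to check that the quotient of the top piece of the monodromy filtration has weight $-1+(i-1)$ for an arbitrary pure lisse sheaf on $U$, rather than only for $F$ with unipotent local monodromy. I would handle this by applying \cite[1.8.4]{Deligne} to $F^{\Iw_\infty}$ and using the decomposition of the tame part into Kummer twists of unipotent pieces.
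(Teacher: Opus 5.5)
Your proposal is correct and follows essentially the same route as the paper: the triangle $j_!F\to K\to i_{\infty*}K_\infty$ (the paper phrases it as excision on $A=\mathbb P^1\smallsetminus\{0\}$), the vanishing of $H^0_c$ and $H^2_c$ from the hypotheses, the exact sequence of Proposition~\ref{prop:weights-torus} with Deligne's purity theorem for the middle piece, and \cite[1.8.4]{Deligne} applied to the unipotent parts at $0$ and $\infty$. Your Verdier-duality argument for $H^2(\mathbb P^1,K)$ is not needed, since that group already vanishes by $H^2_c(\Gm,F)=0$ and $H^2(K_\infty)=0$ in the long exact sequence; the point you flag as the main obstacle is handled in the paper by the same direct citation of \cite[1.8.4]{Deligne}, applied to the Frobenius-stable unipotent summand.
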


\begin{proof}
Put $A=\mathbb P^1\smallsetminus\{0\}$, so that $\omega(F[1])=H^1_c\big(A\otimes\Fpbar,Rj_{\infty*}F\big)$. The complex $Rj_{\infty*}F$ has $j_{\infty*}F$ in degree $0$ and, in degree $1$, the skyscraper at $\infty$ with value $H^1(I_\infty,F(\infty))=F(\infty)_{I_\infty}(-1)$. The triangle $j_{\infty!}F\to Rj_{\infty*}F\to i_{\infty*}(Rj_{\infty*}F)_\infty$ gives an exact sequence
\[
\begin{aligned}
0&\to H^0_c(\Gm,F)\to H^0_c(A,Rj_{\infty*}F)\to F(\infty)^{I_\infty}\to H^1_c(\Gm,F)\\
&\to H^1_c(A,Rj_{\infty*}F)\to F(\infty)_{I_\infty}(-1)\to H^2_c(\Gm,F).
\end{aligned}
\]
The groups $H^0_c(\Gm,F)$ and $H^0_c(A,Rj_{\infty*}F)$ vanish, since $F$ has no nonzero sections supported at points and $A$ is affine. The group $H^2_c(\Gm,F)$, the geometric coinvariants of $F_U$ twisted by $(-1)$, vanishes by hypothesis. Next, there is an exact sequence
\[
0\to F(0)^{I_0}\oplus F(\infty)^{I_\infty}\to H^1_c(\Gm,F)\to H^1(\mathbb P^1,j_*F)\to0,
\]
obtained as in Proposition~\ref{prop:weights-torus} using $H^0(\Gm,F)=0$, whose right-hand term is pure of weight $0$ by \cite[3.2.3]{Deligne}. Combining the two sequences, $\omega(F[1])$ has a filtration with graded pieces $F(0)^{I_0}$, $H^1(\mathbb P^1,j_*F)$ and $F(\infty)_{I_\infty}(-1)$. In particular its dimension is $\dim H^1_c(\Gm,F)=\chi_c(\Gm,F[1])$. Invariants and coinvariants of $I_x$ come only from the unipotent part of $F(x)$. By \cite[1.8.4]{Deligne}, a Jordan block of size $i$ in the unipotent part of $F(0)$ contributes to $F(0)^{I_0}$ one eigenvalue of weight $-1-(i-1)=-i$. A block of size $i$ in the unipotent part of $F(\infty)$ contributes to $F(\infty)_{I_\infty}$ one eigenvalue of weight $-1+(i-1)$, hence to $F(\infty)_{I_\infty}(-1)$ one eigenvalue of weight $i$.
\end{proof}

Let $j'\colon\Gm\hookrightarrow\mathbb A^1=\mathbb P^1\smallsetminus\{\infty\}$ and $j''\colon\mathbb A^1\hookrightarrow\mathbb P^1$ be the inclusions. As $j_{0!}$ and $Rj_{\infty*}$ concern the distinct points $0$ and $\infty$, we have $j_{0!}Rj_{\infty*}\cong Rj''_*j'_!$, and therefore $\omega(F[1])=H^0\big(\mathbb A^1\otimes\Fpbar,j'_!F[1]\big)$. This is the functor that Katz, following Deligne, shows to be a fibre functor \cite[Th.~3.1]{Katz-Conv}.

\begin{theorem}[Rojas-Le\'on]\label{thm:RL}
Let $k$ be a finite field of characteristic $p$, and let $\mathcal A$ and $\mathcal B$ be pure perverse sheaves on $\Gm$ over $k$. Suppose that each of them is the middle extension of a lisse sheaf on a dense open subset, placed in degree $-1$, and that neither has, over $\bar k$, a subobject or quotient of the form $\mathcal L_\rho[1]$. Suppose moreover that $\mathcal A*_{\mathrm{mid}}\mathcal B$ has, over $\bar k$, no nonzero punctual subobject or quotient. Then for every character $\chi$ of finite order prime to $p$,
\[
\mathsf P_{\mathcal A*_{\mathrm{mid}}\mathcal B,\chi}=\mathsf P_{\mathcal A,\chi}\cdot\mathsf P_{\mathcal B,\chi}.
\]
\end{theorem}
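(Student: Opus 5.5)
The plan is to deduce the product formula from Deligne's fibre functor $\omega$ on Katz's Tannakian category, through the weight dictionary of Lemma~\ref{lem:weight-dictionary}. First I reduce to $\chi=\mathbf 1$. Since $\mathcal L_\chi\otimes(\mathcal A*_{\mathrm{mid}}\mathcal B)\cong(\mathcal L_\chi\otimes\mathcal A)*_{\mathrm{mid}}(\mathcal L_\chi\otimes\mathcal B)$, because $\mathcal L_\chi$ is multiplicative, and since $\mathsf P_{\mathcal A,\chi}=\mathsf P_{\mathcal L_\chi\otimes\mathcal A,\mathbf 1}$ by Definition~\ref{def:jordan-poly}, the formula for $\chi$ follows from the formula for $\mathbf 1$ applied to the twisted sheaves. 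The hypotheses are stable under the twist: purity, being a middle extension, the absence of Kummer subobjects or quotients, and the absence of punctual subobjects or quotients of the convolution. After a Tate twist and a half-twist, which only shift all weights uniformly, I may also assume that $\mathcal A$ and $\mathcal B$ are pure of weight $0$, so that their lisse parts are pure of weight $-1$.

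Next I apply Lemma~\ref{lem:weight-dictionary} to each of $\mathcal A$, $\mathcal B$ and $\mathcal A*_{\mathrm{mid}}\mathcal B$. For $\mathcal A$ and $\mathcal B$, the hypothesis with $\rho=\mathbf 1$ and (P4)(b) show that the lisse parts have no geometrically trivial subsheaf or quotient. The lemma then gives that the weight generating polynomial of Frobenius on $\omega(\mathcal A)$ is $\mathsf P_{\mathcal A,\mathbf 1}$, and similarly for $\mathcal B$. For $\mathcal C=\mathcal A*_{\mathrm{mid}}\mathcal B$, I use the arithmetic theory of \cite[Ch.~4 and~5]{Katz-Conv}:
\begin{itemize}
\item $\mathcal C$ is pure of weight $0$, since middle convolution of pure perverse sheaves of weight $0$ is pure of weight $0$, and $\mathcal C$ lies in $\mathcal P$;
\item by the hypothesis of no punctual subobject or quotient and (P4)(a), $\mathcal C$ is the middle extension $(j_*F_U)[1]$, so Lemma~\ref{lem:weight-dictionary} applies to $\mathcal C$.
\end{itemize}
The absence of trivial subsheaf or quotient in $F_U$ comes from $\mathcal C\in\mathcal P$ together with (P4)(b).

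The key step is the Frobenius-equivariant isomorphism
\[
\omega(\mathcal A*_{\mathrm{mid}}\mathcal B)\cong\omega(\mathcal A)\otimes\omega(\mathcal B).
\]
Over $\bar k$, $\omega$ is a fibre functor \cite[Th.~3.1]{Katz-Conv}, and \cite[Cor.~3.2]{Katz-Conv} identifies its tensor structure with $*_{\mathrm{mid}}$. What must be checked is that the isomorphism is defined over $k$, hence compatible with the Frobenius actions. It is built from the Künneth formula and proper base change for $R\mu_!$ applied to $j'_!$-extensions, all of which are canonical and defined over $k$. Given this, the weight multiset of Frobenius on the tensor product is the sum-set of the weight multisets of the factors, because eigenvalues multiply and weights add. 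So the weight generating polynomials multiply, and Lemma~\ref{lem:weight-dictionary} converts this into $\mathsf P_{\mathcal C,\mathbf 1}=\mathsf P_{\mathcal A,\mathbf 1}\mathsf P_{\mathcal B,\mathbf 1}$.

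I expect the main obstacle to be exactly this Frobenius compatibility of Deligne's tensor isomorphism, together with checking that $\omega$ applied to $*_{\mathrm{mid}}$, rather than to $*_!$, picks up no extra punctual contribution. On the second point, $\omega$ kills the negligible objects, which are sums of Kummer sheaves placed in degree $1$ and punctual objects; this is why the hypotheses exclude them. I would try to write $\omega(\mathcal A*_!\mathcal B)\cong\omega(\mathcal A)\otimes\omega(\mathcal B)$ explicitly via Künneth, and then use that the kernel and cokernel of $*_!\to*_{\mathrm{mid}}$ are negligible, hence killed by $\omega$. Both steps are Frobenius-equivariant.
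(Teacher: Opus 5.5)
Your outline is the paper's proof. It has the same normalization to weight $0$, the same fibre functor, the same use of (P4) and Lemma~\ref{lem:weight-dictionary} for $\mathcal A$, $\mathcal B$ and $\mathcal A*_{\mathrm{mid}}\mathcal B$, and the same conclusion from the additivity of weights under tensor product. Reducing to $\chi=\mathbf 1$ by twisting is equivalent to the paper's use of $\omega_\chi(\mathcal N)=\omega(\mathcal L_\chi\otimes\mathcal N)$. But you must first pass to a finite extension of $k$ containing $\mu_m$, where $m$ is the order of $\chi$. Otherwise $\mathcal L_\chi\otimes\mathcal A$ is not defined over $k$, carries no Frobenius, and the untwisted case cannot be applied to it. This step is harmless, since both sides of the formula are geometric and the hypotheses survive finite extensions.

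Your paragraph on the main obstacle needs correcting in two places.

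\emph{The tensor isomorphism.} It is not obtained from K\"unneth and proper base change for $R\mu_!$. The functor $\omega(\mathcal N)=H^0(\mathbb A^1\otimes\bar k,j'_!\mathcal N)$ has compact supports at $0$ but not at $\infty$, and that is exactly why Lemma~\ref{lem:weight-dictionary} sees the blocks at $\infty$ with positive weights. The formal K\"unneth argument gives the tensor compatibility of $H^0_c(\Gm\otimes\bar k,-)$ with $*_!$, which is the wrong functor for two reasons:
\begin{itemize}
\item it records the invariants at $0$ and at $\infty$ both with negative weights, so it does not compute $\mathsf P$;
\item it does not kill $\Qlb[1]$, so it need not pass from $*_!$ to $*_{\mathrm{mid}}$.
\end{itemize}
For $\omega$, the isomorphism $\omega(\mathcal A*_{\mathrm{mid}}\mathcal B)\cong\omega(\mathcal A)\otimes\omega(\mathcal B)$ is Deligne's theorem \cite[Th.~3.1 and Cor.~3.2]{Katz-Conv}. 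Its compatibility with $\Frob_k$ comes from the canonicity of these tensor isomorphisms. The paper argues this way, quoting Katz's statement that $\Frob_k$ acts on $\omega_\chi$ by a tensor automorphism \cite[Ch.~5]{Katz-Conv}.

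\emph{Punctual objects.} They are neither negligible nor killed by $\omega$: for $x\in\Gm$, $\omega(\delta_x)$ is a line, and $\delta_1$ is the unit for $*_{\mathrm{mid}}$. What $\omega$ kills are the successive extensions of objects $\mathcal L_\rho[1]$, and the kernel of $\mathcal A*_!\mathcal B\to\mathcal A*_{\mathrm{mid}}\mathcal B$ is of that kind. The hypothesis excluding punctual subobjects and quotients of $\mathcal A*_{\mathrm{mid}}\mathcal B$ serves only to make it a middle extension, so that Lemma~\ref{lem:weight-dictionary} applies to it. Your bullet point already uses it in exactly this way.
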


\begin{proof}
A product formula of this kind was proved by Rojas-Le\'on \cite[Prop.~6.1]{RojasLeon-conv} with his local convolution functors. We give a proof based on Deligne's fibre functor, which also fixes the normalization.

\emph{Reductions.} Both sides depend only on $\mathcal A$ and $\mathcal B$ over $\bar k$, and the formation of $\mathcal A*_{\mathrm{mid}}\mathcal B$ commutes with extension of scalars (\S\ref{sec:conv}). The hypotheses are preserved by finite extensions of $k$: purity is, the formation of $j_*$ commutes with \'etale base change, and the remaining conditions are geometric. If $\chi$ has order $m$, we may therefore assume that $\mu_m\subset k$. Then $\chi$ is the Kummer character attached to a character of $k^\times$, again denoted $\chi$, and $\mathcal L_\chi$ is a lisse sheaf of rank one on $\Gm$ over $k$, pure of weight $0$. Next, for $\alpha\in\Qlb^\times$ let $\alpha^{\deg}$ be the geometrically constant sheaf of rank one on which $\Frob_k$ acts by $\alpha$. The projection formula gives $(\mathcal A\otimes\alpha^{\deg})*_{\mathrm{mid}}(\mathcal B\otimes\beta^{\deg})\cong(\mathcal A*_{\mathrm{mid}}\mathcal B)\otimes(\alpha\beta)^{\deg}$, and these twists change neither side of the formula, because the polynomials $\mathsf P$ are geometric. Choosing $\alpha$ and $\beta$ of suitable absolute values, we may assume that $\mathcal A$ and $\mathcal B$ are pure of weight $0$.

\emph{The fibre functor.} Over $\bar k$, $\mathcal A$ and $\mathcal B$ lie in Katz's class $\mathcal P$ (\S\ref{sec:conv}); that is, they are objects of the category $\mathcal P_{\mathrm{arith}}$ of \cite[Ch.~4]{Katz-Conv}. The objects of $\mathcal P_{\mathrm{arith}}$ that are pure of weight $0$ are stable under middle convolution \cite[Ch.~4, paragraph after Th.~4.1]{Katz-Conv}. So $\mathcal K=\mathcal A*_{\mathrm{mid}}\mathcal B$ lies in $\mathcal P_{\mathrm{arith}}$ and is pure of weight $0$; in particular, over $\bar k$ it has no subobject or quotient of the form $\mathcal L_\rho[1]$. Let $j'\colon\Gm\hookrightarrow\mathbb A^1$ be the inclusion, and for a perverse sheaf $\mathcal N$ on $\Gm$ over $k$ put
\[
\omega_\chi(\mathcal N)=H^0\big(\mathbb A^1\otimes\bar k,\,j'_!(\mathcal L_\chi\otimes\mathcal N)\big),
\]
on which $\Frob_k$ acts. By \cite[Th.~3.1 and Cor.~3.2]{Katz-Conv}, $\omega_\chi$ is a fibre functor, for middle convolution, on Katz's Tannakian category $\mathcal P$. By \cite[beginning of Ch.~5]{Katz-Conv}, for every character $\chi$ of $k^\times$ the action of $\Frob_k$ on $\omega_\chi$ is an element of the Tannakian group of the category generated by $\mathcal A\oplus\mathcal B$ in $\mathcal P_{\mathrm{arith}}$, that is, an automorphism of the fibre functor compatible with its tensor structure. (The tensor isomorphisms of $\omega_\chi$ are canonical, so they commute with transport of structure by $\mathrm{Gal}(\bar k/k)$, that is, with the action of $\Frob_k$, for every $\chi$. Katz's further condition that $\chi$ be good for $\mathcal A\oplus\mathcal B$ serves to make the eigenvalues unitary, which we do not need: the weights are computed below.) In particular there is an isomorphism $\omega_\chi(\mathcal K)\cong\omega_\chi(\mathcal A)\otimes\omega_\chi(\mathcal B)$ compatible with $\Frob_k$.

\emph{Weights.} Let $\mathcal N$ be one of $\mathcal A$, $\mathcal B$, $\mathcal K$, and put $\mathcal N'=\mathcal L_\chi\otimes\mathcal N$, a perverse sheaf pure of weight $0$. Over $\bar k$, $\mathcal N'$ has no subobject or quotient isomorphic to $\Qlb[1]$, because $\mathcal N$ has none isomorphic to $\mathcal L_{\chi^{-1}}[1]$. Next, there are a dense open subset $U\subseteq\Gm$, with inclusion $j\colon U\hookrightarrow\Gm$, and a lisse sheaf $G_U$ on $U$ with $\mathcal N'\cong(j_*G_U)[1]$. For $\mathcal N=\mathcal A$ or $\mathcal B$ this holds by hypothesis, with $G_U=\mathcal L_\chi|_U\otimes F_U$ if $\mathcal N=(j_*F_U)[1]$, because tensoring with the lisse sheaf $\mathcal L_\chi$ commutes with $j_*$. For $\mathcal N=\mathcal K$, the hypothesis on $\mathcal A*_{\mathrm{mid}}\mathcal B$ shows that $\mathcal N'=\mathcal L_\chi\otimes\mathcal K$ has no nonzero punctual subobject or quotient over $\bar k$, hence none over $k$, since extension of scalars is exact and faithful. By (P2), $\mathcal N'$ is of the form $G_U[1]$ on some dense open subset $U$, and (P4)(a) gives $\mathcal N'\cong(j_*G_U)[1]$. On $U$ we have $\mathcal N'=G_U[1]$ and $D(G_U[1])=G_U^\vee(1)[1]$, so purity of $\mathcal N'$ of weight $0$ means that $G_U$ is pointwise pure of weight $-1$. By (P4)(b), applied over $\bar k$ with $L=\Qlb$, the sheaf $G_U$ has no geometrically trivial subsheaf or quotient sheaf. Thus Lemma~\ref{lem:weight-dictionary} applies to $G_U$. By the remark following that lemma, its functor $\omega$ satisfies $\omega(\mathcal N')=\omega_\chi(\mathcal N)$, and $\mathsf P_{\mathcal N',\mathbf 1}=\mathsf P_{\mathcal N,\chi}$ by Definition~\ref{def:jordan-poly}. Hence
\[
\mathsf P_{\mathcal N,\chi}(T)=\sum_w\big(\text{number of eigenvalues of }\Frob_k\text{ on }\omega_\chi(\mathcal N)\text{ of weight }w\big)\,T^w .
\]
The eigenvalues of $\Frob_k$ on $\omega_\chi(\mathcal A)\otimes\omega_\chi(\mathcal B)$ are the products of those on the two factors, counted with multiplicity, and weights add. So the right-hand side is multiplicative in tensor products, and the isomorphism $\omega_\chi(\mathcal K)\cong\omega_\chi(\mathcal A)\otimes\omega_\chi(\mathcal B)$ gives the theorem.
\end{proof}

\begin{lemma}\label{lem:binomial}
For every $n\ge2$, $2r(n-1)+\kappa_1=r(n)$.
\end{lemma}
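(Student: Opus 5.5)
The identity is purely combinatorial, and I plan to prove it from the definitions by expanding both sides in binomial coefficients. Here $\kappa_1$ is the number of blocks of size $1$ in $\Up{2}^{\otimes(n-1)}$. By Lemma~\ref{lem:CG} applied with $n-1$, this is the Catalan number $m_{n-1,(n-1)/2}$ when $n-1$ is even, and $0$ when $n-1$ is odd. Since $r(n)=2^n-\binom n{\lfloor n/2\rfloor}$ and $2r(n-1)=2^n-2\binom{n-1}{\lfloor(n-1)/2\rfloor}$, the claim is equivalent to
\[
\binom n{\lfloor n/2\rfloor}=2\binom{n-1}{\lfloor(n-1)/2\rfloor}-\kappa_1 .
\]
I will check this separately according to the parity of $n$.

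\emph{Case $n$ even, say $n=2m$.} Then $n-1=2m-1$ is odd, so $\kappa_1=0$. Pascal's rule together with the symmetry $\binom{2m-1}{m}=\binom{2m-1}{m-1}$ gives
\[
\binom{2m}{m}=\binom{2m-1}{m-1}+\binom{2m-1}{m}=2\binom{2m-1}{m-1},
\]
and $\lfloor(n-1)/2\rfloor=m-1$, which is what is needed.

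\emph{Case $n$ odd, say $n=2m+1$.} Then $n-1=2m$ and $\kappa_1=m_{2m,m}=\binom{2m}{m}-\binom{2m}{m-1}$. Pascal's rule gives $\binom{2m+1}{m}=\binom{2m}{m}+\binom{2m}{m-1}$. On the other hand,
\[
2\binom{2m}{m}-\kappa_1=\binom{2m}{m}+\binom{2m}{m-1},
\]
so the two sides agree.

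There is no real obstacle. The only point requiring care is the bookkeeping of $\kappa_1$: I need to confirm from Lemma~\ref{lem:CG}, via $\kappa_{n-2k}=m_{n-1,k}$ with $n-2k=1$, that $\kappa_1$ vanishes exactly when $n$ is even.

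As a conceptual cross-check, I could also read the identity off Theorem~\ref{thmC}(i) by counting dimensions. There $2r(n-1)+\kappa_1$ is the rank implied by $r(n-1)$ blocks of size $2$ plus $\kappa_1$ blocks of size $1$, and it should equal $r(n)$. However, this lemma is presumably used to prove that statement, so I will rely only on the elementary computation.
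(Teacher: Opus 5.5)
Your proof is correct and is the paper's argument: reduce to $\binom n{\lfloor n/2\rfloor}=2\binom{n-1}{\lfloor(n-1)/2\rfloor}-\kappa_1$, then split by parity, using $\kappa_1=0$ for even $n$ and $\kappa_1=\binom{2m}m-\binom{2m}{m-1}$ for $n=2m+1$, each time with Pascal's rule. You are also right that the dimension count of Theorem~\ref{thmC}(i) would be circular, since this lemma is used in Step~4 of the proof of Theorem~\ref{thm:boundary}.
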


\begin{proof}
Since $r(m)=2^m-\binom m{\lfloor m/2\rfloor}$, the claim is that $2\binom{n-1}{\lfloor(n-1)/2\rfloor}-\kappa_1=\binom n{\lfloor n/2\rfloor}$. If $n=2m$, then $\kappa_1=0$ by Lemma~\ref{lem:CG}, and $\binom{2m}m=\binom{2m-1}{m-1}+\binom{2m-1}m=2\binom{2m-1}{m-1}$. If $n=2m+1$, then $\kappa_1=\binom{2m}m-\binom{2m}{m-1}$ by Lemma~\ref{lem:CG}, and $2\binom{2m}m-\kappa_1=\binom{2m}m+\binom{2m}{m-1}=\binom{2m+1}m$.
\end{proof}

\begin{theorem}\label{thm:boundary}
Let $\mathcal K=\Kl[1]*_{\mathrm{mid}}\iota^*\mathcal M_c[1]$.
\begin{enumerate}[(i)]
\item $\mathcal W_n|_{C_c}[1]\cong\mathcal K$.
\item $\mathcal W_n|_{C_c}$ is tame at $h_1=0$, and its local monodromy there is unipotent, with $r(n-1)$ Jordan blocks of size $2$, $\kappa_1$ blocks of size $1$, and no block of size $\ge3$.
\item The tame part of $(\mathcal W_n|_{C_c})(\infty)$ is unipotent, with $\kappa_j$ Jordan blocks of size $j-2$ for each $j\ge3$, and $\Swan_\infty(\mathcal W_n|_{C_c})=2^{n-1}$.
\end{enumerate}
In particular Theorem~\ref{thmC}(i) and~(iv) hold, as well as the assertions of Theorem~\ref{thmC}(ii) on the tame part and on the Swan conductor.
\end{theorem}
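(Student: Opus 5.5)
The plan is to identify $\mathcal W_n|_{C_c}[1]$ with $\mathcal K$ first, by comparing generic ranks. Then the Jordan data follow from the product formula of Theorem~\ref{thm:RL}, applied with $\chi=\mathbf 1$, combined with the tameness of Proposition~\ref{prop:tame}.

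\emph{Step 1: generic rank of $\mathcal K$.} By Lemma~\ref{lem:local-data}, $\Kl[1]$ and $\iota^*\mathcal M_c[1]$ lie in Katz's class $\mathcal P$. Over $\Fpbar$ they are irreducible or at least semisimple, being pure. Hence $\mathcal K$ is defined, pure of weight $n+1$ after the shift, and $\mathcal K$ is a quotient of $\Kl[1]*_!\iota^*\mathcal M_c[1]\cong\mathcal H_n|_{C_c}[1]$ (Lemma~\ref{lem:conv-model}).

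The kernel of $\mathcal H_n|_{C_c}[1]\to\mathcal K$ is the part killed in the map to $*_*$. Since everything here is lisse on $\Gm$, it is a lisse sheaf placed in degree $-1$. By purity of $\mathcal K$ (Katz, middle convolution of pure objects is pure of the sum weight, here $1+(n-1)+2=n+2$ shifted appropriately), the kernel contains the whole weight $\le n$ part $\mathcal H_n^{\le n}$. It can contain no weight-$(n+1)$ part, because the rank of $\mathcal K$ is at least $r(n)$.

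To get that rank I compute $\chi_c(\Gm,\mathcal K)=\mathsf P_{\mathcal K,\mathbf 1}(1)=\mathsf P_{\Kl[1]}(1)\mathsf P_{\iota^*\mathcal M_c[1]}(1)$. Here $\mathsf P_{\Kl[1],\mathbf 1}(T)=T^{-2}+a$ with total value $1$, so $a=0$ and the polynomial is $T^{-2}$. Likewise $\mathsf P_{\iota^*\mathcal M_c[1],\mathbf 1}(T)=\sum_j\kappa_jT^j+a'$ with total value $2^{n-1}$.

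Theorem~\ref{thm:RL} needs the absence of punctual subobjects or quotients of $\mathcal K$. I will check this by showing that $\mathcal K$ is a quotient of the lisse-shifted $\mathcal H_n|_{C_c}[1]$, so it has no punctual quotient by (P3)-type arguments. Dually, $\mathcal K$ is a subobject of $*_*$, which is $D$ of a $*_!$ of lisse objects, so it has no punctual subobject.

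Since $\mathcal K$ has no punctual constituents, (P4)(a) shows that it is the middle extension of a lisse sheaf that is a quotient of $\mathcal H_n|_{C_c}$ and lisse on $\Gm$. Comparing weights with Theorem~\ref{thm:lisse}(iv), this quotient is pure of weight $n+1$. It is therefore a quotient of $\mathcal W_n|_{C_c}$, and equality follows once the generic rank of $\mathcal K$ is shown to be $\ge r(n)$. This is part (i).

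\emph{Step 2: reading off Jordan data.} With $\mathcal K=\mathcal W_n|_{C_c}[1]$ lisse on $\Gm$, the product $T^{-2}\cdot\big(a'+\sum_j\kappa_jT^j\big)$ gives the unipotent part at $0$ as follows. There are $\kappa_1$ blocks of size $1$ (from $j=1$), $a'$ blocks of size $2$, and nothing larger. At $\infty$ there are $\kappa_j$ blocks of size $j-2$ for $j\ge3$.

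Proposition~\ref{prop:tame} says that $\mathcal W_n|_{C_c}$ is tame at $0$. I must also exclude non-unipotent tame characters at $0$. For this I apply Theorem~\ref{thm:RL} with $\chi\ne\mathbf 1$: $\Kl$ has no $\chi$-part at $0$, and $\iota^*\mathcal M_c$ is wild at $0$ and unipotent at $\infty$, so both $\mathsf P_{\cdot,\chi}$ are constants. The product is then constant, so there are no $\chi$-blocks at $0$ or $\infty$. The tame part at $\infty$ is therefore unipotent.

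The rank at $0$ is $\kappa_1+2a'=r(n)$. The relation $a'=r(n-1)$ should follow from $\mathsf P(1)$ bookkeeping: $a'=2^{n-1}-\sum\kappa_j$, which equals $r(n-1)$ by Lemma~\ref{lem:CG}. Lemma~\ref{lem:binomial} then confirms that the rank is $r(n)$, which also fills the rank inequality left open in Step 1.

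\emph{Step 3: Swan at $\infty$.} By \eqref{eq:GOS}, $\Swan_\infty=\chi_c$ up to sign, since $\Swan_0=0$ by tameness. We have $\chi_c(\mathcal K)=\mathsf P(1)=2^{n-1}$.

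The main obstacle is Step 1: justifying that the middle convolution has no punctual pieces and that exactly the lower-weight part is discarded. Proving purity of weight $n+2$ for $\mathcal K$ via Katz Ch.~4 is the delicate input.
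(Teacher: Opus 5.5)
Your plan follows the paper's own structure. You show that $\mathcal K$ has no punctual subobject or quotient, and you apply Theorem~\ref{thm:RL} with $\chi=\mathbf 1$ and with $\chi\ne\mathbf 1$. You then use weights to see that $\mathcal H_n|_{C_c}[1]\twoheadrightarrow\mathcal K$ kills $\mathcal H_n^{\le n}|_{C_c}[1]$, and you finish with a rank count at $h_1=0$ via Proposition~\ref{prop:tame} and Lemma~\ref{lem:binomial}. You flag the weight of $\mathcal K$ as the delicate input. The paper argues it directly instead of quoting Katz: on a dense open, $F_{\mathcal K}$ is a quotient of $R^1\mu_!(\Kl\boxtimes\iota^*\mathcal M_c)$, which has weights $\le n+1$ by Deligne's Weil~II bound. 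It is also a subsheaf of $R^1\mu_*$, which has weights $\ge n+1$ by Verdier duality, so $F_{\mathcal K}$ is pure of weight $n+1$. Quoting Katz's purity theorem is also acceptable.

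Two steps do not hold as written.

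\emph{No punctual subobjects.} This does not follow from $\Kl[1]*_*\iota^*\mathcal M_c[1]$ being dual to a $*_!$-convolution of lisse objects, because such convolutions can have punctual quotients. For instance, over $\Fpbar$ adjunction gives $\Hom(\Kl[1]*_!\iota^*D(\Kl[1]),\delta_1)\cong\End(\Kl)\ne0$; indeed $\delta_1$ is the unit object of Katz's category. You need a reason specific to this pair, and either of the following works.
\begin{enumerate}[(a)]
\item The paper's Step~0: adjunction identifies $\Hom(\delta_x,R\mu_*(\Kl\boxtimes\iota^*\mathcal M_c)[2])$ with $H^0$ of the sheaf $y\mapsto\Kl(y)\otimes\mathcal M_c(y/x)$ on $\mu^{-1}(x)\cong\Gm$. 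This vanishes because that sheaf is totally wild at $y=\infty$ by Lemma~\ref{lem:breaks}.
\item In your dual form: geometrically $\Kl^\vee\cong\Kl$ and $\Lpsi{\lambda t}^\vee=\Lpsi{-\lambda t}$. So the dual convolution is the sheaf $\mathcal H_n|_{C_c}[1]$ built with $-\lambda$ in place of $\lambda$, which is lisse by Theorem~\ref{thm:lisse}(ii) and therefore has no punctual quotient by (P3).
\end{enumerate}

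\emph{Circularity in Step~2.} Step~2 starts from $\mathcal K=\mathcal W_n|_{C_c}[1]$, which is exactly what the rank count is meant to prove. Argue as the paper does:
\begin{enumerate}[(a)]
\item By Step~1, $F_{\mathcal K}$ is a quotient of $\mathcal W_n|_{C_c}$. So $F_{\mathcal K}(0)$ is a quotient of a tame representation, hence tame.
\item With the $\chi\ne\mathbf 1$ computation, $F_{\mathcal K}(0)$ is unipotent with $r(n-1)$ blocks of size $2$ and $\kappa_1$ blocks of size $1$.
\item Hence $\rk F_{\mathcal K}=r(n)=\rk\mathcal W_n$. The surjection $\mathcal W_n|_{C_c}[1]\to\mathcal K$ then has punctual kernel, which vanishes by (P3).
\end{enumerate}

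Finally, your claim that the kernel is lisse ``since everything is lisse'' is true but needs an argument. A perverse quotient of $G[1]$, with $G$ lisse on a curve, is $Q[1]$ with $Q$ having no punctual sections. Its generic stalk is an unramified quotient of that of $G$, which forces $Q$ to be lisse. The paper sidesteps this by working on a dense open and invoking Step~0.
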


\begin{proof}
We work with perverse sheaves on $C_c$ over $\Fq$. Lemma~\ref{lem:conv-model} and the definition of $*_{\mathrm{mid}}$ give morphisms
\[
\mathcal H_n|_{C_c}[1]\cong\Kl[1]*_!\iota^*\mathcal M_c[1]\twoheadrightarrow\mathcal K\hookrightarrow\Kl[1]*_*\iota^*\mathcal M_c[1].
\]
By Theorem~\ref{thm:lisse}(iv) and (P1), there is also an epimorphism $\mathcal H_n|_{C_c}[1]\twoheadrightarrow\mathcal W_n|_{C_c}[1]$ with kernel $\mathcal H_n^{\le n}|_{C_c}[1]$. Step~0 shows that $\mathcal K$ has no nonzero punctual subobject or quotient, and Steps~1 and~2 compute the local monodromy of $\mathcal K$. Step~3 shows that $\mathcal H_n|_{C_c}[1]\twoheadrightarrow\mathcal K$ factors through $\mathcal W_n|_{C_c}[1]$, and Step~4 that the resulting epimorphism $\mathcal W_n|_{C_c}[1]\twoheadrightarrow\mathcal K$ is an isomorphism.

\emph{Step 0: no punctual subobject or quotient.} We argue over $\Fpbar$, using that the formation of $\mathcal K$ commutes with extension of scalars (\S\ref{sec:conv}). The sheaf $\mathcal H_n$ is lisse, so by (P3) the perverse sheaf $\mathcal H_n|_{C_c}[1]$ has no nonzero punctual quotient; hence neither has its quotient $\mathcal K$. For subobjects, since $\mathcal K\subseteq\Kl[1]*_*\iota^*\mathcal M_c[1]$, by (P3) it suffices to show that $\Hom(\delta_x,\Kl[1]*_*\iota^*\mathcal M_c[1])=0$ for every closed point $x$ of $\Gm\otimes\Fpbar$. Adjunction gives
\[
\Hom\big(\delta_x,R\mu_*(\Kl\boxtimes\iota^*\mathcal M_c)[2]\big)\cong H^0\big(\mu^{-1}(x),(\Kl\boxtimes\iota^*\mathcal M_c)|_{\mu^{-1}(x)}\big)(-1).
\]
This vanishes, because $\mu^{-1}(x)\cong\Gm$ and the restricted sheaf $y\mapsto\Kl(y)\otimes\mathcal M_c(y/x)$ is totally wild at $y=\infty$ by Lemma~\ref{lem:breaks}. Thus $\mathcal K$ has no nonzero punctual subobject or quotient over $\Fpbar$. Moreover $\Kl[1]$ and $\iota^*\mathcal M_c[1]$ are lisse sheaves on $\Gm$ placed in degree $-1$, pure as perverse sheaves by Lemma~\ref{lem:local-data}(i),(ii), and by Lemma~\ref{lem:local-data}(iii) neither has a subobject or quotient of the form $\mathcal L_\rho[1]$ over $\Fpbar$. So the pair $\Kl[1]$, $\iota^*\mathcal M_c[1]$ satisfies the hypotheses of Theorem~\ref{thm:RL}.

\emph{Step 1: the two polynomials.} Take first $\chi=\mathbf 1$. By Lemma~\ref{lem:local-data}(i), $b^0_2(\Kl[1],\mathbf 1)=1$ and all other $b$ vanish, while $\chi_c(\Gm,\Kl[1])=\Swan_\infty(\Kl)=1$; so $\mathsf P_{\Kl[1],\mathbf 1}=T^{-2}$. By Lemma~\ref{lem:local-data}(ii), $b^\infty_j(\iota^*\mathcal M_c[1],\mathbf 1)=\kappa_j$ and $b^0_j(\iota^*\mathcal M_c[1],\mathbf 1)=0$, while $\chi_c(\Gm,\iota^*\mathcal M_c[1])=\Swan_0(\iota^*\mathcal M_c)=2^{n-1}$. Since $2^{n-1}-\sum_j\kappa_j=2^{n-1}-\binom{n-1}{\lfloor(n-1)/2\rfloor}=r(n-1)$, this gives
\[
\mathsf P_{\iota^*\mathcal M_c[1],\mathbf 1}(T)=r(n-1)+\sum_{j\ge1}\kappa_jT^j.
\]
For $\chi\ne\mathbf 1$, twisting by $\mathcal L_\chi$ kills the unipotent parts at both points and does not change Swan conductors. Hence $\mathsf P_{\Kl[1],\chi}=1$ and $\mathsf P_{\iota^*\mathcal M_c[1],\chi}=2^{n-1}$.

\emph{Step 2: the product.} By Theorem~\ref{thm:RL},
\[
\mathsf P_{\mathcal K,\mathbf 1}(T)=r(n-1)\,T^{-2}+\kappa_1T^{-1}+\kappa_2+\sum_{j\ge3}\kappa_jT^{j-2},\qquad\mathsf P_{\mathcal K,\chi}=2^{n-1}\quad(\chi\ne\mathbf 1).
\]
Let $F_{\mathcal K}$ be the lisse sheaf on a dense open subset of $\Gm$ with $\mathcal K\cong F_{\mathcal K}[1]$ there. Let $x\in\{0,\infty\}$, and write the tame part of $F_{\mathcal K}(x)$ as $\bigoplus_\rho\mathcal L_\rho\otimes U_\rho$ (\S\ref{sec:conventions}). Every character $\rho\ne\mathbf 1$ of finite order prime to $p$ of the tame inertia at $x$ is the local monodromy at $x$ of a Kummer sheaf $\mathcal L_{\chi^{-1}}$ with $\chi\ne\mathbf 1$. The unipotent part of $(\mathcal L_\chi\otimes F_{\mathcal K})(x)$ is then $U_\rho$. Since $\mathsf P_{\mathcal K,\chi}$ is constant, all $b^x_i(\mathcal K,\chi)$ vanish, so $U_\rho=0$. Hence the tame parts of $F_{\mathcal K}(0)$ and $F_{\mathcal K}(\infty)$ are unipotent. By Definition~\ref{def:jordan-poly}, for $i\ge1$ the coefficient of $T^{-i}$ in $\mathsf P_{\mathcal K,\mathbf 1}$ is $b^0_i(\mathcal K,\mathbf 1)$ and that of $T^i$ is $b^\infty_i(\mathcal K,\mathbf 1)$. Comparing with the display:
\begin{itemize}
\item $b^0_2(\mathcal K,\mathbf 1)=r(n-1)$, $b^0_1(\mathcal K,\mathbf 1)=\kappa_1$, and $b^0_i(\mathcal K,\mathbf 1)=0$ for $i\ge3$;
\item $b^\infty_i(\mathcal K,\mathbf 1)=\kappa_{i+2}$ for $i\ge1$;
\item the constant term $\kappa_2$ is $a_0(\mathcal K,\mathbf 1)$.
\end{itemize}
Thus the tame part of $F_{\mathcal K}(0)$ consists of $r(n-1)$ Jordan blocks of size $2$, $\kappa_1$ blocks of size $1$ and no others, and that of $F_{\mathcal K}(\infty)$ of $\kappa_j$ blocks of size $j-2$ for each $j\ge3$ and no others. Finally $\chi_c(\Gm,\mathcal K)=\mathsf P_{\mathcal K,\mathbf 1}(1)=2^{n-1}$.

\emph{Step 3: $\mathcal K$ is a quotient of $\mathcal W_n|_{C_c}[1]$.} By Lemma~\ref{lem:conv-model}, $\mathcal K$ is a quotient of $\mathcal H_n|_{C_c}[1]$ in the category of perverse sheaves. We first determine the weights of $\mathcal K$ on a dense open subset. Put $\mathcal L_c=\Kl\boxtimes\iota^*\mathcal M_c$, a lisse sheaf on $\Gm^2$ over $\Fq$, pure of weight $n$, so that $\Kl[1]*_!\iota^*\mathcal M_c[1]=R\mu_!\mathcal L_c[2]$ and $\Kl[1]*_*\iota^*\mathcal M_c[1]=R\mu_*\mathcal L_c[2]$. Let $U\subseteq C_c$ be a dense open subset, defined over $\Fq$, on which these two perverse sheaves and $\mathcal K$ are of the form $A[1]$, $B[1]$ and $F_{\mathcal K}[1]$, with $A$, $B$ and $F_{\mathcal K}$ lisse. By (P1), on $U$ the epimorphism $A[1]\to F_{\mathcal K}[1]$ and the monomorphism $F_{\mathcal K}[1]\to B[1]$ come from a surjection $A\to F_{\mathcal K}$ and an injection $F_{\mathcal K}\to B$ of lisse sheaves. The sheaf $A=R^1\mu_!\mathcal L_c|_U$ is pointwise mixed of weights $\le n+1$ by \cite[Th.~3.3.1]{Deligne}. By Verdier duality, $D(R\mu_*\mathcal L_c[2])\cong R\mu_!\big(\mathcal L_c^\vee(2)[2]\big)$; restricting to $U$ gives $B^\vee(1)\cong R^1\mu_!(\mathcal L_c^\vee)(2)|_U$, which is pointwise mixed of weights $\le-n-3$, again by \cite[Th.~3.3.1]{Deligne}. So $B$ is pointwise mixed of weights $\ge n+1$. As $F_{\mathcal K}$ is a quotient of $A$ and a subsheaf of $B$, it is pointwise pure of weight $n+1$. Now consider the composite
\[
\phi\colon\mathcal H_n^{\le n}|_{C_c}[1]\longrightarrow\mathcal H_n|_{C_c}[1]\longrightarrow\mathcal K.
\]
It is a morphism of perverse sheaves on $C_c$ over $\Fq$: the inclusion $\mathcal H_n^{\le n}\subset\mathcal H_n$ is defined over $\Fp$ (Theorem~\ref{thm:lisse}(iv)), the isomorphism of Lemma~\ref{lem:conv-model} is defined over $\Fq$, and so is the natural morphism $R\mu_!\to R\mu_*$ whose image is $\mathcal K$. Let $U$ be as above. As $\mathcal H_n^{\le n}$ is lisse, $\phi|_U=\phi_0[1]$ for a morphism of lisse sheaves $\phi_0\colon\mathcal H_n^{\le n}|_U\to F_{\mathcal K}$. Let $x\in U$ be a closed point and $\bar x$ a geometric point above it. The stalk map $\phi_{0,\bar x}$ commutes with $\Frob_x$, so it maps the generalized eigenspace of $\Frob_x$ for an eigenvalue $\alpha$ into the generalized eigenspace for $\alpha$, because $(\Frob_x-\alpha)^N\phi_{0,\bar x}=\phi_{0,\bar x}(\Frob_x-\alpha)^N$ for all $N$. Every eigenvalue on the source has weight $\le n$ (Theorem~\ref{thm:lisse}(iv)), and every eigenvalue on the target has weight $n+1$. The two spectra are disjoint, so $\phi_{0,\bar x}=0$. Lisse sheaves on the connected scheme $U$ correspond to continuous representations of $\pi_1(U,\bar x)$ through their stalks at $\bar x$, and a morphism of lisse sheaves is determined by its stalk at $\bar x$. Hence $\phi_0=0$, that is, $\phi|_U=0$. Restriction to $U$ is exact on perverse sheaves (P2), so the image of $\phi$ is a perverse subobject of $\mathcal K$ whose restriction to $U$ vanishes, that is, a perverse sheaf supported on the finite set $C_c\smallsetminus U$. But $\mathcal K$ has no nonzero punctual subobject over $\Fpbar$ (Step~0), hence none over $\Fq$, since extension of scalars is exact and faithful. Hence the composite is zero, and $\mathcal K$ is a quotient of $\mathcal H_n|_{C_c}[1]/\mathcal H_n^{\le n}|_{C_c}[1]\cong\mathcal W_n|_{C_c}[1]$ (Theorem~\ref{thm:lisse}(iv)).

\emph{Step 4: comparison of ranks.} Let $\mathcal N$ be the kernel of the epimorphism $\mathcal W_n|_{C_c}[1]\twoheadrightarrow\mathcal K$ of Step~3. With $U$ as in Step~3, this epimorphism restricts on $U$ to $f[1]$ for a surjection $f\colon\mathcal W_n|_U\to F_{\mathcal K}$ of lisse sheaves, by (P1). By Proposition~\ref{prop:tame}, $\mathcal W_n|_{C_c}$ is tame at $h_1=0$. As $f$ is surjective on generic stalks and equivariant for $I_0$, the representation $F_{\mathcal K}(0)$ is a quotient of $(\mathcal W_n|_{C_c})(0)$, hence tame. By Step~2 it is therefore unipotent, with $r(n-1)$ blocks of size $2$ and $\kappa_1$ blocks of size $1$. So it has dimension $2r(n-1)+\kappa_1=r(n)$ by Lemma~\ref{lem:binomial}. Thus $F_{\mathcal K}$ and $\mathcal W_n|_U$ both have rank $r(n)$, and $f$, a surjection of lisse sheaves of the same rank, is an isomorphism. Hence $\mathcal N|_U=(\ker f)[1]=0$, so $\mathcal N$ is supported on the finite set $C_c\smallsetminus U$, that is, punctual. As $\mathcal N$ is a subobject of $\mathcal W_n|_{C_c}[1]$, it vanishes by (P3). Therefore $\mathcal W_n|_{C_c}[1]\to\mathcal K$ is an isomorphism, which is (i). Statements (ii) and (iii) now follow from Step 2. Moreover, by \eqref{eq:GOS}, $\Swan_\infty(\mathcal W_n|_{C_c})=\chi_c(\Gm,\mathcal K)-\Swan_0(\mathcal W_n|_{C_c})=2^{n-1}$.
\end{proof}

The Swan conductor at $\infty$ can also be obtained without the product formula, from an Euler characteristic computed in two ways.

\begin{proposition}\label{prop:euler-H}
One has $\chi_c(\Gm\otimes\Fpbar,\mathcal H_n|_{C_c})=-2^{n-1}$, and the Swan conductors at $\infty$ of $\mathcal H_n|_{C_c}$ and of $\mathcal W_n|_{C_c}$ are both equal to $2^{n-1}$.
\end{proposition}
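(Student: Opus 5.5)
The plan is to compute $\chi_c(\Gm\otimes\Fpbar,\mathcal H_n|_{C_c})$ in two ways, first from the convolution model and then from Grothendieck--Ogg--Shafarevich.

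\emph{First computation.} By Lemma~\ref{lem:conv-model}, $R\Gamma_c(\Gmn{h_1},\mathcal H_n|_{C_c})[-1]\cong R\Gamma_c(\Gm\times\Gm,\mathcal N_c)$. Through the isomorphism $\Phi$ of that lemma, the right side becomes $R\Gamma_c(\Gm^2,\Kl\boxtimes\iota^*\mathcal M_c)$. By K\"unneth it equals $R\Gamma_c(\Gm,\Kl)\otimes R\Gamma_c(\Gm,\iota^*\mathcal M_c)$. By \eqref{eq:GOS} and Lemma~\ref{lem:local-data}, the two factors have Euler characteristics $-1$ and $-2^{n-1}$. Hence the product has Euler characteristic $2^{n-1}$, and the shift by $[-1]$ gives $\chi_c(\mathcal H_n|_{C_c})=-2^{n-1}$.

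\emph{Second computation.} Since $\mathcal H_n|_{C_c}$ is lisse on $\Gm$, \eqref{eq:GOS} gives $\chi_c=-\Swan_0-\Swan_\infty$. Proposition~\ref{prop:tame} shows that it is tame at $h_1=0$, so $\Swan_0=0$. Therefore $\Swan_\infty(\mathcal H_n|_{C_c})=2^{n-1}$.

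\emph{Passing to $\mathcal W_n$.} Swan conductors are additive in short exact sequences of $I_\infty$-representations. Theorem~\ref{thm:lisse}(iii),(iv) gives $0\to\Qlb\to\mathcal H_n\to\mathcal G_n\to0$, where $\mathcal G_n$ is filtered by the constant pieces $\Qlb(-k)^{\oplus m_{n,k}}$ (Theorem~\ref{thm:lisse}(vi)) with top quotient $\mathcal W_n$. All these constant pieces are unramified at $\infty$ and contribute $0$. Hence $\Swan_\infty(\mathcal W_n|_{C_c})=\Swan_\infty(\mathcal H_n|_{C_c})=2^{n-1}$.

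The only delicate point is bookkeeping of shifts in the first step. One must check that $R\pr_{h_1!}\mathcal N_c$ is $\mathcal H_n|_{C_c}$ placed in degree $1$, so that the sign works out, and that the Leray/K\"unneth identification of $\chi_c$ over $\Gm^2$ is compatible with the isomorphism $\Phi$. This is straightforward since $\Phi$ is an isomorphism of varieties.
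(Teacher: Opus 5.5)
Your proposal is correct and follows essentially the same route as the paper. You compute $\chi_c$ of $\mathcal N_c$ on $\Gm^2$ by turning it into an exterior tensor product and applying K\"unneth, then use the Grothendieck--Ogg--Shafarevich formula with tameness at $h_1=0$ (Proposition~\ref{prop:tame}) and additivity of Swan conductors along the filtration with geometrically constant pieces. The only cosmetic difference is the change of variables: the paper uses the shear $(t,h_1)\mapsto(t,h_1t)$ to obtain $\mathcal M_c\boxtimes\Kl$ and applies K\"unneth along $\pr_t$, whereas you use the isomorphism $\Phi$ of Lemma~\ref{lem:conv-model} to obtain $\Kl\boxtimes\iota^*\mathcal M_c$; both give $\chi_c=2^{n-1}$ for $\mathcal N_c$.
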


\begin{proof}
Let $\mathcal N_c$ be the sheaf of Lemma~\ref{lem:conv-model} on $X_c=\Gmn{t}\times\Gmn{h_1}$, so that $\mathcal N_c(t,h_1)=\Kl(h_1t)\otimes\mathcal M_c(t)$. Compactly supported Euler characteristics are compatible with $R(-)_!$, by the Leray spectral sequence. By Lemma~\ref{lem:conv-model}, $R\pr_{h_1!}\mathcal N_c\cong\mathcal H_n|_{C_c}[-1]$, so $\chi_c(X_c\otimes\Fpbar,\mathcal N_c)=-\chi_c(\Gm\otimes\Fpbar,\mathcal H_n|_{C_c})$. On the other hand, the automorphism $(t,h_1)\mapsto(t,h_1t)$ of $X_c$ commutes with $\pr_t$ and carries $\mathcal N_c$ to $\mathcal M_c\boxtimes\Kl$. By the K\"unneth formula, $R\pr_{t!}\mathcal N_c\cong\mathcal M_c\otimes R\Gamma_c(\Gm\otimes\Fpbar,\Kl)$, and $R\Gamma_c(\Gm\otimes\Fpbar,\Kl)$ is a line placed in degree $1$, by (K1)--(K3) and~\eqref{eq:GOS}. Hence $\chi_c(X_c\otimes\Fpbar,\mathcal N_c)=-\chi_c(\Gm\otimes\Fpbar,\mathcal M_c)$. The sheaf $\mathcal M_c$ is tame at $0$, and all its $2^{n-1}$ breaks at $\infty$ equal $1$, as shown in the proof of Lemma~\ref{lem:local-data}(ii), so $\chi_c(\Gm\otimes\Fpbar,\mathcal M_c)=-2^{n-1}$ by~\eqref{eq:GOS}. This proves the first assertion. By~\eqref{eq:GOS} and Proposition~\ref{prop:tame}, $\Swan_\infty(\mathcal H_n|_{C_c})=2^{n-1}$. Finally, $\mathcal H_n|_{C_c}$ is a successive extension of $\mathcal W_n|_{C_c}$, of the constant sheaf $\Qlb$ and of the sheaves $\gr^{\mathrm W}_{2k}\mathcal G_n|_{C_c}$, which are geometrically constant by Theorem~\ref{thm:lisse}(vi). Swan conductors are additive in exact sequences, and they vanish on geometrically constant sheaves; so $\Swan_\infty(\mathcal W_n|_{C_c})=2^{n-1}$.
\end{proof}

\subsection{The point at infinity}

We keep the notation of the proof of Proposition~\ref{prop:tame}: $u'=(u_2,\dots,u_n)$, $f'(u')=\sum_{i\ge2}u_i^{-1}$, $B(u')=-\lambda-\sum_{i\ge2}c_iu_i$, $b=B^{-1}\colon U\to\Gmn{v}$ and $\mathcal R=Rb_!\Lpsi{f'}$. Put $s=-1/\lambda$.

\begin{lemma}\label{lem:R-global}
There is a lisse sheaf $F$ on $U_v=\Gmn{v}\smallsetminus\{s\}$ with $\mathcal R|_{U_v}\cong F[-(n-2)]$, and $F$ extends to a lisse sheaf on $\mathbb P^1_v\smallsetminus\{0,s\}$. In particular the inertia group at $v=\infty$ acts trivially on the generic stalk of $F$.
\end{lemma}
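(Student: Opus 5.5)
The idea is to extend the argument in the proof of Proposition~\ref{prop:tame}, which only worked near $v=\infty$, to all of $\Gmn{v}\smallsetminus\{s\}$. The fibre of $b$ over $v\in\Gm$ is $\{u'\in\Gm^{n-1}:\sum_{i\ge2}c_iu_i=-\lambda-v^{-1}\}$. Put $w=v^{-1}$. When $\lambda+w\ne0$, that is $v\ne s$, the same substitution as before rewrites this fibre as the fibre $Y'_{a(w)}$ of the universal family $\mathcal Y_{n-1}\to\Gm^{n-1}$ over the point $a(w)=\big(\lambda c_i/(\lambda+w)\big)_{i\ge2}$.

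Define $a(v)=\big(\lambda c_iv/(\lambda v+1)\big)_{i\ge2}$. This is a morphism $\mathbb P^1_v\smallsetminus\{0,s\}\to\Gm^{n-1}$, since at $v=\infty$ it takes the value $c$. I would check that the scheme $\{(v,u'):v\in\Gm\smallsetminus\{s\},\ \sum c_iu_i=-\lambda-v^{-1}\}$, together with the phase $f'$, is the base change of $(\mathcal Y_{n-1},\Lpsi{f'})$ along $a|_{U_v}$. Proper base change then gives the following.

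For $n\ge3$: by Lemma~\ref{lem:integrate-t} and Theorem~\ref{thm:lisse}(iii) in $n-1$ variables, the direct image of $\Lpsi{f'}$ along $\mathcal Y_{n-1}\to\Gm^{n-1}$ is $\mathcal G_{n-1}(1)[-(n-2)]$. Hence $\mathcal R|_{U_v}\cong a^*\mathcal G_{n-1}(1)[-(n-2)]$. So we take $F=a^*\mathcal G_{n-1}(1)$. It is visibly the restriction of the lisse sheaf $a^*\mathcal G_{n-1}(1)$ on $\mathbb P^1_v\smallsetminus\{0,s\}$.

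For $n=2$: the fibre is the single point $u_2=-(\lambda+v^{-1})/c_2$. So $\mathcal R|_{U_v}=\Lpsi{-c_2v/(\lambda v+1)}$ in degree $0=n-2$. The function $-c_2v/(\lambda v+1)$ is regular on $\mathbb P^1\smallsetminus\{0,s\}$, with value $-c_2/\lambda$ at $\infty$, so this sheaf extends lisse-ly as well.

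The last assertion follows: a sheaf lisse in a neighbourhood of $\infty$ has trivial inertia action at $\infty$.

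The only point needing care, which I expect to be the main (mild) obstacle, is the bookkeeping of the isomorphism of schemes. One must verify that $U$ maps onto $\Gmn{v}$ with $b^{-1}(U_v)$ exactly the base-changed family, with no extra components when $B(u')\ne0$. One must also check that the shift in the relative form of Lemma~\ref{lem:integrate-t} matches $[-(n-2)]$. Both reduce to the identity $\sum_i\lambda c_iv(\lambda v+1)^{-1}u_i=-\lambda\iff\sum_ic_iu_i=-\lambda-v^{-1}$ for $v\ne0,s$.
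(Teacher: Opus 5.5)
Your proposal is correct and is essentially the paper's proof. It uses the same map $a(v)=\frac{\lambda v}{\lambda v+1}\,c$, extended to $v=\infty$ with value $c$, and the same identification of $b^{-1}(U_v)$ with the pullback along $a$ of $\mathcal Y_{n-1}$ with phase $f'$. It also makes the same choices $F=a^*\mathcal G_{n-1}(1)$ for $n\ge3$ and $F=\Lpsi{-c_2v/(\lambda v+1)}$ for $n=2$. One aside should be dropped: you do not need $U$ to map onto $\Gmn{v}$, since only $b^{-1}(U_v)$ matters, and for $n=2$ it is false, because the fibre over $v=s$ is empty.
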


\begin{proof}
For $v\in U_v$ put $a(v)=\frac{\lambda v}{\lambda v+1}\,c\in\Gm^{n-1}$; this is defined because $v\ne0$ and $\lambda v+1\ne0$. For $u'\in\Gm^{n-1}$, the conditions $u'\in U$ and $b(u')=v$ mean $\sum_{i\ge2}c_iu_i=-(\lambda+v^{-1})$, and this equation is equivalent to $\langle a(v),u'\rangle=-\lambda$. Hence $b^{-1}(U_v)$ is the inverse image under $a$ of the universal family $\mathcal Y_{n-1}=\{(h',u'):\langle h',u'\rangle=-\lambda\}$ over $\Gm^{n-1}$ of the proof of Proposition~\ref{prop:tame}, with the same phase $f'$. In the coordinate $w=v^{-1}$ we have $a=\frac{\lambda}{\lambda+w}\,c$, which extends to $w=0$, that is, to $v=\infty$, with value $c$. Suppose $n\ge3$. By Lemma~\ref{lem:integrate-t} and Theorem~\ref{thm:lisse}(iii) for $n-1$ variables, the direct image of $\Lpsi{f'}$ along $\mathcal Y_{n-1}\to\Gm^{n-1}$ is $\mathcal G_{n-1}(1)[-(n-2)]$, and proper base change gives the assertions with $F=a^*\mathcal G_{n-1}(1)$. If $n=2$, the fibre of $b$ over $v\in U_v$ is the point $u_2=-(\lambda+v^{-1})/c_2$, and $F=\Lpsi{-c_2v/(\lambda v+1)}$, which is lisse on $\mathbb P^1_v\smallsetminus\{s\}$.
\end{proof}

We use Laumon's local Fourier transforms $\mathcal F^{(0,\infty')}$ and $\mathcal F^{(\infty,\infty')}$ \cite[D\'ef.~(2.4.2.3)]{Laumon} and his principle of stationary phase \cite[Prop.~(2.3.3.1)(iii)]{Laumon}, in the formulation of \cite[Th.~3]{Katz-Bourbaki}. Let $\Sigma\subset\mathbb A^1$ be a finite set, $j_\Sigma\colon\mathbb A^1\smallsetminus\Sigma\hookrightarrow\mathbb A^1$ the inclusion, and $F_\Sigma$ a lisse sheaf on $\mathbb A^1\smallsetminus\Sigma$. For $x\in\Sigma\cup\{\infty\}$ let $F_\Sigma(x)$ be the representation of the inertia group at $x$ on the generic stalk, transported to $0$ by translation if $x\in\Sigma$. Then, as representations of the inertia group at $\infty$ of the dual line,
\begin{equation}\label{eq:stationary}
\FT_\psi(j_{\Sigma!}F_\Sigma[1])(\infty)\cong\bigoplus_{x\in\Sigma}\Lpsi{xh_1}(\infty)\otimes\mathcal F^{(0,\infty')}\big(F_\Sigma(x)\big)\ \oplus\ \mathcal F^{(\infty,\infty')}\big(F_\Sigma(\infty)\big).
\end{equation}
Moreover all breaks of $\mathcal F^{(0,\infty')}(V)$ are $<1$ for every $V$ \cite[Th.~(2.4.3)(i)]{Laumon}, \cite[Prop.~6]{Katz-Bourbaki}, and $\mathcal F^{(\infty,\infty')}(V)=0$ if all breaks of $V$ are $\le1$ \cite[Th.~(2.4.3)]{Laumon}, \cite[Cor.~7]{Katz-Bourbaki}.

\begin{proposition}\label{prop:slopes-infinity}
Every break of $(\mathcal G_n|_{C_c})(\infty)$ is at most $1$. If $M$ is the part of $(\mathcal G_n|_{C_c})(\infty)$ of break $1$, then all breaks of $\Lpsi{h_1/\lambda}(\infty)\otimes M$ are $<1$. The same holds for every subquotient of $\mathcal G_n|_{C_c}$, in particular for $\mathcal W_n|_{C_c}$.
\end{proposition}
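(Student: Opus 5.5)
We work with the Fourier model \eqref{eq:FT-model}: on $\Gmn{h_1}$, $\mathcal G_n|_{C_c}(1)[2-n]\cong\FT_\psi(j_{v!}\mathcal R)$. The local monodromy of $\mathcal G_n|_{C_c}$ at $h_1=\infty$ is therefore that of this Fourier transform at $\infty$ of the dual line. We compute it with the stationary phase decomposition \eqref{eq:stationary}.

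To bring $j_{v!}\mathcal R$ into the required shape, we use Lemma~\ref{lem:R-global}. On $U_v=\Gmn{v}\smallsetminus\{s\}$ we have $\mathcal R\cong F[-(n-2)]$ with $F$ lisse, and $F$ extends lisse across $v=\infty$. Put $\Sigma=\{0,s\}$ and let $F_\Sigma=F$ on $\mathbb A^1\smallsetminus\Sigma$. The complex $j_{v!}\mathcal R$ differs from $j_{\Sigma!}F[-(n-2)]$ only by a complex $\mathcal S$ supported at the point $s$ (the stalk of $\mathcal R$ at $s$; at $0$ we have extension by zero). This gives a triangle
\[
j_{\Sigma!}F[-(n-2)]\to j_{v!}\mathcal R\to\mathcal S\to .
\]
The Fourier transform of a skyscraper complex at $s$ is a sum of shifts of $\Lpsi{sh_1}$. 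Since $s=-1/\lambda$, this is $\Lpsi{-h_1/\lambda}$, lisse on $\mathbb A^1$ and of break exactly $1$ at $\infty$, with $\Lpsi{h_1/\lambda}\otimes\Lpsi{-h_1/\lambda}$ trivial.

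For the main term we apply \eqref{eq:stationary} to $j_{\Sigma!}F[1]$. The summand $\mathcal F^{(\infty,\infty')}(F(\infty))$ vanishes because $F(\infty)$ is trivial (Lemma~\ref{lem:R-global}), so its breaks are $0\le1$. The contribution of $x=0$ is $\mathcal F^{(0,\infty')}(F(0))$, all of whose breaks are $<1$. The contribution of $x=s$ is $\Lpsi{sh_1}(\infty)\otimes\mathcal F^{(0,\infty')}(F(s))$. By Lemma~\ref{lem:breaks}(ii) this is isoclinic of break $1$ (or zero), and after twisting by $\Lpsi{h_1/\lambda}$ it becomes $\mathcal F^{(0,\infty')}(F(s))$, whose breaks are $<1$. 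So $\FT_\psi(j_{\Sigma!}F[1])(\infty)$ has breaks $\le1$, and its break-$1$ part becomes of breaks $<1$ after twisting by $\Lpsi{h_1/\lambda}$.

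To pass to $\mathcal G_n|_{C_c}$ itself, we take the long exact sequence of cohomology sheaves of the Fourier-transformed triangle on a punctured neighbourhood of $\infty$, where all terms are lisse. Call a representation $V$ of $I_\infty$ \emph{good} if it has breaks $\le1$ and $\Lpsi{h_1/\lambda}\otimes V$ has all breaks $\neq1$ except those coming from the twist. Equivalently, $V$ is good if it has breaks $\le1$ and $\Lpsi{h_1/\lambda}(\infty)\otimes V$ has all breaks $<1$ on the break-$1$ part of $V$.

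We use two facts about breaks:
\begin{itemize}
\item The break decomposition is canonical and functorial, and tensoring with a rank-one $\Lpsi{h_1/\lambda}$ commutes with it.
\item By Lemma~\ref{lem:breaks}(i),(ii) applied to $\Lpsi{h_1/\lambda}\otimes-$, twisting carries the part of break $<1$ to the part of break exactly $1$.
\end{itemize}
It follows that good representations are stable under subquotients and extensions. Both the constant-coefficient summands from $\mathcal S$ and the terms from $j_{\Sigma!}F$ are good. Hence every cohomology sheaf of $\FT_\psi(j_{v!}\mathcal R)$ is good at $\infty$, in particular $\mathcal G_n|_{C_c}$. The final statement, for subquotients and for $\mathcal W_n|_{C_c}$, follows from the same stability.

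The main obstacle is the bookkeeping for the complex $\mathcal R$: it is not a single sheaf near $s$ and $0$. Its stalk at $s$ may be a complex, and its behaviour at $0$ may involve several degrees. One must check that $j_{v!}\mathcal R$ is, up to the skyscraper at $s$, exactly $j_{\Sigma!}F$ shifted, and that \eqref{eq:stationary} may be applied degree by degree through perverse cohomology as in the proof of Proposition~\ref{prop:tame}. We handle this by the triangle above together with the stability of good representations under extensions.
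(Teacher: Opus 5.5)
Your proposal is correct and follows the paper's proof essentially step by step. It uses the same excision triangle $j_{\Sigma!}F[-(n-2)]\to j_{v!}\mathcal R\to i_{s*}\mathcal R_s\to$, the same application of stationary phase to $j_{\Sigma!}F[1]$, which is already perverse (so no perverse-cohomology d\'evissage is needed), and your second, ``equivalent'' formulation of ``good'' is exactly the paper's class $\mathfrak C$, whose stability under subquotients and extensions follows from the exactness of the break decomposition alone; your first formulation of ``good'' should simply be discarded.
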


\begin{proof}
Let $\mathfrak C$ be the class of continuous representations $Q$ of the inertia group at $\infty$ such that all breaks of $Q$ are $\le1$ and all breaks of $\Lpsi{h_1/\lambda}(\infty)\otimes Q^{(1)}$ are $<1$, where $Q^{(1)}$ is the part of $Q$ of break $1$. The break decomposition is functorial and exact \cite[1.1]{Katz-Gauss}, so $\mathfrak C$ is stable under subquotients and extensions. It contains every representation all of whose breaks are $<1$, and every representation $\Lpsi{sh_1}(\infty)\otimes Y$ with all breaks of $Y$ less than $1$, by Lemma~\ref{lem:breaks}(ii); recall $s=-1/\lambda$.

Let $j_U\colon\mathbb A^1_v\smallsetminus\{0,s\}\hookrightarrow\mathbb A^1_v$ be the inclusion, let $F$ be as in Lemma~\ref{lem:R-global}, and put $K_1=j_{U!}F[1]$. The complex $j_{v!}\mathcal R$ vanishes at $v=0$ and agrees with $F[-(n-2)]$ on $U_v$. So excision for the point $s$ gives a distinguished triangle
\[
K_1[-(n-1)]\longrightarrow j_{v!}\mathcal R\longrightarrow i_{s*}\mathcal R_s\longrightarrow,
\]
where $\mathcal R_s$ is the stalk of $\mathcal R$ at $s$, a complex of finite-dimensional vector spaces. By proper base change, $\FT_\psi(i_{s*}\mathcal R_s)\cong\mathcal R_s\otimes\Lpsi{sh_1}[1]$, whose cohomology sheaves are of the form $\Lpsi{sh_1}\otimes(\text{constant})$. By~\eqref{eq:stationary}, applied with $\Sigma=\{0,s\}$ and $F_\Sigma=F$, the representation $\FT_\psi(K_1)(\infty)$ lies in $\mathfrak C$: the summand for $x=0$ has all breaks $<1$, the summand for $x=s$ is of the form $\Lpsi{sh_1}(\infty)\otimes Y$ with all breaks of $Y$ less than $1$, and the last summand vanishes, because $F(\infty)$ is trivial by Lemma~\ref{lem:R-global}. Here $\FT_\psi(K_1)(\infty)$ denotes the representation on the generic stalk of the lisse sheaf $\mathcal H^{-1}(\FT_\psi(K_1))$ near $\infty$; the sheaf $\mathcal H^0(\FT_\psi(K_1))$ is punctual, since $\FT_\psi(K_1)$ is perverse \cite[(1.3.2.3)]{Laumon}.

Apply $\FT_\psi$ to the triangle. By~\eqref{eq:FT-model}, $\FT_\psi(j_{v!}\mathcal R)$ is, on $\Gmn{h_1}$, the lisse sheaf $\mathcal G_n|_{C_c}(1)$ placed in degree $n-2$. On a punctured neighbourhood of $\infty$ all cohomology sheaves involved are lisse, $\mathcal H^{n-2}\big(\FT_\psi(K_1)[-(n-1)]\big)=\mathcal H^{-1}(\FT_\psi(K_1))$, and $\mathcal H^{n-1}\big(\FT_\psi(K_1)[-(n-1)]\big)=\mathcal H^0(\FT_\psi(K_1))$ vanishes there. So the long exact sequence of cohomology sheaves exhibits $(\mathcal G_n|_{C_c})(\infty)$ as an extension of a submodule of $\mathcal H^{n-2}\big(\FT_\psi(i_{s*}\mathcal R_s)\big)(\infty)$ by a quotient of $\FT_\psi(K_1)(\infty)$. Tate twists do not change the action of inertia. Since $\mathfrak C$ is stable under subquotients and extensions, $(\mathcal G_n|_{C_c})(\infty)$ and all its subquotients lie in $\mathfrak C$.
\end{proof}

\begin{theorem}\label{thm:infinity}
\begin{enumerate}[(i)]
\item $(\mathcal W_n|_{C_c})(\infty)=T\oplus M$, where $T$ is the tame part, unipotent of dimension $r(n)-2^{n-1}$ with $\kappa_j$ Jordan blocks of size $j-2$ for each $j\ge3$, and $M$ has dimension $2^{n-1}$ and is isoclinic of break $1$. Moreover $M\cong\Lpsi{-h_1/\lambda}(\infty)\otimes X$, where all breaks of $X$ are $<1$.
\item The sheaf $\det(\mathcal W_n|_{C_c})\otimes\Lpsi{2^{n-1}\lambda^{-1}h_1}$ on $C_c$ is geometrically constant.
\end{enumerate}
In particular Theorem~\ref{thmC}(ii) and~(iii) hold.
\end{theorem}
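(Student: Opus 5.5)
Part (i) is assembled from three earlier results. By Proposition~\ref{prop:slopes-infinity}, every break of $(\mathcal W_n|_{C_c})(\infty)$ is at most $1$. The break decomposition therefore gives $(\mathcal W_n|_{C_c})(\infty)=T\oplus N\oplus M$. Here $T$ is the tame part, $N$ is the part with breaks in $(0,1)$, and $M$ is the part of break exactly $1$. The same proposition says that all breaks of $\Lpsi{h_1/\lambda}(\infty)\otimes M$ are $<1$. Put $X=\Lpsi{h_1/\lambda}(\infty)\otimes M$; then $M\cong\Lpsi{-h_1/\lambda}(\infty)\otimes X$.

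By Theorem~\ref{thm:boundary}(iii), $T$ is unipotent with $\kappa_j$ blocks of size $j-2$. Its dimension is $\sum_{j\ge3}(j-2)\kappa_j=\sum_j j\kappa_j-2\sum_j\kappa_j+\kappa_1=2^{n-1}-2\binom{n-1}{\lfloor(n-1)/2\rfloor}+\kappa_1$. Lemma~\ref{lem:binomial} rewrites this as $r(n)-2^{n-1}$. Hence $\dim N+\dim M=2^{n-1}$.

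Next use Swan conductors. Theorem~\ref{thm:boundary}(iii) (or Proposition~\ref{prop:euler-H}) gives $\Swan_\infty=2^{n-1}$. The Swan conductor is the sum of the breaks, so $\Swan(N)+\dim M=2^{n-1}=\dim N+\dim M$. Each break of $N$ is $<1$, so $\Swan(N)<\dim N$ whenever $N\ne0$. This forces $N=0$ and $\dim M=2^{n-1}$, which proves (i).

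For (ii), the plan is to show that $\mathcal D:=\det(\mathcal W_n|_{C_c})\otimes\Lpsi{2^{n-1}\lambda^{-1}h_1}$ is lisse of rank one on $\Gm$ and unramified at both $0$ and $\infty$. It then extends to a lisse rank-one sheaf on $\mathbb P^1\otimes\Fpbar$, which is simply connected, so $\mathcal D$ is geometrically constant.

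At $0$ the argument is direct. By Theorem~\ref{thm:boundary}(ii) the local monodromy is unipotent, so its determinant is trivial, and $\Lpsi{2^{n-1}\lambda^{-1}h_1}$ is lisse at $0$.

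At $\infty$, the determinant of $T$ is trivial because $T$ is unipotent. It remains to show $\det M\cong\Lpsi{-2^{n-1}h_1/\lambda}(\infty)$, and this is the delicate step. We have $\det M=\Lpsi{-2^{n-1}h_1/\lambda}(\infty)\otimes\det X$, with $X$ of dimension $2^{n-1}$ and all breaks $<1$. So $\det X$ is a character with break $<1$ (Lemma~\ref{lem:breaks}(i)). Breaks of characters are integers by Hasse--Arf, so $\det X$ is tame. We therefore get $\det(\mathcal W_n|_{C_c})(\infty)\cong\Lpsi{-2^{n-1}h_1/\lambda}(\infty)\otimes\chi$ for a tame character $\chi$, and consequently $\mathcal D$ is tame at $\infty$ and unramified at $0$.

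To kill $\chi$, note that $\mathcal D$ is then a rank-one lisse sheaf on $\Gm\otimes\Fpbar$, tame at $\infty$ and unramified at $0$. Every such sheaf is a Kummer sheaf $\mathcal L_\chi$, and a Kummer sheaf unramified at $0$ is trivial. Hence $\chi$ is trivial and $\mathcal D$ is geometrically constant. The only inputs beyond the earlier results are Hasse--Arf for characters and the description of the tame fundamental group of $\Gm$. The main care needed is to confirm that the break of $\det X$ is $<1$, hence $0$; this is exactly what Lemma~\ref{lem:breaks}(i), applied to $\Lambda^{2^{n-1}}X$, gives.
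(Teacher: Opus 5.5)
Your proof is correct and follows the paper's argument. Part (i) is the same dimension count via Lemma~\ref{lem:binomial} combined with the Swan-conductor squeeze from Theorem~\ref{thm:boundary}(iii) and Proposition~\ref{prop:slopes-infinity}, and in (ii) you likewise show that $\mathcal D$ is unramified at $0$ and reduce the question at $\infty$ to $\det X$ having break $<1$.

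The only divergence is the final step of (ii). The paper extends $\mathcal D$ to $\mathbb A^1$ and uses $\chi_c(\mathbb A^1\otimes\Fpbar,\bar{\mathcal D})=1-\Swan_\infty(\bar{\mathcal D})>0$ to produce nonzero geometric coinvariants. You instead invoke Hasse--Arf to make $\det X$ tame, then use that the tame fundamental group of $\Gm\otimes\Fpbar$ is generated by the image of $I_0$. Both routes work. Note that your announced plan of proving unramifiedness at $\infty$ directly is not what you carry out, but the Kummer argument supplies it.
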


\begin{proof}
(i) By Theorem~\ref{thm:boundary}(iii), the tame part $T$ is unipotent with $\kappa_j$ blocks of size $j-2$ for $j\ge3$. Since $\sum_jj\kappa_j=2^{n-1}$ and $\sum_j\kappa_j=\binom{n-1}{\lfloor(n-1)/2\rfloor}$,
\[
\dim T=\sum_{j\ge3}(j-2)\kappa_j=2^{n-1}-2\binom{n-1}{\lfloor(n-1)/2\rfloor}+\kappa_1=r(n)-2^{n-1},
\]
the last equality by Lemma~\ref{lem:binomial}. So the wild part of $(\mathcal W_n|_{C_c})(\infty)$ has dimension $2^{n-1}$, and its breaks are positive and add up to $\Swan_\infty(\mathcal W_n|_{C_c})=2^{n-1}$ (Theorem~\ref{thm:boundary}(iii)). By Proposition~\ref{prop:slopes-infinity} they are at most $1$, so they all equal $1$. Thus the wild part is the part $M$ of break $1$, and $X=\Lpsi{h_1/\lambda}(\infty)\otimes M$ has all breaks $<1$ by Proposition~\ref{prop:slopes-infinity}.

(ii) Put $\mathcal D=\det(\mathcal W_n|_{C_c})\otimes\Lpsi{2^{n-1}\lambda^{-1}h_1}$, a lisse sheaf of rank one on $C_c\cong\Gm$, and work over $\Fpbar$. At $h_1=0$ the local monodromy of $\mathcal W_n|_{C_c}$ is unipotent (Theorem~\ref{thm:boundary}(ii)), so its determinant is trivial on $I_0$; as $\Lpsi{2^{n-1}\lambda^{-1}h_1}$ is lisse at $0$, the sheaf $\mathcal D$ extends to a lisse sheaf $\bar{\mathcal D}$ on $\mathbb A^1$. At $\infty$, by (i),
\[
\mathcal D(\infty)\cong\det T\otimes\det\big(\Lpsi{-h_1/\lambda}(\infty)\otimes X\big)\otimes\Lpsi{2^{n-1}\lambda^{-1}h_1}(\infty)=\det T\otimes\det X.
\]
The character $\det T$ is tame, and $\det X$, a direct summand of $X^{\otimes2^{n-1}}$, has break $<1$ by Lemma~\ref{lem:breaks}(i). So $\Swan_\infty(\bar{\mathcal D})<1$, and the Grothendieck--Ogg--Shafarevich formula on $\mathbb A^1$ \cite[Exp.~X]{SGA5} gives $\chi_c(\mathbb A^1\otimes\Fpbar,\bar{\mathcal D})=1-\Swan_\infty(\bar{\mathcal D})>0$. Since $H^0_c$ vanishes, $H^2_c(\mathbb A^1\otimes\Fpbar,\bar{\mathcal D})\ne0$; that is, the rank-one sheaf $\bar{\mathcal D}$ has nonzero geometric coinvariants, and it is geometrically constant.
\end{proof}

\section{Moments}\label{sec:fourth}

In this section $n\ge2$, $q$ is a power of $p$, and implied constants depend only on $n$. Recall that $K(0)=-1$ and $|K(a)|\le2q^{1/2}$ for $a\ne0$ \cite{Weil}. Moreover $K(a)$ is real: conjugating $\psi_q(x^{-1}+ax)$ and substituting $x\mapsto-x$ restores the summand.

\subsection{The first two moments}

\begin{lemma}\label{lem:V2}
Let $a,b\in\Fq$. Then $\sum_{x\in\Fq}K(ax)$ equals $-q$ if $a=0$ and $0$ otherwise. Moreover
\[
\sum_{x\in\Fq}K(ax)K(bx)=\begin{cases}q^2-q,&a=b\ne0,\\-q,&a\ne b,\ ab\ne0,\\0,&\text{exactly one of $a,b$ is }0,\\q,&a=b=0.\end{cases}
\]
\end{lemma}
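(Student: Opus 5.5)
The plan is to expand each Kloosterman sum as $K(ax)=\sum_{y\in\Fq^\times}\psi_q(y^{-1}+axy)$, interchange summations, and apply orthogonality of additive characters in the variable $x$.

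For the first moment, we get
\[
\sum_{x\in\Fq}K(ax)=\sum_{y\ne0}\psi_q(y^{-1})\sum_x\psi_q(axy).
\]
If $a\ne0$, the inner sum vanishes for every $y\ne0$, so the total is $0$. If $a=0$, the inner sum equals $q$ and $\sum_{y\ne0}\psi_q(y^{-1})=-1$, so the total is $-q$.

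For the second moment, the cases where $a$ or $b$ vanishes reduce to the first. If $a=b=0$, each summand is $K(0)^2=1$, giving $q$. If exactly one of them is zero, say $b=0$, then $K(bx)=-1$ and the sum is $-\sum_xK(ax)=0$, since $a\ne0$.

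When $ab\ne0$, expand both factors:
\[
\sum_{y,z\ne0}\psi_q(y^{-1}+z^{-1})\sum_x\psi_q\big(x(ay+bz)\big)=q\sum_{\substack{y,z\ne0\\ ay+bz=0}}\psi_q(y^{-1}+z^{-1}).
\]
The constraint gives $z=-ay/b$, so $z^{-1}=-b/(ay)$ and
\[
y^{-1}+z^{-1}=y^{-1}\big(1-b/a\big).
\]
If $a=b$, this exponent is $0$ and there are $q-1$ terms, giving $q(q-1)=q^2-q$. If $a\ne b$, the map $y\mapsto y^{-1}(1-b/a)$ is a bijection of $\Fq^\times$, so the sum is $\sum_{w\ne0}\psi_q(w)=-1$, giving $-q$.

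There is no real obstacle here. The only care needed is bookkeeping: each inner character sum over $x$ collapses to $q$ times an indicator, and the value $K(0)=-1$ handles the degenerate cases.
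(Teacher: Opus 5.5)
Your proposal is correct and follows essentially the same route as the paper: expand the Kloosterman sums, sum over $x$ first by orthogonality, and reduce the case $ab\ne0$ to $q\sum_{y\ne0}\psi_q\big((1-b/a)y^{-1}\big)$, with $K(0)=-1$ handling the degenerate cases. The only cosmetic difference is that the paper gets the first moment from $\sum_{y\in\Fq}K(y)=0$ after substituting $y=ax$, while you apply orthogonality directly.
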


\begin{proof}
We have $\sum_{y\in\Fq}K(y)=\sum_{x\in\Fq^\times}\psi_q(x^{-1})\sum_{y\in\Fq}\psi_q(xy)=0$, which gives the first assertion, as $K(0)=-1$. If $a,b\ne0$, expanding and summing over $x$ first gives
\[
\sum_xK(ax)K(bx)=q\sum_{\substack{u,v\in\Fq^\times\\au+bv=0}}\psi_q(u^{-1}+v^{-1})=q\sum_{u\in\Fq^\times}\psi_q\big((1-b/a)u^{-1}\big),
\]
which is $q(q-1)$ if $a=b$ and $-q$ otherwise. If $a=0\ne b$, the sum is $-\sum_xK(bx)=0$, and if $a=b=0$ it is $q$.
\end{proof}

\begin{proposition}\label{prop:low-moments}
For every $n\ge2$,
\begin{enumerate}[(i)]
\item $\sum_{h\in\Fq^n}\Sigma_n(h)=(-1)^nq^{n-1}$;
\item $\sum_{h\in\Fq^n}|\Sigma_n(h)|^2=q^{n-2}\big((q-1)^{n+1}+1-(-1)^n(q-2)\big)$;
\item $\sum_{h\in\Fq^n}\Sigma_n(h)^2=q^{n-2}\big(1-(q-1)^n+2(-1)^n\big)$.
\end{enumerate}
In particular Theorem~\ref{thmD}(i) holds.
\end{proposition}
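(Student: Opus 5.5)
The plan is to reduce everything to the Fourier-type identity~\eqref{eq:intro-factor}, which holds for every $h\in\Fq^n$ by Lemma~\ref{lem:factor}, and then to exchange the order of summation so that the sum over $h$ factors coordinatewise. Each factor is then a one-variable sum evaluated by Lemma~\ref{lem:V2}. No cohomology is needed; the only inputs are orthogonality, the reality of $K$ and the oddness of $p$.

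\emph{(i).} Summing $q\,\Sigma_n(h)=\sum_t\psi_q(\lambda t)\prod_iK(th_i)$ over $h\in\Fq^n$ gives
\[
q\sum_h\Sigma_n(h)=\sum_t\psi_q(\lambda t)\prod_i\Big(\sum_{h_i}K(th_i)\Big).
\]
By Lemma~\ref{lem:V2}, each inner sum vanishes for $t\ne0$ and equals $-q$ for $t=0$. Only $t=0$ survives, which gives $q^{-1}(-q)^n=(-1)^nq^{n-1}$.

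\emph{(ii) and (iii).} Since $K$ is real, $\overline{\Sigma_n(h)}=q^{-1}\sum_{t'}\psi_q(-\lambda t')\prod_iK(t'h_i)$. Multiplying and summing over $h$ gives
\[
q^2\sum_h\Sigma_n(h)\,\overline{\Sigma_n(h)}=\sum_{t,t'}\psi_q\big(\lambda(t-t')\big)\prod_i\Big(\sum_{h_i}K(th_i)K(t'h_i)\Big),
\]
and the same formula with $t+t'$ in place of $t-t'$ for $\sum_h\Sigma_n(h)^2$. I then split the pairs $(t,t')$ according to the four cases of Lemma~\ref{lem:V2}.

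\begin{itemize}
\item Pairs where exactly one of $t,t'$ is zero contribute $0$.
\item The pair $t=t'=0$ contributes $q^n$.
\item Diagonal pairs $t=t'\ne0$ contribute $(q^2-q)^n$ times $\sum_{t\ne0}\psi_q(\lambda(t\mp t))$. This character sum is $q-1$ for the difference, and $\sum_{t\ne0}\psi_q(2\lambda t)=-1$ for the sum; the latter uses $2\lambda\ne0$, i.e.\ $p$ odd.
\item Off-diagonal pairs with $tt'\ne0$ contribute $(-q)^n$ times the sum of the character over such pairs. This equals the full sum over $(\Fq^\times)^2$, namely $\big|\sum_{t\ne0}\psi_q(\lambda t)\big|^2=1$, resp.\ $\big(\sum_{t\ne0}\psi_q(\lambda t)\big)^2=1$, minus the diagonal. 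That gives $2-q$, resp.\ $2$.
\end{itemize}

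Assembling, the second moment is
\[
q^{-2}\big[(q-1)^{n+1}q^n+(-q)^n(2-q)+q^n\big]=q^{n-2}\big((q-1)^{n+1}+1-(-1)^n(q-2)\big),
\]
and $\sum_h\Sigma_n(h)^2=q^{-2}\big[-(q^2-q)^n+2(-q)^n+q^n\big]$, which is the formula in~(iii).

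The only step needing care is the bookkeeping of the off-diagonal character sums, in particular subtracting the diagonal correctly and using $p$ odd in the squared case. Everything else is a direct application of Lemma~\ref{lem:V2}. Theorem~\ref{thmD}(i) is then (i) together with (ii).
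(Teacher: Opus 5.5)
Your proof is correct and follows the paper's argument essentially verbatim: you expand via \eqref{eq:intro-factor}, factor the sum over $h$ coordinatewise, and evaluate the one-variable sums $\sum_xK(tx)$ and $\sum_xK(tx)K(t'x)$ by Lemma~\ref{lem:V2}, splitting the pairs $(t,t')$ into the same four cases. Your off-diagonal character sums, $2-q$ and $2$ (the latter using $2\lambda\ne0$), and the final assembly match the paper's exactly.
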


\begin{proof}
Expand each factor $\Sigma_n(h)$ by~\eqref{eq:intro-factor} and sum over $h\in\Fq^n$; the sum over $h$ factors over the coordinates. For (i), this gives $q\sum_h\Sigma_n(h)=\sum_t\psi_q(\lambda t)\big(\sum_xK(tx)\big)^n=(-q)^n$, by Lemma~\ref{lem:V2}. For (ii) and (iii), put $V_2(t_1,t_2)=\sum_xK(t_1x)K(t_2x)$. Since $K$ is real,
\[
\begin{aligned}
q^2\sum_h|\Sigma_n(h)|^2&=\sum_{t_1,t_2}\psi_q\big(\lambda(t_1-t_2)\big)V_2(t_1,t_2)^n,\\
q^2\sum_h\Sigma_n(h)^2&=\sum_{t_1,t_2}\psi_q\big(\lambda(t_1+t_2)\big)V_2(t_1,t_2)^n.
\end{aligned}
\]
By Lemma~\ref{lem:V2}, the pair $t_1=t_2=0$ contributes $q^n$, pairs with exactly one zero coordinate contribute $0$, pairs with $t_1=t_2\ne0$ contribute $(q^2-q)^n$ times $\psi_q(0)=1$, respectively times $\psi_q(2\lambda t_1)$, and pairs with $t_1\ne t_2$ and $t_1t_2\ne0$ contribute $(-q)^n$ times the additive character. Since $\sum_{t\ne0}\psi_q(c\,t)=-1$ for $c\ne0$, and $2\lambda\ne0$,
\[
\sum_{\substack{t_1\ne t_2\\t_1t_2\ne0}}\psi_q\big(\lambda(t_1-t_2)\big)=1-(q-1)=2-q,\qquad\sum_{\substack{t_1\ne t_2\\t_1t_2\ne0}}\psi_q\big(\lambda(t_1+t_2)\big)=1-(-1)=2.
\]
Hence $q^2\sum_h|\Sigma_n(h)|^2=q^n+(q-1)(q^2-q)^n+(2-q)(-q)^n$ and $q^2\sum_h\Sigma_n(h)^2=q^n-(q^2-q)^n+2(-q)^n$, which are (ii) and (iii).
\end{proof}

The substitution $u\mapsto-u$ transforms the condition defining $\Sigma_n(h)$ into the one defining $\Sigma_n(-h)$ and conjugates the summand, so $\Sigma_n(-h)=\overline{\Sigma_n(h)}$. Thus (iii) is the correlation of $\Sigma_n$ with its reflection $h\mapsto-h$; it is used in Corollary~\ref{cor:M2}.

\subsection{A four-parameter identity}
For $t=(t_1,t_2,t_3,t_4)\in\Fq^4$ put
\[
V(t)=\sum_{x\in\Fq}\prod_{j=1}^4K(t_jx),\qquad\Lambda(t)=\lambda(t_1-t_2+t_3-t_4).
\]

\begin{proposition}\label{prop:master}
For every $n\ge2$,
\[
q^4\sum_{h\in\Fq^n}|\Sigma_n(h)|^4=\sum_{t\in\Fq^4}\psi_q(\Lambda(t))\,V(t)^n.
\]
\end{proposition}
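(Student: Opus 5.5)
The plan is to proceed exactly as in the proof of Proposition~\ref{prop:low-moments}: expand each of the four factors by the identity~\eqref{eq:intro-factor}, then sum over $h$ coordinate by coordinate. Since $K$ is real (shown at the start of this section), complex conjugation acts on~\eqref{eq:intro-factor} only through the additive character:
\[
q\,\overline{\Sigma_n(h)}=\sum_{t\in\Fq}\psi_q(-\lambda t)\prod_{i=1}^nK(th_i).
\]
Writing $|\Sigma_n(h)|^4=\Sigma_n(h)\overline{\Sigma_n(h)}\Sigma_n(h)\overline{\Sigma_n(h)}$ and using four independent variables $t_1,\dots,t_4$ gives
\[
q^4|\Sigma_n(h)|^4=\sum_{t\in\Fq^4}\psi_q\big(\lambda(t_1-t_2+t_3-t_4)\big)\prod_{i=1}^n\prod_{j=1}^4K(t_jh_i).
\]
Here the signs $+,-,+,-$ produce exactly the phase $\Lambda(t)$.

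Next, sum over $h\in\Fq^n$. Lemma~\ref{lem:factor} guarantees that~\eqref{eq:intro-factor} holds for every $h\in\Fq^n$, including vectors with zero coordinates. The outer sum over $t$ is finite and independent of $h$, so the two sums can be exchanged. For fixed $t$, the product over $i$ factors, and $\sum_{h}\prod_i\prod_jK(t_jh_i)=\prod_{i=1}^n\sum_{x\in\Fq}\prod_{j=1}^4K(t_jx)=V(t)^n$, which yields the identity.

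No step here is a real obstacle. The only points needing care are the sign pattern in $\Lambda$, which comes from the reality of $K$ when conjugating, and the fact that the expansion must be valid for all $h\in\Fq^n$ and not only for $h$ in the torus.
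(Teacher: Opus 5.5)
Your proposal is correct and follows the paper's proof essentially verbatim: you expand the four factors by \eqref{eq:intro-factor} (valid for all $h\in\Fq^n$ by Lemma~\ref{lem:factor}), use the reality of $K$ so that conjugation only flips the sign in $\psi_q(\lambda t_j)$ and produces $\Lambda(t)$, and then factor the sum over $h$ coordinatewise to obtain $V(t)^n$.
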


\begin{proof}
Write $|\Sigma_n(h)|^4=\Sigma_n(h)\overline{\Sigma_n(h)}\Sigma_n(h)\overline{\Sigma_n(h)}$ and expand each factor by \eqref{eq:intro-factor}, with summation variables $t_1,\dots,t_4$. Since $K$ is real, conjugation only replaces $\psi_q(\lambda t_j)$ by $\psi_q(-\lambda t_j)$, so the additive phases combine to $\Lambda(t)$. The remaining product $\prod_{i,j}K(t_jh_i)$ factors over $i$, and summing over $h\in\Fq^n$ gives $V(t)^n$.
\end{proof}

We partition $\Fq^4$ as follows. Let $\mathcal T_0=\{t:t_1t_2t_3t_4=0\}$. Let $\mathcal D_1=\{(a,a,b,b)\}$ and $\mathcal D_2=\{(a,b,b,a)\}$, with $a,b\in\Fq^\times$, and $\mathcal T_D=\mathcal D_1\cup\mathcal D_2$. Let $\mathcal T_P=\{(a,b,a,b):a,b\in\Fq^\times,\ a\ne b\}$, and let $\mathcal T_G$ be the complement of $\mathcal T_0\cup\mathcal T_D\cup\mathcal T_P$. The two pairings defining $\mathcal T_D$ are those compatible with the signs in $\Lambda$, which vanishes identically on $\mathcal T_D$.

\subsection{The degenerate strata}

\begin{lemma}\label{lem:diag}
Let $a,b\in\Fq^\times$, $\rho=a/b$, and let $\chi_2$ be the quadratic character of $\Fq^\times$, extended by $\chi_2(0)=0$. Then
\[
\sum_{x\in\Fq}K(ax)^2K(bx)^2=\begin{cases}2q^3-3q^2-3q,&\rho=1,\\ q^3-(2+\chi_2(\rho))q^2-3q,&\rho\ne1.\end{cases}
\]
\end{lemma}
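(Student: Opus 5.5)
The plan is to reduce everything to an exact character-sum evaluation by the same orthogonality device used in Lemma~\ref{lem:V2}.

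\emph{Step 1: orthogonality in $x$.} Expand each of the four Kloosterman factors, with variables $u_1,u_2$ for the two copies of $K(ax)$ and $u_3,u_4$ for the two copies of $K(bx)$. Summing over all $x\in\Fq$, including $x=0$ (whose term $K(0)^4=1$ is absorbed automatically), orthogonality gives
\[
\sum_{x\in\Fq}K(ax)^2K(bx)^2=q\,S(\rho),\qquad S(\rho)=\sum_{\substack{u\in(\Fq^\times)^4\\ \rho(u_1+u_2)+u_3+u_4=0}}\psi_q\big(u_1^{-1}+u_2^{-1}+u_3^{-1}+u_4^{-1}\big).
\]
Here I have divided the linear condition by $b$.

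\emph{Step 2: reduction to a sum over $s$.} Put
\[
A(s)=\sum_{\substack{u_1,u_2\in\Fq^\times\\ u_1+u_2=s}}\psi_q(u_1^{-1}+u_2^{-1}).
\]
Grouping the terms of $S(\rho)$ by $s=u_1+u_2$ gives $S(\rho)=\sum_{s\in\Fq}A(s)A(-\rho s)$. The term $s=0$ contributes $A(0)^2=(q-1)^2$.

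For $s\ne0$ one has $u_1^{-1}+u_2^{-1}=s/w$ with $w=u_1u_2$. The number of pairs $(u_1,u_2)$ with $u_1+u_2=s$ and $u_1u_2=w$ is $1+\chi_2(s^2-4w)$. Hence
\[
A(s)=-1+\sum_{w\ne0}\chi_2(s^2-4w)\,\psi_q(s/w),
\]
a Salié-type sum. The case $\rho=1$ is then the second moment $\sum_{s\ne0}A(s)A(-s)$, and the case $\rho\ne1$ is a cross-correlation. The claimed formulas correspond to
\[
\sum_{s\ne0}A(s)A(-\rho s)=\begin{cases}q^2-q-4,&\rho=1,\\ -(\chi_2(\rho)+4)q-2,&\rho\ne1.\end{cases}
\]
This gives a useful target to check against.

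\emph{Step 3: evaluating the correlation.} Expand the product $A(s)A(-\rho s)$. Then interchange the order of summation and sum over $s\in\Fq^\times$ first, with $w$ and $w'$ fixed, after substituting $y=1/w$ and $y'=1/w'$.
\begin{itemize}
\item The sum over $s$ of the additive character $\psi_q(s(y-\rho y'))$ collapses by orthogonality to the diagonal $y=\rho y'$, up to a correction term.
\item The remaining quadratic-character factors combine into $\chi_2$ of a polynomial in $s$ of degree at most two. The corresponding sums $\sum_s\chi_2(\alpha s^2+\beta)$ are evaluated exactly by the Jacobsthal evaluation: they equal $-\chi_2(\alpha)$ or $(q-1)\chi_2(\alpha)$, according as $\beta\ne0$ or $\beta=0$.
\end{itemize}

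The lone term $\chi_2(\rho)$ should appear through the leading coefficient $\rho^2$ combined with the ratio $y=\rho y'$. The case split $\rho=1$ versus $\rho\ne1$ arises from whether two lines in the $(y,y')$-plane coincide.

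\emph{Main obstacle.} I expect the bookkeeping in Step~3 to be the main difficulty: tracking the boundary terms ($w$ or $w'$ equal to $0$, and the degenerate values where the discriminant vanishes) and checking that the constants come out to exactly $-3q$ and $-3q^2$. As a safeguard, I would verify the result against $\sum_{\rho}$ of the formula, using the second-moment computation of Lemma~\ref{lem:V2}: summing over $a$ and $b$ should reproduce $\sum_x\big(\sum_aK(ax)^2\big)^2$. I would also check small $q$ numerically.
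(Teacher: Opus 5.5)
Your Steps 1 and 2 are correct and are exactly the paper's setup: your $A(s)$ is the paper's $\tau(s)$, $A(0)=q-1$, and the count $1+\chi_2(s^2-4w)$ is right. The gap is in Step 3, which is where all the work lies. After the substitution $y=1/w$, $y'=1/w'$, the cross term of $A(s)A(-\rho s)$ is
\[
\chi_2\big(s^2-4/y\big)\,\chi_2\big(\rho^2s^2-4/y'\big)\,\psi_q\big(s(y-\rho y')\big).
\]
So with $(y,y')$ fixed, the quadratic-character factors still depend on $s$. The sum over $s$ is therefore a mixed character sum, and orthogonality does not collapse it to the diagonal $y=\rho y'$. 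You cannot use the variable $s$ for orthogonality and then sum the remaining $\chi_2$-factors over $s$ again. Moreover, even on the diagonal the leftover factor is $\chi_2$ of the quartic $(s^2-4w)(\rho^2s^2-4w')$ in $s$, not of a polynomial of degree at most two. So the sums $\sum_s\chi_2(\alpha s^2+\beta)$ you plan to use are not the ones that occur.

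The missing idea is to use homogeneity to remove $s$ from the argument of $\chi_2$ before summing over $s$. Substitute $w=s^2m$, or equivalently $u_1=sy$, $u_2=s(1-y)$, as the paper does. Then $\chi_2(s^2-4w)=\chi_2(1-4m)$ and $\psi_q(s/w)=\psi_q(1/(sm))$, and with $\mu=m^{-1}$
\[
A(s)=-1+R(s),\qquad R(s)=\sum_{\mu\ne0}\chi_2\big(\mu(\mu-4)\big)\,\psi_q(\mu/s),
\]
where $s$ now occurs only in the additive character. Summing over $s\ne0$ first is then legitimate. Jacobsthal's evaluation with roots $0$ and $4$ gives $\sum_{s\ne0}R(s)=\sum_{s\ne0}R(-\rho s)=1$, and
\[
\sum_{s\ne0}R(s)R(-\rho s)=q\,\chi_2(\rho)\sum_{\mu\ne0}\chi_2\big((\mu-4)(\rho\mu-4)\big)-1 .
\]
The inner sum is $-1-\chi_2(\rho)$ for $\rho\ne1$, by Jacobsthal in $\mu$ with roots $4$ and $4/\rho$, and $q-2$ for $\rho=1$. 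The case split is exactly whether these two roots coincide.

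Your target for $\rho\ne1$ is also miscomputed. Subtracting $(q-1)^2$ from $q^2-(2+\chi_2(\rho))q-3$ gives $\sum_{s\ne0}A(s)A(-\rho s)=-\chi_2(\rho)q-4$, not $-(\chi_2(\rho)+4)q-2$. Checked against your value, a correct computation would appear to fail.
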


\begin{proof}
For $s\in\Fq$ let $\tau(s)$ be the sum of $\psi_q(x_1^{-1}+x_2^{-1})$ over $x_1,x_2\in\Fq^\times$ with $x_1+x_2=s$. Expanding the squares gives $K(y)^2=\sum_s\tau(s)\psi_q(ys)$, and orthogonality in $x$ yields
\[
\sum_xK(ax)^2K(bx)^2=q\sum_{s\in\Fq}\tau(s)\tau(-\rho s).
\]
Clearly $\tau(0)=q-1$. For $s\ne0$ write $x_1=sy$ and $x_2=s(1-y)$ with $y\notin\{0,1\}$; then $x_1^{-1}+x_2^{-1}=\big(sy(1-y)\big)^{-1}$. For $m\in\Fq^\times$ the equation $y(1-y)=m$ has $1+\chi_2(1-4m)$ solutions. Hence, writing $\mu=m^{-1}$ and noting $\chi_2(1-4m)=\chi_2(\mu(\mu-4))$,
\[
\tau(s)=\sum_{m\ne0}\big(1+\chi_2(1-4m)\big)\psi_q\Big(\frac1{sm}\Big)=-1+R(s),\qquad R(s)=\sum_{\mu\ne0}\chi_2\big(\mu(\mu-4)\big)\psi_q(\mu/s).
\]
We use Jacobsthal's evaluation: $\sum_{\mu\in\Fq}\chi_2\big((\mu-\alpha)(\mu-\beta)\big)=-1$ for distinct $\alpha,\beta\in\Fq$. Indeed, for $\mu\ne\beta$ the summand is $\chi_2(z)$ with $z=(\mu-\alpha)/(\mu-\beta)$, which runs over $\Fq\smallsetminus\{1\}$. Since $p$ is odd, $0\ne4$, so $\sum_{\mu\ne0}\chi_2(\mu(\mu-4))=-1$, and therefore $\sum_{s\ne0}R(s)=\sum_{s\ne0}R(-\rho s)=1$. Moreover
\[
\begin{aligned}
\sum_{s\ne0}R(s)R(-\rho s)&=\sum_{\mu,\mu'\ne0}\chi_2\big(\mu(\mu-4)\big)\chi_2\big(\mu'(\mu'-4)\big)\big(q\,\mathbf 1_{\mu'=\rho\mu}-1\big)\\
&=q\,\chi_2(\rho)\sum_{\mu\ne0}\chi_2\big((\mu-4)(\rho\mu-4)\big)-1.
\end{aligned}
\]
If $\rho\ne1$, Jacobsthal's evaluation with the roots $4$ and $4/\rho$ gives $\sum_{\mu\ne0}\chi_2\big((\mu-4)(\rho\mu-4)\big)=-\chi_2(\rho)-1$, so the last expression equals $-q-q\chi_2(\rho)-1$. If $\rho=1$ the sum is $q-2$, and the expression equals $q^2-2q-1$. Finally
\[
\sum_s\tau(s)\tau(-\rho s)=(q-1)^2+(q-1)-2+\sum_{s\ne0}R(s)R(-\rho s),
\]
which is $q^2-2q-3-q\chi_2(\rho)$ if $\rho\ne1$ and $2q^2-3q-3$ if $\rho=1$. Multiplying by $q$ gives the lemma.
\end{proof}

\begin{proposition}\label{prop:diag-main}
For $n\ge2$, $\displaystyle\sum_{t\in\mathcal T_D}\psi_q(\Lambda(t))V(t)^n=2q^{3n+2}+O(q^{3n+1})$.
\end{proposition}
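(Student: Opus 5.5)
On $\mathcal T_D$ the phase $\Lambda$ vanishes identically, so the sum to be estimated is simply $\sum_{t\in\mathcal T_D}V(t)^n$. We first describe $\mathcal T_D$. On $\mathcal D_1$, $t=(a,a,b,b)$, and $V(t)=\sum_xK(ax)^2K(bx)^2$. On $\mathcal D_2$, $t=(a,b,b,a)$, and $V(t)$ is again $\sum_xK(ax)^2K(bx)^2$. So in both cases Lemma~\ref{lem:diag} gives $V(t)$ exactly, as a function of $\rho=a/b$ alone.

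The two sets $\mathcal D_1$ and $\mathcal D_2$ intersect exactly in the diagonal points $(a,a,a,a)$, $a\in\Fq^\times$. There are $q-1$ of them. By inclusion–exclusion,
\[
\sum_{\mathcal T_D}V^n=2\sum_{a,b\in\Fq^\times}V_{a,b}^n-\sum_{a\in\Fq^\times}V_{a,a}^n,
\]
where $V_{a,b}=\sum_xK(ax)^2K(bx)^2$.

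We now use Lemma~\ref{lem:diag} to split the double sum by the value of $\rho$.

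\emph{Pairs with $\rho\ne1$.} There are $(q-1)(q-2)$ such pairs. For each, $V_{a,b}=q^3-(2+\chi_2(\rho))q^2-3q=q^3\big(1+O(q^{-1})\big)$, with an implied constant that is absolute. Raising to the $n$-th power gives $V_{a,b}^n=q^{3n}+O_n(q^{3n-1})$. Summing, these pairs contribute $(q-1)(q-2)\big(q^{3n}+O(q^{3n-1})\big)=q^{3n+2}+O(q^{3n+1})$ to $\sum_{a,b}V_{a,b}^n$.

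\emph{Pairs with $\rho=1$.} There are $q-1$ such pairs, and for each $V_{a,a}=2q^3-3q^2-3q=O(q^3)$. These pairs contribute $O(q\cdot q^{3n})=O(q^{3n+1})$. The subtracted diagonal term $\sum_aV_{a,a}^n$ is bounded by the same estimate, $O(q^{3n+1})$.

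Combining the three contributions gives $\sum_{\mathcal T_D}V^n=2q^{3n+2}+O(q^{3n+1})$, which is the statement.

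There is no real obstacle in this argument. The only points needing care are the sign convention that makes $\Lambda\equiv0$ on both $\mathcal D_1$ and $\mathcal D_2$, and the bookkeeping of the overlap $\mathcal D_1\cap\mathcal D_2$, so that the diagonal is not counted twice. The expansion $(q^3+O(q^2))^n=q^{3n}+O_n(q^{3n-1})$ is uniform in $\rho$ because $|\chi_2(\rho)|\le1$.
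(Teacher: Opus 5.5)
Your proof is correct and follows essentially the same route as the paper: $\Lambda\equiv0$ on $\mathcal T_D$, Lemma~\ref{lem:diag} gives $V=q^3+O(q^2)$ off the diagonal and $O(q^3)$ on it, and inclusion--exclusion over $\mathcal D_1\cap\mathcal D_2$ yields $2q^{3n+2}+O(q^{3n+1})$. Writing the inclusion--exclusion out explicitly as $2\sum_{a,b}V_{a,b}^n-\sum_aV_{a,a}^n$ is just a more detailed version of the paper's bookkeeping.
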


\begin{proof}
On $\mathcal T_D$ we have $\Lambda=0$ and $V(a,a,b,b)=V(a,b,b,a)=\sum_xK(ax)^2K(bx)^2$. By Lemma~\ref{lem:diag} this is $q^3+O(q^2)$ for the $(q-1)(q-2)$ pairs with $a\ne b$, and $2q^3+O(q^2)$ for the $q-1$ pairs with $a=b$. Hence each of $\mathcal D_1$ and $\mathcal D_2$ contributes $q^{3n+2}+O(q^{3n+1})$, while their intersection, the diagonal, contributes $O(q^{3n+1})$. Inclusion--exclusion gives the claim.
\end{proof}

\begin{proposition}\label{prop:pair}
For $n\ge2$, $\displaystyle\sum_{t\in\mathcal T_P}\psi_q(\Lambda(t))V(t)^n=O(q^{3n+1})$.
\end{proposition}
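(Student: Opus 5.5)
On $\mathcal T_P$ we have $t=(a,b,a,b)$ with $a,b\in\Fq^\times$ and $a\ne b$. Hence $\Lambda(t)=2\lambda(a-b)$ and
\[
V(t)=\sum_{x\in\Fq}K(ax)^2K(bx)^2 .
\]
This is exactly the quantity computed in Lemma~\ref{lem:diag} with $\rho=a/b\ne1$. So the plan is to insert the exact formula
\[
V(a,b,a,b)=q^3-(2+\chi_2(a/b))q^2-3q
\]
and then exploit the oscillation of $\psi_q(2\lambda(a-b))$ in the main term.

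First write $V=q^3+E(\rho)$ with $E(\rho)=-(2+\chi_2(\rho))q^2-3q=O(q^2)$. Expand $V^n$ binomially as $q^{3n}+nq^{3(n-1)}E(\rho)+O(q^{3n-2})$. The remainder summed over the $O(q^2)$ pairs contributes $O(q^{3n})$, which is harmless.

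The leading term contributes $q^{3n}\sum_{a\ne b,\ ab\ne0}\psi_q(2\lambda(a-b))$. As in the proof of Proposition~\ref{prop:low-moments}, this character sum equals $2-q$, since $2\lambda\ne0$ because $p$ is odd. This gives $O(q^{3n+1})$.

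The linear term is $nq^{3n-3}\sum\psi_q(2\lambda(a-b))E(a/b)$. Since $E=O(q^2)$ and there are at most $q^2$ pairs, it is trivially $O(q^{3n+1})$. More precisely, parametrise by $b$ and $\rho$: the inner sum over $b\ne0$ of $\psi_q(2\lambda b(\rho-1))$ is $-1$ for $\rho\ne1$, so this term is even $O(q^{3n})$. Either way the bound holds.

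There is no real obstacle here. The only points needing care are to record that $2\lambda\ne0$, so that the character sum over the pairs is $2-q$ rather than of order $q^2$, and to check that every error term is dominated by $q^{3n+1}$.
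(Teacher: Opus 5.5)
Your proof is correct and follows the paper's own argument: insert Lemma~\ref{lem:diag} to get $V(a,b,a,b)=q^3+O(q^2)$, evaluate the main term through $\sum_{a\ne b,\,ab\ne0}\psi_q(2\lambda(a-b))=2-q$ (using $2\lambda\ne0$), and bound the remaining terms trivially over the $O(q^2)$ points. Your further split of the error into linear and higher-order terms, and the optional refinement of the linear term to $O(q^{3n})$, are not needed, since the paper simply uses $V(t)^n=q^{3n}+O(q^{3n-1})$, but they are harmless.
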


\begin{proof}
For $t=(a,b,a,b)$ with $a\ne b$, Lemma~\ref{lem:diag} gives $V(t)=q^3+O(q^2)$, hence $V(t)^n=q^{3n}+O(q^{3n-1})$, while $\Lambda(t)=2\lambda(a-b)$. Since $2\lambda\ne0$,
\[
\sum_{\substack{a,b\ne0\\a\ne b}}\psi_q\big(2\lambda(a-b)\big)=\Big|\sum_{a\ne0}\psi_q(2\lambda a)\Big|^2-(q-1)=2-q.
\]
So the main term contributes $O(q^{3n+1})$. The error term, summed trivially over $O(q^2)$ points, contributes $O(q^{3n+1})$ as well.
\end{proof}

\begin{proposition}\label{prop:zero}
For $n\ge2$, $\displaystyle\sum_{t\in\mathcal T_0}\psi_q(\Lambda(t))V(t)^n=O\big(q^{(5n+6)/2}\big)$.
\end{proposition}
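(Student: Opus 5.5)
The estimate should follow from the trivial Weil bound alone, organized by the number of vanishing coordinates of $t$. The phase $\psi_q(\Lambda(t))$ has absolute value $1$ and plays no role, so I will bound $\sum_{t\in\mathcal T_0}|V(t)|^n$. The key fact is that $K(0)=-1$. Hence every coordinate $t_j=0$ contributes the factor $-1$ to each term of $V(t)=\sum_x\prod_jK(t_jx)$, and $V(t)$ reduces to a sum of products of fewer Kloosterman sums. I would split $\mathcal T_0$ according to the number $z\in\{1,2,3,4\}$ of zero coordinates.

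For $z=4$, $V(0)=q$, which contributes $q^n$. For $z=3$, $V(t)=-\sum_xK(ax)$ with $a\ne0$, and this vanishes by Lemma~\ref{lem:V2}, so there is no contribution. For $z=2$, $V(t)=\sum_xK(ax)K(bx)$ with $ab\ne0$. By Lemma~\ref{lem:V2} this equals $q^2-q$ or $-q$, so $|V(t)|\le q^2$. There are $O(q^2)$ such $t$, and they contribute $O(q^{2n+2})$.

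The main case is $z=1$, say $t=(0,a,b,c)$ up to permutation, with $abc\ne0$. Then
\[
V(t)=-\sum_{x\in\Fq}K(ax)K(bx)K(cx)=1-\sum_{x\ne0}K(ax)K(bx)K(cx).
\]
By Weil's bound $|K(y)|\le2q^{1/2}$ for $y\ne0$, this gives $|V(t)|\le1+8(q-1)q^{3/2}\ll q^{5/2}$. There are $O(q^3)$ such $t$, so this stratum contributes $O(q^3\cdot q^{5n/2})=O(q^{(5n+6)/2})$.

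Summing the four strata gives $O\big(q^{(5n+6)/2}+q^{2n+2}+q^n\big)=O\big(q^{(5n+6)/2}\big)$, because $2n+2\le(5n+6)/2$ for all $n\ge0$. This is the claim. I see no real obstacle: the stratum with one zero coordinate is the bottleneck and exactly saturates the stated exponent, so no square-root cancellation in the sum over $x$ is needed. If a sharper bound were ever required, I would treat that stratum cohomologically. The sum over $x\ne0$ is the trace on $H^1_c$ of $[\times a]^*\Kl\otimes[\times b]^*\Kl\otimes[\times c]^*\Kl$, whose $H^2_c$ vanishes because the sheaf is totally wild at $\infty$. By \cite[Th.~3.3.1]{Deligne} this gives $|V(t)|\ll q^2$, and hence a stratum bound of $O(q^{2n+3})$.
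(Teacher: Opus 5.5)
Your argument is correct and is essentially the paper's proof: both stratify $\mathcal T_0$ by the number of nonzero coordinates and bound $V(t)$ by Weil's bound. The paper uses the uniform estimate $|V(t)|\le 2^{a+1}q^{1+a/2}$ on every stratum, whereas you use the exact values of Lemma~\ref{lem:V2} when at most two coordinates are nonzero; in both, the stratum with three nonzero coordinates dominates with $O(q^{3+5n/2})$.

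Your closing aside, which the proof does not need, has a wrong justification. The product $[\times a]^*\Kl\otimes[\times b]^*\Kl\otimes[\times c]^*\Kl$ is not always totally wild at $\infty$: it acquires a tame part when $\pm\sqrt a\pm\sqrt b\pm\sqrt c=0$, for example at $t=(0,1,1,4)$. So the vanishing of $H^2_c$ there must come from another reason, such as the absence of geometric invariants.
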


\begin{proof}
Suppose exactly $a\le3$ coordinates of $t$ are nonzero. The term $x=0$ of $V(t)$ has absolute value $1$, and Weil's bound gives $|V(t)|\le1+(q-1)2^aq^{a/2}\le2^{a+1}q^{1+a/2}$. There are at most $\binom4aq^a$ such $t$. The resulting bound $O\big(q^{a+n(1+a/2)}\big)$ is largest for $a=3$, where it equals $O\big(q^{(5n+6)/2}\big)$.
\end{proof}

\subsection{The Cayley cubic and the generic stratum}
For $t\in\Fq^4$ with all $t_i\ne0$ let $H_t\subset\mathbb P^3$ be the plane $\sum_it_iu_i=0$. Let
\[
\mathcal Q=\{e_3(u)=0\}\subset\mathbb P^3,\qquad e_3(u)=\sum_{i<j<k}u_iu_ju_k,
\]
be the Cayley cubic surface, and put $\Gamma_t=H_t\cap\mathcal Q$ and $a_t=q+1-\#\Gamma_t(\Fq)$.

\begin{lemma}\label{lem:plane}
If all $t_i\ne0$, then $V(t)=-q^2\big(a_t+2+3q^{-1}\big)$.
\end{lemma}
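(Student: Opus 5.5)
We would prove the formula by converting $V(t)$ into a point count on the plane cubic $\Gamma_t$. The approach has three steps: expand the Kloosterman sums, eliminate the projective scaling, then count points away from the coordinate lines. Every step is elementary and uses only orthogonality of additive characters.

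\emph{Step 1: orthogonality in $x$.} Expand each $K(t_jx)=\sum_{u_j\in\Fq^\times}\psi_q(u_j^{-1}+t_jxu_j)$ and sum over $x\in\Fq$ first. This gives
\[
V(t)=q\sum_{\substack{u\in(\Fq^\times)^4\\ \langle t,u\rangle=0}}\psi_q\big(u_1^{-1}+u_2^{-1}+u_3^{-1}+u_4^{-1}\big).
\]

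\emph{Step 2: remove the scaling.} The constraint is homogeneous, so write $u=sw$ with $s\in\Fq^\times$ and $w$ a representative of a point of $H_t(\Fq)$ with all coordinates nonzero. The phase becomes $s^{-1}e_3(w)/e_4(w)$. The inner sum over $s$ equals $q-1$ if $e_3(w)=0$ and $-1$ otherwise. Let $N$ be the number of points of $H_t(\Fq)$ with all coordinates nonzero, and $N_0$ the number of those that lie on $\mathcal Q$. Then
\[
V(t)=q\,(qN_0-N).
\]

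\emph{Step 3: the counts.} Since all $t_i\ne0$, the four traces $\{w_i=0\}\cap H_t$ are distinct lines in $H_t\cong\mathbb P^2$.
\begin{itemize}
\item No three of these lines are concurrent. Indeed, $w_i=w_j=w_k=0$ together with $\langle t,w\rangle=0$ forces $t_lw_l=0$, hence $w=0$.
\item So the union of the four lines has $4(q+1)-6=4q-2$ rational points, and $N=q^2+q+1-(4q-2)=q^2-3q+3$.
\item On the line $w_i=0$ the cubic $e_3$ restricts to $w_jw_kw_l$. Hence $\Gamma_t$ meets the coordinate lines exactly in the six pairwise intersection points $\{w_i=w_j=0\}\cap H_t$, which are distinct by the non-concurrency above.
\item Therefore $N_0=\#\Gamma_t(\Fq)-6=q-5-a_t$. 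This holds whether or not $\Gamma_t$ is irreducible, because only the point count enters.
\end{itemize}
Substituting gives
\[
V(t)=q\big(q(q-5-a_t)-q^2+3q-3\big)=-q^2\big(a_t+2+3q^{-1}\big).
\]

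There is no real obstacle. The only point needing care is Step 3: checking that the six points where $\Gamma_t$ meets the coordinate lines are distinct and that no other points of $\Gamma_t$ lie on those lines. Both facts rest on the nonvanishing of every $t_i$.
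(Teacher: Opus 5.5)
Your proposal is correct and follows the paper's proof essentially step for step. You use orthogonality in $x$, then average over the scaling action of $\Fq^\times$ to get $V(t)=q(qN_0-N)$, and then the same inclusion--exclusion counts $N=q^2-3q+3$ and $N_0=\#\Gamma_t(\Fq)-6$, which rest on the six points of $H_t$ with exactly two zero coordinates.
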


\begin{proof}
Orthogonality in $x$ gives $V(t)=q\,\Sigma_0(t)$, where $\Sigma_0(t)$ is the sum of $\psi_q(u_1^{-1}+\cdots+u_4^{-1})$ over the $u\in(\Fq^\times)^4$ with $\sum_it_iu_i=0$. This set is stable under scalar multiplication, and summing $\psi_q(\nu^{-1}f(u))$ over $\nu\in\Fq^\times$ gives $q-1$ if $f(u)=0$ and $-1$ otherwise. Hence $\Sigma_0(t)=qN_0-N$, where $N$ is the number of points of $H_t(\Fq)$ with no zero coordinate and $N_0$ is the number of those with $f(u)=0$. The four lines $H_t\cap\{u_i=0\}$ are distinct. They meet pairwise in the six points of $H_t$ with exactly two zero coordinates, and no three of them are concurrent, since $H_t$ contains no coordinate point. Inclusion--exclusion gives $N=(q^2+q+1)-4(q+1)+6=q^2-3q+3$. For $u$ with no zero coordinate, $f(u)=0$ if and only if $e_3(u)=0$. The restriction of $e_3$ to $\{u_i=0\}$ is the product of the other three coordinates. So a point of $\Gamma_t$ with a zero coordinate has exactly two zero coordinates, and each of the six such points of $H_t$ lies on $\mathcal Q$. Hence $N_0=\#\Gamma_t(\Fq)-6=q-5-a_t$, so $\Sigma_0(t)=-qa_t-2q-3$, and the formula follows.
\end{proof}

\begin{proposition}\label{prop:lines}
The lines of $\mathbb P^3$ over $\Fqbar$ contained in $\mathcal Q$ are exactly the six coordinate edges $\{u_i=u_j=0\}$ and the three lines $\{u_i+u_j=u_k+u_l=0\}$, $\{i,j\}\sqcup\{k,l\}=\{1,2,3,4\}$. For $t\in\mathcal T_G$ the curve $\Gamma_t$ is a geometrically irreducible and reduced plane cubic.
\end{proposition}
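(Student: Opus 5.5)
The plan is to prove the two assertions in order. The second follows quickly from the first: a plane section $\Gamma_t=H_t\cap\mathcal Q$ that is reducible or non-reduced must contain a line of $\mathcal Q$. So the work is the classification of lines, followed by a short check of which planes $H_t$ contain which of the nine lines.

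\emph{The nine lines.} Each of the listed lines lies on $\mathcal Q$.
\begin{itemize}
\item On an edge $\{u_i=u_j=0\}$, every monomial of $e_3$ contains $u_i$ or $u_j$.
\item On $\{u_1+u_2=u_3+u_4=0\}$, with points $(1,-1,s,-s)$, one has $e_3=u_1u_2(u_3+u_4)+u_3u_4(u_1+u_2)=0$. The other two pairings are handled the same way.
\end{itemize}

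Conversely, let $L\subset\mathcal Q$ be a line over $\Fqbar$. If $L$ lies in a coordinate plane $\{u_l=0\}$, then $L$ lies in $\mathcal Q\cap\{u_l=0\}$. This intersection is the union of three edges, since $e_3$ restricts to the product of the other three coordinates, so $L$ is an edge. Otherwise $L$ meets each plane $\{u_l=0\}$ in exactly one point. That point lies on $\mathcal Q\cap\{u_l=0\}$, hence has at least two zero coordinates, so $L$ meets the union of the edges in at least one point for each $l$.

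I would then split into cases according to whether $L$ passes through a coordinate vertex.
\begin{itemize}
\item If $L$ passes through no vertex, each point where $L$ meets $\{u_l=0\}$ has exactly two zero coordinates. A single point with two zeros accounts for two values of $l$. So $L$ joins two points with complementary zero sets, say $(0,0,\alpha,\beta)$ and $(\gamma,\delta,0,0)$. Expanding $e_3$ along $L$, the coefficients of $s^2$ and $s$ are $\gamma\delta(\alpha+\beta)$ and $\alpha\beta(\gamma+\delta)$. Their vanishing forces $\beta=-\alpha$ and $\delta=-\gamma$, which gives one of the three special lines.
\item If $L$ passes through a vertex, say $e_1$, write $L=\{e_1+sP\}$ with $P$ a point with $P_1=0$. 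The coefficient conditions force $P$ to have two zero coordinates, so $L$ is an edge and lies in a coordinate plane, a contradiction.
\end{itemize}
This case bookkeeping is where I expect to spend the most care, but it is elementary.

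\emph{Plane sections.} Let $t\in\mathcal T_G$, so that all $t_i\ne0$. First, $H_t\not\subset\mathcal Q$: a plane contained in $\mathcal Q$ would contain infinitely many lines of $\mathcal Q$, whereas there are only nine. Hence $\Gamma_t$ is a plane cubic curve in $H_t\cong\mathbb P^2$. If it were reducible or non-reduced over $\Fqbar$, it would contain a line, which must be one of the nine.
\begin{itemize}
\item An edge $\{u_k=u_l=0\}$ lies in $H_t$ only if $t_iu_i+t_ju_j=0$ identically on it, that is, $t_i=t_j=0$, which is excluded.
\item The line $(1,-1,s,-s)$ lies in $H_t$ if and only if $t_1=t_2$ and $t_3=t_4$, that is, $t\in\mathcal D_1$.
\item The pairing $u_1+u_4=u_2+u_3=0$ gives $t_1=t_4$ and $t_2=t_3$, that is, $t\in\mathcal D_2$.
\item The pairing $u_1+u_3=u_2+u_4=0$ gives $t_1=t_3$ and $t_2=t_4$. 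This places $t$ in $\mathcal T_P$, or in the diagonal, which is contained in $\mathcal T_D$.
\end{itemize}
All of these are excluded from $\mathcal T_G$. Hence $\Gamma_t$ contains no line, and is therefore geometrically irreducible and reduced.
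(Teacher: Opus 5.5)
Your proposal is correct, and its skeleton is the paper's: verify the nine lines, dispose of lines in a coordinate plane or through a vertex via $e_3(1,sa,sb,sc)=s^2(ab+ac+bc)+s^3abc$, then check which $H_t$ contain which of the nine lines. The one genuinely different step is the case of a line $L$ through no vertex and in no coordinate plane.

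The paper restricts the coordinates to linear forms $\theta_i=\alpha_i(z-z_i)$ on $L$ and writes $e_3|_L=\theta_1\theta_2\theta_3\theta_4\sum_i\theta_i^{-1}$. It then observes that an isolated zero $z_i$ would be a pole of $\sum_i\alpha_i^{-1}(z-z_i)^{-1}$ with nonzero residue, so the zeros must coincide in pairs.

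You instead use that $L\cap\{u_l=0\}$ lies on $\mathcal Q\cap\{u_l=0\}$, the union of three edges, so each of these intersection points has exactly two zero coordinates. Your bookkeeping then shows that $L$ joins points $(0:0:\alpha:\beta)$ and $(\gamma:\delta:0:0)$ with $\alpha\beta\gamma\delta\ne0$. This is the configuration $z_1=z_2\ne z_3=z_4$ that the paper reaches, and both arguments finish with the same vanishing of the two coefficients $\gamma\delta(\alpha+\beta)$ and $\alpha\beta(\gamma+\delta)$.

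Your route is purely incidence-geometric and avoids rational functions. The paper's is slightly more uniform: the pairing of zeros comes out of one residue computation, with no need to track which intersection point accounts for which index.

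The remaining differences are cosmetic. You get $H_t\not\subset\mathcal Q$ from the finiteness of the set of lines, whereas the paper gets it directly because $H_t$ contains none of the nine lines. In the vertex case, state that two of $P_2,P_3,P_4$ vanish, so that $P$ is a vertex. Saying only that $P$ has two zero coordinates (counting $P_1$) would not force $L$ to be an edge.
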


\begin{proof}
Throughout the proof, points and lines are taken over $\Fqbar$. The nine lines lie on $\mathcal Q$. Every monomial of $e_3$ involves $u_i$ or $u_j$, so $e_3$ vanishes on $\{u_i=u_j=0\}$. For a partition $\{i,j\}\sqcup\{k,l\}$ one has $e_3=u_iu_j(u_k+u_l)+u_ku_l(u_i+u_j)$, which vanishes on $\{u_i+u_j=u_k+u_l=0\}$. Conversely, a line through the point $(1:0:0:0)$ consists of the points $(1:sa:sb:sc)$ together with $(0:a:b:c)$. It lies on $\mathcal Q$ if and only if $s^2(ab+ac+bc)+s^3abc$ vanishes identically, that is, if and only if two of $a,b,c$ vanish. By symmetry, the lines on $\mathcal Q$ through a coordinate point are the coordinate edges. A line contained in a coordinate plane $u_i=0$ lies on $\mathcal Q$ only if the product of the other three coordinates vanishes on it, so it is again a coordinate edge.

Let $\Pi\subset\mathcal Q$ be a line through no coordinate point and not contained in a coordinate plane. Each $u_i$ restricts to a nonzero linear form $\theta_i$ on $\Pi\cong\mathbb P^1$, with a single zero $z_i$. Choose an affine coordinate $z$ on $\Pi$ for which no $z_i$ lies at infinity, and write $\theta_i=\alpha_i(z-z_i)$ with $\alpha_i\ne0$. Since $e_3=\theta_1\theta_2\theta_3\theta_4\sum_i\theta_i^{-1}$ on $\Pi$, the rational function $\sum_i\alpha_i^{-1}(z-z_i)^{-1}$ vanishes identically. A zero $z_i$ distinct from all the others would be a simple pole with residue $\alpha_i^{-1}\ne0$, so the zeros coincide at least in pairs. Three of them cannot coincide, for $\Pi$ would then pass through a coordinate point. Thus, after renumbering, $z_1=z_2\ne z_3=z_4$, and $\theta_2=\gamma\theta_1$, $\theta_4=\gamma'\theta_3$ with $\gamma,\gamma'\ne0$. Then
\[
e_3|_\Pi=\theta_1\theta_3\big(\gamma(1+\gamma')\theta_1+\gamma'(1+\gamma)\theta_3\big),
\]
and since $\theta_1,\theta_3$ are linearly independent, $\gamma=\gamma'=-1$. So $\Pi$ is the line $u_1+u_2=u_3+u_4=0$.

Now let $t\in\mathcal T_G$. The plane $H_t$ contains no coordinate edge, since $\{u_i=u_j=0\}\subset H_t$ would force $t_k=t_l=0$. It contains the line $u_i+u_j=u_k+u_l=0$, which is spanned by $e_i-e_j$ and $e_k-e_l$, if and only if $t_i=t_j$ and $t_k=t_l$. For the three partitions these conditions describe exactly $\mathcal D_1$, $\mathcal D_2$ and $\mathcal T_P\cup(\mathcal D_1\cap\mathcal D_2)$. Hence $H_t$ contains no line of $\mathcal Q$. In particular $H_t\not\subset\mathcal Q$, so $\Gamma_t$ is a plane cubic curve. A plane cubic that is reducible or non-reduced over $\Fqbar$ contains a line, which would be a line of $\mathcal Q$ lying in $H_t$. Hence $\Gamma_t$ is geometrically irreducible and reduced.
\end{proof}

\begin{corollary}\label{cor:hasse}
For $t\in\mathcal T_G$, $|a_t|\le2q^{1/2}$.
\end{corollary}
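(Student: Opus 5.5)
The plan is to apply the Hasse--Weil bound for plane cubics to $\Gamma_t$, taking care of the possibility that $\Gamma_t$ is singular. First, $\Gamma_t$ is defined over $\Fq$, since $t\in\Fq^4$ and $e_3$ has integer coefficients. By Proposition~\ref{prop:lines}, for $t\in\mathcal T_G$ the curve $\Gamma_t$ is a geometrically irreducible and reduced plane cubic. So there are two cases: $\Gamma_t$ is smooth, or it has exactly one singular point. It cannot have more than one, because a line through two singular points would meet the cubic with multiplicity at least $4$ and would therefore be a component, which irreducibility excludes. A singular point is also rational over $\Fq$, being unique and hence Galois-stable.

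\emph{Smooth case.} Here $\Gamma_t$ is an elliptic curve over $\Fq$; it has genus one and, by the count below, many rational points, so choosing one as origin is harmless. Hasse's theorem then gives $|q+1-\#\Gamma_t(\Fq)|\le2q^{1/2}$, which is exactly $|a_t|\le2q^{1/2}$.

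\emph{Singular case.} Let $P$ be the singular point, and let $\nu\colon\tilde\Gamma\to\Gamma_t$ be the normalization, which is $\mathbb P^1$ over $\Fq$. Then $\#\Gamma_t(\Fq)=q+1-\delta$, where $\delta$ depends only on the structure of $P$:
\begin{itemize}
\item[] for a cusp, $\delta=0$ (one point above $P$);
\item[] for a split node, $\delta=1$ (two rational points above $P$);
\item[] for a non-split node, $\delta=-1$ (a conjugate pair above $P$, and $P$ itself is still rational).
\end{itemize}
Hence $|a_t|\le1\le2q^{1/2}$. Equivalently, the number of rational points on the smooth locus is $q$, $q-1$ or $q+1$ according to these three cases.

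The only step needing real input is irreducibility, which Proposition~\ref{prop:lines} already supplies. What remains is a routine case analysis on the singularity type of an irreducible plane cubic. An alternative is to write $\#\Gamma_t(\Fq)=q+1-\Tr(\Frob\mid H^1(\Gamma_t\otimes\Fqbar,\Qlb))$, where $H^1$ has dimension at most $2$ and weights at most $1$; this gives the same bound uniformly in both cases.
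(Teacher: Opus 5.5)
Your proposal is correct and is essentially the paper's argument: Hasse's theorem in the smooth case, and in the singular case a unique, Galois-fixed double point with at most two points of $\tilde\Gamma\cong\mathbb P^1$ above it, so $|a_t|\le1$. One justification misfires. In the smooth case, ``by the count below'' refers to the singular case and gives no rational point, whereas the paper exhibits the six rational smooth points of $\Gamma_t$ with two zero coordinates, which also give $\tilde\Gamma\cong\mathbb P^1$. This is harmless, since the Weil bound for genus-one curves needs no rational point. You should also note, as the paper does by the same B\'ezout argument, that the singular point cannot be a triple point.
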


\begin{proof}
The six points of $H_t$ with exactly two zero coordinates are $\Fq$-rational points of $\Gamma_t$, and they are smooth points of $\Gamma_t$. For instance, at $(t_2:-t_1:0:0)$ the gradient of $e_3$ is $(0,0,u_1u_2,u_1u_2)$ with $u_1u_2\ne0$, and it is not proportional to the gradient $(t_1,t_2,t_3,t_4)$ of the equation of $H_t$, because $t_1\ne0$. If $\Gamma_t$ is smooth, it is therefore an elliptic curve over $\Fq$, and Hasse's theorem \cite[Ch.~V, Th.~1.1]{Silverman} gives $|a_t|\le2q^{1/2}$. Otherwise $\Gamma_t$ is, by Proposition~\ref{prop:lines}, a geometrically irreducible singular plane cubic. A line through two singular points, or through a point of multiplicity $3$ and a further point of $\Gamma_t$, would meet $\Gamma_t$ with total multiplicity at least $4$. By B\'ezout's theorem it would then be a component of $\Gamma_t$, which is impossible. Hence $\Gamma_t$ has a unique singular point $P_0$, and it is a double point. Being unique, $P_0$ is fixed by $\mathrm{Gal}(\Fqbar/\Fq)$ and is $\Fq$-rational. Let $\nu\colon\tilde\Gamma\to\Gamma_t$ be the normalization. The arithmetic genus of a plane cubic is $1$, and the double point lowers it by at least $1$, so $\tilde\Gamma$ is a smooth projective curve of genus $0$. It has an $\Fq$-point, lying above a smooth rational point of $\Gamma_t$, so it is isomorphic to $\mathbb P^1$. The map $\nu$ is an isomorphism away from $P_0$, and $\nu^{-1}(P_0)$ consists of one or two geometric points, one for each branch of $\Gamma_t$ at $P_0$. Hence $\nu^{-1}(P_0)$ has $0$, $1$ or $2$ rational points. Therefore $\#\Gamma_t(\Fq)=(q+1-\#\nu^{-1}(P_0)(\Fq))+1$, and $a_t=\#\nu^{-1}(P_0)(\Fq)-1$ satisfies $|a_t|\le1$.
\end{proof}

\begin{lemma}\label{lem:ray}
For every integer $k\ge0$,
\[
\Big|\sum_{t\in\mathcal T_G}\psi_q(\Lambda(t))\,a_t^k\Big|\le7\cdot2^kq^{3+k/2}.
\]
\end{lemma}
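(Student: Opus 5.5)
The bound should follow from the homogeneity of $a_t$ in $t$ together with a single geometric sum over the scalar $\nu$. The trivial estimate, using Corollary~\ref{cor:hasse} and $|\mathcal T_G|\le q^4$, gives $2^kq^{4+k/2}$. That is one power of $q$ too many, and the saving of $q$ should come from the additive phase $\psi_q(\Lambda(t))$ along rays.

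\emph{Step 1: invariance under scaling.} The set $\mathcal T_G$ is stable under $t\mapsto\nu t$ for $\nu\in\Fq^\times$. Indeed, $\mathcal T_0$, $\mathcal D_1$, $\mathcal D_2$ and $\mathcal T_P$ are each defined by conditions that are homogeneous in $t$: a vanishing coordinate, or equalities between coordinates. The plane $H_{\nu t}$ equals $H_t$, so $\Gamma_{\nu t}=\Gamma_t$ and $a_{\nu t}=a_t$. Hence $\mathcal T_G$ is a disjoint union of rays $\{\nu t^0:\nu\in\Fq^\times\}$, and on each ray $a_t$ is constant, equal to $a_{t^0}$.

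\emph{Step 2: summing along each ray.} Because $\Lambda$ is linear, $\Lambda(\nu t^0)=\nu\Lambda(t^0)$. Orthogonality then gives
\[
\sum_{\nu\in\Fq^\times}\psi_q\big(\nu\Lambda(t^0)\big)=\begin{cases}q-1,&\Lambda(t^0)=0,\\-1,&\Lambda(t^0)\ne0.\end{cases}
\]
Consequently the sum in the statement equals
\[
\sum_{\text{rays }[t^0]\subset\mathcal T_G}a_{t^0}^k\Big((q-1)\mathbf 1_{\Lambda(t^0)=0}-\mathbf 1_{\Lambda(t^0)\ne0}\Big).
\]

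\emph{Step 3: counting the rays.} The rays in $\mathcal T_G$ are points of $\mathbb P^3(\Fq)$. There are at most $q^3+q^2+q+1$ of them in total. Those with $\Lambda=0$ lie on the projective plane $t_1-t_2+t_3-t_4=0$, so there are at most $q^2+q+1$ of them. By Corollary~\ref{cor:hasse}, each term satisfies $|a_{t^0}|^k\le2^kq^{k/2}$. The absolute value of the sum is therefore at most
\[
2^kq^{k/2}\big((q-1)(q^2+q+1)+q^3+q^2+q+1\big)\le2^kq^{k/2}\cdot2(q^3+q^2+q+1).
\]
Since $q\ge3$, we have $q^2+q+1\le q^3$, so this is at most $4\cdot2^kq^{3+k/2}\le7\cdot2^kq^{3+k/2}$.

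The only point that needs real care is Step~1, namely checking that $\mathcal T_G$ is genuinely a union of full rays. This reduces to the scaling invariance of the four excluded strata, which is immediate from their definitions. Everything else is bookkeeping, and the constant $7$ leaves ample slack.
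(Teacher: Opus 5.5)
Your proposal is correct and follows the paper's proof essentially step for step. Both use the scaling invariance of $\mathcal T_G$ and of $a_t$, sum $\psi_q(\nu\Lambda(t))$ over each $\Fq^\times$-orbit (you use implicitly that each orbit has exactly $q-1$ points, which holds because $t\ne0$), and count the orbits in $\mathbb P^3(\Fq)$ and on the plane $\Lambda=0$. The differences are cosmetic: the paper rewrites the sum as $q\sum_{\Lambda=0}a_t^k-\sum a_t^k$ and uses the cruder bounds $q^2+q+1\le3q^2$ and $q^3+q^2+q+1\le4q^3$, whereas your use of $q\ge3$ gives the slightly better constant $4$.
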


\begin{proof}
Let $\nu\in\Fq^\times$. The sets $\mathcal T_0$, $\mathcal D_1$, $\mathcal D_2$ and $\mathcal T_P$ are defined by conditions invariant under $t\mapsto\nu t$: the vanishing of a coordinate, and the patterns $(a,a,b,b)$, $(a,b,b,a)$ and $(a,b,a,b)$ with $a\ne b$. Hence $\mathcal T_G$ is stable under $t\mapsto\nu t$. Moreover $H_{\nu t}=H_t$, so $\Gamma_{\nu t}=\Gamma_t$ and $a_{\nu t}=a_t$; and $\Lambda(\nu t)=\nu\Lambda(t)$, as $\Lambda$ is linear. All coordinates of a point $t\in\mathcal T_G$ are nonzero, so $\nu t=t$ forces $\nu=1$. Thus $\Fq^\times$ acts freely on $\mathcal T_G$, and $\mathcal T_G$ is the disjoint union of the orbits $\{\nu t:\nu\in\Fq^\times\}$, each with $q-1$ elements. Let $\overline{\mathcal T}_G\subset\mathbb P^3(\Fq)$ be the set of orbits, that is, the image of $\mathcal T_G$. The condition $\Lambda(t)=0$ depends only on the orbit $[t]$, and
\[
\sum_{\nu\in\Fq^\times}\psi_q(\nu\Lambda(t))=\begin{cases}q-1,&\Lambda(t)=0,\\-1,&\Lambda(t)\ne0.\end{cases}
\]
Summing first over each orbit, we obtain
\[
\sum_{t\in\mathcal T_G}\psi_q(\Lambda(t))\,a_t^k=(q-1)\sum_{\substack{[t]\in\overline{\mathcal T}_G\\ \Lambda(t)=0}}a_t^k-\sum_{\substack{[t]\in\overline{\mathcal T}_G\\ \Lambda(t)\ne0}}a_t^k=q\sum_{\substack{[t]\in\overline{\mathcal T}_G\\ \Lambda(t)=0}}a_t^k-\sum_{[t]\in\overline{\mathcal T}_G}a_t^k.
\]
The orbits with $\Lambda(t)=0$ lie on the plane $\Lambda=0$ of $\mathbb P^3$, which has $q^2+q+1\le3q^2$ rational points, and $\#\overline{\mathcal T}_G\le\#\mathbb P^3(\Fq)=q^3+q^2+q+1\le4q^3$. By Corollary~\ref{cor:hasse}, $|a_t|^k\le(2q^{1/2})^k$ for every $[t]\in\overline{\mathcal T}_G$. So the right-hand side is at most $(q\cdot3q^2+4q^3)(2q^{1/2})^k=7\cdot2^kq^{3+k/2}$ in absolute value.
\end{proof}

\begin{proposition}\label{prop:generic}
For $n\ge2$, $\displaystyle\sum_{t\in\mathcal T_G}\psi_q(\Lambda(t))V(t)^n=O\big(q^{(5n+6)/2}\big)$.
\end{proposition}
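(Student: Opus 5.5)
The plan is to start from Lemma~\ref{lem:plane}, which on $\mathcal T_G$ gives $V(t)=-q^2\big(a_t+2+3q^{-1}\big)$. Putting $c=2+3q^{-1}$ and applying the binomial theorem,
\[
V(t)^n=(-1)^nq^{2n}\sum_{k=0}^n\binom nk c^{\,n-k}a_t^k .
\]
Multiplying by $\psi_q(\Lambda(t))$ and summing over $t\in\mathcal T_G$ then reduces the problem to the twisted power sums
\[
S_k=\sum_{t\in\mathcal T_G}\psi_q(\Lambda(t))\,a_t^k,\qquad 0\le k\le n.
\]
Lemma~\ref{lem:ray} bounds these by $7\cdot2^kq^{3+k/2}$. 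Since $c$ is bounded independently of $q$ (we have $c\le 5$), this yields
\[
\Big|\sum_{t\in\mathcal T_G}\psi_q(\Lambda(t))V(t)^n\Big|\le q^{2n}\sum_{k=0}^n\binom nk 5^{\,n-k}\cdot 7\cdot 2^kq^{3+k/2}.
\]
The largest term is the one with $k=n$, and it contributes $O(q^{2n+3+n/2})=O(q^{(5n+6)/2})$. This is exactly the claimed bound, with an implied constant depending only on $n$, for instance $7\cdot 7^n$.

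There is essentially nothing to prove beyond this, because all the real work is already in Lemma~\ref{lem:ray}. That lemma needs Corollary~\ref{cor:hasse}, which gives $|a_t|\le 2q^{1/2}$ uniformly on $\mathcal T_G$, and this in turn depends on the geometric irreducibility of $\Gamma_t$ from Proposition~\ref{prop:lines}. It also needs the orbit decomposition under $t\mapsto\nu t$, which saves a factor of $q$ over the trivial count of $q^4$ points. The only thing to check is that Lemma~\ref{lem:plane} applies, namely that every $t\in\mathcal T_G$ has all coordinates nonzero. This holds because $\mathcal T_G$ avoids $\mathcal T_0$.

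The one point that might look like an obstacle is that the term $k=0$ is not small pointwise. Trivially, $\sum_{t\in\mathcal T_G}\psi_q(\Lambda(t))$ could be as large as $q^4$, but Lemma~\ref{lem:ray} with $k=0$ already shows it is $O(q^3)$. For $n\ge2$ its contribution $q^{2n+3}$ is in any case dominated by $q^{(5n+6)/2}$, so it causes no trouble. The same holds for every intermediate $k$, since the exponent $2n+3+k/2$ is increasing in $k$.
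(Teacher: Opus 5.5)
Your proposal is correct and is the paper's proof: expand $V(t)^n$ binomially using Lemma~\ref{lem:plane}, bound each twisted power sum by Lemma~\ref{lem:ray}, and note that the $k=n$ term, of size $O(q^{2n+3+n/2})=O(q^{(5n+6)/2})$, dominates. Your explicit constant $7\cdot7^n$ and your check that every $t\in\mathcal T_G$ has nonzero coordinates are both fine.
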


\begin{proof}
By Lemma~\ref{lem:plane}, $V(t)^n=(-1)^nq^{2n}\sum_{k=0}^n\binom nk(2+3q^{-1})^{n-k}a_t^k$. Apply Lemma~\ref{lem:ray} to each term; the largest contribution, $O(q^{2n+3+n/2})$, comes from $k=n$.
\end{proof}

\subsection{The fourth moment}

\begin{theorem}\label{thm:fourth}
For every $n\ge3$,
\[
\sum_{h\in\Fq^n}|\Sigma_n(h)|^4=2q^{3n-2}+O\big(q^{3n-2-\delta(n)}\big).
\]
In particular Theorem~\ref{thmD}(ii) holds.
\end{theorem}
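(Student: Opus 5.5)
The plan is to start from the identity of Proposition~\ref{prop:master}, which writes $q^4\sum_h|\Sigma_n(h)|^4$ as $\sum_{t\in\Fq^4}\psi_q(\Lambda(t))V(t)^n$. We split $\Fq^4$ into the four disjoint strata $\mathcal T_0$, $\mathcal T_D$, $\mathcal T_P$ and $\mathcal T_G$ and add the estimates already proved for each:
\[
q^4\sum_h|\Sigma_n(h)|^4=2q^{3n+2}+O(q^{3n+1})+O(q^{3n+1})+O\big(q^{(5n+6)/2}\big)+O\big(q^{(5n+6)/2}\big).
\]
The contributions come from Proposition~\ref{prop:diag-main} for $\mathcal T_D$, Proposition~\ref{prop:pair} for $\mathcal T_P$, Proposition~\ref{prop:zero} for $\mathcal T_0$ and Proposition~\ref{prop:generic} for $\mathcal T_G$. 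Dividing by $q^4$ gives the main term $2q^{3n-2}$, with error $O(q^{3n-3})+O(q^{(5n-2)/2})$.

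It remains to compare the exponents. We have $(5n-2)/2=3n-2-(n-2)/2$ and $3n-3=3n-2-1$. So the error is $O(q^{3n-2-\delta(n)})$ with $\delta(n)=\min(1,(n-2)/2)$, and this is a genuine saving exactly when $n\ge3$. That explains the restriction $n\ge3$ in the statement: for $n=2$ the generic and zero strata are of the same size as the main term.

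There is no real obstacle left, because the hard inputs are already in place. These are the Cayley cubic analysis (Proposition~\ref{prop:lines}), the Hasse bound (Corollary~\ref{cor:hasse}) and the orbit-averaging of Lemma~\ref{lem:ray}. The only point to check with care is that the four strata really partition $\Fq^4$, so that no term is double-counted. This holds by construction, since $\mathcal T_G$ is defined as a complement and $\mathcal T_P$ excludes $a=b$. One should also check that the implied constants depend only on $n$, which holds because each cited bound has constants of the form $2^{O(n)}$ times binomial coefficients.
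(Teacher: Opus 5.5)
Your proposal is correct and follows the paper's proof essentially verbatim. You add the four stratum contributions from Propositions~\ref{prop:diag-main}, \ref{prop:pair}, \ref{prop:zero} and~\ref{prop:generic} in the identity of Proposition~\ref{prop:master}, divide by $q^4$, and compare the exponents $3n-3$ and $(5n-2)/2$ with $3n-2$ to obtain $\delta(n)=\min\bigl(1,\frac{n-2}2\bigr)$.
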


\begin{proof}
The four strata partition $\Fq^4$, and Table~\ref{tab:strata} collects their contributions to the right-hand side of Proposition~\ref{prop:master}. Adding them, the left-hand side multiplied by $q^4$ equals $2q^{3n+2}+O(q^{3n+1})+O\big(q^{(5n+6)/2}\big)$. After division by $q^4$ the main term is $2q^{3n-2}$, and the two error terms are $O(q^{A_1})$ and $O(q^{A_2})$ with
\[
A_1=3n-3=(3n-2)-1,\qquad A_2=\frac{5n-2}2=(3n-2)-\frac{n-2}2.
\]
So the total error is $O(q^{3n-2-\delta(n)})$ with $\delta(n)=\min\big(1,\frac{n-2}2\big)$. For $n=3$ one has $A_1=6$ and $A_2=\frac{13}2$, so $\delta(3)=\frac12$; for $n\ge4$, $A_2\le A_1$ and $\delta(n)=1$.
\end{proof}

\begin{table}[ht]
\centering
\small
\caption{The strata of $\Fq^4$ in the proof of Theorem~\ref{thm:fourth}, for $n\ge2$. The last column is the contribution of the stratum to $\sum_t\psi_q(\Lambda(t))V(t)^n$, given by Propositions~\ref{prop:diag-main}, \ref{prop:pair}, \ref{prop:zero} and~\ref{prop:generic}. On the diagonal $t_1=t_2=t_3=t_4$ of $\mathcal T_D$ one has $V(t)=2q^3+O(q^2)$; on $\mathcal T_0$, $a\le3$ is the number of nonzero coordinates of $t$; on $\mathcal T_G$, $|a_t|\le2q^{1/2}$ by Corollary~\ref{cor:hasse}.}\label{tab:strata}
\begin{tabular}{@{}llll@{}}
\toprule
stratum & number of points & $V(t)$ & contribution\\
\midrule
$\mathcal T_D$ & $2(q-1)^2-(q-1)$ & $q^3+O(q^2)$ & $2q^{3n+2}+O(q^{3n+1})$\\
$\mathcal T_P$ & $(q-1)(q-2)$ & $q^3+O(q^2)$ & $O(q^{3n+1})$\\
$\mathcal T_0$ & $q^4-(q-1)^4\le4q^3$ & $O(q^{1+a/2})$ & $O\big(q^{(5n+6)/2}\big)$\\
$\mathcal T_G$ & $\le(q-1)^4$ & $-q^2\big(a_t+2+3q^{-1}\big)$ & $O\big(q^{(5n+6)/2}\big)$\\
\bottomrule
\end{tabular}
\end{table}

For $n=3$ the saving is $q^{1/2}$. It comes from the averaging over projective rays in Lemma~\ref{lem:ray}, which reduces the generic stratum from the trivial size $q^{4+5n/2}$ to $q^{3+5n/2}$.

\section{The geometric monodromy group}\label{sec:monodromy}

Let $V$ be the geometric generic stalk of $\mathcal W_n$, and let $G=\Ggeom(\mathcal W_n)\subseteq\GL(V)$ be the Zariski closure of the image of $\pi_1(S\otimes\Fpbar)$. For a Zariski-closed subgroup $H\subseteq\GL(V)$ put
\[
M_4(H,V)=\dim\big(V^{\otimes2}\otimes(V^\vee)^{\otimes2}\big)^H=\dim\End_H(V\otimes V).
\]
Since $\mathcal W_n$ is pure, it is geometrically semisimple \cite[3.4.1(iii)]{Deligne}, and $G^0$ is semisimple \cite[Cor.~1.3.9]{Deligne}. In particular $G^0$ is reductive, and every finite-dimensional representation of $G$ is semisimple by Lemma~\ref{lem:semisimple}(i). For $h\in S(\Fq)$ put $A^0_q(h)=q^{-(n+1)/2}A_q(h)$, so that $|A^0_q(h)|\le r(n)$. By Theorem~\ref{thm:lisse}(v),
\begin{equation}\label{eq:normalized}
\Sigma_n(h)=(-1)^{n-1}\big(q^{(n-1)/2}A^0_q(h)+c_n(q)\big),\qquad c_n(q)=\sum_{k=1}^{\lfloor n/2\rfloor}m_{n,k}q^{k-1}\ll q^{(n-2)/2}.
\end{equation}
By Corollary~\ref{cor:all-h}, the points of $\Fq^n\smallsetminus S(\Fq)$ with exactly $a$ nonzero coordinates, $0\le a\le n-1$, number at most $\binom naq^a$, and for each of them $|\Sigma_n(h)|\le(2^a-1)q^{(a-1)/2}$ if $a\ge1$ and $\Sigma_n(h)=0$ if $a=0$.

\subsection{Moments and irreducibility}

\begin{lemma}\label{lem:lower}
Let $n\ge3$. As $q\to\infty$ through powers of $p$,
\[
q^{-n}\sum_{h\in S(\Fq)}|A^0_q(h)|^4=2+O\big(q^{-1/2}\big).
\]
\end{lemma}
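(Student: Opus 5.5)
The plan is to deduce the statement from Theorem~\ref{thm:fourth} by peeling off, one at a time, the contributions that are not $A^0_q$ on the torus. The full fourth moment $\sum_{h\in\Fq^n}|\Sigma_n(h)|^4$ splits into the part over $S(\Fq)$ and the part over $\Fq^n\smallsetminus S(\Fq)$. On the complement, the remark after~\eqref{eq:normalized} gives at most $\binom naq^a$ points with exactly $a\le n-1$ nonzero coordinates, each with $|\Sigma_n(h)|\le(2^a-1)q^{(a-1)/2}$. That part is therefore $O\big(\sum_{a\le n-1}q^{a+2(a-1)}\big)=O(q^{3n-5})$, which is negligible against $q^{3n-2-1/2}$. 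Hence $\sum_{h\in S(\Fq)}|\Sigma_n(h)|^4=2q^{3n-2}+O(q^{3n-2-\delta(n)})$ with $\delta(n)\ge\frac12$ for $n\ge3$.

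Next, on $S(\Fq)$ I write, by~\eqref{eq:normalized}, $|\Sigma_n(h)|=|q^{(n-1)/2}A^0_q(h)+c_n(q)|$, where $|A^0_q(h)|\le r(n)$ and $c_n(q)\ll q^{(n-2)/2}$. Put $X=q^{(n-1)/2}A^0_q(h)$ and $Y=c_n(q)$. Expanding $|X+Y|^4-|X|^4$ gives terms bounded by $O(|X|^3|Y|+|Y|^4)$, and since $|X|\ll q^{(n-1)/2}$ this is $O(q^{3(n-1)/2+(n-2)/2})=O(q^{2n-5/2})$ pointwise. Summed over $(q-1)^n\le q^n$ points, the discrepancy is $O(q^{3n-5/2})$, that is, relative size $q^{-1/2}$ compared with $q^{3n-2}$. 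Therefore
\[
q^{2(n-1)}\sum_{h\in S(\Fq)}|A^0_q(h)|^4=2q^{3n-2}+O(q^{3n-5/2}),
\]
and dividing by $q^{3n-2}=q^{n}\cdot q^{2(n-1)}$ yields the claim.

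I expect no real obstacle; the only care needed is checking exponents. The cross terms are the worst case, with exponent $3(n-1)/2+(n-2)/2+n=3n-5/2$, which is exactly the $q^{-1/2}$ saving. For $n=3$, Theorem~\ref{thm:fourth}'s own error $\delta(3)=\frac12$ also only gives $q^{-1/2}$, consistent with the statement. For $n\ge4$ the error is still dominated by the cross terms at $q^{-1/2}$.
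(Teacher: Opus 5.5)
Your proposal is correct and follows the paper's proof. Both bound the pointwise discrepancy $\big||\Sigma_n(h)|^4-q^{2n-2}|A^0_q(h)|^4\big|\ll q^{2n-5/2}$ via~\eqref{eq:normalized}, sum it over $S(\Fq)$, discard the $O(q^{3n-5})$ contribution of $\Fq^n\smallsetminus S(\Fq)$, and invoke Theorem~\ref{thm:fourth} with $\delta(n)\ge\frac12$.
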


\begin{proof}
By~\eqref{eq:normalized}, $\big||\Sigma_n(h)|^4-q^{2n-2}|A^0_q(h)|^4\big|\ll q^{3(n-1)/2}q^{(n-2)/2}=q^{2n-5/2}$, and summing over the at most $q^n$ points of $S(\Fq)$,
\[
\sum_{h\in S(\Fq)}|\Sigma_n(h)|^4=q^{2n-2}\sum_{h\in S(\Fq)}|A^0_q(h)|^4+O(q^{3n-5/2}).
\]
The points of $\Fq^n\smallsetminus S(\Fq)$ contribute $O(q^{3n-5})$ to the fourth moment. Since $\delta(n)\ge\frac12$ for $n\ge3$, Theorem~\ref{thm:fourth} yields $q^{2n-2}\sum_{h\in S(\Fq)}|A^0_q(h)|^4=2q^{3n-2}+O(q^{3n-5/2})$.
\end{proof}

\begin{proposition}\label{prop:moment}
Let $T$ be a smooth geometrically connected variety of dimension $d$ over a finite field $\mathbb F_{q_0}$, and let $\mathcal F$ be a lisse sheaf on $T$, pure of weight $w$, with geometric generic stalk $V_{\mathcal F}$ and geometric monodromy group $G_{\mathcal F}$. Put $M=M_4(G_{\mathcal F},V_{\mathcal F})$. Then there are complex numbers $\beta_1,\dots,\beta_M$ of absolute value $1$ such that, for $q=q_0^k$,
\[
q^{-d}\sum_{x\in T(\Fq)}\big|q^{-w/2}\Tr(\Frob_x\mid\mathcal F)\big|^4=\sum_{m=1}^M\beta_m^k+O\big(q^{-1/2}\big).
\]
Consequently, if the left-hand side tends to a limit $L$ as $k\to\infty$, then $L=M$.
\end{proposition}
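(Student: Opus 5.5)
We reduce Proposition~\ref{prop:moment} to Lemma~\ref{lem:averages}, applied to the lisse sheaf
\[
\mathcal E=\big(\mathcal F\otimes\mathcal F\otimes\mathcal F^\vee\otimes\mathcal F^\vee\big)\otimes\big(q_0^{w/2}\big)^{-2\deg}\otimes\big(q_0^{-w/2}\big)^{-2\deg},
\]
or more simply to $\mathcal E=\mathcal F^0\otimes\mathcal F^0\otimes(\mathcal F^0)^\vee\otimes(\mathcal F^0)^\vee$. Here $\mathcal F^0=\mathcal F\otimes\gamma^{\deg}$, and $\gamma\in\Qlb$ corresponds under $\iota_{\mathbb C}$ to the positive real number $q_0^{-w/2}$. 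Choosing $\gamma$ positive matters because then $\Tr(\Frob_x\mid\mathcal F^0)=q^{-w/2}\Tr(\Frob_x\mid\mathcal F)$ exactly, with no phase. The sheaf $\mathcal F^0$ is lisse and pointwise pure of weight $0$. Its geometric monodromy is unchanged, since $\gamma^{\deg}$ is geometrically constant. Hence $\mathcal E$ is lisse and pointwise pure of weight $0$, with geometric generic stalk $V_{\mathcal F}^{\otimes2}\otimes(V_{\mathcal F}^\vee)^{\otimes2}$.

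First I identify the traces. For $x\in T(\Fq)$, the eigenvalues of $\Frob_x$ on $\mathcal F^0$ have absolute value $1$. The eigenvalues on $(\mathcal F^0)^\vee$ are their inverses, which are their complex conjugates. Therefore $\Tr(\Frob_x\mid(\mathcal F^0)^\vee)=\overline{\Tr(\Frob_x\mid\mathcal F^0)}$, and
\[
\Tr(\Frob_x\mid\mathcal E)=\big|\Tr(\Frob_x\mid\mathcal F^0)\big|^4=\big|q^{-w/2}\Tr(\Frob_x\mid\mathcal F)\big|^4 .
\]
The left-hand side of the proposition is thus $q^{-d}\sum_{x\in T(\Fq)}\Tr(\Frob_x\mid\mathcal E)$.

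Next I identify the number of terms. By Lemma~\ref{lem:averages}, this average equals $\sum_{m=1}^{M'}\beta_m^k+O(q^{-1/2})$, where $M'=\dim\big(V_{\mathcal F}^{\otimes2}\otimes(V_{\mathcal F}^\vee)^{\otimes2}\big)^{\pi^{\mathrm g}}$ and the $\beta_m$ have absolute value $1$. Invariants under $\pi^{\mathrm g}=\pi_1(T\otimes\overline{\mathbb F}_{q_0})$ coincide with invariants under its Zariski closure $G_{\mathcal F}$ in $\GL(V_{\mathcal F})$. This holds because the stabilizer of a vector in the algebraic representation $V^{\otimes2}\otimes(V^\vee)^{\otimes2}$ of $\GL(V)$ is Zariski closed. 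Hence $M'=M_4(G_{\mathcal F},V_{\mathcal F})=M$. The final assertion, that a limit $L$ must equal $M$, is exactly the last part of Lemma~\ref{lem:averages}.

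There is no serious obstacle; the argument is bookkeeping. The one point needing care is the normalization. Dividing by $q^{w/2}$ must be realized by a twist $\gamma^{\deg}$ with $\iota_{\mathbb C}(\gamma)>0$, so that the conjugation identity $\Tr(\mathcal F^{0\vee})=\overline{\Tr(\mathcal F^0)}$ holds on the nose. This relies on pointwise purity of weight $0$, not merely on mixedness. The implied constant depends only on $\mathcal F$, as in Lemma~\ref{lem:averages}.
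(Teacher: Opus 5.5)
Your argument is correct and is essentially the paper's proof. The paper applies Lemma~\ref{lem:averages} to $\mathcal F^{\otimes2}\otimes(\mathcal F^\vee)^{\otimes2}$ and identifies the $\pi^{\mathrm g}$-invariants with the $G_{\mathcal F}$-invariants via Zariski closure, exactly as you do.

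The only difference is cosmetic. Your twist by $\gamma^{\deg}$ cancels in $\mathcal F^0\otimes\mathcal F^0\otimes(\mathcal F^0)^\vee\otimes(\mathcal F^0)^\vee\cong\mathcal F^{\otimes2}\otimes(\mathcal F^\vee)^{\otimes2}$, so the positivity of $\gamma$ that you flag as the delicate point plays no role. The paper instead uses directly that $\alpha^{-1}=q^{-w}\bar\alpha$ for every eigenvalue $\alpha$ of $\Frob_x$ on $\mathcal F$.
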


\begin{proof}
Put $\mathcal E=\mathcal F^{\otimes2}\otimes(\mathcal F^\vee)^{\otimes2}$, a lisse sheaf pure of weight $0$. At $x\in T(\Fq)$ the eigenvalues of $\Frob_x$ on $\mathcal F^\vee$ are the inverses $\alpha^{-1}=q^{-w}\bar\alpha$ of the eigenvalues $\alpha$ on $\mathcal F$, so $\Tr(\Frob_x\mid\mathcal E)=\big|q^{-w/2}\Tr(\Frob_x\mid\mathcal F)\big|^4$. The invariants of $\pi_1(T\otimes\Fqbar)$ on $V_{\mathcal F}^{\otimes2}\otimes(V_{\mathcal F}^\vee)^{\otimes2}$ are those of its Zariski closure $G_{\mathcal F}$, and they have dimension $M$. Lemma~\ref{lem:averages} gives the assertions.
\end{proof}

\begin{corollary}\label{cor:M4}
For $n\ge3$ one has $M_4(G,V)=2$. Consequently $V$ is an irreducible representation of $G$, and it is not self-dual.
\end{corollary}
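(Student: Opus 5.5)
The plan is to apply Proposition~\ref{prop:moment} to $\mathcal F=\mathcal W_n$ on $T=S$, which is smooth and geometrically connected of dimension $d=n$ over $\Fp$. The sheaf $\mathcal W_n$ is pure of weight $w=n+1$ (Theorem~\ref{thm:lisse}(iv)). For $h\in S(\Fq)$ we have $q^{-w/2}\Tr(\Frob_h\mid\mathcal W_n)=A^0_q(h)$. Lemma~\ref{lem:lower} then says that, as $q=p^k\to\infty$, the left-hand side of Proposition~\ref{prop:moment} converges to $2$. The "consequently" clause of that proposition therefore gives $M_4(G,V)=2$ directly.

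To deduce irreducibility and non-self-duality, I use $M_4(G,V)=\dim\End_G(V\otimes V)$ together with the semisimplicity of $V\otimes V$ as a $G$-module. This semisimplicity holds because $G^0$ is semisimple, via Lemma~\ref{lem:semisimple}(i). Decompose $V\otimes V=\bigoplus_i W_i^{\oplus e_i}$ with the $W_i$ pairwise non-isomorphic irreducibles. Then $\dim\End_G(V\otimes V)=\sum_ie_i^2$. Since $r(n)\ge2$ for $n\ge2$, the space $V\otimes V$ splits as $\Sym^2V\oplus\Lambda^2V$ with both summands nonzero, so it has at least two irreducible constituents counted with multiplicity. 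Hence $\sum e_i^2\ge2$. Equality forces exactly two non-isomorphic constituents, each of multiplicity one, namely $\Sym^2V$ and $\Lambda^2V$, both irreducible and non-isomorphic.

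Next, write $V=\bigoplus_jU_j$ as a sum of irreducible $G$-modules. If there were at least two summands, $V\otimes V$ would contain $U_1\otimes U_1$, $U_1\otimes U_2$ and $U_2\otimes U_1$. The last two are isomorphic, so some constituent would occur with multiplicity at least $2$, contradicting the previous paragraph. Hence $V$ is irreducible.

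For non-self-duality, suppose $V\cong V^\vee$. Then the trivial representation occurs in $V\otimes V\cong V\otimes V^\vee=\End(V)$, namely as the line of scalars. By the above, it would have to be one of the two irreducible summands $\Sym^2V$ or $\Lambda^2V$, so that summand would be one-dimensional. But $\dim\Sym^2V=r(r+1)/2>1$, and $\dim\Lambda^2V=r(r-1)/2>1$ as soon as $r=r(n)\ge3$. This holds for $n\ge3$, since $r(3)=5$. That is a contradiction. Equivalently, a self-dual $V$ would give $M_4=\dim\End_G(\End V)\ge2$, with equality only when $\End(V)$ is the trivial line plus one irreducible of dimension $r^2-1$; that decomposition cannot match the splitting into $\Sym^2V$ and $\Lambda^2V$ when $r\ge3$.

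The only potentially delicate point is the bookkeeping that identifies $\Tr(\Frob_h\mid\mathcal W_n^{\otimes2}\otimes(\mathcal W_n^\vee)^{\otimes2})$ with $|A^0_q(h)|^4$, but this is already handled inside Proposition~\ref{prop:moment}. The rest is elementary representation theory.
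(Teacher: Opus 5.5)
Your proof is correct and follows the paper's route. You obtain $M_4(G,V)=2$ from Lemma~\ref{lem:lower} and Proposition~\ref{prop:moment} exactly as the paper does, and you derive both consequences from semisimplicity and the splitting $V\otimes V=\Sym^2V\oplus\bigwedge^2V$. The only difference is organizational. You first show that $M_4=2$ forces $\Sym^2V$ and $\bigwedge^2V$ to be irreducible and non-isomorphic, and read off both conclusions from that. The paper argues each conclusion by contradiction: $\End_G(V)^{\otimes2}\hookrightarrow\End_G(V\otimes V)$ gives $M_4\ge4$ in the reducible case, and three nonzero $G$-stable summands give $M_4\ge3$ in the self-dual case.
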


\begin{proof}
Lemma~\ref{lem:lower} and Proposition~\ref{prop:moment}, applied to $\mathcal W_n$ on $T=S$ with $d=n$ and $w=n+1$, give $M_4(G,V)=2$. Since $V$ is semisimple, if it were reducible we could write $V=V_1\oplus V_2$ with $V_1,V_2\ne0$ stable under $G$; then $\dim\End_G(V)\ge2$, and the injection $\End_G(V)\otimes\End_G(V)\hookrightarrow\End_G(V\otimes V)$ gives $M_4\ge4$. Suppose $V\cong V^\vee$. Since $V$ is irreducible, $(V\otimes V)^G\cong\Hom_G(V^\vee,V)$ is one-dimensional by Schur's lemma. Write $V\otimes V=\Sym^2V\oplus\bigwedge^2V$; both summands are $G$-stable. The two components of an invariant vector are invariant, so the invariant line lies in $\Sym^2V$ or in $\bigwedge^2V$, say in $X$, and let $Y$ be the other summand. Put $N=\dim V=r(n)\ge5$. Then $\dim\Sym^2V=N(N+1)/2$ and $\dim\bigwedge^2V=N(N-1)/2$ both exceed $1$. Hence $X=\mathbf 1\oplus X'$ with $X'\ne0$, and $Y\ne0$. So $V\otimes V=\mathbf 1\oplus X'\oplus Y$ with three nonzero $G$-stable summands. The three projections are linearly independent elements of $\End_G(V\otimes V)$, so $M_4\ge3$.
\end{proof}

The exact moments of Proposition~\ref{prop:low-moments} give these two properties for every $n\ge2$, independently of the fourth moment.

\begin{corollary}\label{cor:M2}
Let $n\ge2$. As $q\to\infty$ through powers of $p$,
\[
q^{-n}\sum_{h\in S(\Fq)}|A^0_q(h)|^2=1+O\big(q^{-1/2}\big),\qquad q^{-n}\sum_{h\in S(\Fq)}A^0_q(h)^2=O\big(q^{-1/2}\big).
\]
Consequently $\dim\End_G(V)=1$ and $(V\otimes V)^G=0$: the representation $V$ of $G$ is irreducible and not self-dual.
\end{corollary}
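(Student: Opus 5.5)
We compute the two moments on $S(\Fq)$ from the exact identities of Proposition~\ref{prop:low-moments}, exactly as Lemma~\ref{lem:lower} did for the fourth moment. We then convert them into invariant dimensions by Lemma~\ref{lem:averages}, applied to $\mathcal E=\mathcal W_n\otimes\mathcal W_n^\vee$ and to $\mathcal E=\mathcal W_n^{\otimes2}$, each normalized to weight $0$.

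\emph{Step 1: the second moment.} By~\eqref{eq:normalized}, $q^{n-1}|A^0_q(h)|^2=|\Sigma_n(h)|^2+O\big(q^{(n-1)/2}q^{(n-2)/2}\big)$ on $S(\Fq)$, since $|A^0_q|\le r(n)$ and $c_n(q)\ll q^{(n-2)/2}$. Summing over $S(\Fq)$ gives an error of $O(q^{2n-3/2})$. The points of $\Fq^n\smallsetminus S(\Fq)$ with $a\le n-1$ nonzero coordinates contribute at most $\sum_a q^a q^{a-1}\ll q^{2n-3}$ to $\sum|\Sigma_n|^2$. Proposition~\ref{prop:low-moments}(ii) gives $\sum_{h\in\Fq^n}|\Sigma_n(h)|^2=q^{2n-1}+O(q^{2n-2})$. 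Dividing by $q^{n-1}\cdot q^n$ yields the first asymptotic.

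\emph{Step 2: the twisted second moment.} The same comparison uses $\Sigma_n(h)^2$. Its main term is $(-1)^{2n-2}q^{n-1}A^0_q(h)^2$, and the cross term is $O(q^{(2n-3)/2})$. Proposition~\ref{prop:low-moments}(iii) gives $\sum_h\Sigma_n(h)^2=-q^{2n-2}+O(q^{2n-3})$. After removing the boundary points and dividing by $q^{2n-1}$, the result is $O(q^{-1/2})$, coming from the cross terms.

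\emph{Step 3: conversion.} $\mathcal E_1=\mathcal W_n\otimes\mathcal W_n^\vee$ is pure of weight $0$, with trace $|A^0_q(h)|^2$, since eigenvalues on $\mathcal W_n^\vee$ are $q^{-(n+1)}\bar\alpha$. By Lemma~\ref{lem:averages} (with $d=n$), the limit $1$ forces $\dim\End_G(V)=\dim(V\otimes V^\vee)^G=1$. Irreducibility then follows from the semisimplicity of $V$ noted at the start of this section.

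For the second statement, take $\mathcal E_2=\mathcal W_n^{\otimes 2}\otimes\gamma^{2\deg}$, where $\gamma$ is as in Theorem~\ref{thmE}(iii). It is pure of weight $0$, with trace at $h\in S(\Fq)$ equal to $A^0_q(h)^2$. Its geometric invariants are $(V\otimes V)^G$, because the twist is geometrically constant. Lemma~\ref{lem:averages} gives $\sum_m\beta_m^k=O(q^{-1/2})\to0$. Its final clause (or Dirichlet's argument) shows the limit equals $M$, so $M=0$. Finally, irreducibility gives $(V\otimes V)^G\cong\Hom_G(V^\vee,V)$, so $(V\otimes V)^G=0$ means $V\not\cong V^\vee$.

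\emph{Main obstacle.} The only care needed is the error bookkeeping in Steps 1 and 2. One must check that the cross terms with $c_n(q)$, of size $q^{(n-1)/2+(n-2)/2}$ per point, and the contribution of the coordinate hyperplanes are genuinely $o(q^{2n-1})$, and that the exact formula (iii) is $O(q^{2n-2})$ rather than of size $q^{2n-1}$. Both follow from Corollary~\ref{cor:all-h} and Theorem~\ref{thm:fibre}. The argument is uniform for $n\ge2$, with no use of Theorem~\ref{thm:fourth}.
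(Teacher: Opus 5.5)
Your proposal is correct and follows the paper's proof essentially step for step. It uses the same comparison of $|\Sigma_n|^2$ and $\Sigma_n^2$ with $q^{n-1}|A^0_q|^2$ and $q^{n-1}(A^0_q)^2$ via \eqref{eq:normalized}, the same $O(q^{2n-3/2})$ and $O(q^{2n-3})$ error bookkeeping against Proposition~\ref{prop:low-moments}(ii),(iii), and the same application of Lemma~\ref{lem:averages} to $\mathcal W_n\otimes\mathcal W_n^\vee$ and $\mathcal W_n^0\otimes\mathcal W_n^0$. (The identification $(V\otimes V)^G\cong\Hom_G(V^\vee,V)$ holds without irreducibility, so your appeal to it there is harmless but unnecessary.)
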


\begin{proof}
By~\eqref{eq:normalized}, for $h\in S(\Fq)$,
\[
|\Sigma_n(h)|^2=q^{n-1}|A^0_q(h)|^2+2c_n(q)q^{(n-1)/2}\operatorname{Re}A^0_q(h)+c_n(q)^2,
\]
and similarly $\Sigma_n(h)^2=q^{n-1}A^0_q(h)^2+2c_n(q)q^{(n-1)/2}A^0_q(h)+c_n(q)^2$. Summed over the at most $q^n$ points of $S(\Fq)$, the last two terms contribute $O(q^{(n-2)/2+(n-1)/2+n})=O(q^{2n-3/2})$. The points outside $S(\Fq)$ contribute $O(q^{2n-3})$ to both moments. By Proposition~\ref{prop:low-moments}(ii) and~(iii), $\sum_{h\in\Fq^n}|\Sigma_n(h)|^2=q^{2n-1}+O(q^{2n-2})$ and $\sum_{h\in\Fq^n}\Sigma_n(h)^2=O(q^{2n-2})$. Dividing by $q^{2n-1}$ gives the two estimates. Now apply Lemma~\ref{lem:averages} with $T=S$ and $d=n$. For the first estimate take $\mathcal E=\mathcal W_n\otimes\mathcal W_n^\vee$, which is pure of weight $0$ with trace $|A^0_q(h)|^2$ at $h$ and with $\dim\big(V\otimes V^\vee\big)^G=\dim\End_G(V)$. For the second take $\mathcal E=\mathcal W_n^0\otimes\mathcal W_n^0$, where $\mathcal W_n^0=\mathcal W_n\otimes\gamma^{\deg}$ and $\gamma\in\Qlb$ is the element with $\iota_{\mathbb C}(\gamma)=p^{-(n+1)/2}>0$; its trace at $h\in S(\Fq)$ is $A^0_q(h)^2$, and $G$ acts on its geometric generic stalk $V\otimes V$. Hence $\dim\End_G(V)=1$ and $(V\otimes V)^G=0$. As $V$ is semisimple, the first equality means that $V$ is irreducible. The second means $\Hom_G(V^\vee,V)=0$.
\end{proof}

\subsection{The identity component}

\begin{proposition}\label{prop:unipotent}
For every $n\ge2$ the group $G$ contains a unipotent element $g\ne1$ with $\rk(g-1)=r(n-1)$. In particular $G$ is infinite.
\end{proposition}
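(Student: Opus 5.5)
The element will come from the local monodromy of $\mathcal W_n$ at the finite boundary point of a coordinate line. Fix $c\in(\Fpbar^\times)^{n-1}$ and consider the line $C_c\subset S\otimes\Fpbar$. The inclusion $C_c\hookrightarrow S$ induces a homomorphism $\pi_1(C_c\otimes\Fpbar)\to\pi_1(S\otimes\Fpbar)$ that is compatible with the actions on the geometric generic stalks. So the geometric monodromy group of $\mathcal W_n|_{C_c}$ is contained in $G$, after identifying the generic stalk of the restriction with $V$ by specialization, which is possible because $\mathcal W_n$ is lisse on $S$. The inertia group $I_0$ at $h_1=0$ is a subgroup of $\pi_1(C_c\otimes\Fpbar)$, defined up to conjugacy, so its image lies in $G$.

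By Theorem~\ref{thm:boundary}(ii), $\mathcal W_n|_{C_c}$ is tame at $h_1=0$. The local monodromy there is unipotent, with $r(n-1)$ Jordan blocks of size $2$ and $\kappa_1$ blocks of size $1$, and no larger blocks. Let $\gamma$ be a topological generator of tame inertia and let $g$ be its image in $\GL(V)$. Then $g$ is unipotent. For a unipotent element, $\rk(g-1)$ is the sum over Jordan blocks of (size $-1$). This gives $\rk(g-1)=r(n-1)\cdot1+\kappa_1\cdot0=r(n-1)$.

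It remains to see that $g\ne1$, which means $r(n-1)\ge1$. Since $r(m)=2^m-\binom m{\lfloor m/2\rfloor}$, we have $r(1)=2-1=1$, and for $m\ge1$ we have $\binom m{\lfloor m/2\rfloor}<2^m$. So $r(n-1)\ge1$ for every $n\ge2$. Finally, a unipotent element $g\ne1$ in characteristic $0$ has infinite order, because the powers $g^k=\exp(kN)$ with $N=\log g\ne0$ are pairwise distinct. Hence $G$ is infinite.

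I expect no real obstacle here. The substantive input, the Jordan structure at $h_1=0$, is Theorem~\ref{thm:boundary}(ii). The only point needing a sentence is the compatibility of the generic stalks under restriction to the line, and the lisseness of $\mathcal W_n$ on $S$ gives this.
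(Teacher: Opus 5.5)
Your proposal is correct and follows the paper's proof: restrict to a coordinate line $C_c$, take the image of a generator of tame inertia at $h_1=0$, read $\rk(g-1)=r(n-1)\ge1$ off the Jordan structure in Theorem~\ref{thm:boundary}(ii), and conclude that $g$ has infinite order because $\log g\ne0$ in characteristic $0$. Your explicit checks that $r(n-1)\ge1$ and that the rank of $g-1$ is the sum of (block size $-1$) merely spell out what the paper states in one line.
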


\begin{proof}
Choose $c\in(\Fpbar^\times)^{n-1}$. The inclusion $C_c\subset S$ induces a homomorphism $\pi_1(C_c\otimes\Fpbar)\to\pi_1(S\otimes\Fpbar)$. Through it, the inertia group at $h_1=0$ acts on $V$ by the local monodromy of $\mathcal W_n|_{C_c}$. By Theorem~\ref{thm:boundary}(ii), a topological generator of tame inertia acts by a unipotent $g$ with $\rk(g-1)=r(n-1)\ge1$. In characteristic $0$ such an element has infinite order, since $g^m=\exp(m\log g)$ and $\log g\ne0$.
\end{proof}

\begin{proposition}\label{prop:det}
For every $n\ge2$, the character $\det\mathcal W_n$ of $\pi_1(S\otimes\Fpbar)$ has finite image $\Gamma$, and $\det(G)=\Gamma$. Consequently $G\cap\{\text{scalars}\}$ is finite.
\end{proposition}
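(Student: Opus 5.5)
The plan is to obtain finiteness of $\Gamma$ from purity via Deligne's theorem on rank-one sheaves, and then deduce the other two assertions formally. The coordinate-line computation of Theorem~\ref{thm:infinity}(ii) is only a consistency check here.

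\emph{Finiteness of $\Gamma$.} The sheaf $\mathcal D=\det\mathcal W_n$ is lisse of rank one on the normal, geometrically connected variety $S$ over $\Fp$. By Theorem~\ref{thm:lisse}(iv), $\mathcal W_n$ is pointwise pure of weight $n+1$, so $\mathcal D$ is pointwise pure of weight $(n+1)r(n)$. I would invoke Deligne's theorem on lisse $\Qlb$-sheaves of rank one \cite[Prop.~1.3.4]{Deligne}: for a suitable $\alpha\in\Qlb^\times$, the twist $\mathcal D\otimes(\alpha^{-1})^{\deg}$ has finite order as a character of $\pi_1(S)$. Geometrically constant twists do not change the restriction to $\pi_1(S\otimes\Fpbar)$. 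Hence $\det\mathcal W_n$ restricted to $\pi_1(S\otimes\Fpbar)$ has finite image $\Gamma$.

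As a check, on each coordinate line $C_c$ Theorem~\ref{thm:infinity}(ii) identifies this restriction with $\Lpsi{-2^{n-1}\lambda^{-1}h_1}$, which has order $p$. This agrees with finiteness, but it is not needed: only Deligne's theorem is used.

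\emph{$\det(G)=\Gamma$.} Let $H\subseteq\GL(V)$ be the image of $\pi_1(S\otimes\Fpbar)$, so that $G$ is its Zariski closure. The determinant is a morphism of algebraic groups. Hence $\det(G)\subseteq\overline{\det(H)}=\overline{\Gamma}=\Gamma$, the last equality because a finite set is Zariski closed. Conversely, $\det(G)\supseteq\det(H)=\Gamma$. So $\det(G)=\Gamma$.

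\emph{Scalars.} If $\zeta\cdot\mathrm{id}_V$ lies in $G$, then $\zeta^{r(n)}\in\Gamma$. So $\zeta^{r(n)|\Gamma|}=1$, and there are only finitely many such $\zeta$. Therefore $G\cap\{\text{scalars}\}$ is finite.

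The only nontrivial step is the appeal to Deligne's rank-one result. What must be checked there is that its hypotheses (normal connected base of finite type over a finite field, lisse of rank one, pure) hold for $\mathcal D$. They do, by Theorem~\ref{thm:lisse}(iv).
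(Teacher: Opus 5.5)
Your proof is correct, but it follows a different route from the paper.

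\textbf{Your route.} You get finiteness of $\Gamma$ in one stroke from Deligne's theorem that a lisse rank-one $\Qlb$-sheaf on a normal connected scheme of finite type over a finite field becomes of finite order after a twist by some $\alpha^{\deg}$ \cite[1.3.4]{Deligne}. That theorem needs no purity hypothesis, so your appeal to Theorem~\ref{thm:lisse}(iv) is superfluous. The same input is already implicit in the paper: the semisimplicity of $G^0$ \cite[Cor.~1.3.9]{Deligne}, cited at the start of Section~\ref{sec:monodromy}, forces $\det$ to be trivial on $G^0$, so $\det(G)$ is finite.

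\textbf{The paper's route.} The paper proves finiteness by hand, in three steps.
\begin{enumerate}[(1)]
\item Theorem~\ref{thm:boundary}(ii) makes $\det\mathcal W_n$ trivial on the inertia groups at $h_i=0$ of all coordinate lines.
\item Kummer theory on $\Gm^n$ shows that a character of finite order prime to $p$ that is trivial on these inertia groups is trivial.
\item A compactness argument in $\Qlb^\times$ splits $\Gamma$ into a pro-$p$ part and a prime-to-$p$ part. The pro-$p$ part is finite because $1+\mathfrak m_E$ is pro-$\ell$, and the prime-to-$p$ part is trivial by (1) and (2).
\end{enumerate}

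\textbf{What each buys.} Your argument is shorter and works for any lisse sheaf, but it rests on an external class-field-theoretic theorem. The paper's argument uses only its own boundary analysis and elementary Kummer theory. It also proves more than the statement: $\Gamma$ is a finite $p$-group.

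That extra fact is used in the proof of Theorem~\ref{thm:det} (``by Proposition~\ref{prop:det} and its proof''), because Lemma~\ref{lem:AS-lines} applies only to characters whose image is a $p$-group. If you substitute your proof, you still need an argument like the paper's Steps 1--2 to rule out a prime-to-$p$ part of $\Gamma$ before Theorem~\ref{thm:det} goes through.
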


\begin{proof}
Let $\rho\colon\pi_1(S\otimes\Fpbar)\to\GL(V)$ be the representation defined by $\mathcal W_n$, so that $\Gamma$ is the image of $\det\circ\rho$.

\emph{Step 1: inertia.} By symmetry in the coordinates, Theorem~\ref{thm:boundary} applies to every coordinate line in every direction. Fix $i$ and a line $C\subset S\otimes\Fpbar$ along which the coordinates $h_j$, $j\ne i$, are constant. Through the homomorphism $\pi_1(C)\to\pi_1(S\otimes\Fpbar)$, the inertia group $I_0$ at $h_i=0$ acts on $V$ by the local monodromy of $\mathcal W_n|_C$. By Theorem~\ref{thm:boundary}(ii) the wild inertia acts trivially, so the action factors through the procyclic tame quotient of $I_0$, a topological generator of which acts by a unipotent element $g$. The image of $I_0$ therefore lies in the closure of the group generated by $g$, on which the continuous character $\det$ is identically $1$, because $\det g=1$. Thus $\det\circ\rho$ is trivial on $I_0$.

\emph{Step 2: characters of order prime to $p$.} Let $m\ge1$ be prime to $p$. As $S\otimes\Fpbar$ is connected, the continuous homomorphisms $\pi_1(S\otimes\Fpbar)\to\mathbb Z/m$ are the elements of $H^1(S\otimes\Fpbar,\mathbb Z/m)$, which we identify with $H^1(S\otimes\Fpbar,\mu_m)$ by choosing a primitive $m$-th root of unity in $\Fpbar$. The Kummer sequence $1\to\mu_m\to\Gm\xrightarrow{\,m\,}\Gm\to1$, exact for the \'etale topology because $p\nmid m$, gives an exact sequence
\[
0\to\mathcal O(S\otimes\Fpbar)^\times\big/\mathcal O(S\otimes\Fpbar)^{\times m}\to H^1(S\otimes\Fpbar,\mu_m)\to\mathrm{Pic}(S\otimes\Fpbar)[m]\to0.
\]
The ring $\mathcal O(S\otimes\Fpbar)=\Fpbar[h_1^{\pm1},\dots,h_n^{\pm1}]$ is factorial, so $\mathrm{Pic}(S\otimes\Fpbar)=0$ and the first map is an isomorphism. The units of this ring are $\Fpbar^\times\cdot h_1^{\mathbb Z}\cdots h_n^{\mathbb Z}$, and $\Fpbar^\times$ is $m$-divisible. Hence
\[
H^1(S\otimes\Fpbar,\mu_m)\cong(\mathbb Z/m)^n,
\]
with basis the Kummer classes of $h_1,\dots,h_n$. Let $\chi$ be the character attached to the class of $h_1^{e_1}\cdots h_n^{e_n}$, and let $C$ be as in Step~1. On $C$ the functions $h_j$, $j\ne i$, are constants, hence $m$-th powers in $\Fpbar^\times$, so $\chi|_{\pi_1(C)}$ is the Kummer character of $h_i^{e_i}$. Its restriction to $I_0$ is the $e_i$-th power of the character $\sigma\mapsto\sigma(h_i^{1/m})/h_i^{1/m}$. This character maps $I_0$ onto $\mu_m$, because $h_i$ is a uniformizer at $0$, so that adjoining $h_i^{1/m}$ gives a totally ramified extension of degree $m$. Hence $\chi$ is trivial on $I_0$ if and only if $m\mid e_i$. Consequently a character of $\pi_1(S\otimes\Fpbar)$ of finite order prime to $p$ that is trivial on the inertia groups of Step~1 for $i=1,\dots,n$ is trivial.

\emph{Step 3: $\Gamma$ is finite.} The group $\Gamma$ is compact, since $\pi_1(S\otimes\Fpbar)$ is. The group $\Qlb^\times$ is the union of the closed subgroups $E^\times$, where $E\subset\Qlb$ runs over the countably many finite extensions of $\mathbb Q_\ell$. By the Baire category theorem some $\Gamma\cap E^\times$ has nonempty interior in $\Gamma$; it is then an open subgroup, of finite index. Enlarging $E$ by a finite set of coset representatives, we may assume $\Gamma\subseteq E^\times$. The valuation $v$ of $E$ maps $\Gamma$ onto a compact subgroup of the discrete torsion-free group $v(E^\times)$, which is finite and hence trivial; so $\Gamma\subseteq\mathcal O_E^\times$. Now $\mathcal O_E^\times$ is the product of a finite group and the pro-$\ell$ group $1+\mathfrak m_E$. The profinite abelian group $\Gamma$ is the product of its pro-$p$ part $\Gamma_p$ and its prime-to-$p$ part $\Gamma'$. Indeed, $\Gamma$ is the inverse limit of its finite quotients, each finite abelian group is canonically the product of its $p$-part and its prime-to-$p$ part, and these decompositions are compatible with the transition maps. As $\ell\ne p$, the group $\Gamma_p$ meets $1+\mathfrak m_E$ trivially, so it embeds into the finite group $\mathcal O_E^\times/(1+\mathfrak m_E)$ and is finite. If $\Gamma'\ne1$, then $\Gamma'$ has a nontrivial finite quotient, whose order is prime to $p$, and hence a nontrivial continuous character $\chi_0$ of finite order prime to $p$. Composing $\det\circ\rho$, which maps onto $\Gamma$, with the projection $\Gamma\to\Gamma'$ and with $\chi_0$ gives a nontrivial character of $\pi_1(S\otimes\Fpbar)$ of order prime to $p$. By Step~1 it is trivial on all the inertia groups considered there, which contradicts Step~2. Hence $\Gamma'=1$, and $\Gamma=\Gamma_p$ is finite.

\emph{Step 4.} The subset $\det^{-1}(\Gamma)$ of $\GL(V)$ is Zariski closed and contains the image of $\rho$, so it contains $G$. Therefore $\det(G)=\Gamma$. If $z\cdot1\in G$, then $z^{r(n)}=\det(z\cdot1)\in\Gamma$, which leaves finitely many possibilities for $z$.
\end{proof}

\begin{theorem}[Larsen's alternative]\label{thm:larsen}
Let $V$ be a vector space of dimension $N\ge2$ over an algebraically closed field $k$ of characteristic $0$, and let $H\subseteq\GL(V)$ be a Zariski-closed subgroup with $H^0$ reductive and $M_4(H,V)=2$. Then either $H^0\supseteq\SL(V)$, or $H^0$ consists of scalars. In the second case $H/(H\cap\{\text{scalars}\})$ is finite.
\end{theorem}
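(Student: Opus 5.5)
We plan to follow Katz's proof of Larsen's alternative, working with the Lie algebra $\mathfrak g=\operatorname{Lie}(H^0)\subseteq\mathfrak{gl}(V)=\End(V)$. Throughout we use the isomorphism of $H$-modules $V\otimes V^\vee\cong\End(V)=k\cdot1\oplus\mathfrak{sl}(V)$.

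The first step is to translate $M_4=2$ into irreducibility statements. Since $H^0$ is reductive, Lemma~\ref{lem:semisimple}(i) shows that every representation of $H$ is semisimple, so $M_4(H,V)=\dim\End_H(V\otimes V)$ equals the sum of the squares of the multiplicities of the simple constituents of $V\otimes V$. Now $V\otimes V=\Sym^2V\oplus\bigwedge^2V$, and both summands are nonzero when $N\ge2$. Hence $M_4=2$ forces $\Sym^2V$ and $\bigwedge^2V$ to be irreducible and non-isomorphic $H$-modules. We also use the equivalent form $M_4=\dim\End_H(V\otimes V^\vee)$. Under the decomposition of $V\otimes V^\vee$ above, $M_4=2$ says that $V$ is irreducible, so that $\End_H(V)=k$, and that $\mathfrak{sl}(V)$ is an irreducible $H$-module. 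Here the adjoint action of $H$ on $\End(V)$ is conjugation.

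The second step is to exploit the stability of $\mathfrak g$. The subspace $\mathfrak g\cap\mathfrak{sl}(V)$ is stable under conjugation by $H$, because $H$ normalizes $H^0$. By irreducibility of $\mathfrak{sl}(V)$ it is either $0$ or all of $\mathfrak{sl}(V)$.
\begin{itemize}
\item If it is all of $\mathfrak{sl}(V)$, then $H^0$ is a connected group whose Lie algebra contains $\mathfrak{sl}(V)$. In characteristic $0$, connected subgroups are determined by their Lie algebras, so $H^0\supseteq\SL(V)$.
\item If it is $0$, then $\mathfrak g$ meets $\mathfrak{sl}(V)$ trivially. The bracket $[\mathfrak g,\mathfrak g]$ lies in $\mathfrak g\cap\mathfrak{sl}(V)=0$, so $\mathfrak g$ is abelian, and $\mathfrak g$ embeds into $\End(V)/\mathfrak{sl}(V)\cong k$ through the trace, so $\dim\mathfrak g\le1$. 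It remains to show that $\mathfrak g$ consists of scalars.
\end{itemize}

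The delicate point is this second case; a one-dimensional $\mathfrak g$ transversal to $\mathfrak{sl}(V)$ need not a priori be $k\cdot1$. We argue with $H^0$ itself. Since $\dim\mathfrak g\le1$, $H^0$ is trivial, $\Gm$ or $\mathbb G_a$. The group $\mathbb G_a$ is excluded because $H^0$ is reductive, so $H^0$ is a torus of dimension at most $1$.

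Decompose $V=\bigoplus_\chi V_\chi$ into weight spaces for $H^0$. The group $H$ permutes the $V_\chi$ because it normalizes $H^0$. Now take $X\in\mathfrak g$ and its traceless part $X_0=X-\frac{\Tr X}{N}\cdot1$. Conjugation by $h\in H$ preserves $\mathfrak g$ and preserves trace, so it fixes $X$ whenever $\mathfrak g$ is a line. Hence $X$ commutes with $H$, and since $\End_H(V)=k$, $X$ is a scalar. Thus $\mathfrak g\subseteq k\cdot1$, and $H^0$ consists of scalars.

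Finally, in this case $H^0\subseteq H\cap\{\text{scalars}\}$, so $H/(H\cap\{\text{scalars}\})$ is a quotient of the finite group $H/H^0$, hence finite.

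The main obstacle we expect is the identification of connected subgroups via Lie algebras in the first case. This is standard in characteristic $0$, and we would cite it rather than reprove it.
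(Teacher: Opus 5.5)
Your proof is correct and follows essentially the same route as the paper: $M_4=2$ makes $\mathfrak{sl}(V)$ irreducible under conjugation, so $\mathfrak h\cap\mathfrak{sl}(V)$ is $0$ or $\mathfrak{sl}(V)$, and in the second of these cases the Lie-algebra correspondence in characteristic $0$ gives $\SL(V)\subseteq H^0$. The only variation is in the case $\mathfrak h\cap\mathfrak{sl}(V)=0$: you use $\End_H(V)=k$ together with $\Tr X\neq 0$ to see that a generator $X$ of the line $\mathfrak h$ is fixed by $H$, whereas the paper projects $\mathfrak h$ equivariantly onto the irreducible module $\mathfrak{sl}(V)$, of dimension $N^2-1>1$; you should state $\Tr X\neq0$ explicitly (it holds because $X\notin\mathfrak{sl}(V)$), and the discussion of $\Sym^2V$, $\bigwedge^2V$, the torus and the weight spaces is not needed.
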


This is \cite[Th.~1.1.6(1)]{Katz-Larsen}. We include the short proof, which uses no classification of finite groups.

\begin{proof}
By Lemma~\ref{lem:semisimple}(i), every finite-dimensional representation of $H$ is semisimple. Let $H$ act on $\End(V)=V\otimes V^\vee$ by conjugation. A permutation of the tensor factors identifies $V^{\otimes2}\otimes(V^\vee)^{\otimes2}$ with $\End(V)\otimes\End(V)^\vee=\End_k(\End(V))$, compatibly with the action of $H$, so $\dim\End_H(\End(V))=M_4(H,V)=2$. Since $k$ has characteristic $0$, we have $N\ne0$ in $k$, and the trace gives a decomposition $\End(V)=k\cdot1\oplus\mathfrak{sl}(V)$ into $H$-stable subspaces. Accordingly,
\[
\End_H(\End(V))=\End_H(k\cdot1)\oplus\Hom_H(k\cdot1,\mathfrak{sl}(V))\oplus\Hom_H(\mathfrak{sl}(V),k\cdot1)\oplus\End_H(\mathfrak{sl}(V)).
\]
The first and the last summands contain the identity maps and are nonzero, as $\mathfrak{sl}(V)\ne0$. The total dimension is $2$, so both are one-dimensional and the two cross terms vanish. Thus the semisimple $H$-module $\mathfrak{sl}(V)$ has a one-dimensional endomorphism algebra, and it is irreducible.

Let $\mathfrak h\subseteq\End(V)$ be the Lie algebra of $H$. For $g\in H$, conjugation by $g$ preserves $H$, so its differential, which is conjugation by $g$ on $\End(V)$, preserves $\mathfrak h$. Thus $\mathfrak h\cap\mathfrak{sl}(V)$ is an $H$-stable subspace of $\mathfrak{sl}(V)$, equal to $0$ or to $\mathfrak{sl}(V)$. In characteristic $0$ every group scheme of finite type over $k$ is smooth, by Cartier's theorem. Hence the Lie algebra of the intersection of two closed subgroups of $\GL(V)$ is the intersection of their Lie algebras, and every algebraic group has the dimension of its Lie algebra.

If $\mathfrak h\cap\mathfrak{sl}(V)=\mathfrak{sl}(V)$, then $(H\cap\SL(V))^0$ is a closed connected subgroup of $\SL(V)$ of dimension $\dim\mathfrak{sl}(V)=\dim\SL(V)$. Since $\SL(V)$ is connected, hence irreducible as a variety, a closed irreducible subset of the same dimension is all of $\SL(V)$. Thus $(H\cap\SL(V))^0=\SL(V)$, and $\SL(V)\subseteq H^0$.

If $\mathfrak h\cap\mathfrak{sl}(V)=0$, then $\dim\mathfrak h\le1$. The projection $X\mapsto X-N^{-1}\Tr(X)\cdot1$ of $\End(V)$ onto $\mathfrak{sl}(V)$ is $H$-equivariant, and it maps $\mathfrak h$ onto an $H$-stable subspace of $\mathfrak{sl}(V)$ of dimension at most $1<N^2-1$. This subspace is therefore $0$, that is, $\mathfrak h\subseteq k\cdot1$. Let $Z\subseteq\GL(V)$ be the group of scalars. Then $H^0\cap Z$ has Lie algebra $\mathfrak h\cap k\cdot1=\mathfrak h$, hence the same dimension as the irreducible variety $H^0$, and $H^0=(H^0\cap Z)^0\subseteq Z$. Finally $H^0\subseteq H\cap Z$, so $H/(H\cap Z)$ is a quotient of the finite group $H/H^0$.
\end{proof}

\begin{theorem}\label{thm:monodromy}
For every $n\ge2$ and every odd prime $p$, $G^0=\SL_{r(n)}$.
\end{theorem}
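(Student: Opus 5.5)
For $n\ge3$ we apply Larsen's alternative (Theorem~\ref{thm:larsen}) to $H=G$ acting on $V$, with $N=r(n)\ge5$. Its hypotheses hold: $G^0$ is semisimple, hence reductive, because $\mathcal W_n$ is pure, and $M_4(G,V)=2$ by Corollary~\ref{cor:M4}. We then rule out the second alternative. If $G^0$ consisted of scalars, then $G/(G\cap\{\text{scalars}\})$ would be finite. By Proposition~\ref{prop:det}, $G\cap\{\text{scalars}\}$ is finite, so $G$ itself would be finite. This contradicts Proposition~\ref{prop:unipotent}, which gives a unipotent element $g\ne1$, and such an element has infinite order. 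Hence $G^0\supseteq\SL(V)$. Since $\det(G)$ is finite by Proposition~\ref{prop:det}, $\det$ is trivial on the connected group $G^0$, so $G^0\subseteq\SL(V)$ and therefore $G^0=\SL_{r(n)}$.

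For $n=2$ we have $r(2)=2$, and the fourth moment is not available. We argue directly in dimension two. Corollary~\ref{cor:M2} shows that $V$ is an irreducible representation of $G$ and is not self-dual. $G^0$ is semisimple (Deligne) and lies in $\SL_2$, because $\det$ has finite image on $G$. The connected semisimple subgroups of $\SL_2$ are $1$ and $\SL_2$. If $G^0=1$, then $G$ is finite, which again contradicts Proposition~\ref{prop:unipotent}. Hence $G^0=\SL_2$.

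The non-self-duality is not actually needed at this step, since the unipotent element already excludes a finite $G$. I would nevertheless cite Corollary~\ref{cor:M2} for irreducibility, to make explicit that no torus case arises. Alternatively, one can exclude a torus as $G^0$ simply because a torus is not semisimple.

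The main point to check is that Proposition~\ref{prop:det} (finiteness of $\det$) and Proposition~\ref{prop:unipotent} really apply for every $n\ge2$ and every odd $p$. Both rest only on Theorem~\ref{thm:boundary}(ii), which is proved for all $n\ge2$, so this should go through. The remaining subtlety is the deduction that $G$ is finite from the finiteness of $G/(G\cap Z)$ and of $G\cap Z$. This is immediate, since $G\cap Z$ is a finite normal subgroup with finite quotient.
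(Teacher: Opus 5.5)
Your proof is correct. For $n\ge3$ it is the paper's argument: Larsen's alternative with $M_4(G,V)=2$. You exclude the scalar case by combining finiteness of $G\cap\{\text{scalars}\}$ (Proposition~\ref{prop:det}) with the infinite-order unipotent element (Proposition~\ref{prop:unipotent}), and you get $G^0\subseteq\SL(V)$ from the finiteness of $\det(G)$.

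For $n=2$ your route differs somewhat from the paper's:
\begin{itemize}
\item \textbf{Your route.} You use Deligne's theorem (Weil~II, Cor.~1.3.9, recorded at the start of Section~\ref{sec:monodromy}) that $G^0$ is \emph{semisimple}. This forces $G^0\in\{1,\SL_2\}$, so you only need $G$ to be infinite.
\item \textbf{The paper's route.} It uses only that $G^0$ is reductive, which already follows from geometric semisimplicity via Lemma~\ref{lem:semisimple}(ii) without Cor.~1.3.9. It therefore has to handle the case where $G^0$ is a torus. It excludes that case by observing that some power $g^m$ of the boundary unipotent element lies in $G^0$ and is still a nontrivial unipotent, which uses the unipotent element more strongly than mere infinitude of $G$.
\end{itemize}
Your version is shorter given the semisimplicity input; the paper's requires less from Weil~II.

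One side remark of yours is inaccurate, though your logic does not depend on it. Irreducibility of $V$, even together with non-self-duality (Corollary~\ref{cor:M2}), does not rule out a torus as $G^0$. For example, with $T$ the diagonal torus, the group $N_{\SL_2}(T)\cdot\mu_p$ (scalars of order $p$ adjoined) has the following properties:
\begin{itemize}
\item it acts irreducibly on $V$;
\item it has no invariants in $V\otimes V$;
\item its determinant group is $\mu_p$, since $p$ is odd;
\item its identity component is the torus $T$.
\end{itemize}
What excludes the torus is semisimplicity of $G^0$, or the unipotent element as in the paper, not Corollary~\ref{cor:M2}.
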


\begin{proof}
\emph{Step 1: $G^0\subseteq\SL(V)$ and $\dim G^0\ge1$, for every $n\ge2$.} By Proposition~\ref{prop:det}, $\det(G^0)$ is a subgroup of the finite group $\det(G)$. It is connected, being the image of a connected group under a morphism of algebraic groups, and is therefore trivial. Thus $G^0\subseteq\SL(V)$. By Proposition~\ref{prop:unipotent} the group $G$ is infinite; since it has finitely many connected components, $\dim G^0\ge1$.

\emph{Step 2: the case $n\ge3$.} By Corollary~\ref{cor:M4}, $M_4(G,V)=2$, and $G^0$ is reductive, being semisimple. So Theorem~\ref{thm:larsen} applies to $H=G$. The group $G^0$ does not consist of scalars, for it would then be contained in $G\cap\{\text{scalars}\}$, which is finite by Proposition~\ref{prop:det}, whereas $\dim G^0\ge1$. Hence $G^0\supseteq\SL(V)$, and therefore $G^0=\SL(V)$.

\emph{Step 3: the case $n=2$.} Here $\dim V=r(2)=2$, and the fourth moment gives no information: for $n=2$ the bounds of Propositions~\ref{prop:zero} and~\ref{prop:generic} are of the same order $q^{3n+2}$ as the main term of Proposition~\ref{prop:diag-main}. We argue directly instead. Let $\mathfrak h\subseteq\mathfrak{sl}_2$ be the Lie algebra of the connected reductive group $G^0$. In characteristic $0$ the Lie algebra of a connected reductive group is reductive, so $\mathfrak h=\mathfrak z(\mathfrak h)\oplus[\mathfrak h,\mathfrak h]$ with $[\mathfrak h,\mathfrak h]$ semisimple. If $[\mathfrak h,\mathfrak h]\ne0$, it is therefore a nonzero semisimple subalgebra of $\mathfrak{sl}_2$, hence of dimension at least $3$. So $\mathfrak h=\mathfrak{sl}_2$, and $G^0=\SL_2$, because $\SL_2$ is connected of dimension $3$. If $[\mathfrak h,\mathfrak h]=0$, then $G^0$ is commutative, since in characteristic $0$ a connected algebraic group with abelian Lie algebra is commutative; being reductive, it is a torus, conjugate into the diagonal torus of $\SL_2$; it contains no unipotent element other than $1$. But for the element $g$ of Proposition~\ref{prop:unipotent} some power $g^m$ with $m\ge1$ lies in $G^0$, and $g^m=1+m(g-1)\ne1$, because $(g-1)^2=0$ in dimension two. Hence $G^0=\SL_2$.
\end{proof}

\begin{remark}\label{rem:no-classification}
Larsen's alternative leaves two possibilities, $G^0\supseteq\SL(V)$ or $G$ finite modulo scalars, and the second is excluded by the unipotent element of Proposition~\ref{prop:unipotent}, which has infinite order. So no list of finite subgroups of $\GL(V)$, and in particular neither the classification of finite simple groups nor the results of Guralnick and Tiep on finite primitive groups, enters the proof. The same holds for the determination of the component group below, which uses only the local monodromy at infinity and Artin--Schreier theory.
\end{remark}

\subsection{The determinant and the component group}

\begin{lemma}\label{lem:AS-lines}
Let $k$ be an algebraically closed field of characteristic $p$, let $S_k=\Gmn{k}^n$, and let $\chi\colon\pi_1(S_k)\to\Qlb^\times$ be a continuous character whose image is a finite $p$-group. Suppose that for every $i$ and every $c\in(k^\times)^{n-1}$ the restriction of $\chi$ to the fundamental group of the coordinate line $L_{i,c}=\{h:h_j=c_j\ \text{for}\ j\ne i\}$ is trivial. Then $\chi$ is trivial.
\end{lemma}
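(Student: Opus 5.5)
Since the image of $\chi$ is a finite $p$-group, and a finite $p$-group is solvable with a composition series whose factors are cyclic of order $p$, I will argue by induction on the order of the image. If $\chi\ne1$, compose $\chi$ with a surjection of its image onto $\mathbb Z/p$. The result is a nontrivial character of order $p$ that is still trivial on every coordinate line. So it suffices to treat characters of order exactly $p$. Actually, induction is not even needed: $\chi$ is trivial if and only if every such composite is trivial, because a nontrivial finite abelian $p$-group always has a nontrivial homomorphism to $\mathbb Z/p$.

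For a character of order $p$, I use Artin--Schreier theory on the affine scheme $S_k=\operatorname{Spec}A$, where $A=k[h_1^{\pm1},\dots,h_n^{\pm1}]$. The sequence $0\to\mathbb Z/p\to\mathbb G_a\xrightarrow{F-1}\mathbb G_a\to0$ is exact for the \'etale topology, and $H^1(S_k,\mathcal O)=0$ because $S_k$ is affine. Hence
\[
H^1(S_k,\mathbb Z/p)\cong A/(F-1)A .
\]
Each class is therefore represented by a Laurent polynomial $g$, with associated character that of $\Lpsi{g}$ up to a choice of $\psi$. Next I put $g$ in normal form. Using $h^{pm}\equiv h^{m}$ modulo $(F-1)A$, and using that constants lie in $(F-1)k$ because $k$ is algebraically closed, every class has a unique representative $g=\sum_{m}a_mh^m$ supported on exponents $m\in\mathbb Z^n\smallsetminus\{0\}$ with $p\nmid m$, meaning not all coordinates of $m$ are divisible by $p$. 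Uniqueness holds because $(F-1)A$ is spanned by the elements $a^ph^{pm}-ah^m$, and the normal form is the standard basis of the quotient.

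Now assume $g\ne0$ in normal form, and pick an exponent $m$ with $a_m\ne0$ and a coordinate $i$ with $p\nmid m_i$. Restrict to the line $L_{i,c}$. The restriction of $g$ is the Laurent polynomial in $h_i$
\[
g_c(h_i)=\sum_{e\in\mathbb Z} b_e(c)\,h_i^{e},\qquad b_e(c)=\sum_{m:\,m_i=e}a_m\,c^{m'},
\]
where $m'$ denotes $m$ with the $i$-th coordinate removed. The character of $\pi_1(L_{i,c})$ attached to $g_c$ is trivial if and only if $g_c\in(F-1)k[h_i^{\pm1}]$. For a one-variable Laurent polynomial this forces $b_e(c)=0$ for every $e$ with $p\nmid e$. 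This follows from the same one-variable normal form, together with the observation that terms with $p\mid e$ can only reduce to exponents $e/p^j$, and those stay divisible by $p$ until they become prime to $p$. That collision between reduced terms and the $b_e$ is the delicate point. To avoid it, I choose $e=m_i$ to be the exponent with $p\nmid e$ of largest absolute value among exponents occurring in $g$. Then no reduction of another term lands on $h_i^{e}$, so $b_e(c)=0$ is forced.

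Finally, $b_e(c)$ is a nonzero Laurent polynomial in $c$, since the exponents $m'$ are distinct for distinct $m$ with $m_i=e$. So it does not vanish for some $c\in(k^\times)^{n-1}$. This contradicts the hypothesis, hence $g\equiv0$ and $\chi$ is trivial. I expect the main obstacle to be this normal-form bookkeeping on the line, namely making sure that the restricted polynomial is reduced in the leading exponent $e$ so that nontriviality can be read off. Choosing $e$ of maximal $|e|$ with $p\nmid e$ is what I would use first to handle it.
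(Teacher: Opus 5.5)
Your reduction to characters of order $p$, the Artin--Schreier identification $H^1(S_k,\mathbb Z/p)\cong A/(F-1)A$, and the normal form supported on $\mathbb Z^n\smallsetminus p\mathbb Z^n$ are fine and match the paper. The paper's induction via $\chi^p$ amounts to your composite $\chi^{p^{e-1}}$. The gap is in the restriction step, at exactly the point you flagged.

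Triviality of $\chi$ on $L_{i,c}$ does not force $b_e(c)=0$. By the one-variable normal form it forces
\[
\beta_e(c)=\sum_{s\ge0}b_{p^se}(c)^{1/p^s}=0\qquad(p\nmid e).
\]
Your device for discarding the terms with $s\ge1$ does not work. The terms that reduce onto $h_i^{e}$ are those whose $i$-th exponent is $p^se$ with $s\ge1$. These exponents are divisible by $p$ and larger than $|e|$ in absolute value, so taking $e$ of maximal absolute value among exponents prime to $p$ does not exclude them.

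No other choice of $i$ and $e$ avoids the collision in general. The Laurent polynomial $g=h_1h_2^p+h_1^ph_2$ is in normal form. Its restriction to $L_{1,c}$ is $c^ph_1+ch_1^p\equiv(c^p+c^{1/p})h_1$, and symmetrically for $L_{2,c}$. On the lines with $c^{p^2-1}=-1$ the restriction is therefore trivial, although $b_1(c)=c^p\ne0$. Your last step only uses that the $m'$ are distinct among exponents with $m_i=e$, so it says nothing about $\beta_e$.

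The missing idea is a proof that $\beta_e$ does not vanish identically on $(k^\times)^{n-1}$ as soon as some $a_m\ne0$ with $m_i\in\{p^se:s\ge0\}$. The paper raises $\beta_e(c)$ to a large power $p^N$. This turns it into the Laurent polynomial $\sum_{s\ge0}\sum_{m_i=p^se}a_m^{p^{N-s}}c^{p^{N-s}m'}$. The paper then shows that distinct pairs $(s,m)$ give distinct monomials:
\begin{itemize}
\item for fixed $s$, the exponent $m$ is determined by $m'$;
\item if $p^{N-s}m'=p^{N-\tilde s}\tilde m'$ with $s<\tilde s$, then $\tilde m=p^{\tilde s-s}m\in p\mathbb Z^n$, which the normal form excludes.
\end{itemize}
Hence the polynomial is nonzero, so it is nonzero at some $c$, and $\chi$ is nontrivial on that line. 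Note that this uses the condition $m\notin p\mathbb Z^n$ a second time, across the whole ray $\{p^se\}$. Without it cancellation does occur, as for $h_1h_2-h_1^ph_2^p=-\wp(h_1h_2)$.

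A smaller point: you assert the uniqueness of the normal form rather than prove it, and you also need it in one variable to read off triviality. An argument such as the paper's should be supplied. If $g=\wp(b)$, choose a nonzero exponent $m_0$ of $b$ with $\max_i|m_{0,i}|$ maximal and look at the coefficient of $h^{pm_0}$ in $\wp(b)$.
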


\begin{proof}
The image of $\chi$ is cyclic of order $p^e$. We argue by induction on $e$, the case $e=0$ being trivial. The character $\chi^p$ satisfies the same hypothesis and has image of order $p^{e-1}$, so it is trivial by induction; hence $e\le1$. Suppose $e=1$. Choosing a primitive $p$-th root of unity in $\Qlb$, we may regard $\chi$ as an element of $H^1(S_k,\mathbb F_p)=\Hom(\pi_1(S_k),\mathbb F_p)$. Let $A=k[h_1^{\pm1},\dots,h_n^{\pm1}]$ and $\wp(x)=x^p-x$. The Artin--Schreier sequence $0\to\mathbb F_p\to\mathcal O\xrightarrow{\wp}\mathcal O\to0$ on the \'etale site, together with $H^1(S_k,\mathcal O)=0$, identifies $H^1(S_k,\mathbb F_p)$ with $A/\wp(A)$, compatibly with restriction to the lines $L_{i,c}$, for which the same identification holds with $A$ replaced by $k[h_i^{\pm1}]$.

Let $\mathsf M=\mathbb Z^n\smallsetminus p\mathbb Z^n$. Every class in $A/\wp(A)$ is represented by a Laurent polynomial $g=\sum_{m\in\mathsf M}a_mh^m$: constants lie in $\wp(k)$ since $k$ is algebraically closed, and $a\,h^{pm}\equiv a^{1/p}h^m$ modulo $\wp(A)$. This representative is unique. Indeed, if $\sum_{m\in\mathsf M}a_mh^m=\wp(b)$ with $b=\sum_mb_mh^m$ nonconstant, choose $m_0\ne0$ in the support of $b$ with $\max_i|m_{0,i}|$ maximal; then $pm_0$ is not in the support of $b$, so the coefficient of $h^{pm_0}$ in $\wp(b)$ is $b_{m_0}^p\ne0$, although $pm_0\notin\mathsf M$. So $b$ is constant, and then all $a_m$ vanish. The same holds in one variable.

Let $g=\sum_{m\in\mathsf M}a_mh^m$ represent $\chi$, fix $i$, and for $c\in(k^\times)^{n-1}$ write $m=(m',m_i)$ and $c^{m'}=\prod_{j\ne i}c_j^{m_j}$. The restriction of $g$ to $L_{i,c}$ is $\sum_{e\in\mathbb Z}\alpha_e(c)h_i^e$ with $\alpha_e(c)=\sum_{m_i=e}a_mc^{m'}$. Its reduced representative in one variable has, for $e\notin p\mathbb Z$, the coefficient $\sum_{s\ge0}\alpha_{p^se}(c)^{1/p^s}$ of $h_i^e$. By hypothesis these coefficients vanish for all $c$. Raising to a large power $p^N$, we find that the Laurent polynomial $\sum_{s\ge0}\sum_{m_i=p^se}a_m^{p^{N-s}}c^{p^{N-s}m'}$ vanishes on $(k^\times)^{n-1}$, hence identically. Distinct pairs $(s,m)$ give distinct monomials $c^{p^{N-s}m'}$. Indeed, for fixed $s$ the exponent $m$ is determined by $m'$, as $m_i=p^se$. And if $p^{N-s}m'=p^{N-\tilde s}\tilde m'$ with $s<\tilde s$, then $\tilde m=p^{\tilde s-s}m\in p\mathbb Z^n$, which is impossible. So $a_m=0$ whenever $m_i\ne0$. As every $m\in\mathsf M$ has a nonzero coordinate, $g=0$, and $\chi$ is trivial.
\end{proof}

\begin{theorem}\label{thm:det}
For every $n\ge2$ there is $\alpha\in\Qlb^\times$ with $\det\mathcal W_n\cong\Lpsi{-2^{n-1}\lambda^{-1}(h_1+\cdots+h_n)}\otimes\alpha^{\deg}$, where $\alpha^{\deg}$ is the geometrically constant sheaf of rank one on which $\Frob_p$ acts by $\alpha$. The image $\Gamma$ of $\det\mathcal W_n$ on $\pi_1(S\otimes\Fpbar)$ is the group $\mu_p$ of $p$-th roots of unity.
\end{theorem}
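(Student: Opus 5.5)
We consider the rank-one lisse sheaf
\[
\mathcal D=\det\mathcal W_n\otimes\Lpsi{2^{n-1}\lambda^{-1}(h_1+\cdots+h_n)}
\]
on $S$, and show that it is geometrically constant. The isomorphism with $\alpha^{\deg}$ then follows. Indeed, $\pi_1(S)$ acts on $\mathcal D$ through $\mathrm{Gal}(\Fpbar/\Fp)$. Taking $\alpha$ to be the eigenvalue of $\Frob_{h_0}$ at the rational point $h_0=(1,\dots,1)$ gives $\mathcal D\cong\alpha^{\deg}$, exactly as in the proof of Theorem~\ref{thm:lisse}(vi).

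Let $\chi$ be the character of $\pi_1(S\otimes\Fpbar)$ defined by $\mathcal D$. It is the product of $\det\circ\rho$ and an Artin--Schreier character. The first factor has finite image $\Gamma$ by Proposition~\ref{prop:det}; the second has image in $\mu_p$. So $\chi$ has finite image. Next we check that $\chi$ is trivial on every coordinate line $L_{i,c}$. By symmetry in the coordinates, Theorem~\ref{thm:infinity}(ii) applies in every direction. On $L_{i,c}$ the Artin--Schreier factor restricts to $\Lpsi{2^{n-1}\lambda^{-1}h_i}$ times a constant (the terms $h_j=c_j$, $j\ne i$, contribute a geometrically trivial sheaf). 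Theorem~\ref{thm:infinity}(ii) states precisely that the resulting rank-one sheaf on the line is geometrically constant.

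To conclude that $\chi$ is trivial, decompose the finite cyclic image into its prime-to-$p$ part and its $p$-part. For the prime-to-$p$ part $\chi'$, triviality on coordinate lines gives triviality on the inertia groups $I_0$ at $h_i=0$ considered in Step~1 of the proof of Proposition~\ref{prop:det}. Step~2 of that proof then forces $\chi'=1$. For the $p$-part, Lemma~\ref{lem:AS-lines} applies directly and gives triviality. Hence $\mathcal D$ is geometrically constant.

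For the image, the geometric determinant is now the Artin--Schreier character of the nonconstant linear function $-2^{n-1}\lambda^{-1}\sum_ih_i$; here $2^{n-1}\lambda^{-1}\ne0$ because $p$ is odd. Its restriction to a single coordinate line is $\Lpsi{ah_1}$ with $a\ne0$, which is geometrically nontrivial of order exactly $p$. So $\Gamma$ is a nontrivial subgroup of $\mu_p$, hence $\Gamma=\mu_p$.

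The main point requiring care is the passage from triviality on lines to triviality on $S$. It is handled by combining the Kummer computation in Proposition~\ref{prop:det} with Lemma~\ref{lem:AS-lines}, after checking that restricting the Artin--Schreier twist to a line introduces only a geometrically constant factor.
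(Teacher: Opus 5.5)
Your proof is correct and follows the paper's argument: twist $\det\mathcal W_n$ by the Artin--Schreier sheaf, apply Theorem~\ref{thm:infinity}(ii) on every coordinate line, conclude with Lemma~\ref{lem:AS-lines}, and read off $\Gamma=\mu_p$ from the nontrivial Artin--Schreier factor. The only difference is that you handle the prime-to-$p$ part of $\chi$ by rerunning Step~2 of Proposition~\ref{prop:det}; the paper instead notes that the proof of that proposition already shows $\Gamma$ is a $p$-group, so $\chi$ has $p$-power image and Lemma~\ref{lem:AS-lines} applies at once.
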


\begin{proof}
Put $\mathcal D=\det\mathcal W_n\otimes\Lpsi{2^{n-1}\lambda^{-1}(h_1+\cdots+h_n)}$, and let $\chi$ be the character of $\pi_1(S\otimes\Fpbar)$ that it defines. By Proposition~\ref{prop:det} and its proof, the image of $\det\mathcal W_n$ on $\pi_1(S\otimes\Fpbar)$ is a finite $p$-group, and the image of the Artin--Schreier sheaf lies in $\mu_p$; so the image of $\chi$ is a finite $p$-group. The family is symmetric in the coordinates, so Theorem~\ref{thm:infinity}(ii) applies to the coordinate lines $L_{i,c}$ in every direction $i$. On $L_{i,c}$ the sheaf $\Lpsi{2^{n-1}\lambda^{-1}(h_1+\cdots+h_n)}$ is $\Lpsi{2^{n-1}\lambda^{-1}h_i}$ tensored with a geometrically constant sheaf, so $\chi$ is trivial on $\pi_1(L_{i,c})$. By Lemma~\ref{lem:AS-lines}, $\chi$ is trivial, that is, $\mathcal D$ is geometrically constant. A lisse sheaf of rank one on $S$ that is geometrically constant is of the form $\alpha^{\deg}$, which proves the first assertion. Finally $2^{n-1}\lambda^{-1}\ne0$ in $\Fp$, as $p$ is odd. So the restriction of $\Lpsi{-2^{n-1}\lambda^{-1}(h_1+\cdots+h_n)}$ to a coordinate line is a nontrivial Artin--Schreier sheaf, of Swan conductor $1$ at $\infty$; its geometric image is a nontrivial subgroup of $\mu_p$, hence equal to $\mu_p$.
\end{proof}

The constant $\alpha$ is determined in Theorem~\ref{thm:alpha} below: $\alpha=p^{(n+1)r(n)/2}$.

\begin{theorem}\label{thm:Ggeom}
For every $n\ge2$ and every odd prime $p$,
\[
\Ggeom(\mathcal W_n)=\big\{g\in\GL(V):\det(g)^p=1\big\}.
\]
In particular Theorem~\ref{thmE}(ii) holds.
\end{theorem}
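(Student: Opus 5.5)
The plan is to combine Theorem~\ref{thm:monodromy} with Theorem~\ref{thm:det}. Write $H=\{g\in\GL(V):\det(g)^p=1\}$. The first task is to show $G\subseteq H$. By Theorem~\ref{thm:det} the image $\Gamma$ of $\det\circ\rho$ on $\pi_1(S\otimes\Fpbar)$ is $\mu_p$. By Step~4 of Proposition~\ref{prop:det}, $\det(G)=\Gamma=\mu_p$. Hence every $g\in G$ satisfies $\det(g)^p=1$, so $G\subseteq H$.

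For the reverse inclusion we use the identity component. By Theorem~\ref{thm:monodromy}, $G^0=\SL(V)$, and $\SL(V)$ is also the identity component of $H$. The determinant therefore induces isomorphisms
\[
H/\SL(V)\cong\mu_p\qquad\text{and}\qquad G/G^0\cong\det(G)=\mu_p .
\]
Since $G$ is a closed subgroup of $H$ containing $\SL(V)$, it is the preimage under $\det$ of the subgroup $\det(G)$ of $\mu_p$. That subgroup is all of $\mu_p$, so $G=H$. As a consequence, $G^0=\SL_{r(n)}$ and the component group $G/G^0\cong\mu_p$ is cyclic of order $p$, which is Theorem~\ref{thmE}(ii).

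Every step is a short formal deduction from results already proved, so no step is a real obstacle. The one point to check is that $\det(G)$ equals the full group $\mu_p$ and not a proper subgroup. This follows because the geometric image of $\det\mathcal W_n$ is the image of a nontrivial Artin--Schreier sheaf, and $\mu_p$ has prime order. This was already shown at the end of the proof of Theorem~\ref{thm:det}, using that $2^{n-1}\lambda^{-1}\ne0$ for odd $p$.
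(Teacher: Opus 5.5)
Your proposal is correct and follows essentially the same route as the paper. The paper's proof also combines $G^0=\SL(V)$ from Theorem~\ref{thm:monodromy} with $\det(G)=\Gamma=\mu_p$ from Proposition~\ref{prop:det} and Theorem~\ref{thm:det}, and it gets the reverse inclusion by the same coset argument: for $\det g\in\mu_p$ it picks $g'\in G$ with $\det g'=\det g$, so that $g'^{-1}g\in\SL(V)\subseteq G$.
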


\begin{proof}
By Theorem~\ref{thm:monodromy}, $G^0=\SL(V)$, and by Proposition~\ref{prop:det} and Theorem~\ref{thm:det}, $\det(G)=\Gamma=\mu_p$. So $G\subseteq\{g:\det(g)\in\mu_p\}$. Conversely, let $\det(g)\in\mu_p$. Choose $g'\in G$ with $\det g'=\det g$. Then $g'^{-1}g\in\SL(V)=G^0\subseteq G$, so $g\in G$. The component group $G/G^0=G/\SL(V)$ is isomorphic to $\mu_p$ through $\det$.
\end{proof}

\begin{remark}\label{rem:n2-consistency}
For $n=2$, Corollary~\ref{cor:M2} shows independently that $\det$ is nontrivial on $G$. Indeed, if $\det$ were trivial on $G$, then $G=\SL_2$, and the standard representation of $\SL_2$ is self-dual, contradicting $(V\otimes V)^G=0$. For $n\ge3$ the non-self-duality in Corollary~\ref{cor:M2} carries no information on the determinant, since the standard representation of $\SL_r$ is not self-dual for $r\ge3$.
\end{remark}

\subsection{The constant \texorpdfstring{$\alpha$}{alpha} and the arithmetic monodromy group}\label{sec:alpha}

We now determine the constant $\alpha$ of Theorem~\ref{thm:det}. The argument is an induction on $n$. On a coordinate line, the determinant of a Frobenius element at $h_1=0$, acting on the generic stalk of $\mathcal G_n$, is computed in two ways: from Theorem~\ref{thm:det}, and from the Fourier model~\eqref{eq:FT-model}, through the vanishing cycles of a Fourier transform. Put $b_n=\binom n{\lfloor n/2\rfloor}$, so that $r(n)=2^n-b_n$ for every $n\ge1$, and
\[
S_1(n)=\sum_{k=0}^{\lfloor n/2\rfloor}k\,m_{n,k},\qquad e(n)=(n-2)2^{n-1}+1 .
\]

\begin{lemma}\label{lem:numerology}
Let $n\ge1$.
\begin{enumerate}[(i)]
\item $2S_1(n)=(n+1)b_n-2^n$.
\item $e(1)=0$, and $e(n)=2e(n-1)+2^{n-1}-1$ if $n\ge2$.
\item $2^n-1+e(n)-S_1(n)=\frac12(n+1)r(n)$ and $e(n)-\sum_{k\ge1}(k-1)m_{n,k}=\frac12(n-1)r(n)$. In particular $(n+1)r(n)$ and $(n-1)r(n)$ are even.
\end{enumerate}
\end{lemma}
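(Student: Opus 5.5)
The plan is to prove all three parts by direct computation with binomial identities. The only inputs are the definitions $m_{n,k}=\binom nk-\binom n{k-1}$, $b_n=\binom n{\lfloor n/2\rfloor}$ and $r(n)=2^n-b_n$, together with the identity $\sum_k m_{n,k}=b_n$ (telescoping, as in Lemma~\ref{lem:CG}).

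For (i), we sum by parts. Set $K=\lfloor n/2\rfloor$. Then
\[
S_1(n)=\sum_{k=0}^{K}k\Big(\binom nk-\binom n{k-1}\Big)=Kb_n-\sum_{j=0}^{K-1}\binom nj .
\]
If $n=2K+1$ is odd, symmetry gives $\sum_{j\le K}\binom nj=2^{n-1}$, so $\sum_{j\le K-1}\binom nj=2^{n-1}-b_n$. Hence $S_1=(K+1)b_n-2^{n-1}$, and doubling gives $(n+1)b_n-2^n$.

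If $n=2K$ is even, $\sum_{j\le K-1}\binom nj=(2^n-b_n)/2$. So $2S_1=2Kb_n-2^n+b_n=(n+1)b_n-2^n$.

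For (ii), the case $e(1)=0$ is immediate. For the recursion,
\[
2e(n-1)+2^{n-1}-1=(n-3)2^{n-1}+2+2^{n-1}-1=(n-2)2^{n-1}+1=e(n),
\]
which is purely mechanical.

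For (iii), substitute (i):
\[
2^n-1+e(n)-S_1(n)=2^n+(n-2)2^{n-1}-\tfrac12(n+1)b_n+\tfrac12 2^n=\tfrac12(n+1)2^n-\tfrac12(n+1)b_n .
\]
The right-hand side is $\tfrac12(n+1)r(n)$.

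For the second identity, write $\sum_{k\ge1}(k-1)m_{n,k}=S_1(n)-\sum_{k\ge1}m_{n,k}=S_1(n)-(b_n-1)$, using $m_{n,0}=1$. Then
\[
e(n)-S_1(n)+b_n-1=(n-2)2^{n-1}-\tfrac12(n+1)b_n+\tfrac12 2^n+b_n=\tfrac12(n-1)(2^n-b_n).
\]
Evenness follows because both left-hand sides are integers.

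The only step with any content is the partial-sum evaluation $\sum_{j<K}\binom nj$ in (i), where the even and odd cases must be separated. The case $n=1$, with $K=0$, should be checked directly: $S_1=0$ and $(2)(1)-2=0$.
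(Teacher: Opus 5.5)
Your proposal is correct and follows the paper's proof essentially line by line: the same summation by parts reducing $S_1(n)$ to $Kb_n-\sum_{j<K}\binom nj$ with the even/odd symmetry split, the same mechanical check of the recursion in (ii), and the same use of $m_{n,0}=1$ and $\sum_k m_{n,k}=b_n$ in (iii). The only cosmetic difference is that you verify the second identity of (iii) by direct substitution, whereas the paper obtains it from the first identity by subtracting $r(n)$.
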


\begin{proof}
(i) Put $K=\lfloor n/2\rfloor$. Since $m_{n,k}=\binom nk-\binom n{k-1}$, summation by parts gives
\[
S_1(n)=\sum_{k=0}^{K}k\binom nk-\sum_{j=0}^{K-1}(j+1)\binom nj=K\binom nK-\sum_{j=0}^{K-1}\binom nj .
\]
If $n=2K$, the symmetry $\binom nj=\binom n{n-j}$ gives $\sum_{j<K}\binom nj=\frac12(2^n-b_n)$, so $2S_1(n)=2Kb_n-2^n+b_n$. If $n=2K+1$, it gives $\sum_{j\le K}\binom nj=2^{n-1}$, so $\sum_{j<K}\binom nj=2^{n-1}-b_n$ and $2S_1(n)=2Kb_n-2^n+2b_n$. In both cases $2S_1(n)=(n+1)b_n-2^n$.

(ii) $2e(n-1)+2^{n-1}-1=(n-3)2^{n-1}+2+2^{n-1}-1=e(n)$.

(iii) By (i), $2\big(2^n-1+e(n)-S_1(n)\big)=2^{n+1}+(n-2)2^n+2^n-(n+1)b_n=(n+1)r(n)$. Since $\sum_{k\ge0}m_{n,k}=b_n$ and $m_{n,0}=1$, we have $\sum_{k\ge1}(k-1)m_{n,k}=S_1(n)-b_n+1$, and therefore $e(n)-\sum_{k\ge1}(k-1)m_{n,k}=\big(2^n-1+e(n)-S_1(n)\big)-(2^n-b_n)=\frac12(n+1)r(n)-r(n)$. The left-hand sides of both identities are integers.
\end{proof}

For the rest of this subsection let $n\ge2$, and use the notation of the proof of Proposition~\ref{prop:tame} and of Lemma~\ref{lem:R-global}: $c\in(\Fq^\times)^{n-1}$, $f'$, $B$, $U$, $b$, $\mathcal R=Rb_!\Lpsi{f'}$, $j_v$, $\FT_\psi$, $s=-1/\lambda$, $U_v=\Gmn{v}\smallsetminus\{s\}$, and the lisse sheaf $F$ on $U_v$ with $\mathcal R|_{U_v}\cong F[-(n-2)]$. By Lemma~\ref{lem:R-global}, $F$ is the restriction of a lisse sheaf $\bar F$ on $\mathbb P^1_v\smallsetminus\{0,s\}$; let $\bar F_\infty$ be its stalk at the rational point $v=\infty$, a representation of $\mathrm{Gal}(\Fqbar/\Fq)$. Put
\[
Y_c=\{u'\in\Gm^{n-1}:\langle c,u'\rangle=-\lambda\},\qquad D_{n-1}(c)=\det\big(\Frob_q\mid H^{n-2}_c(Y_c\otimes\Fqbar,\Lpsi{f'})\big),
\]
where $\langle c,u'\rangle=\sum_{i\ge2}c_iu_i$, so that $U=\Gm^{n-1}\smallsetminus Y_c$. For $n=2$, $Y_c$ is the point $u_2=-\lambda/c_2$, at which $f'=-c_2/\lambda$, and $D_1(c)=\psi_q(-c_2/\lambda)$.

\begin{lemma}\label{lem:FT-zero}
Put $P=j_{v!}\mathcal R[n-1]$, a complex on $\mathbb A^1_v$ over $\Fq$.
\begin{enumerate}[(i)]
\item $H^i_c(U\otimes\Fqbar,\Lpsi{f'})=0$ for $i\ne n-1$, and there is an exact sequence of $\Frob_q$-modules
\[
0\to H^{n-2}_c(Y_c\otimes\Fqbar,\Lpsi{f'})\to H^{n-1}_c(U\otimes\Fqbar,\Lpsi{f'})\to\Qlb\to0,
\]
with $\Frob_q$ acting trivially on $\Qlb$. In particular $H^{n-1}_c(U\otimes\Fqbar,\Lpsi{f'})$ has dimension $2^{n-1}$, and the determinant of $\Frob_q$ on it is $D_{n-1}(c)$.
\item $P$ is a perverse sheaf, and $P|_{U_v}\cong F[1]$.
\item $\FT_\psi(P)\cong\mathcal G[1]$ for a constructible sheaf $\mathcal G$ on $\mathbb A^1_{h_1}$ over $\Fq$ that has no nonzero section supported on a finite set. Moreover $\mathcal G|_{\Gmn{h_1}}\cong\mathcal G_n|_{C_c}(1)$, and $\mathcal G_0\cong H^{n-1}_c(U\otimes\Fqbar,\Lpsi{f'})$ compatibly with $\Frob_q$.
\end{enumerate}
\end{lemma}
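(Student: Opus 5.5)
For (i) I will use excision for the closed subvariety $Y_c\subset\Gm^{n-1}$ with open complement $U$. This gives a long exact sequence
\[
\cdots\to H^i_c(U,\Lpsi{f'})\to H^i_c(\Gm^{n-1},\Lpsi{f'})\to H^i_c(Y_c,\Lpsi{f'})\to H^{i+1}_c(U,\Lpsi{f'})\to\cdots.
\]
By Lemma~\ref{lem:t0} in $n-1$ variables, the middle term is a line in degree $n-1$ with trivial Frobenius. By Theorem~\ref{thm:fibre}(iv) for $n-1\ge2$ variables, and directly for $n=2$ where $Y_c$ is a point, the cohomology of $Y_c$ is concentrated in degree $n-2$. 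The sequence therefore collapses to
\[
0\to H^{n-2}_c(Y_c)\to H^{n-1}_c(U)\to\Qlb\to0,
\]
and all other $H^i_c(U)$ vanish. The dimension is $(2^{n-1}-1)+1$, and multiplicativity of determinants gives $D_{n-1}(c)$.

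For (ii) I will compute the stalks of $\mathcal R$. Away from $v=s$ and $v=0$, Lemma~\ref{lem:R-global} gives that $\mathcal R$ is $F[-(n-2)]$, so $P|_{U_v}=F[1]$. Over $v=0$ the sheaf $j_{v!}$ vanishes. At $v=s$ the fibre of $b$ is $\{u'\in U:\langle c,u'\rangle=0\}$, a homogeneous hyperplane section of the torus. Its cohomology with $\Lpsi{f'}$ is $\Sigma_0$-type cohomology. By scaling (as in Lemma~\ref{lem:plane}), or by Artin vanishing for the affine fibre of dimension $n-2$, it sits in degrees $\le 2(n-2)$ and $\ge n-2$. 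Perversity requires only $\mathcal H^{i}(P)=0$ for $i>0$, i.e.\ $\mathcal R_s$ concentrated in degrees $\le n-1$, together with the absence of punctual sections in $\mathcal H^{-1}$. I expect the cleaner route is to see $P$ as $Rb_!$ of a shifted lisse rank-one sheaf on the smooth affine $U$ of dimension $n-1$. Since $b$ is affine, $Rb_!$ preserves ${}^pD^{\ge0}$ (Artin), and $Rb_!$ is right t-exact up to fibre dimension. To get perversity exactly I will identify $P$ with a Fourier-type object whose dual is of the same form. Concretely, the plan is to use Verdier duality $D(Rb_!)=Rb_*D$ and show $Rb_!\to Rb_*$ is an isomorphism at $s$ by checking that the cohomology at $s$ of $j_{v!}\mathcal R$ lies in degree exactly $n-2$ (respectively $-1$ after shift). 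I expect this step, controlling the stalk at $s$, to be the main obstacle. I will try first the homogeneity argument: the fibre over $s$ is a cone, so its cohomology with the $\Gm$-scaled Artin--Schreier sheaf reduces to the counting in Lemma~\ref{lem:plane}, which lives in a single middle degree.

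For (iii) I will apply Laumon's theorem that $\FT_\psi$ preserves perversity. Then $\FT_\psi(P)$ is perverse, and on $\Gm$ it is $\mathcal G_n|_{C_c}(1)[1]$ by \eqref{eq:FT-model}, shifted by $n-1$. To see that it is a sheaf in degree $-1$ with no punctual sections, I will use that $P$ has no quotient or sub supported at points and no constant sub/quotient $\mathcal L_\psi$-type. Namely, $P$ is generically $F[1]$ with $F$ having no Artin--Schreier constituent that would produce a $\delta$ in the transform. Hence $\mathcal H^0(\FT_\psi P)=0$ and $\mathcal H^{-1}$ has no punctual sections. Finally, the stalk at $h_1=0$ is by proper base change $R\Gamma_c(\mathbb A^1_v,P)[1]=R\Gamma_c(U,\Lpsi{f'})[n]$, which by (i) is $H^{n-1}_c(U)$ in degree $-1$. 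This is compatible with Frobenius, giving $\mathcal G_0\cong H^{n-1}_c(U\otimes\Fqbar,\Lpsi{f'})$.
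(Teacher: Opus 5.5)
Part (i) is the paper's argument, and in (iii) your closing stalk computation at $h_1=0$ is the step that is needed. Part (ii), however, has a genuine gap for $n\ge4$, and (iii) depends on it through Laumon's theorem.

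Your remark that $b$ is affine, so that $Rb_!$ is left t-exact, does give $P\in{}^pD^{\ge0}$. What is missing is the other half at $v=s$. With $W=\{u'\in\Gm^{n-1}:\langle c,u'\rangle=0\}$ you must show $H^j_c(W\otimes\Fqbar,\Lpsi{f'})=0$ for $n\le j\le 2n-4$. This range is nonempty once $n\ge4$; for $n=2$ the set $W$ is empty, and for $n=3$ it is a curve, so nothing is needed there. None of your proposed routes supplies this vanishing:
\begin{itemize}
\item Lemma~\ref{lem:plane} is a point count, in four variables only, and traces of Frobenius cannot tell in which degrees cohomology lives.
\item Integrating along the $\Gm$-orbits of the cone $W$ expresses $R\Gamma_c(W,\Lpsi{f'})$ through two pieces: the constant-coefficient cohomology of the arrangement complement $\bar W=W/\Gm\subset\mathbb P^{n-2}$, and that of the hypersurface $\bar W\cap\{f'=0\}$. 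These are linked by a connecting map. Already $H^*_c(\bar W)$ is not concentrated in one degree, so you would have to prove that this connecting map is bijective in high degrees, which you do not address.
\item Showing that $Rb_!\to Rb_*$ is an isomorphism near $s$ cannot work. At generic $v$, the classes of weight $<n-2$ in $H^{n-2}_c$ of the fibre die in ordinary cohomology. Verifying it through the degree concentration at $s$ would in any case be circular.
\end{itemize}

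The missing idea is that no computation at $s$ is needed: the question can be moved to the Fourier side, where everything is already known on $\Gmn{h_1}$.
\begin{itemize}
\item Since $\FT_\psi$ is an equivalence preserving perversity, it is t-exact. With $P_i={}^p\mathcal H^i(j_{v!}\mathcal R)$ this gives $\FT_\psi(P_i)\cong{}^p\mathcal H^i\big(\FT_\psi(j_{v!}\mathcal R)\big)$.
\item By~\eqref{eq:FT-model}, this vanishes on $\Gmn{h_1}$ for $i\ne n-1$.
\item A perverse sheaf on $\mathbb A^1_{h_1}$ vanishing on $\Gmn{h_1}$ has the form $i_{0*}E$. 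By Lemma~\ref{lem:integrate-t}, $i_{0*}E\cong\FT_\psi(\bar E(1)[1])$, where $\bar E$ is the geometrically constant sheaf on $\mathbb A^1_v$ with fibre $E$.
\item Hence, for $i\ne n-1$, $P_i$ is a geometrically constant sheaf placed in degree $-1$. It is zero, because $j_{v!}\mathcal R|_{U_v}=F[-(n-2)]$ forces $P_i|_{U_v}=0$. So $P=P_{n-1}$ is perverse.
\end{itemize}
Given this, (iii) is exactly your last step. $\FT_\psi(P)$ is perverse, and its $\mathcal H^0$ vanishes on $\Gmn{h_1}$ and, by your stalk computation, at $0$. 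Then (P2) yields $\FT_\psi(P)=\mathcal G[1]$ with $\mathcal G$ having no punctual sections. Your appeal to Artin--Schreier constituents of $F$ is unnecessary. It is also not the right criterion: a punctual quotient $\delta_0$ of $\FT_\psi(P)$ corresponds to a geometrically constant quotient of $P$ on all of $\mathbb A^1_v$, not to a property of $F$.
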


\begin{proof}
(i) The phase is $f'$ on $\Gm^{n-1}$, on its open subset $U$ and on the closed complement $Y_c$. By Lemma~\ref{lem:t0} for $n-1$ variables, $R\Gamma_c(\Gm^{n-1}\otimes\Fqbar,\Lpsi{f'})$ is a line placed in degree $n-1$, with trivial Frobenius. If $n\ge3$, Theorem~\ref{thm:fibre}(iv), applied to $n-1\ge2$ variables and to $c$, shows that $R\Gamma_c(Y_c\otimes\Fqbar,\Lpsi{f'})$ is concentrated in degree $n-2$, of dimension $2^{n-1}-1$; if $n=2$, this holds because $Y_c$ is a point. The excision sequence for $Y_c\subset\Gm^{n-1}$ gives the assertions.

(ii) By Lemma~\ref{lem:R-global}, $P|_{U_v}\cong F[1]$. Let $P_i={}^p\mathcal H^i(j_{v!}\mathcal R)$ be the perverse cohomology sheaves. The functor $\FT_\psi$ is triangulated and maps perverse sheaves to perverse sheaves \cite[Th.~(1.3.2.3)]{Laumon}; as the perverse t-structure is bounded, every object is a successive extension of shifted perverse sheaves, so $\FT_\psi$ is t-exact, and $\FT_\psi(P_i)\cong{}^p\mathcal H^i\big(\FT_\psi(j_{v!}\mathcal R)\big)$. By~\eqref{eq:FT-model}, $\FT_\psi(j_{v!}\mathcal R)|_{\Gmn{h_1}}\cong\big(\mathcal G_n|_{C_c}(1)[1]\big)[-(n-1)]$, and $\mathcal G_n|_{C_c}(1)[1]$ is perverse by (P1). Restriction to an open subset is t-exact, so $\FT_\psi(P_i)$ vanishes on $\Gmn{h_1}$ for $i\ne n-1$. Now let $\mathcal Q$ be a perverse sheaf on $\mathbb A^1_{h_1}$ that vanishes on $\Gmn{h_1}$. By (P2), $\mathcal H^{-1}(\mathcal Q)$ is a sheaf supported at $0$ without nonzero sections supported at $0$, so it vanishes, and $\mathcal Q=i_{0*}E$, where $E$ is a finite-dimensional continuous representation of $\mathrm{Gal}(\Fqbar/\Fq)$ placed in degree $0$. For such an $E$, let $\bar E$ be the geometrically constant sheaf on $\mathbb A^1_v$ with fibre $E$; Lemma~\ref{lem:integrate-t} and the projection formula give $\FT_\psi\big(\bar E(1)[1]\big)\cong i_{0*}E$. Since $\FT_\psi$ is an equivalence of categories \cite[Cor.~(1.2.2.3)]{Laumon}, it follows that $P_i\cong\bar E_i(1)[1]$ for $i\ne n-1$, where $E_i=i_0^*\FT_\psi(P_i)$. On the other hand $P_i|_{U_v}={}^p\mathcal H^i\big(F[1][-(n-1)]\big)=0$ for $i\ne n-1$, and a lisse sheaf on $\mathbb A^1_v$ that vanishes on $U_v$ is zero. Hence $P_i=0$ for $i\ne n-1$. Therefore $j_{v!}\mathcal R\cong P_{n-1}[-(n-1)]$, and $P\cong P_{n-1}$ is perverse.

(iii) By (ii) and \cite[Th.~(1.3.2.3)]{Laumon}, $\FT_\psi(P)$ is perverse, and by~\eqref{eq:FT-model} its restriction to $\Gmn{h_1}$ is $\mathcal G_n|_{C_c}(1)[1]$. By proper base change, its stalk at $h_1=0$ is
\[
R\Gamma_c(\mathbb A^1_v\otimes\Fqbar,P)[1]=R\Gamma_c(\Gmn{v}\otimes\Fqbar,\mathcal R)[n]=R\Gamma_c(U\otimes\Fqbar,\Lpsi{f'})[n],
\]
compatibly with $\Frob_q$, and by (i) this is $H^{n-1}_c(U\otimes\Fqbar,\Lpsi{f'})$ placed in degree $-1$. Hence $\mathcal H^0(\FT_\psi(P))=0$, and (P2) gives the assertions, with $\mathcal G=\mathcal H^{-1}(\FT_\psi(P))$.
\end{proof}

\begin{lemma}\label{lem:vanishing-zero}
Let $D_0$ be a decomposition group at the rational point $h_1=0$ of $\mathbb A^1_{h_1}$ over $\Fq$, let $I_0\subseteq D_0$ be its inertia subgroup, and let $\phi\in D_0$ induce the geometric Frobenius of $\Fq$. Let $\mathcal G$ be the sheaf of Lemma~\ref{lem:FT-zero}(iii), and let $\mathcal G_{\bar\eta_0}$ be its generic stalk, a representation of $D_0$. Then there is an exact sequence of representations of $D_0$
\begin{equation}\label{eq:vanishing-zero}
0\longrightarrow\mathcal G_0\longrightarrow\mathcal G_{\bar\eta_0}\longrightarrow\bar F_\infty(-1)\longrightarrow0,
\end{equation}
in which $D_0$ acts on $\mathcal G_0$ and on $\bar F_\infty(-1)$ through $D_0/I_0=\mathrm{Gal}(\Fqbar/\Fq)$. Moreover $\bar F_\infty\cong H^{n-2}_c(Y_c\otimes\Fqbar,\Lpsi{f'})$ as $\Frob_q$-modules, and
\[
\det\big(\phi\mid\mathcal G_{\bar\eta_0}\big)=q^{2^{n-1}-1}D_{n-1}(c)^2 .
\]
In particular $I_0$ acts unipotently on $\mathcal G_{\bar\eta_0}$, and this determinant does not depend on the choice of $\phi$.
\end{lemma}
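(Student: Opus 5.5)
The exact sequence \eqref{eq:vanishing-zero} will come from the triangle of nearby and vanishing cycles for the perverse sheaf $\FT_\psi(P)=\mathcal G[1]$ at $h_1=0$, combined with Laumon's computation of vanishing cycles of a Fourier transform at $0$. Concretely, the triangle $\mathcal G_0\to\mathcal G_{\bar\eta_0}\to R^{-1}\Phi_0(\FT_\psi(P))$ is exact on the left because $\mathcal G$ has no section supported at $0$ (Lemma~\ref{lem:FT-zero}(iii)). It is exact on the right because $\FT_\psi(P)$ is perverse, so $\Phi_0$ is concentrated in degree $-1$; I would take this from \cite[Prop.~(2.3.2.1)]{Laumon}. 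Laumon identifies $R^{-1}\Phi_0(\FT_\psi(P))$ with $\mathcal F^{(\infty,0')}$ of the $I_\infty$-module $P(\infty)=\bar F_\infty$, compatibly with the decomposition group. By Lemma~\ref{lem:R-global}, this module is unramified. In the unramified case the local Fourier transform $\mathcal F^{(\infty,0')}$ of a representation of $\mathrm{Gal}(\Fqbar/\Fq)$ is the same representation twisted by $(-1)$, with $I_0$ acting trivially. I would check the twist either by citing Laumon's explicit formula for the tame/unramified case, or directly: for $P=\bar E[1]$ geometrically constant, Lemma~\ref{lem:integrate-t} gives $\FT_\psi(\bar E[1])=i_{0*}E(-1)$, whose vanishing cycles at $0$ equal $E(-1)$ up to sign conventions. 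Since the stalk $\mathcal G_0$ and the quotient both carry trivial inertia action, $I_0$ acts unipotently, with $(\gamma-1)^2=0$, and $D_0$ acts on both through $D_0/I_0$.

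Next I need $\bar F_\infty\cong H^{n-2}_c(Y_c,\Lpsi{f'})$. By Lemma~\ref{lem:R-global}, $\bar F$ near $v=\infty$ is $a^*\mathcal G_{n-1}(1)$ with $a(\infty)=c$ for $n\ge3$. The stalk at $c$ of $\mathcal G_{n-1}(1)$ is $H^{n-2}_c(Y_c,\Lpsi{f'})$ by Theorem~\ref{thm:lisse}(iii) and proper base change. For $n=2$, $\bar F=\Lpsi{-c_2v/(\lambda v+1)}$, whose value at $v=\infty$ is $\psi_q(-c_2/\lambda)=D_1(c)$.

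For the determinant, take $\det$ along \eqref{eq:vanishing-zero}. By Lemma~\ref{lem:FT-zero}(iii),(i), $\det(\phi\mid\mathcal G_0)=D_{n-1}(c)\cdot1$. The quotient $\bar F_\infty(-1)$ has dimension $2^{n-1}-1$, so its determinant is $q^{2^{n-1}-1}D_{n-1}(c)$. Multiplying gives $q^{2^{n-1}-1}D_{n-1}(c)^2$. Since $I_0$ acts unipotently, its elements have determinant $1$, so the determinant is independent of the lift $\phi$.

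The main obstacle is the precise normalization: the Tate twist $(-1)$ and the absence of a Gauss-sum or sign factor in Laumon's unramified local Fourier transform $\mathcal F^{(\infty,0')}$, together with the Frobenius-compatibility of his vanishing-cycle isomorphism. I would settle it by the reduction just described. Laumon's description depends only on the $D_\infty$-module at $\infty$ functorially, and $P$ agrees near $\infty$ with the geometrically constant sheaf attached to $\bar F_\infty$ (shifted by $[1]$). The twist can therefore be read off from the model $\FT_\psi(\bar E[1])=i_{0*}E(-1)[0]$. For this model the nearby cycles vanish and the stalk is $E(-1)$, so $\Phi^{-1}$ is $E(-1)$ up to the degree shift.
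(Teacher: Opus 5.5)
Your proposal is correct and follows the paper's proof. The paper uses the same steps: the nearby/vanishing-cycle triangle for $\mathcal G[1]$ at $h_1=0$, the identification of $R^{-1}\Phi_0(\FT_\psi(P))$ with $\bar F_\infty(-1)$ by comparison with the constant model $\FT_\psi(\bar E[1])\cong i_{0*}E(-1)$, the stalk of $\bar F$ at $\infty$ from Lemma~\ref{lem:R-global}, and the determinant count. One small correction: surjectivity onto $R^{-1}\Phi_0$ is automatic because $\mathcal H^0(\mathcal G[1])_0=0$, and it is injectivity of specialization, not perversity, that kills $R^{-2}\Phi_0$.

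The only real difference is how the localization at $v=\infty$ is obtained. You cite Laumon's Frobenius-compatible isomorphism $R^{-1}\Phi_0(\FT_\psi(P))\cong\mathcal F^{(\infty,0')}(P|_{\eta_\infty})$. The paper proves it directly: vanishing cycles commute with the proper projection $\mathbb P^1_v\times\mathbb A^1_{h_1}\to\mathbb A^1_{h_1}$, Deligne's local acyclicity theorem kills the contribution of $\mathbb A^1_v$, and vanishing cycles commute with tensoring by the lisse sheaf $\bar F$ near $v=\infty$. It only remarks afterwards that this is Laumon's statement.
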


\begin{proof}
Let $T_0$ be the henselization of $\mathbb A^1_{h_1}$ at $0$, let $\eta$ be its generic point and $\bar\eta$ a geometric point over $\eta$, so that $D_0=\mathrm{Gal}(\bar\eta/\eta)$. For a scheme $\mathcal X$ of finite type over $T_0$ and a complex $\mathcal Q$ on $\mathcal X$, we use the complexes of nearby cycles $R\Psi(\mathcal Q)$ and of vanishing cycles $R\Phi(\mathcal Q)$ on the geometric special fibre $\mathcal X_{\bar0}$, with their compatible actions of $D_0$, and the distinguished triangle $\mathcal Q|_{\mathcal X_{\bar0}}\to R\Psi(\mathcal Q)\to R\Phi(\mathcal Q)\to$ \cite[Exp.~XIII, 2.1]{SGA7}. For a complex $\mathcal Q$ on $\mathbb A^1_{h_1}$ we write $R\Phi_0(\mathcal Q)$ for $R\Phi(\mathcal Q|_{T_0})$, a complex of representations of $D_0$.

\emph{The exact sequence.} For $\mathcal Q=\FT_\psi(P)=\mathcal G[1]$, the first two terms of the triangle are $\mathcal G_0[1]$ and $\mathcal G_{\bar\eta_0}[1]$, and the first map is the specialization map $\mathcal G_0\to\mathcal G_{\bar\eta_0}$. This map is injective, because its kernel is the stalk at $0$ of the subsheaf of sections of $\mathcal G$ supported at $0$, which vanishes by Lemma~\ref{lem:FT-zero}(iii). Hence $R\Phi_0(\FT_\psi(P))$ is concentrated in degree $-1$, and there is an exact sequence of representations of $D_0$
\[
0\to\mathcal G_0\to\mathcal G_{\bar\eta_0}\to R^{-1}\Phi_0\big(\FT_\psi(P)\big)\to0 .
\]
It remains to show that $R^{-1}\Phi_0(\FT_\psi(P))\cong\bar F_\infty(-1)$.

\emph{Localization at $v=\infty$.} Let $\bar X=\mathbb P^1_v\times\mathbb A^1_{h_1}$, let $\iota_X\colon\mathbb A^1_v\times\mathbb A^1_{h_1}\hookrightarrow\bar X$ be the inclusion, and let $\bar p\colon\bar X\to\mathbb A^1_{h_1}$ be the projection, which is proper. For a complex $K$ on $\mathbb A^1_v$ put $\mathcal J(K)=\iota_{X!}\big(\pr_v^*K\otimes\Lpsi{vh_1}\big)$, so that $\FT_\psi(K)=R\bar p_*\mathcal J(K)[1]$. Nearby cycles commute with proper direct images \cite[Exp.~XIII, (2.1.7.2)]{SGA7}, and so does restriction to the special fibre, by proper base change; hence so do vanishing cycles, and
\[
R\Phi_0\big(\FT_\psi(K)\big)\cong R\Gamma\big(\mathbb P^1_{\Fqbar},R\Phi(\mathcal J(K))\big)[1]
\]
compatibly with the actions of $D_0$, where $R\Phi(\mathcal J(K))$ is formed for $\bar X\times_{\mathbb A^1_{h_1}}T_0$ over $T_0$. By Deligne's theorem \cite[Th.~finitude, Cor.~2.16]{SGA45}, the structure morphism $\mathbb A^1_v\to\operatorname{Spec}\Fq$ is universally locally acyclic relative to $K$; the theorem is stated there for torsion coefficients, and the case of $\Qlb$-complexes follows by passage to the limit and d\'evissage. By base change, the projection $\mathbb A^1_v\times T_0\to T_0$ is locally acyclic relative to $\pr_v^*K$, and hence also relative to $\pr_v^*K\otimes\Lpsi{vh_1}$, as $\Lpsi{vh_1}$ is lisse. So $R\Phi(\mathcal J(K))$ vanishes on $\mathbb A^1_{\Fqbar}$, and
\begin{equation}\label{eq:localization}
R\Phi_0\big(\FT_\psi(K)\big)\cong R\Phi\big(\mathcal J(K)\big)_{\infty}[1],
\end{equation}
where the right-hand side is the stalk at the point $v=\infty$ of the geometric special fibre. This is the localization step in Laumon's computation of the local monodromy of $\FT_\psi(K)$ at a finite point of the dual line \cite[Prop.~(2.3.2.1)(i)]{Laumon}.

\emph{Comparison with the constant sheaf.} Let $N_\infty=\mathbb P^1_v\smallsetminus\{0,s\}$, an open neighbourhood of $\infty$ on which $\bar F$ is lisse, and let $K_0=\Qlb[1]$ be the constant sheaf on $\mathbb A^1_v$ placed in degree $-1$. By Lemma~\ref{lem:FT-zero}(ii), $P=F[1]$ on $N_\infty\cap\mathbb A^1_v$, so the projection formula for $\iota_{X!}$ gives $\mathcal J(P)|_{N_\infty\times\mathbb A^1_{h_1}}\cong\pr_v^*\bar F\otimes\mathcal J(K_0)|_{N_\infty\times\mathbb A^1_{h_1}}$. Vanishing cycles are local for the \'etale topology, and they commute with tensor products by lisse sheaves: nearby cycles are computed by a direct image from the geometric generic fibre followed by restriction to the special fibre, and both operations commute with tensor products by lisse sheaves, by the projection formula, which is a local statement. Hence
\[
R\Phi\big(\mathcal J(P)\big)_\infty\cong\bar F_\infty\otimes R\Phi\big(\mathcal J(K_0)\big)_\infty,
\]
with $D_0$ acting diagonally, through $D_0/I_0$ on $\bar F_\infty$. Finally, Lemma~\ref{lem:integrate-t} gives $\FT_\psi(K_0)\cong i_{0*}\Qlb(-1)$. The nearby cycles of this sheaf at $0$ vanish, and its stalk at $0$ is $\Qlb(-1)$; so $R\Phi_0(\FT_\psi(K_0))\cong\Qlb(-1)[1]$, and $R\Phi(\mathcal J(K_0))_\infty\cong\Qlb(-1)$ by~\eqref{eq:localization}. Applying~\eqref{eq:localization} to $K=P$, we obtain $R\Phi_0(\FT_\psi(P))\cong\bar F_\infty(-1)[1]$, as required. In the language of Laumon's local Fourier transform $\mathcal F^{(\infty,0')}$, this is the isomorphism $R^{-1}\Phi_0(\FT_\psi(P))\cong\mathcal F^{(\infty,0')}\big(F(\infty)\big)$ of \cite[Prop.~(2.3.2.1)(iii) and D\'ef.~(2.4.2.3)]{Laumon}, together with the formula $\mathcal F^{(\infty,0')}(V)\cong V(-1)$ for unramified $V$; cf.~\cite[Lemma~11]{Katz-Bourbaki}. These references are not used in the argument above.

\emph{The stalk at infinity.} If $n\ge3$, the proof of Lemma~\ref{lem:R-global} gives $\bar F=a^*\mathcal G_{n-1}(1)$ with $a(\infty)=c$, so $\bar F_\infty$ is the stalk of $\mathcal G_{n-1}(1)$ at $c$, which is $H^{n-2}_c(Y_c\otimes\Fqbar,\Lpsi{f'})$ by Theorem~\ref{thm:lisse}(iii) for $n-1$ variables. If $n=2$, then $\bar F=\Lpsi{-c_2v/(\lambda v+1)}$, whose stalk at $\infty$ is the line on which $\Frob_q$ acts by $\psi_q(-c_2/\lambda)$, that is, $H^0_c(Y_c\otimes\Fqbar,\Lpsi{f'})$. In both cases $\bar F_\infty$ has dimension $2^{n-1}-1$, and the determinant of $\Frob_q$ on it is $D_{n-1}(c)$. Now $\phi$ acts on $\mathcal G_0$ as $\Frob_q$, with determinant $D_{n-1}(c)$ by Lemma~\ref{lem:FT-zero}(i),(iii), and on $\bar F_\infty(-1)$ as $q\Frob_q$, with determinant $q^{2^{n-1}-1}D_{n-1}(c)$. The exact sequence~\eqref{eq:vanishing-zero} gives the formula for $\det(\phi\mid\mathcal G_{\bar\eta_0})$. As $I_0$ acts trivially on both outer terms of~\eqref{eq:vanishing-zero}, it acts unipotently on $\mathcal G_{\bar\eta_0}$; two choices of $\phi$ differ by an element of $I_0$, which has determinant $1$ there.
\end{proof}

\begin{theorem}\label{thm:alpha}
For every $n\ge1$, every power $q$ of $p$ and every $h\in(\Fq^\times)^n$,
\begin{equation}\label{eq:det-fibre}
\det\big(\Frob_q\mid H^{n-1}_c(Y_h\otimes\Fqbar,\Lpsi f)\big)=q^{(n-2)2^{n-1}+1}\,\psi_q\big(-2^{n-1}\lambda^{-1}(h_1+\cdots+h_n)\big),
\end{equation}
where $Y_h=\{u\in\Gm^n:\langle h,u\rangle=-\lambda\}$ and $f=\sum_iu_i^{-1}$. If $n\ge2$, the constant of Theorem~\ref{thm:det} is $\alpha=p^{(n+1)r(n)/2}$; equivalently, for every $h\in S(\Fq)$,
\[
\det\big(\Frob_h\mid\mathcal W_n\big)=q^{(n+1)r(n)/2}\,\psi_q\big(-2^{n-1}\lambda^{-1}(h_1+\cdots+h_n)\big),
\]
and the product of the $r(n)$ eigenvalues of weight $n-1$ of $\Frob_q$ on $H^{n-1}_c(Y_h\otimes\Fqbar,\Lpsi f)$ is $q^{(n-1)r(n)/2}\psi_q\big(-2^{n-1}\lambda^{-1}(h_1+\cdots+h_n)\big)$. In particular Theorem~\ref{thmE}(i) holds.
\end{theorem}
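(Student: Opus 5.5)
We argue by induction on $n$, proving~\eqref{eq:det-fibre} for $n$ together with the value of $\alpha$. For $n=1$, $Y_h$ is the point $u_1=-\lambda/h_1$, the cohomology is a line in degree $0$ with eigenvalue $\psi_q(-h_1/\lambda)$, and $(n-2)2^{n-1}+1=0$; this is~\eqref{eq:det-fibre}. Now let $n\ge2$ and assume~\eqref{eq:det-fibre} for $n-1$ variables.

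\emph{Reduction to $\alpha$.} By Theorem~\ref{thm:lisse}(iii),(iv),(vi), the stalk of $\mathcal G_n$ at $h$ is $H^{n-1}_c(Y_h,\Lpsi f)(-1)$. Its determinant is the product of $\det(\Frob_h\mid\mathcal W_n)$ and the determinants on the graded pieces, which contribute $\prod_{k\ge1}q^{km_{n,k}}=q^{S_1(n)}$. Theorem~\ref{thm:det} gives $\det(\Frob_h\mid\mathcal W_n)=\alpha^{\deg h}\psi_q(-2^{n-1}\lambda^{-1}\sum h_i)$. Untwisting by $q^{2^n-1}$, we find that~\eqref{eq:det-fibre} for $n$ is equivalent to
\[
\alpha^{\deg}=q^{e(n)+2^n-1-S_1(n)}=q^{(n+1)r(n)/2}
\]
by Lemma~\ref{lem:numerology}(iii). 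The statement on the product of the eigenvalues of weight $n-1$ then follows by removing the lower eigenvalues $q^{k-1}$ (second identity of Lemma~\ref{lem:numerology}(iii)). So it suffices to compute $\alpha$ at a single $q$ and a single point, or a single limit point.

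\emph{Computing $\alpha$ at $h_1=0$.} Fix $c\in(\Fq^\times)^{n-1}$ and work on $C_c$. Take $\phi\in D_0$ as in Lemma~\ref{lem:vanishing-zero}. On one side, $\mathcal G_{\bar\eta_0}$ is the generic stalk of $\mathcal G_n|_{C_c}(1)$ by Lemma~\ref{lem:FT-zero}(iii). Its determinant under $\phi$ is computed through the filtration: the graded pieces are geometrically constant and give $q^{S_1(n)}q^{-(2^n-1)}$. On $\mathcal W_n|_{C_c}$, Theorem~\ref{thm:det} says that $\det$ is $\Lpsi{-2^{n-1}\lambda^{-1}(h_1+\sum c_i)}\otimes\alpha^{\deg}$. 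This sheaf is lisse at $h_1=0$, so $\phi$ acts on its stalk by $\alpha^{\deg}\psi_q(-2^{n-1}\lambda^{-1}\sum_{i\ge2}c_i)$, where $\alpha^{\deg}$ means $\alpha^{[\Fq:\Fp]}$. On the other side, Lemma~\ref{lem:vanishing-zero} gives $q^{2^{n-1}-1}D_{n-1}(c)^2$. By induction,
\[
D_{n-1}(c)=q^{e(n-1)}\psi_q\big(-2^{n-2}\lambda^{-1}\textstyle\sum c_i\big),
\]
so $D_{n-1}(c)^2=q^{2e(n-1)}\psi_q(-2^{n-1}\lambda^{-1}\sum c_i)$. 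The characters match. Equating the powers of $q$ and using $e(n)=2e(n-1)+2^{n-1}-1$ from Lemma~\ref{lem:numerology}(ii) yields $\alpha^{\deg}\,q^{S_1(n)-(2^n-1)}=q^{e(n)}$, which is exactly the required identity.

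\emph{Main obstacle.} The delicate point is justifying that the determinant of $\phi$ on the generic stalk at the boundary point equals the value of the rank-one sheaf $\det\mathcal G_n|_{C_c}(1)$ at $h_1=0$. This requires $\det$ to be unramified at $0$, which holds because inertia acts unipotently (Lemma~\ref{lem:vanishing-zero}, or Theorem~\ref{thm:boundary}(ii)). One must also check that the lisse rank-one extension across $0$ of $\alpha^{\deg}\otimes\Lpsi{\cdot}$, tensored with the constant graded pieces, has the stated Frobenius at $0$. Tracking the Tate twist $(1)$ consistently between~\eqref{eq:FT-model} and $\mathcal G_n$ is the remaining bookkeeping, and I would double-check that the exponents combine exactly as in Lemma~\ref{lem:numerology}.
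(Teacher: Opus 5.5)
Your proposal is correct and follows the paper's proof essentially step for step. You use induction on $n$ and compute $\det(\phi\mid\mathcal G_{\bar\eta_0})$ at $h_1=0$ in two ways: once from the weight filtration together with Theorem~\ref{thm:det}, using that the determinant extends lissely across $0$, and once from Lemma~\ref{lem:vanishing-zero} with the inductive value of $D_{n-1}(c)^2$. The powers of $q$ are then matched via Lemma~\ref{lem:numerology}(ii),(iii), taking $q=p$ to pin down $\alpha$ itself rather than only $\alpha^k$.
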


\begin{proof}
We argue by induction on $n$. For $n=1$, $Y_h$ is the point $u_1=-\lambda/h_1$, at which $f=-h_1/\lambda$; so $H^0_c(Y_h\otimes\Fqbar,\Lpsi f)$ is a line on which $\Frob_q$ acts by $\psi_q(-h_1/\lambda)$, and~\eqref{eq:det-fibre} holds because $e(1)=0$. Let $n\ge2$, and assume~\eqref{eq:det-fibre} for $n-1$ variables.

\emph{Step 1: a second computation of $\det(\phi\mid\mathcal G_{\bar\eta_0})$.} Let $q=p^k$, let $c\in(\Fq^\times)^{n-1}$, and let $D_0$, $I_0$ and $\phi$ be as in Lemma~\ref{lem:vanishing-zero}. By Lemma~\ref{lem:FT-zero}(iii), $\mathcal G_{\bar\eta_0}$ is the generic stalk of $\mathcal G_n|_{C_c}(1)$, as a representation of $D_0$. By Theorem~\ref{thm:lisse}(iv),(vi), the generic stalk of $\mathcal G_n|_{C_c}$ has a $D_0$-stable filtration whose graded pieces are the generic stalk of $\mathcal W_n|_{C_c}$ and the spaces $\Qlb(-j)^{\oplus m_{n,j}}$, $1\le j\le\lfloor n/2\rfloor$, on which $\phi$ acts as multiplication by $q^j$. By Theorem~\ref{thm:det}, $\det(\mathcal W_n|_{C_c})\cong\Lpsi{-2^{n-1}\lambda^{-1}h_1}\otimes\mathcal E$, where $\mathcal E$ is the geometrically constant sheaf on $C_c$ on which $\Frob_q$ acts by $\alpha^k\psi_q\big(-2^{n-1}\lambda^{-1}(c_2+\cdots+c_n)\big)$. Both factors extend to lisse sheaves on $\mathbb A^1_{h_1}$. So $I_0$ acts trivially on the determinant of the generic stalk of $\mathcal W_n|_{C_c}$, and $\phi$ acts on it by the Frobenius at $0$ of the extension, that is, by $\psi_q(0)\,\alpha^k\psi_q\big(-2^{n-1}\lambda^{-1}(c_2+\cdots+c_n)\big)$. As $\mathcal G_n|_{C_c}$ has rank $2^n-1$, we have $\det(\phi\mid\mathcal G_{\bar\eta_0})=q^{-(2^n-1)}\det\big(\phi\mid(\mathcal G_n|_{C_c})_{\bar\eta_0}\big)$, and therefore
\[
\det\big(\phi\mid\mathcal G_{\bar\eta_0}\big)=q^{S_1(n)-(2^n-1)}\,\alpha^k\,\psi_q\big(-2^{n-1}\lambda^{-1}(c_2+\cdots+c_n)\big).
\]

\emph{Step 2: the value of $\alpha$.} By the induction hypothesis, applied to $c$, $D_{n-1}(c)=q^{e(n-1)}\psi_q\big(-2^{n-2}\lambda^{-1}(c_2+\cdots+c_n)\big)$, so that $D_{n-1}(c)^2=q^{2e(n-1)}\psi_q\big(-2^{n-1}\lambda^{-1}(c_2+\cdots+c_n)\big)$. Comparing Step~1 with Lemma~\ref{lem:vanishing-zero}, the values of $\psi_q$ cancel, and Lemma~\ref{lem:numerology}(ii),(iii) gives
\[
\alpha^k=q^{2e(n-1)+2^{n-1}-1+2^n-1-S_1(n)}=q^{e(n)+2^n-1-S_1(n)}=q^{(n+1)r(n)/2}.
\]
For $q=p$ this is $\alpha=p^{(n+1)r(n)/2}$.

\emph{Step 3: every $h$.} Let $h\in S(\Fq)$. Theorem~\ref{thm:det} and Step~2 give the formula for $\det(\Frob_h\mid\mathcal W_n)$. By Theorem~\ref{thm:lisse}(iii),(iv),(vi), the stalk of $\mathcal G_n$ at $\bar h$ is $H^{n-1}_c(Y_h\otimes\Fqbar,\Lpsi f)(-1)$, of dimension $2^n-1$, and it has a $\Frob_h$-stable filtration with graded pieces $\Qlb(-j)^{\oplus m_{n,j}}$ and $(\mathcal W_n)_{\bar h}$. Hence
\[
\begin{aligned}
\det\big(\Frob_q\mid H^{n-1}_c(Y_h\otimes\Fqbar,\Lpsi f)\big)&=q^{S_1(n)-(2^n-1)}\det(\Frob_h\mid\mathcal W_n)\\
&=q^{e(n)}\,\psi_q\big(-2^{n-1}\lambda^{-1}(h_1+\cdots+h_n)\big)
\end{aligned}
\]
by Lemma~\ref{lem:numerology}(iii), which is~\eqref{eq:det-fibre}. Finally, by Theorem~\ref{thm:fibre}(iv) and Theorem~\ref{thm:lisse}(iv), the eigenvalues of weight $n-1$ on $H^{n-1}_c(Y_h\otimes\Fqbar,\Lpsi f)$ are the eigenvalues of $\Frob_h$ on $(\mathcal W_n)_{\bar h}$ divided by $q$, so their product is $q^{-r(n)}\det(\Frob_h\mid\mathcal W_n)$.
\end{proof}

\begin{corollary}\label{cor:det-all-h}
Let $h\in\Fq^n$ have exactly $a\ge1$ nonzero coordinates. Then
\[
\det\big(\Frob_q\mid H^{n-1}_c(Y_h\otimes\Fqbar,\Lpsi f)\big)=q^{(a-2)2^{a-1}+1}\,\psi_q\big(-2^{a-1}\lambda^{-1}(h_1+\cdots+h_n)\big).
\]
\end{corollary}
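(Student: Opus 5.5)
The plan is to reduce to the case of $a$ variables, as in the proof of Corollary~\ref{cor:all-h}, and then apply Theorem~\ref{thm:alpha} to $h_I$. Let $I$ be the set of indices of the nonzero coordinates, $h_I\in(\Fq^\times)^a$ the vector they form, and $f_I=\sum_{i\in I}u_i^{-1}$. As in that proof, $Y_h\cong Y_{h_I}\times\Gm^{n-a}$ and $\Lpsi f\cong\Lpsi{f_I}\boxtimes\boxtimes_{i\notin I}\Lpsi{u_i^{-1}}$. The K\"unneth formula together with Lemma~\ref{lem:t0} (applied in $n-a$ variables, or trivially if $n=a$) then gives a $\Frob_q$-equivariant isomorphism
\[
H^{n-1}_c(Y_h\otimes\Fqbar,\Lpsi f)\cong H^{a-1}_c(Y_{h_I}\otimes\Fqbar,\Lpsi{f_I})\otimes H^{n-a}_c\big(\Gm^{n-a}\otimes\Fqbar,\boxtimes_{i\notin I}\Lpsi{u_i^{-1}}\big).
\]
The second factor is a line with trivial Frobenius action by Lemma~\ref{lem:t0}. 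So the determinant on the left equals the determinant of $\Frob_q$ on $H^{a-1}_c(Y_{h_I}\otimes\Fqbar,\Lpsi{f_I})$.

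Next I invoke \eqref{eq:det-fibre} of Theorem~\ref{thm:alpha} with $n$ replaced by $a$. It is proved there for every $a\ge1$, including $a=1$, where $Y_{h_I}$ is a point. This yields $q^{(a-2)2^{a-1}+1}\psi_q\big(-2^{a-1}\lambda^{-1}\sum_{i\in I}h_i\big)$. Finally, $\sum_{i\in I}h_i=h_1+\cdots+h_n$, since the other coordinates vanish. For $a=1$ the result matches Corollary~\ref{cor:all-h}: the exponent is $(1-2)\cdot1+1=0$ and the value is $\psi_q(-h_i/\lambda)$.

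The only point needing care is that the K\"unneth isomorphism respects Frobenius and involves no sign or degree twist affecting the determinant. Since the extra factor is one-dimensional with trivial Frobenius, tensoring with it preserves the determinant, so no real obstacle is expected.
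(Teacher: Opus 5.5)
Your proposal is correct and follows the paper's own proof: the paper likewise uses the K\"unneth reduction $R\Gamma_c(Y_h\otimes\Fqbar,\Lpsi f)\cong R\Gamma_c(Y_{h_I}\otimes\Fqbar,\Lpsi{f_I})[-(n-a)]$ from the proof of Corollary~\ref{cor:all-h}, which relies on Lemma~\ref{lem:t0}. It then applies Theorem~\ref{thm:alpha} with $n$ replaced by $a$, using $\sum_{i\in I}h_i=h_1+\cdots+h_n$.
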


\begin{proof}
In the notation of the proof of Corollary~\ref{cor:all-h}, $R\Gamma_c(Y_h\otimes\Fqbar,\Lpsi f)\cong R\Gamma_c(Y_{h_I}\otimes\Fqbar,\Lpsi{f_I})[-(n-a)]$ compatibly with $\Frob_q$, and $\sum_{i\in I}h_i=h_1+\cdots+h_n$. Apply Theorem~\ref{thm:alpha} with $n$ replaced by $a$.
\end{proof}

\begin{theorem}\label{thm:Garith}
Let $n\ge2$, let $\gamma\in\Qlb$ be the element with $\iota_{\mathbb C}(\gamma)=p^{-(n+1)/2}>0$, and let $\mathcal W_n^0=\mathcal W_n\otimes\gamma^{\deg}$, as in the proof of Corollary~\ref{cor:M2}. Then $\mathcal W_n^0$ is pure of weight $0$,
\[
\det\mathcal W_n^0\cong\Lpsi{-2^{n-1}\lambda^{-1}(h_1+\cdots+h_n)},
\]
and the arithmetic monodromy group $G_{\mathrm{arith}}(\mathcal W_n^0)$, the Zariski closure of the image of $\pi_1(S)$ acting on $V$, is
\[
G_{\mathrm{arith}}(\mathcal W_n^0)=\Ggeom(\mathcal W_n)=\big\{g\in\GL(V):\det(g)^p=1\big\}.
\]
In particular Theorem~\ref{thmE}(iii) holds.
\end{theorem}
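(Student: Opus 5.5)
Purity first. $\mathcal W_n$ is pure of weight $n+1$ by Theorem~\ref{thm:lisse}(iv). The sheaf $\gamma^{\deg}$ is lisse of rank one, and at a point of degree $k$ its Frobenius eigenvalue is $\gamma^k$, of absolute value $q^{-(n+1)/2}$ with $q=p^k$. So $\mathcal W_n^0$ is pure of weight $0$.

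Determinant. We have $\det\mathcal W_n^0=\det\mathcal W_n\otimes(\gamma^{r(n)})^{\deg}$. By Theorem~\ref{thm:alpha}, $\det\mathcal W_n\cong\Lpsi{-2^{n-1}\lambda^{-1}(h_1+\cdots+h_n)}\otimes\alpha^{\deg}$ with $\alpha=p^{(n+1)r(n)/2}$. Lemma~\ref{lem:numerology}(iii) shows that this exponent is an integer, so $\alpha$ is a positive rational integer. Likewise $\gamma^{r(n)}$ corresponds under $\iota_{\mathbb C}$ to $p^{-(n+1)r(n)/2}$, which is rational, so $\gamma^{r(n)}=\alpha^{-1}$ in $\Qlb$. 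Hence the twists cancel, and the displayed formula for $\det\mathcal W_n^0$ follows.

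Arithmetic monodromy. Let $G_a$ be the Zariski closure of the image of $\pi_1(S)$. Since $\pi_1(S\otimes\Fpbar)$ is normal in $\pi_1(S)$, we get $G_a\supseteq\Ggeom(\mathcal W_n^0)=\Ggeom(\mathcal W_n)$; twisting by a geometrically constant sheaf does not change geometric monodromy. By Theorem~\ref{thm:Ggeom} this group is $\{g:\det(g)^p=1\}$. For the reverse inclusion, the image of $\pi_1(S)$ under $\det$ is the image of the character attached to $\det\mathcal W_n^0=\Lpsi{\cdots}$, and its values lie in $\mu_p$, because Artin--Schreier sheaves of functions to $\Fp$ have traces $\psi_q(\cdot)\in\mu_p$. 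So the image of $\pi_1(S)$ lies in the Zariski-closed set $\{\det(g)^p=1\}$, and hence $G_a$ does too. Therefore $G_a=\Ggeom(\mathcal W_n)$.

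The only delicate point is the rationality used to cancel $\alpha$ against $\gamma^{r(n)}$. It rests on the integrality in Lemma~\ref{lem:numerology}(iii) and on $\iota_{\mathbb C}$ being a field isomorphism, which fixes $\mathbb Q$.
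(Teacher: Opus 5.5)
Your proof is correct and matches the paper's argument. The steps are the same: purity from the weight of $\mathcal W_n$ and $|\iota_{\mathbb C}(\gamma)|$; cancellation of $\alpha$ against $\gamma^{r(n)}$, using the integrality in Lemma~\ref{lem:numerology}(iii) and the fact that $\iota_{\mathbb C}$ fixes $\mathbb Q$; and the sandwich $\Ggeom(\mathcal W_n)\subseteq G_{\mathrm{arith}}(\mathcal W_n^0)\subseteq\{\det(g)^p=1\}$.

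One small difference: you pass from traces $\psi_q(\cdot)\in\mu_p$ at Frobenius elements to the whole image of $\pi_1(S)$, which tacitly uses Chebotarev density and continuity. The paper instead notes directly that the character of $\Lpsi{g}$ factors through the Galois group $\Fp$ of the Artin--Schreier covering. It also adds a remark, not needed for the statement as posed, that the conclusion does not depend on the choice of $\iota_{\mathbb C}$.
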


\begin{proof}
Since $\mathcal W_n$ is pure of weight $n+1$ and $|\iota_{\mathbb C}(\gamma)|=p^{-(n+1)/2}$, the sheaf $\mathcal W_n^0$ is pure of weight $0$. By Lemma~\ref{lem:numerology}(iii), $\frac12(n+1)r(n)$ is an integer, so $\iota_{\mathbb C}(\gamma^{r(n)})=p^{-(n+1)r(n)/2}$ is a rational number. A field isomorphism is the identity on $\mathbb Q$, hence $\gamma^{r(n)}=p^{-(n+1)r(n)/2}=\alpha^{-1}$ by Theorem~\ref{thm:alpha}, and $\det\mathcal W_n^0=\det\mathcal W_n\otimes(\gamma^{r(n)})^{\deg}$ is the Artin--Schreier sheaf above, by Theorem~\ref{thm:det}. For a morphism $g\colon S\to\mathbb A^1$ over $\Fp$, the character of $\pi_1(S)$ defined by $\Lpsi g$ is the composite of the homomorphism $\pi_1(S)\to\pi_1(\mathbb A^1_{\Fp})$ induced by $g$ with the character defined by the Artin--Schreier sheaf $\Lpsi x$ on $\mathbb A^1_x$. The latter factors through the Galois group $\Fp$ of the Artin--Schreier covering $y\mapsto y^p-y$ of $\mathbb A^1_x$, so it takes values in $\mu_p$. So every element of the image of $\pi_1(S)$ has determinant in $\mu_p$, and so has every element of its Zariski closure: $G_{\mathrm{arith}}(\mathcal W_n^0)\subseteq\{g:\det(g)^p=1\}$. This group is $\Ggeom(\mathcal W_n)$ by Theorem~\ref{thm:Ggeom}, and $\Ggeom(\mathcal W_n)=\Ggeom(\mathcal W_n^0)\subseteq G_{\mathrm{arith}}(\mathcal W_n^0)$, because $\gamma^{\deg}$ is geometrically constant.

The result does not depend on $\iota_{\mathbb C}$. Another choice of $\iota_{\mathbb C}$ replaces $\gamma$ by $\pm\gamma$. If $n$ is odd, then $\gamma=p^{-(n+1)/2}$ is rational and does not change. If $n$ is even, then $r(n)$ is even, as $\binom n{n/2}$ is, so $-1\in\SL(V)\subseteq G_{\mathrm{arith}}(\mathcal W_n^0)$, and replacing $\gamma$ by $-\gamma$ multiplies the image of each Frobenius element by $-1$, which does not change the Zariski closure of the image of $\pi_1(S)$.
\end{proof}

\begin{remark}\label{rem:Garith-unnormalized}
Without the normalization, $G_{\mathrm{arith}}(\mathcal W_n)=\GL(V)$. Indeed $G_{\mathrm{arith}}(\mathcal W_n)$ contains $\Ggeom(\mathcal W_n)\supseteq\SL(V)$, and its image under $\det$ is a closed subgroup of $\Gm$ that contains $\det(\Frob_{h_0}\mid\mathcal W_n)=p^{(n+1)r(n)/2}\psi\big(-2^{n-1}\lambda^{-1}n\big)$ for $h_0=(1,\dots,1)\in S(\Fp)$. This element has infinite order, so the image is all of $\Gm$.
\end{remark}

\subsection{Consequences}

\begin{corollary}\label{cor:fourth-n2}
Let $n=2$ and fix $p$. As $q\to\infty$ through powers of $p$,
\[
\sum_{h\in\Fq^2}|\Sigma_2(h)|^4=2q^4+O_p\big(q^{7/2}\big).
\]
In particular Theorem~\ref{thmD}(iii) holds.
\end{corollary}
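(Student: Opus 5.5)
We compute the fourth moment of the normalized traces $A^0_q(h)$ over $S(\Fq)$ from the monodromy group, and then transfer it back to $\Sigma_2$ by~\eqref{eq:normalized}. The fourth moment is not an input here. By Theorem~\ref{thm:Ggeom}, $G=\Ggeom(\mathcal W_2)=\{g\in\GL_2:\det(g)^p=1\}$, so $G^0=\SL_2$ and $V$ is its standard representation. We have
\[
M_4(G,V)=\dim\End_G(V\otimes V)\le\dim\End_{\SL_2}(V\otimes V)=2,
\]
since $V\otimes V=\Sym^2V\oplus\bigwedge^2V$ is a sum of two non-isomorphic irreducible $\SL_2$-modules. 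Both projections commute with all of $\GL_2$, so $M_4(G,V)=2$.

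Apply Proposition~\ref{prop:moment} to $\mathcal W_2$ on $T=S$, with $d=2$ and $w=3$. This gives
\[
q^{-2}\sum_{h\in S(\Fq)}|A^0_q(h)|^4=\beta_1^k+\beta_2^k+O(q^{-1/2}).
\]
We must show that both $\beta_m$ equal $1$. The $\beta_m$ are the eigenvalues of $\Frob_p$ on the invariants $\big(V^{\otimes2}\otimes(V^\vee)^{\otimes2}\big)^{\pi^{\mathrm g}}$, which correspond to $\End_G(V\otimes V)$, spanned by the two projectors onto $\Sym^2V$ and $\bigwedge^2V$. Frobenius acts on this space by conjugation, through an element of $G_{\mathrm{arith}}(\mathcal W_2^0)$, and that group equals $G$ by Theorem~\ref{thm:Garith}. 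Conjugation by an element of $\GL(V)$ fixes both projectors, so $\beta_1=\beta_2=1$ and the sum equals $2q^2+O(q^{3/2})$. This is the step I expect to need the most care, namely making precise the identification of the Frobenius action on the $H^{2d}_c$ term of Lemma~\ref{lem:averages}. The alternative is to note that the tensor-invariant projectors are horizontal, so they give geometrically constant rank-one summands of $\mathcal E$ on which every $\Frob_h$ acts trivially.

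For the transfer, write $\Sigma_2(h)=-(q^{1/2}A^0_q(h)+1)$ for $h\in S(\Fq)$, since $c_2(q)=m_{2,1}=1$. Then
\[
|\Sigma_2(h)|^4=q^2|A^0_q(h)|^4+O(q^{3/2}),
\]
uniformly in $h$, because $|A^0_q(h)|\le2$. Summing over the at most $q^2$ points of $S(\Fq)$ gives $2q^4+O(q^{7/2})$. The points of $\Fq^2\smallsetminus S(\Fq)$ are at most $2q+1$ in number, and for them $|\Sigma_2(h)|\le1$ by Corollary~\ref{cor:all-h}, so they contribute $O(q)$. The implied constants depend on $p$ only through Lemma~\ref{lem:averages}, which gives the $O_p$.
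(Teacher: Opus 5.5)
Your proof is correct and follows the paper's argument. Both use $G\supseteq\SL_2$ to get $\dim\End_G(V\otimes V)=2$; both note that this space is spanned by tensor-natural operators on which all of $\pi_1(S)$ acts trivially (the paper uses the identity and the swap, you use the two projectors); and both conclude with Lemma~\ref{lem:averages} and the transfer through~\eqref{eq:normalized}. The only difference is in the cross term: the paper shows that $\mathcal W_2^0\otimes\mathcal W_2^0\otimes(\mathcal W_2^0)^\vee$ has no geometric invariants, because $-1\in\SL_2$, to bound it by $O_p(q^3)$, whereas your trivial bound of $O(q^{3/2})$ per point already gives the stated error $O_p(q^{7/2})$, so that extra step is not needed.
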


\begin{proof}
By Theorem~\ref{thm:monodromy}, $G\supseteq\SL_2$. Under $\SL_2$ the representation $V\otimes V$ is the sum of $\Sym^2V$ and $\bigwedge^2V$, irreducible of dimensions $3$ and $1$, so $\End_G(V\otimes V)\subseteq\End_{\SL_2}(V\otimes V)$ has dimension at most $2$. It contains the identity and the symmetry of $V\otimes V$, which come from the morphisms $\mathrm{id}$ and the symmetry of $\mathcal W_2\otimes\mathcal W_2$ defined over $\Fp$. So the geometric invariants of $\mathcal E_4=\mathcal W_2^{\otimes2}\otimes(\mathcal W_2^\vee)^{\otimes2}$ form a space of dimension $2$ on which $\Frob_p$ acts trivially. Moreover $-1\in\SL_2\subseteq G$ acts on $V\otimes V\otimes V^\vee$ by $-1$, so the sheaf $\mathcal E_3=\mathcal W_2^0\otimes\mathcal W_2^0\otimes(\mathcal W_2^0)^\vee$, with $\mathcal W_2^0$ as in the proof of Corollary~\ref{cor:M2}, has no geometric invariants. Lemma~\ref{lem:averages} gives
\[
\sum_{h\in S(\Fq)}|A^0_q(h)|^4=2q^2+O_p\big(q^{3/2}\big),\qquad\sum_{h\in S(\Fq)}|A^0_q(h)|^2A^0_q(h)=O_p\big(q^{3/2}\big).
\]
By~\eqref{eq:normalized}, $\Sigma_2(h)=-(z+1)$ with $z=q^{1/2}A^0_q(h)$, and
\[
|z+1|^4=|z|^4+4|z|^2\operatorname{Re}z+4(\operatorname{Re}z)^2+2|z|^2+4\operatorname{Re}z+1.
\]
Summing over $S(\Fq)$, the first term gives $2q^4+O_p(q^{7/2})$, the second $O_p(q^3)$, and the remaining terms $O(q^3)$, since $|A^0_q(h)|\le2$. The points outside $S(\Fq)$ contribute $O(q)$.
\end{proof}

\begin{corollary}\label{cor:equidistribution}
Let $n\ge2$, let $r=r(n)$, let $K_n=\{g\in\mathrm U(r):\det(g)^p=1\}$, and let $\gamma$ be as in Theorem~\ref{thm:Garith}. For $q=p^k$ and $h\in S(\Fq)$, the semisimplification of $\gamma^k\Frob_h$ acting on the stalk of $\mathcal W_n$ at $\bar h$ is conjugate, through $\iota_{\mathbb C}$, to an element $\theta_{q,h}$ of $K_n$, well defined up to conjugacy, and
\[
\Tr\theta_{q,h}=A^0_q(h),\qquad\det\theta_{q,h}=\psi_q\big(-2^{n-1}\lambda^{-1}(h_1+\cdots+h_n)\big).
\]
As $k\to\infty$, the conjugacy classes $\theta_{q,h}$, $h\in S(\Fq)$, become equidistributed with respect to the image of the Haar probability measure of $K_n$ on its space of conjugacy classes. In particular Theorem~\ref{thmF} holds.
\end{corollary}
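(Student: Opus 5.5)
The argument splits into an arithmetic part (identifying $\theta_{q,h}$ as a conjugacy class in $K_n$ with the stated trace and determinant) and the equidistribution, which is Deligne's theorem in the form of \cite[Th.~9.2.6]{KatzSarnak} applied to the lisse sheaf $\mathcal W_n^0$ of Theorem~\ref{thm:Garith}.

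\emph{Step 1: $\theta_{q,h}$ lies in $K_n$.} For $q=p^k$ and $h\in S(\Fq)$, $\gamma^k\Frob_h$ is the Frobenius of $\mathcal W_n^0$ at $h$, and its trace is $\gamma^kA_q(h)=q^{-(n+1)/2}A_q(h)=A^0_q(h)$. By Theorem~\ref{thm:Garith}, $\mathcal W_n^0$ is pure of weight $0$, so the semisimplification $\mathrm{ss}(\gamma^k\Frob_h)$ has eigenvalues of absolute value $1$ under $\iota_{\mathbb C}$. Its determinant is the trace of Frobenius on $\det\mathcal W_n^0\cong\Lpsi{-2^{n-1}\lambda^{-1}(h_1+\cdots+h_n)}$, namely $\psi_q(-2^{n-1}\lambda^{-1}\sum_ih_i)$, a $p$-th root of unity. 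A semisimple element of $\GL_r(\mathbb C)$ with unitary eigenvalues is conjugate to an element of $\mathrm U(r)$, determined up to conjugacy in $\mathrm U(r)$; its determinant is the same $p$-th root of unity, so it lies in $K_n$. Conjugacy in $\mathrm U(r)$ coincides with conjugacy in $K_n$, since $K_n\supseteq\mathrm{SU}(r)$ and scalars in $\mathrm U(r)$ commute with everything.

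\emph{Step 2: the hypotheses of Deligne's theorem.} Deligne's equidistribution theorem requires a lisse sheaf, pure of weight $0$ on a smooth geometrically connected $S$, whose arithmetic and geometric monodromy groups coincide. Theorem~\ref{thm:Garith} supplies exactly this: $G_{\mathrm{arith}}(\mathcal W_n^0)=\Ggeom(\mathcal W_n)=G$, the group $\{g:\det(g)^p=1\}$. A maximal compact subgroup of $G(\mathbb C)$ is $K_n$.

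\emph{Step 3: conclusion.} By \cite[Th.~9.2.6]{KatzSarnak}, the classes $\theta_{q,h}$, $h\in S(\Fq)$, equidistribute in $K_n^\natural$ as $k\to\infty$. The main check is that the theorem's normalizations match ours, in particular that $\#S(\Fq)=(q-1)^n\sim q^n$. Concretely, equidistribution reduces to showing, for each nontrivial irreducible representation $\Lambda$ of $G$, that
\[
(q-1)^{-n}\sum_h\Tr\Lambda(\theta_{q,h})=O(q^{-1/2}).
\]
This follows from Lemma~\ref{lem:averages}: since $G_{\mathrm{arith}}=\Ggeom$, the invariants of $\Lambda$ are zero. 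The sum itself is controlled by Deligne's bound on $H^i_c$ for $i<2n$.

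\emph{Step 4: Theorem~\ref{thmF}.} By \eqref{eq:normalized},
\[
q^{-(n-1)/2}\big((-1)^{n-1}\Sigma_n(h)-c_n(q)\big)=A^0_q(h)=\Tr\theta_{q,h}.
\]
Pushing the equidistribution forward along the continuous map $g\mapsto\Tr g$ gives the first claim of Theorem~\ref{thmF}. For the second, write $g\in K_n$ as $\zeta g_1$ with $\zeta\in\mu_{pr}$ and $g_1\in\mathrm{SU}(r)$; then $|\Tr g|=|\Tr g_1|$, and Haar measure on $K_n$ pushes forward to Haar measure on $\mathrm{SU}(r)$. Composing with $z\mapsto|z|$ therefore yields the law of $|\Tr g|$ for $g$ Haar-random in $\mathrm{SU}(r(n))$.
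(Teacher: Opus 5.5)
Your proof is correct and follows the paper's route. You apply Deligne's theorem, in the Katz--Sarnak form, to $\mathcal W_n^0$ using $G_{\mathrm{arith}}=\Ggeom$ from Theorem~\ref{thm:Garith}, read off the trace and determinant, and deduce the statement on $|\Tr|$ from the coset decomposition of $K_n$ over $\mathrm{SU}(r)$. Your explicit construction of $\theta_{q,h}$ in $\mathrm U(r)$ and your unpacking of Weyl's criterion via Lemma~\ref{lem:averages} only spell out what the paper takes from the cited theorem.

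One small wording fix: since $g=\zeta g_1$ is not unique, there is no map $K_n\to\mathrm{SU}(r)$ to push Haar measure along. Say instead, as the paper does, that Haar measure on $K_n$ restricts on each coset $z\,\mathrm{SU}(r)$ to $1/p$ times a translate of Haar measure on $\mathrm{SU}(r)$, and that $|\Tr(zg_1)|=|\Tr g_1|$.
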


\begin{proof}
By Theorem~\ref{thm:Garith}, $\mathcal W_n^0$ is a lisse sheaf on the smooth geometrically connected variety $S$, pure of weight $0$, with $G_{\mathrm{arith}}(\mathcal W_n^0)=\Ggeom(\mathcal W_n^0)=G$; its Frobenius at $h\in S(\Fq)$ is $\gamma^k\Frob_h$. Through $\iota_{\mathbb C}$, the group $G$ becomes $\{g\in\GL_r(\mathbb C):\det(g)^p=1\}$, of which $K_n$ is a maximal compact subgroup. Deligne's equidistribution theorem \cite[Th.~3.5.3]{Deligne}, in the form \cite[Th.~9.2.6]{KatzSarnak} for smooth geometrically connected bases of any dimension, therefore applies to $\mathcal W_n^0$: the classes $\theta_{q,h}$ are defined as stated, and they become equidistributed. Moreover $\Tr\theta_{q,h}=\iota_{\mathbb C}\big(\gamma^kA_q(h)\big)=q^{-(n+1)/2}A_q(h)=A^0_q(h)$, and $\det\theta_{q,h}=\iota_{\mathbb C}\big(\gamma^{kr}\alpha^k\big)\,\psi_q\big(-2^{n-1}\lambda^{-1}(h_1+\cdots+h_n)\big)$ by Theorem~\ref{thm:det}, with $\gamma^r\alpha=1$ by Theorem~\ref{thm:alpha} and the proof of Theorem~\ref{thm:Garith}.

For Theorem~\ref{thmF}, let $\varphi$ be a continuous function on the disc $\{z\in\mathbb C:|z|\le r\}$. Then $g\mapsto\varphi(\Tr g)$ is a continuous central function on $K_n$, so the equidistribution gives
\[
\frac1{(q-1)^n}\sum_{h\in S(\Fq)}\varphi\big(A^0_q(h)\big)\longrightarrow\int_{K_n}\varphi(\Tr g)\,dg\qquad(k\to\infty),
\]
where $A^0_q(h)=q^{-(n-1)/2}\big((-1)^{n-1}\Sigma_n(h)-c_n(q)\big)$ by~\eqref{eq:normalized}. For the absolute values take $\varphi(z)=\varphi_1(|z|)$ with $\varphi_1$ continuous on $[0,r]$. The group $K_n$ is the union of the cosets $z\,\mathrm{SU}(r)$ with $z$ a scalar satisfying $z^{rp}=1$, and $|\Tr(zg)|=|\Tr g|$; so the Haar integral of $\varphi_1(|\Tr g|)$ over $K_n$ equals its Haar integral over $\mathrm{SU}(r)$.
\end{proof}

\begin{corollary}[Conductors of tensor constructions]\label{cor:conductor}
Let $n\ge2$, $c\in(\Fpbar^\times)^{n-1}$, and let $\mathcal E$ be a subquotient of $(\mathcal W_n|_{C_c})^{\otimes a}\otimes\big((\mathcal W_n|_{C_c})^\vee\big)^{\otimes b}$ for some $a,b\ge0$. Then $\Swan_0(\mathcal E)=0$, all breaks of $\mathcal E$ at $\infty$ are at most $1$, and $\Swan_\infty(\mathcal E)\le\rk\mathcal E$. In particular, for every irreducible algebraic representation $\varrho$ of $\GL_{r(n)}$, the sheaf $\varrho(\mathcal W_n|_{C_c})$ satisfies $\Swan_0+\Swan_\infty\le\dim\varrho$; for $\varrho=\Sym^k$ the bound is $\binom{k+r(n)-1}{r(n)-1}$.
\end{corollary}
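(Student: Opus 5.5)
The plan is to reduce everything to the local monodromy of $\mathcal W_n|_{C_c}$ at $0$ and $\infty$, which Theorems~\ref{thm:boundary} and~\ref{thm:infinity} describe, and then pass through tensor constructions by functoriality. Put $W=\mathcal W_n|_{C_c}$, a lisse sheaf on $C_c\cong\Gm$. By Theorem~\ref{thm:boundary}(ii), $W(0)$ is tame and unipotent. The dual is then also tame, since $\Iw_0$ acts trivially on $W(0)^\vee$. A tensor product of representations on which $\Iw_0$ acts trivially is again trivial for $\Iw_0$, and the same holds for subquotients. Hence $\mathcal E(0)$ is tame and $\Swan_0(\mathcal E)=0$.

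At $\infty$, Theorem~\ref{thm:infinity}(i) (equivalently Proposition~\ref{prop:slopes-infinity}) shows that all breaks of $W(\infty)$ are at most $1$. The breaks of the dual representation are the same, because $I^{(r+)}$ acts trivially on $W(\infty)$ if and only if it acts trivially on $W(\infty)^\vee$. By Lemma~\ref{lem:breaks}(i) and induction on $a+b$, every break of $W^{\otimes a}\otimes(W^\vee)^{\otimes b}$ at $\infty$ is at most $1$. The break decomposition is exact and functorial, so this property passes to every subquotient $\mathcal E$. Since the Swan conductor is the sum of the breaks counted with multiplicity, we get $\Swan_\infty(\mathcal E)\le\rk\mathcal E$.

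For the final clause, write $\varrho$ for an irreducible algebraic representation of $\GL_{r(n)}$. After twisting by a power of $\det$, $\varrho$ occurs as a direct summand of $\mathrm{St}^{\otimes a}\otimes(\mathrm{St}^\vee)^{\otimes b}$ for suitable $a,b$. One way to see this is that $\varrho\otimes\det^{m}$ is polynomial for large $m$, and $\det^{-1}$ is a summand of $(\mathrm{St}^\vee)^{\otimes r(n)}$. Hence $\varrho(W)$ is a subquotient of the corresponding tensor sheaf, and the first part applies. This gives $\Swan_0+\Swan_\infty\le\rk\varrho(W)=\dim\varrho$. For $\varrho=\Sym^k$ the dimension is $\binom{k+r(n)-1}{r(n)-1}$, and in this case no determinant twist is needed.

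The only point needing care is that the break bound $\le1$ holds for the whole inertia representation at $\infty$ and not only for its wild part; Proposition~\ref{prop:slopes-infinity} states exactly this. The rest is formal once we have the preservation of breaks under duals and tensor products, which is Lemma~\ref{lem:breaks}(i).
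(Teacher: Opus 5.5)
Your proposal is correct and follows the paper's proof essentially step for step. You get tameness at $0$ from Theorem~\ref{thm:boundary}(ii), and breaks at most $1$ at $\infty$ from Theorem~\ref{thm:infinity}(i), extended to duals, tensor products and subquotients by Lemma~\ref{lem:breaks}(i). You then realize $\varrho$ as a direct summand of some $\mathrm{St}^{\otimes a}\otimes(\mathrm{St}^\vee)^{\otimes b}$; your observation that $\det^{-1}$ is a summand of $(\mathrm{St}^\vee)^{\otimes r(n)}$ justifies this last step, which the paper only asserts.
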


\begin{proof}
Tameness at $0$ (Theorem~\ref{thm:boundary}) passes to duals, tensor products and subquotients. By Theorem~\ref{thm:infinity}(i), the breaks of $\mathcal W_n|_{C_c}$ at $\infty$ are $0$ and $1$, and the dual has the same breaks. By Lemma~\ref{lem:breaks}(i), all breaks of the tensor product are at most $1$, and the breaks of a subquotient are among those of the module. Hence $\Swan_\infty(\mathcal E)\le\rk\mathcal E$. Finally, every irreducible algebraic representation of $\GL_r$ is a direct summand of some $\mathrm{St}^{\otimes a}\otimes(\mathrm{St}^\vee)^{\otimes b}$, where $\mathrm{St}$ is the standard representation, and $\Sym^k$ has dimension $\binom{k+r-1}{r-1}$.
\end{proof}

\section{Numerical checks}\label{sec:numerics}

The following computations test the statements above; none of them is used in the proofs. Throughout, $\psi(x)=\exp(2\pi ix/p)$, $\lambda=1$ unless stated otherwise, and the computations were done in double precision.
\begin{enumerate}[(a)]
\item \emph{Theorem~\ref{thmA}.} We treated $n=2$ with $p\in\{3,5,7\}$ and $n=3$ with $p\in\{3,5\}$, for several $h\in(\Fp^\times)^n$. These include $h=(1,1)$, which lies on a divisor $L_\varepsilon=0$. For each such $h$ we computed $\Sigma_n(h)$ over $\mathbb F_{p^m}$ for $1\le m\le2^n$ by~\eqref{eq:intro-factor}, the Kloosterman sums over $\mathbb F_{p^m}$ being computed by a fast Fourier transform on $\mathbb F_{p^m}^\times$. By Theorem~\ref{thm:fibre}(iv), these numbers give the power sums of the eigenvalues of $\Frob_p$ on $H^{n-1}_c(Y_h,\Lpsi f)$. Newton's identities, applied to the first $2^n-1$ of them, determine a polynomial of degree $2^n-1$. Its roots predict the last power sum to within a relative error of $2\cdot10^{-13}$. Their absolute values agree with Theorem~\ref{thmA}, which predicts $r(n)$ roots of absolute value $p^{(n-1)/2}$ and $m_{n,k}$ roots of absolute value $p^{k-1}$, to within a relative error of $2\cdot10^{-7}$. The roots of weight $0$ are equal to $1$, as Theorem~\ref{thmA} asserts, to within $10^{-6}$; for $n=3$ this was checked for all $h\in(\Fp^\times)^3$ with $p\in\{3,5,7\}$ and $\lambda\in\{1,2\}$. The deviation is of the size expected for a double root computed in double precision.
\item \emph{The lower eigenvalues for $n=4$.} For $p=3$ and the five vectors $h=(1,1,1,1)$, $(1,1,1,2)$, $(1,1,2,2)$, $(1,2,2,2)$ and $(2,2,2,2)$ we subtracted $3+2p^m$, the contribution predicted by Theorem~\ref{thmA} for the eigenvalue $1$ with multiplicity $3$ and the eigenvalue $p$ with multiplicity $2$, from the power sums for $1\le m\le11$. Newton's identities, applied to the first ten differences, give ten roots of absolute value $p^{3/2}$ to within $2\cdot10^{-5}$, and these predict the eleventh difference to within a relative error of $3\cdot10^{-13}$.
\item \emph{Theorems~\ref{thmE}(i) and~\ref{thm:alpha}.} Let $\delta_q(h)$ be the product of the $r(n)$ eigenvalues of weight $n-1$ of $\Frob_q$ on $H^{n-1}_c(Y_h,\Lpsi f)$, computed from the power sums as in~(a) and~(b). By Theorem~\ref{thm:alpha}, $\delta_q(h)=q^{(n-1)r(n)/2}\psi_q\big(-2^{n-1}\lambda^{-1}(h_1+\cdots+h_n)\big)$. We compared $\delta_p(h)$ with this value for all $h\in(\Fp^\times)^2$ with $p\le13$ and $\lambda\in\{1,2\}$; for all $h\in(\Fp^\times)^3$ with $p\in\{3,5,7\}$ and $\lambda\in\{1,2\}$; and for $n=4$, $p=3$ and the five vectors $h$ of~(b). In all cases the ratio is $1$ to within $3\cdot10^{-14}$. We also computed the determinant of $\Frob_p$ on the whole of $H^{n-1}_c(Y_h,\Lpsi f)$, from the first $2^n-1$ power sums by Newton's identities, and compared it with~\eqref{eq:det-fibre}: for all $h\in(\Fp^\times)^2$ with $p\in\{3,5,7,11,13\}$ and for all $h\in(\Fp^\times)^3$ with $p\in\{3,5,7\}$, with $\lambda\in\{1,2\}$ in each case, the ratio is $1$ to within $4\cdot10^{-13}$.
\item \emph{Theorem~\ref{thmD}(i).} The three identities of Proposition~\ref{prop:low-moments} hold to within a relative error of $10^{-14}$ for $n\in\{2,3\}$ and odd $p\le11$, and for $n=4$ and $p\le7$, with $\lambda\in\{1,2\}$.
\item \emph{Theorem~\ref{thmC}.} For $(n,p)\in\{(2,5),(2,7),(3,3),(3,5),(3,7),(4,3)\}$, several $c\in(\Fp^\times)^{n-1}$ and $\lambda\in\{1,2\}$, we computed the traces of Frobenius on $\mathcal W_n|_{C_c}$ over $\mathbb F_{p^m}$ from Theorem~\ref{thmB}, and from them the eigenvalues of $\Frob_p$ on $H^1_c(C_c\otimes\Fpbar,\mathcal W_n|_{C_c})$. This group has dimension $2^{n-1}$, by~\eqref{eq:GOS} and Theorem~\ref{thmC}(i),(ii). The invariants at $0$ and at $\infty$, whose Jordan structure is given by Theorem~\ref{thmC}(i),(ii), and \cite[1.8.4 and 3.2.3]{Deligne} predict its weights: $2$ and $4$ for $n=2$; $3,3,4,4$ for $n=3$; and $4$ six times and $6$ twice for $n=4$. These are the weights found, and the computed eigenvalues predict the next power sum to within a relative error of $10^{-13}$. Theorem~\ref{thmC} also predicts the cohomology of a twist. Since $2\ne0$ in $\Fp$, Lemma~\ref{lem:breaks}(ii) shows that after tensoring with $\Lpsi{-h_1/\lambda}(\infty)$ both the tame part and the part of slope $1$ of $(\mathcal W_n|_{C_c})(\infty)$ become isoclinic of slope $1$. By Theorem~\ref{thmC}(i),(ii) and~\eqref{eq:GOS}, the $L$-function of $\mathcal W_n|_{C_c}\otimes\Lpsi{-h_1/\lambda}$ on $C_c$ is therefore a polynomial of degree $r(n)$. For $(n,p,\lambda)\in\{(2,5,2),(2,7,1),(3,3,2),(3,5,1),(4,3,1)\}$, with one vector $c$ in each case, the power sums of its inverse roots, computed as above, satisfy a linear recurrence of order $r(n)$ and of no smaller order.
\item \emph{Theorem~\ref{thmD}(ii).} Let $R_n(p)$ be the ratio of $\sum_{h\in\Fp^n}|\Sigma_n(h)|^4$ to $2p^{3n-2}$.
\begin{itemize}
\item For $n=3$ and the primes $23\le p\le53$, $R_3(p)$ increases from $0.754$ to $0.882$, and $p\,(1-R_3(p))$ stays between $5.6$ and $6.3$.
\item For $n=4$ and $11\le p\le23$, $R_4(p)$ increases from $0.684$ to $0.854$, and $p\,(1-R_4(p))$ stays between $3.3$ and $3.5$.
\end{itemize}
This agrees with Theorem~\ref{thmD}(ii).
\item \emph{The fourth moment, Section~\ref{sec:fourth}.}
\begin{itemize}
\item The formula of Lemma~\ref{lem:diag} holds to within $10^{-10}$ for all $a,b\in\Fp^\times$ and all odd primes $p\le31$.
\item For $p\in\{5,7,11,13\}$, the number of points of $\Gamma_t$ was counted directly. The formula of Lemma~\ref{lem:plane} then holds to within $10^{-11}$ for all $t\in(\Fp^\times)^4$, and on $\mathcal T_G$ one finds $|a_t|\le1.81\,p^{1/2}$, as Corollary~\ref{cor:hasse} requires.
\end{itemize}
\item \emph{Section~\ref{sec:polytope} and Remark~\ref{rem:collision}.}
\begin{itemize}
\item For $2\le n\le5$ we decided, by linear programming, which subsets of the vertex set of $P_n$ are vertex sets of faces. The resulting face numbers agree with Proposition~\ref{prop:structure}(iv), and the normalized volume of $\Delta_n$, computed from its convex hull, is $2^n$, as in Proposition~\ref{prop:volume}.
\item Over $\mathbb R$, for $n=2$ and $h=(1,(1+\delta)^2)$ with $\delta=10^{-1},10^{-2},10^{-3}$, the two points given by Proposition~\ref{prop:crit}, namely
\[
\Big(-\frac1{2+\delta},\,-\frac1{(1+\delta)(2+\delta)},\,(2+\delta)^2\Big)\quad\text{and}\quad\Big(\frac1\delta,\,-\frac1{\delta(1+\delta)},\,\delta^2\Big),
\]
satisfy the critical equations up to a residual of $2\cdot10^{-14}$. The second one escapes to infinity as $\delta\to0$, as described in Remark~\ref{rem:collision}.
\end{itemize}
\end{enumerate}

\section*{Acknowledgments}
This work was carried out independently. I am grateful to those whose support, patience,
and encouragement made it possible to continue the project over a long period. Much of
the research was done alongside other professional responsibilities, and the arguments were
revisited repeatedly as the manuscript developed. The final form reflects that sustained
process of reading, checking, and revision. I remain solely responsible for the mathematical
content and for any errors that remain.

All mathematical arguments, proofs, and conclusions in this article are the sole work of the author. Computational tools were used only for the numerical checks explicitly described in the text; these checks are not essential to the proofs and are provided for verification. The author has reviewed all results and assumes full responsibility for the correctness and integrity of the work. AI-assisted tools, if used, were employed exclusively for language editing and formatting; they did not contribute to any mathematical content.

\end{document}